\documentclass[11pt]{article}
\usepackage[utf8x]{inputenc}
\usepackage[margin=1.5in]{geometry}
\usepackage{microtype}
\usepackage[T1]{fontenc}
\usepackage{ae,aecompl}
\usepackage{times}
\usepackage{tikz-cd}
\usepackage{mathrsfs} 
\usepackage{harmony}
\usepackage{verbatim,amsmath,amsthm,amsfonts,amssymb,latexsym,graphicx,mathtools,extpfeil,color}
\usepackage{tikz}
\usepackage{epstopdf,pinlabel}
\usepackage[all]{xy}
\usepackage{enumitem}
\usepackage{graphicx}
\usepackage{booktabs}
\usepackage{caption}
\usepackage{tabularx}
\usepackage{subcaption}
\usepackage{cancel}
\usepackage{slashed}
\DeclareMathAlphabet{\mathpzc}{OT1}{pzc}{m}{it}
\usepackage[colorlinks,pagebackref,hypertexnames=false]{hyperref} \usepackage[alphabetic,backrefs,msc-links]{amsrefs}
\usepackage{aliascnt}
\numberwithin{equation}{section}

\newcommand{\bR}{{\bf R}}

\newcommand{\cM}{\mathcal{M}}

\newcommand{\cS}{\mathcal{S}}

\newcommand{\fa}{{\mathfrak a}}
\newcommand{\fb}{{\mathfrak b}}

\newcommand{\fm}{{\mathfrak m}}

\newcommand{\fz}{{\mathfrak z}}

\newcommand{\fZ}{{\mathfrak Z}}

\newcommand{\R}{\bR}

\newcommand{\su}{\mathfrak{su}}

\DeclareMathOperator{\Hom}{Hom}

\DeclareMathOperator{\im}{im}

\DeclareMathOperator{\tr}{tr}

\newcommand{\id}{{\rm id}}

\renewcommand{\epsilon}{\varepsilon}

\def\({\mathopen{}\left(}
\def\){\right)\mathclose{}}
\def\<{\mathopen{}\left<}
\def\>{\right>\mathclose{}}

\usepackage{multicol, color}

\definecolor{gold}{rgb}{0.85,.66,0}
\definecolor{cherry}{rgb}{0.9,.1,.2}
\definecolor{burgundy}{rgb}{0.8,.2,.2}
\definecolor{orangered}{rgb}{0.85,.3,0}
\definecolor{orange}{rgb}{0.85,.4,0}
\definecolor{olive}{rgb}{.45,.4,0}
\definecolor{lime}{rgb}{.6,.9,0}
\definecolor{green}{rgb}{.2,.7,0}
\definecolor{grey}{rgb}{.4,.4,.2}
\definecolor{brown}{rgb}{.4,.3,.1}

\def\makeautorefname#1#2{\AtBeginDocument{\expandafter\def\csname#1autorefname\endcsname{#2}}}

\newcommand{\mynewtheorem}[2]{
  \newaliascnt{#1}{equation}          
  \newtheorem{#1}[#1]{#2}
  \aliascntresetthe{#1}
  \makeautorefname{#1}{#2}
}
\mynewtheorem{theorem}{Theorem}
\mynewtheorem{prop}{Proposition}
\mynewtheorem{cor}{Corollary}
\mynewtheorem{construction}{Construction}
\mynewtheorem{lemma}{Lemma}
\mynewtheorem{conjecture}{Conjecture}

\numberwithin{substep}{step}
\makeautorefname{step}{Step}
\makeautorefname{substep}{Step}

\numberwithin{subcase}{case}
\makeautorefname{case}{Case}
\makeautorefname{subcase}{case}

\theoremstyle{remark}
\mynewtheorem{remarkx}{Remark}
\newenvironment{remark}
{\pushQED{\qed}\remarkx}
{\popQED\endremarkx}

\theoremstyle{definition}
\mynewtheorem{definitionx}{Definition}
\newenvironment{definition}
{\pushQED{\qed}\definitionx}
{\popQED\enddefinitionx}
\mynewtheorem{examplex}{Example}
\newenvironment{example}
{\pushQED{\qed}\examplex}
{\popQED\endexamplex}
\mynewtheorem{exercise}{Exercise}
\mynewtheorem{convention}{Convention}
\newtheorem*{convention*}{Convention}
\newtheorem*{conventions*}{Conventions}
\mynewtheorem{question}{Question}
        
\makeautorefname{chapter}{Chapter}
\makeautorefname{section}{Section}
\makeautorefname{subsection}{Section}
\makeautorefname{subsubsection}{Section}

\theoremstyle{theorem}
\newtheorem{theorem-intro}{Theorem}
\newtheorem{cor-intro}{Corollary}
\newtheorem{lemma-intro}{Lemma}
\theoremstyle{remark}
\newtheorem{remarkintrox}{Remark}
\makeautorefname{remarkintrox}{Remark}
\newenvironment{remarkintro}
{\pushQED{\qed}\remarkintrox}
{\popQED\endremarkintrox}

\theoremstyle{definition}
\newtheorem{defnintrox}{Definition}
\makeautorefname{defnintrox}{Definition}
\newenvironment{defnintro}
{\pushQED{\qed}\defnintrox}
{\popQED\enddefnintrox}
\usepackage{accents}
\usepackage[normalem]{ulem}

\DeclareFontFamily{U}{mathx}{\hyphenchar\font45}
\DeclareFontShape{U}{mathx}{m}{n}{<-> mathx10}{}
\DeclareSymbolFont{mathx}{U}{mathx}{m}{n}
\DeclareMathAccent{\widecheck}{0}{mathx}{"71}

\DeclareSymbolFont{stmry}{U}{stmry}{m}{n}
\DeclareMathSymbol{\llbracket}{\mathopen}{stmry}{"4A}

\newcommand{\lift}{\mathscr O}

\newcommand{\hp}{q_3}

\newcommand{\F}{{\Bbb F_2}}

\newcommand{\medtilde}[1]{\mkern3mu\widetilde{\mkern-3mu#1\mkern-1mu}\mkern1mu}
\newcommand{\medhat}[1]{\mkern3mu\widehat{\mkern-3mu#1\mkern-1mu}\mkern1mu}
\newcommand{\medcheck}[1]{\mkern3mu\widecheck{\mkern-3mu#1\mkern-1mu}\mkern1mu}
\newcommand{\medbar}[1]{\mkern3mu\overline{\mkern-3mu#1\mkern-1mu}\mkern1mu}
\newcommand{\eqm}{\medtilde M}
\newcommand{\eqb}{\medtilde B}
\newcommand{\eqz}{\medtilde Z}
\newcommand{\eqn}{\medtilde N}

\title{Instantons, indefinite 4-manifolds, and Dehn surgery}
\author{Aliakbar Daemi\thanks{The work of AD was supported by NSF Grants DMS-2609113, DMS-2208181, and a Simons fellowship.} \hspace{1cm} Xingpei Liu\thanks{The work of XL was supported by NSFC 12341105.} \hspace{1cm} Mike Miller Eismeier}
\date{}

\newcommand{\Addresses}{{
  \bigskip
  \footnotesize
  Aliakbar Daemi, \textsc{Department of Mathematics, Washington University in St. Louis, One Brookings drive, Room 212,
  St. Louis, MO 63130}\par\nopagebreak
  \textit{E-mail address}: \texttt{adaemi@wustl.edu}
  \vspace{.4cm}
  
Xingpei Liu, \textsc{Beijing International Center for Mathematical Research, Peking University, Beijing 100871, China}\par\nopagebreak
  \textit{E-mail address}: \texttt{zivpei@pku.edu.cn}
  \vspace{.4cm}
  
Mike Miller Eismeier,
  \textsc{Department of Mathematics and Statistics, University of Vermont, Innovation Hall E231, 82 University Place, Burlington, VT 05404}\par\nopagebreak
  \textit{E-mail address}: \texttt{Mike.Miller-Eismeier@uvm.edu}\par\nopagebreak
  \vspace{.2cm}
  
  \textsc{Institute of Mathematics, Academia Sinica, 635 Astronomy-Mathematics Building, No. 1, Sec. 4, Roosevelt Road, Taipei 106319, Taiwan}\par\nopagebreak
  \textit{E-mail address}: \texttt{mme@as.edu.tw}
}}

\begin{document}

\newgeometry{top=1in}

\maketitle

\begin{abstract}
	We prove that there exist hyperbolic integer homology spheres with arbitrarily large Dehn surgery number. Previously, no integer homology sphere was known to have a surgery number larger than $2$. Our approach uses Fr{\o}yshov's invariant $q_3$ of integer homology spheres, which is defined in terms of mod 2 instanton homology. We show that if $W: Y \to Y'$ is a cobordism between integer homology spheres with no $2$-torsion in its first homology, then $-b^+(W) \le q_3(Y') - q_3(Y) \le b^-(W)$. We also extend both $q_3$ and the inequality to rational homology spheres.
\end{abstract}
\hypersetup{linkcolor=black}
\setcounter{tocdepth}{2}
\tableofcontents

\restoregeometry

\section{Introduction}
Every closed oriented 3-manifold can be obtained by performing surgery on a framed link in $S^3$. The first published proof, due to Wallace \cite{Wallace}, was obtained by establishing the relationship between cobordisms and what is now called surgery. Lickorish later offered a more elementary proof \cite{Lickorish} in terms of what are now called Dehn twists. This motivates the definition of {\it surgery number}, a measure of complexity for any closed oriented 3-manifold \cite{Auckly1}.  The surgery number $S(Y)$ of such a $3$-manifold $Y$ is the minimum number of components of a framed link $L \subset S^3$ such that $Y$ is obtained by surgery on $L$. A framing of a link $L \subset S^3$ is specified by assigning an integer to each connected component of $L$. One may also perform more general Dehn surgeries on $L$, parameterized by a choice of rational surgery coefficient on each component. We call any such link a rationally-framed link. Replacing integral surgery with Dehn surgery in the definition of $S(Y)$ yields another topological invariant of $Y$, called the Dehn surgery number of $Y$ and denoted by $S_D(Y)$. It follows immediately from the definitions that $S(Y)$ is at least as large as $S_D(Y)$. 

There is a trivial algebraic lower bound: $S_D(Y)$ is bounded below by the least number of generators of $H_1(Y)$. A stronger, though less computable, lower bound is given by the weight of the fundamental group. There has been considerable interest in finding nontrivial lower bounds for $S_D(Y)$, especially in the case that $H_1(Y)$ is trivial, but it has been difficult to do so. It was first observed by \cite{GL} that every reducible integer homology sphere has $S_D(Y) \ge 2$. The first irreducible (and in fact atoroidal) example with surgery number at least $2$ was constructed by \cite{Auckly1}. Further integer homology sphere examples were produced in \cite{HKL,HL-surgery,Chen-HF-surgery} using Heegaard Floer homology, and in \cite{NST, IPT} using the Chern--Simons filtration. The first example of an integer homology sphere for which the weight of the fundamental group gives a nontrivial lower bound on the surgery number was given in \cite{ChenLodha}.
	
Relaxing the assumption on first homology, further examples with cyclic $H_1(Y)$ and $S_D(Y) \ge 2$ appeared in \cite{BL-lens-spaces,HKMP,SZ:surgery,LP-10/8,Azarpendar}, using obstructions coming from the Casson--Walker invariant, character varieties, Heegaard Floer homology, and Furuta's $10/8$-inequality. No examples have appeared thus far in the literature for which $H_1(Y)$ is cyclic and $S_D(Y)$ is known to be larger than $2$.

Auckly asked whether there are irreducible, atoroidal homology spheres with arbitrarily large surgery number. This appears as Problem 3.102(B) in Kirby's 1995 problem list \cite{Kirby}, and a variation appears as part of Problem 1.15(a) in the 2026 K3 problem list \cite{K3}. The main result of this paper is an affirmative answer to this question.

\begin{figure}
	\begin{center}
		\begin{tikzpicture}[scale=1]
			\usetikzlibrary{decorations.pathreplacing}
			
			\tikzset{
				knot/.style={
					line width=0.5mm,
					line cap=round,
					line join=round
				}
			}
			\draw[knot] (-0.2,-0.15) .. controls (2.4,0.9) and (2,-3.2) .. (-0.2,-1.4);
			\draw[knot] (0.3,-0.2) .. controls (-0.2,-2.2) and (-2.4,-1.8) .. (-1,-0.7);
			\draw[knot] (-0.45,-1.2) .. controls (-1.75,0.65) and (-0.35,1.3) .. (0, 0.9);
			\draw[knot] (-0.6,0.05) .. controls (-1, -0.25) and (-0.9, 0.16) .. (-0.78,0.23);
			\draw[knot] (0.3,0.55) .. controls (0.35,0.5) and (0.35,0.2) .. (0.35,0.15);
			
			{\draw[knot] (-0.45,0.49) .. controls (0.4, 1.3) and (2.2, 1.25) .. (2.7, 0.9);
			\draw[draw=white, line width=2mm] (1.1, 1.1) -- (2.1, 1.1);
			\draw[knot] (-0.6,0.05) .. controls (0.7, 1.1) and (1.65,1.12) .. (2.5,0.7);
			\draw[draw=white, line width=2mm] (1.1, 0.9) -- (2.1, 0.9);
			\node[scale=1.3] at (1.6, 1) {$\cdots$};

			\draw[decorate, decoration={brace,mirror,raise=1mm,amplitude=8pt}, line 				width=0.3mm] (-5, -2) -- (5,-2)
			node[midway,below=10pt] {$n$ copies};}
	
			\draw[knot] (-0.7,-0.5) -- (-0.45,-0.3);
			\draw[knot] (-0.45,-0.3) -- (-0.55,-0.07);
			\draw[knot] (-0.35, 0.08) -- (-0.2,-0.15);
			\draw[knot] (-0.65, 0.15) -- (-0.82,0.5);
			\draw[knot] (-0.45,0.25) -- (-0.65,0.7);
			\draw[knot] (-0.65,0.35) -- (-0.6, 0.39);
			\draw[knot] (0.16,0.78) -- (0.23,0.7);
			
			\draw[knot] (3.3,-0.15) .. controls (5.9,0.9) and (5.5,-3.2) .. (3.3,-1.4);
			\draw[knot] (3.8,-0.2) .. controls (3.3,-2.2) and (1.1,-1.8) .. (2.5,-0.7);
			\draw[knot] (3.05,-1.2) .. controls (1.75,0.65) and (3.15,1.3) .. (3.5, 0.9);
			\draw[knot] (2.9,0.05) .. controls (2.5, -0.25) and (2.6, 0.16) .. (2.72,0.23);
			\draw[knot] (3.8,0.55) .. controls (3.85,0.5) and (3.85,0.2) .. (3.85,0.15);
			\draw[knot] (3.05,0.49) .. controls (4.55,1.3) and (3.5,1.62) .. (3.5,1.6);
			\draw[knot] (2.9,0.05) .. controls (5.3, 1.4) and (3.55,1.82) .. (3.63,1.8);
			\draw[knot] (2.8,-0.5) -- (3.05,-0.3);
			\draw[knot] (3.05,-0.3) -- (2.95,-0.07);
			\draw[knot] (3.15, 0.08) -- (3.3,-0.15);
			\draw[knot] (2.85, 0.15) -- (2.68,0.5);
			\draw[knot] (3.05,0.25) -- (2.85,0.7);
			\draw[knot] (2.85,0.35) -- (2.9, 0.39);
			\draw[knot] (3.66,0.78) -- (3.73,0.7);
			
			\draw[knot] (-3.7,-0.15) .. controls (-1.1,0.9) and (-1.5,-3.2) .. (-3.7,-1.4);
			\draw[knot] (-3.2,-0.2) .. controls (-3.7,-2.2) and (-5.9,-1.8) .. (-4.5,-0.7);
			\draw[knot] (-3.95,-1.2) .. controls (-5.25,0.65) and (-3.85,1.3) .. (-3.5, 0.9);
			\draw[knot] (-4.1,0.05) .. controls (-4.5, -0.25) and (-4.4, 0.16) .. (-4.28,0.23);
			\draw[knot] (-3.2,0.55) .. controls (-3.15,0.5) and (-3.15,0.2) .. (-3.15,0.15);
			\draw[knot] (-3.95,0.49) .. controls (-3.1, 1.3) and (-1.3, 1.25) .. (-0.8, 0.9);
			\draw[knot] (-4.1,0.05) .. controls (-2.8, 1.1) and (-1.85,1.12) .. (-1,0.7);
			\draw[knot] (-4.25, 0.95) .. controls (-4.5,1.5) and (-4,1.62) .. (-4,1.6);
			\draw[knot] (-4.45,0.75) .. controls (-4.8,1.5) and (-4.3,1.83) .. (-4.1,1.8);
			\draw[knot] (-4.2,-0.5) -- (-3.95,-0.3);
			\draw[knot] (-3.95,-0.3) -- (-4.05,-0.07);
			\draw[knot] (-3.85, 0.08) -- (-3.7,-0.15);
			\draw[knot] (-4.15, 0.15) -- (-4.3,0.45);
			\draw[knot] (-3.95,0.25) -- (-4.15,0.7);
			\draw[knot] (-4.15,0.35) -- (-4.1, 0.39);
			\draw[knot] (-3.34,0.78) -- (-3.27,0.7);
			
			\draw[knot] (-0.7,2.1) -- (-0.7, 1.3);
			\draw[knot] (0.1,2.1) -- (0.1,1.3);
			\draw[knot] (-0.7,1.3) -- (0.1,1.3);
			\draw[knot] (-0.7,2.1) -- (0.1,2.1);
			\draw[knot] (-4.1,1.8) -- (-0.7,1.8);
			\draw[knot] (-4,1.6) -- (-0.7,1.6);
			\draw[knot] (3.5,1.6) -- (0.1,1.6);
			\draw[knot] (3.62,1.8) -- (0.1,1.8);
			
			\node at (-0.3, 1.7) {$k$};
		\end{tikzpicture}
	\end{center}
	\vspace{-5mm}
	\caption{The link $L_{n,k}$ in $S^3$. The box indicates $k$ full twists.} \label{Lnk}
\end{figure}

\begin{theorem-intro}\label{intro1}
	Let $\#^n P$ be the connected sum of $n \ge 0$ copies of the Poincar\'e homology sphere $P$. Then 
	\[
	  S(\#^n P) = S_D(\#^n P) = n.
	\]
	Furthermore, for any $n \ge 1$ and $k \in \mathbb Z$, the manifold $Y_{n,k}$ given by $(-1)$-surgery on each component of the link $L_{n,k}$ of Figure \ref{Lnk} is a hyperbolic integer homology sphere with \[S(Y_{n,k}) = S_D(Y_{n,k}) = n.\]
\end{theorem-intro}
Auckly also asked whether the connected sum of $n$ nontrivial integer homology spheres has Dehn surgery number at least $n$ \cite[Problem 3.102A]{Kirby}. The first part of Theorem~\ref{intro1} shows this is at least true for the case that these integer homology spheres are $P$; see also Corollary \ref{surgery-bound-Brieskorn}.

We give two proofs of Theorem~\ref{intro1}, both using the invariant $q_3$ of integer homology spheres, introduced by Kim Fr{\o}yshov in \cite{Fr:q2}. The first proof, presented in Section~\ref{background-proof-1}, is a geodesic route to the result. The second proof is obtained by establishing more general properties of $q_3$. We make the observation that the integer-valued invariant $q_3$ can be generalized to $\Bbb F_2$-homology spheres, i.e. closed $3$-manifolds whose homology with coefficients in the field $\mathbb{F}_2 = \mathbb{Z}/2\mathbb{Z}$ is the same as that of $S^3$. More importantly, we establish the following properties of this extension.

\begin{theorem-intro}\label{main-thm}
	The invariant $q_3(Y)$ of $\Bbb F_2$-homology spheres satisfies the following properties. 
\begin{enumerate}[label=(\roman*), ref=(\roman*)]
	\item\label{main-thm-i} If $-Y$ denotes the orientation-reversal, we have $q_3(-Y) = -q_3(Y)$.
	\item\label{main-thm-ii} If $W:Y\to Y'$ is a cobordism between $\Bbb F_2$-homology spheres such that $H_1(W;\Bbb Z)$ has no 2-torsion, then 
	\begin{equation}\label{b+-ineq}
		-b^+(W) \le q_3(Y')-q_3(Y) \le b^-(W).
	\end{equation}	
	\item\label{main-thm-iii} If $Y$ is an integer homology sphere and $P$ is the Poincar\'e homology sphere, then  \[q_3(Y \# P) = q_3(Y) + 1.\]
\end{enumerate}
\end{theorem-intro}

It follows from \eqref{b+-ineq} that, like Fr{\o}yshov's $h$-invariant \cite{Froyshov} or the $d$-invariant in Heegaard Floer homology \cite{OzSz}, the invariant $q_3$ is nondecreasing under negative-definite cobordisms. In particular, $q_3$ is an $\mathbb F_2$-homology cobordism invariant. However, in contrast to the inequality \eqref{b+-ineq}, the difference $d(Y') - d(Y)$ can be an arbitrary even integer when $b^+(W) = b^-(W) = 1$; a similar statement is true for $h$.

\begin{remarkintro}
Both the invariant $q_3(Y)$ and the properties stated in Theorem~\ref{main-thm}\ref{main-thm-i}-\ref{main-thm-ii} admit generalizations to rational homology spheres. However, the invariant should no longer be understood as an integer, but rather an integer-valued function on nonzero elements of the group ring $\mathbb F_2[H^1(Y;\mathbb F_2)]$. The definition is given in Section \ref{subsec:QHS-Scpx} using the machinery of Section \ref{sec:equivariant-homology-groups}. The generalization of Theorem \ref{main-thm}\ref{main-thm-i} is Proposition \ref{prop:J-3mfd-duality}, and the generalization of Theorem \ref{main-thm}\ref{main-thm-ii} is Theorem \ref{thm:main-ineq-QHS}.
\end{remarkintro}

\begin{remarkintro}\label{rmk:additivity}
A statement more general than Theorem~\ref{main-thm}\ref{main-thm-iii} is true: if $Y, Y'$ are integer homology spheres and $Y'$ is homology cobordant to any connected sum of Seifert spaces and manifolds with Dehn surgery number one, then $q_3(Y \# Y') = q_3(Y) + q_3(Y')$. This generalization will appear in forthcoming joint work by the first and third authors and Christopher Scaduto \cite{DMES}, studying in detail the cellular instanton invariants discussed in this paper and proving a connected-sum theorem. It is not known whether $q_3$ is additive for arbitrary integer homology spheres.
\end{remarkintro}

Because the trace of an integral surgery presentation for $Y$ is a simply-connected $4$-manifold with boundary $Y$ and second Betti number equal to the number of components of the link, it follows immediately from \eqref{b+-ineq} that $|q_3(Y)|$ is a lower bound for the surgery number $S(Y)$. This estimate can be improved.

\begin{cor-intro}\label{surgery-number-bound}
	For any $\Bbb F_2$-homology sphere $Y$, we have
	\[  
	|q_3(Y)| \le S_D(Y).
	\]
\end{cor-intro}

In preparation for the proof, we make the following definitions. First, if $L \subseteq S^3$ is any rationally-framed link, its linking matrix $M_L$ has coefficients $a_{ij} = \text{link}(L_i, L_j)$ for $i \ne j$, and $a_{ii}$ equal to the rational framing of $L_i$.  Second, if $M$ is a symmetric matrix, $n_\pm(M)$ is the number of eigenvalues of $M$ of each sign.

\begin{proof}
	Suppose $L$ is a rationally-framed link whose Dehn surgery is equal to $Y$. We will construct a $2$-handle cobordism $W: S^3 \to Y$ for which $b^-(W) = n_-(M_L)$. Then \eqref{b+-ineq} implies that 
	\[
	q_3(Y) \leq b^-(W) = n_-(M_L) \leq \# \pi_0(L).
	\] 
	Minimizing over all rational surgery presentations for $Y$ gives $q_3(Y) \le S_D(Y)$. The same argument applied to $-Y$ yields the inequality $q_3(-Y) \leq S_D(-Y)$. Because $S_D(-Y) = S_D(Y)$, this inequality together with Theorem \ref{main-thm}\ref{main-thm-i} implies $-q_3(Y) \le S_D(Y)$.
	
	The cobordism $W$ is given by the trace of an integrally-framed link $L'$ obtained from $L$ by a sequence of reverse slam-dunk moves, so in particular $L$ and $L'$ have the same surgery. If $r$ is the surgery coefficient of a component $K$ of the link $L$, let 
	\[  
	r = a_1-\frac{1}{a_2-\dfrac{1}{a_3-\dfrac{1}{\cdots-\dfrac{1}{a_n}}}}
	\]
	be the Hirzebruch--Jung continued fraction for $r$, so $a_1 = \lfloor r\rfloor$ and $a_j \ge 2$ for $2\le j\le n$. Using a sequence of reverse slam-dunk moves, $r$-surgery along $K$ is equivalent to 
	integral surgery along $K$ and a chain of $n-1$ unknots linked with $K$, where the surgery coefficient of $K$ is $a_1$ and the surgery coefficient of the $j^{\rm th}$ unknot in 
	the chain is $a_{j+1}$. Adding such a chain of unknots for each component of $L$ and setting the surgery coefficients as above determines the integrally framed link $L'$. The $4$-manifold $W$, given by gluing 2-handles to $B^4$ along the components of $L'$ with specified framing, has intersection form equal to the linking matrix $M_{L'}$. It remains to show that the number of negative eigenvalues of $M_L$ and $M_{L'}$ agree, for which it suffices to show that this is the case when performing a single reverse slam-dunk move which introduces a meridian of positive framing.
	
	Suppose $J$ is a rationally-framed link whose component $J_1$ has framing $a=b-1/c$, with $b$ an integer and $c>0$. Then the reverse slam-dunk move produces a link $J'=J \cup \mu$, for which the framing of $\mu$ and $J_1$ are $c$ and $b$, respectively. Let $V_\pm$ be maximal definite subspaces for the linking form $M_J$. It is straightforward to compute that $V_-$ and $\text{span}(e_\mu) + V_+$ are complementary definite subspaces of $M_{J'}$, where $e_\mu$ is the basis vector corresponding to the component $\mu$ of $J'$, and in particular each is of maximal dimension. This implies that $n_-(M_J) = n_-(M_{J'})$, as desired.
\end{proof}

The preceding argument shows that $q_3(Y)$ bounds the following more refined quantity.

\begin{defnintro}
	The \textit{signed surgery numbers of $Y$} are defined as \[S_\pm(Y) = \min \{n_\pm(M_L) : L \text{ is a rationally-framed link with Dehn surgery } Y\}.\qedhere\]
\end{defnintro}

To be precise, the proof of Corollary \ref{surgery-number-bound} shows that the signed surgery numbers of $Y$ are the same if we instead minimize over the class of integrally-framed links. Therefore, the inequality \eqref{b+-ineq} implies the following result.

\begin{cor-intro}\label{cor:signed-surgery-bound}
	For any $\Bbb F_2$-homology sphere $Y$, we have 
	\[
	-S_+(Y) \le q_3(Y) \le S_-(Y).
	\]
\end{cor-intro}

The monotonicity of $q_3$ can be used to provide additional examples for which the Dehn surgery of an $n$-fold connected sum is at least $n$, which could be considered as evidence toward a positive resolution to Auckly's question \cite[Problem 3.102A]{Kirby}. 

\begin{cor-intro} \label{surgery-bound-Brieskorn}
	Suppose $Y = Y_1\#\cdots\#Y_n$, where each $Y_i$ is one of the Brieskorn homology spheres $\Sigma(p,q,pqk-1)$. Then $S_-(Y) = S_D(Y) = n$.
\end{cor-intro}
\begin{proof}
	We show that $n \le q_3(Y) \le S_-(Y) \le S_D(Y) \le n$. The second inequality is Corollary~\ref{cor:signed-surgery-bound}, the third inequality is obvious, and the fourth inequality follows because $\Sigma(p,q,pqk-1)$ is $-1/k$ surgery on the left-handed $(p,q)$-torus knot. 
	
	For each coprime pair $(p,q)$ and each integer $k \ge 1$ there exists a negative-definite simply-connected cobordism $W: P \to \Sigma(p,q,pqk-1)$; more details appear in Example \ref{ex:Brieskorn}. It follows that there is a negative-definite simply-connected cobordism $W: \#^n P \to Y$. By Theorem~\ref{intro1} and Theorem~\ref{main-thm}\ref{main-thm-ii}, we have $n \le q_3(Y)$.
\end{proof}

\begin{remarkintro}
	Together with the generalized additivity result discussed in Remark \ref{rmk:additivity}, a stronger result holds with a similar argument: if $Y = Y_1 \# \cdots \# Y_n$ is an integer homology sphere with $q_3(Y_i) = S_D(Y_i) = 1$ for each $i$, then $q_3(Y) = S_-(Y) = S_D(Y) = n$. It is established in \cite{GME} that there is a homomorphism $M: \mathcal C \to 4\mathbb Z$ on the concordance group such that $q_3(S^3_{-1/n}(K)) = 1$ for all $n > 0$ if and only if $M(K) < 0$. In some sense, half of all knots satisfy the stated condition. 
\end{remarkintro}

\begin{remarkintro}
	Any Seifert--fibered integer homology sphere $\Sigma$ satisfies $|q_3(\Sigma)| \le 1$. This is a straightforward consequence of the definition of $q_3$ and is established in Example \ref{ex:SFS-q3-small}. This implies that $q_3$ cannot be used to provide nontrivial lower bounds on the surgery number of Seifert-fibered spaces. In fact, no invariant satisfying the inequality \eqref{b+-ineq} can do so, because every Seifert-fibered space has $S_\pm(Y) \le 1$. To see that $S_-(Y) \le 1$, observe that every Seifert--fibered space can be written as surgery on a core circle with some integer framing $b$ and a collection of meridians framed by the Seifert data $b_i/a_i$, which can all be chosen positive. It is straightforward to verify that this link has $n_-(M_L) \le 1$. 
\end{remarkintro}

\subsubsection*{Outline} 
\indent In Section~\ref{background-proof-1}, we present the first proof of Theorem~\ref{intro1}. Section~\ref{Fro-approach} outlines Fr{\o}yshov's definition of $q_3(Y)$, and Section~\ref{sec:weak-ineq} establishes \eqref{b+-ineq} in the special case that one of $Y$ or $Y'$ is $S^3$, the other is an integer homology sphere, and $W$ is a $2$-handle cobordism. This is enough to prove Corollary~\ref{surgery-number-bound} in the case that $Y$ is an integer homology sphere. Section~\ref{connected-sum-P} establishes that $q_3(Y)$ is \textit{superadditive} with respect to connected sum with the Poincar\'e sphere, from which Corollary~\ref{surgery-bound-Brieskorn} follows. Section~\ref{sec:hyperbolic-example} completes the proof of Theorem~\ref{intro1}.

In Section~\ref{sec:S-complexes} we explain the language of $\mathcal S$-complexes, introduced in \cite{DS1}. These are used to reconceptualize the invariant $q_3(Y)$ and its basic properties.

In Section~\ref{cellular-model} we give the background needed to extend the definition of $q_3(Y)$ to rational homology spheres. The material here is based on part of the forthcoming book \cite{DMES}, but the relevant portions are presented in a self-contained manner. The definition of $q_3(Y)$ for rational homology spheres is given in Section~\ref{subsec:QHS-Scpx}.

In Section~\ref{sec:susp} we present the proof of \eqref{b+-ineq} in the full generality of rational homology spheres, which is used to complete the proof of Theorem~\ref{main-thm}\ref{main-thm-iii}. The proof presented in Section~\ref{sec:susp} is different in character from that of Section~\ref{background-proof-1}, which is based on the surgery exact triangle in instanton Floer homology. Instead, we present a general construction of cobordism maps for cobordisms with $b^+(W) > 0$, using the technique of \textit{suspension} developed in \cite{DME1}. 

\subsubsection*{History}
In 2018, Kim Fr{\o}yshov gave several talks on the invariants $q_2(Y)$ and $q_3(Y)$. Around the same time, the first and third authors developed the notion of \emph{abelian suspension} \cite{DME1} in work to prove the invariance of equivariant instanton homology. This led to the notion of \emph{central suspension} discussed in Section~\ref{sec:susp}, which can be used to prove \eqref{b+-ineq} and resolve the surgery number problem, announced in late 2022. At the same time, they collaborated with Chris Scaduto on the cellular model of Section~\ref{cellular-model}, and the connected-sum theorem used to prove the result in Remark~\ref{rmk:additivity}. Dissemination of this work was delayed until the completion of \cite{DMES}; the first and third authors regret the lengthy wait.

During this period, Fr{\o}yshov's work appeared as \cite{Fr:q2}. Studying this preprint led the second author to an independent proof of Theorem~\ref{intro1}, also using $q_3$. The two approaches offer complementary advantages. The argument of the first and third authors leads to the generalization to rational homology spheres, the inequality \eqref{b+-ineq}, and the general additivity result of Remark~\ref{rmk:additivity}. The second author's approach is more direct and substantially simplifies the proof of the additivity in Theorem~\ref{main-thm}. The authors became aware of each other's work in late 2025.

We have decided to present a variation on both arguments in this paper, in the hope that both are of independent value. This paper would not exist in its present form without the equal contribution of all three authors, nor without the foundational joint work with Chris Scaduto.\\

\noindent\textit{Acknowledgements.} The authors appreciate Kim Fr\o yshov's responses to emails on this subject, in which he expressed that he anticipated the inequality \eqref{b+-ineq} for $Y = S^3$, as well as an additivity result similar to the one mentioned in Remark~\ref{rmk:additivity}. The authors thank Kyle Hayden for suggesting a construction of integer homology spheres similar to the family $Y_{n,k}$. XL would like to thank his advisor Yi Xie for introducing him to the world of gauge theory. AD and MME thank Tye Lidman for bringing this problem to their attention. AD and MME also thank Chris Scaduto for many conversations, which have been influential on their understanding of this subject. The authors thank Chris Scaduto for comments on a draft of the paper. 


\section{Surgery number bounds for integer homology spheres}\label{background-proof-1}

The invariant $q_3$ is defined using instanton homology with coefficients in the field $\F$. There are at least two approaches to setting up this theory. In Section \ref{Fro-approach}, we review Fr{\o}yshov's perspective, as presented in \cite{Fr:q2}, which is restricted to integer homology spheres. 
Section \ref{sec:weak-ineq} is devoted to the proof of \eqref{b+-ineq} for a cobordism $W:Y\to Y'$ under a restrictive assumption on $Y$, which is satisfied when $Y=S^3$. In particular, this version of \eqref{b+-ineq} is still strong enough to prove the surgery number bound in Corollary \ref{surgery-number-bound}. The additivity formula Theorem \ref{main-thm}\ref{main-thm-iii} can be separated into two inequalities, and in Section \ref{connected-sum-P} we establish one of the two inequalities. Using these ingredients, in Section \ref{sec:hyperbolic-example} we complete the proof of Theorem \ref{intro1}.

\subsection{Mod 2 instanton homology \`a la Fr{\o}yshov}\label{Fro-approach}
Let $Y$ be a connected oriented closed 3-manifold. The {\it Chern--Simons functional} of any connection $A$ on the trivial $SU(2)$-bundle over $Y$ is defined as 
\vspace{-5pt}
\begin{equation} \label{CS}
	CS(A)=-\frac{1}{8\pi^2}\int_Y \tr(A\wedge dA+\frac{2}{3}A\wedge A\wedge A).
\end{equation} 
Any map $u:Y\to SU(2)$ determines an automorphism of the trivial $SU(2)$-bundle on $Y$, and we may pull back $A$ to obtain another connection $u^*A$. The difference between values of the Chern--Simons functional at $u^*A$ and $A$ is equal to the degree of $u$ as a map between closed oriented 3-manifolds. In particular, if $\mathcal A(Y)$ denotes the space of all $SU(2)$ connections on $Y$ and $\mathcal G(Y)$ denotes the gauge group of maps $u:Y\to SU(2)$, then $CS$ induces an $\mathbb R/\mathbb Z$-valued functional on the configuration space ${\mathcal B}(Y) = {\mathcal A}(Y)/{\mathcal G}(Y)$, which is still denoted by $CS$. 

For an integer homology sphere $Y$, the instanton homology group of $Y$ is defined by applying Morse homological methods to this functional. For any such $Y$, the trivial connection always represents a critical point of $CS:{\mathcal B}(Y) \to \mathbb R/\mathbb Z$ and any other critical point is irreducible. A similar claim holds even after a small perturbation of $CS$; we assume that a perturbation is fixed such that all critical points of the perturbed Chern--Simons functional are nondegenerate. We write $\mathfrak C(Y)$ (resp. $\mathfrak C^{\rm irr}(Y)$) for the set of critical points (resp. nontrivial critical points) of the perturbed Chern--Simons functional. We also write $\theta$ for the element of $\mathfrak C(Y)$ represented by the trivial connection. 

After fixing a Riemannian metric on $Y$, we may form the gradient of $CS$ with respect to the induced metric on $\mathcal B(Y)$. The gradient flow equation of the perturbed Chern--Simons functional can be identified with a perturbation of the ASD equation for connections on the cylinder $\Bbb R\times Y$. Recall that for $SU(2)$-connections on a Riemannian $4$-manifold, the ASD equation is defined by $* F_A = -F_A$; that is, the Hodge star should take the curvature to its negative. Here the ASD equation is defined with respect to the product metric on $\Bbb R\times Y$ induced by the fixed metric on $Y$ and the standard metric on $\mathbb R$. For any pair $\alpha$, $\beta\in \mathfrak C(Y)$, we write $M(Y;\alpha,\beta)$ for the gauge equivalence classes of all solutions of this perturbed ASD equation that are asymptotic to $\alpha$ and $\beta$ on the incoming and the outgoing ends of the cylinder $\Bbb R\times Y$. We choose the perturbation of the Chern--Simons functional such that the moduli spaces $M(Y;\alpha,\beta)$ are cut out transversely. Translation of any element of $M(Y;\alpha,\beta)$ in the $\Bbb R$ direction represents another element of $M(Y;\alpha,\beta)$, and we write $\breve M(Y;\alpha,\beta)$ for the quotient of the free orbits of this action. This action is free at an element of $M(Y;\alpha,\beta)$ unless $\alpha=\beta$ and this element of the moduli space is the pullback of $\alpha=\beta$ to the cylinder. There is a $\Bbb Z/8\Bbb Z$-grading on  $\mathfrak C(Y)$ that determines the mod $8$ values of the dimension of the moduli spaces $\breve M(Y;\alpha,\beta)$:
\begin{equation}\label{dim-mod-8}
  \dim\left(\breve M(Y;\alpha,\beta)\right)\equiv \left\{\begin{array}{ll}|\alpha|-|\beta|-1 & \text{$\alpha$ is irreducible}\\
  |\alpha|-|\beta|-4 & \alpha=\theta \end{array}\right. \hspace{1cm} \mod 8.
\end{equation}
Here $|\alpha|$ denotes the grading of $\alpha$. We write $\breve M(Y;\alpha,\beta)_d$ for the $d$-dimensional component of $\breve M(Y;\alpha,\beta)$, which is nonempty only if the mod $8$ value of $d$ satisfies the relation in \eqref{dim-mod-8}. A similar convention is used for the parameterized moduli spaces $M(Y; \alpha, \beta)_d$, such that $M(Y; \alpha, \beta)_d/\mathbb R = \breve M(Y; \alpha, \beta)_{d-1}$ for $d > 0$.

The instanton homology group $I(Y)$ is the homology of a chain complex $(C(Y),d_1)$, where $C(Y)$ is the vector space over $\F$ generated by $\mathfrak C^{\rm irr}(Y)$, and the differential $d_1$ acts on a generator $\alpha$ of $C(Y)$ as 
\begin{equation}\label{diff}
  d_1\alpha=\sum_{\beta\in \mathfrak C^{\rm irr}(Y)}\#\breve M(Y;\alpha,\beta)_0 \cdot \beta,
\end{equation}
where the sum is over all $\beta\in \mathfrak C^{\rm irr}(Y)$ with $|\beta|=|\alpha|-1$, and $\#\breve M(Y;\alpha,\beta)_0$ is the number of the elements of the $0$-dimensional space $\breve M(Y;\alpha,\beta)_0$. This defines a differential of degree $-1$, i.e., $d_1$ maps the summand $C_i(Y)$ of $C(Y)$ generated by the elements of degree $i$ in $\mathfrak C^{\rm irr}(Y)$ to $C_{i-1}(Y)$. In particular, we adopt the homological convention for instanton homology, which differs from that in \cite{Fr:q2}, where the cohomological convention is used. We use the convention that an operator with subscript $k$ has degree $-k$. 

There are two homomorphisms that are defined using moduli spaces involving the trivial connection. Define
\[
  \delta_1:C(Y) \to \F,\hspace{1cm}\delta_4: \F \to C(Y),
\]
as
\[
  \delta_1(\alpha)=\#\breve M(Y;\alpha,\theta)_0,\hspace{1cm}\delta_4(1)=\sum_{\beta}\#\breve M(Y;\theta,\beta)_0\cdot \beta
\]
These operators satisfy $\delta_1d_1=0$ and $d_1\delta_4=0$. In particular, they induce homomorphisms $\delta_1:I(Y)\to \F$, which is nontrivial only in degree $1$, and $\delta_4:\F\to I(Y)$, which takes values in degree $4$.

There are several other operators acting on $C(Y)$ that are defined using the {\it basepoint fibration}. Given a basepoint $y\in Y$, we obtain an action of $\mathcal G(Y)$ on the Lie algebra $\su(2)$ of $SU(2)$ by first restricting a gauge transformation $u:Y\to SU(2)$ to $y$ and then applying the adjoint action of $SU(2)$ on its Lie algebra. Let $\mathcal A^*(Y)$ be the subspace of $\mathcal A(Y)$ given by irreducible connections. The group $\mathcal G(Y)$ acts on the trivial vector bundle $\su(2)\times \mathcal A^*(Y)$ over $\mathcal A^*(Y)$ and the stabilizer of any point in this bundle or its base is $\{\pm 1\}$. Passing to the quotient determines a rank three vector bundle $\mathbb E$ over $\mathcal B^*(Y)$, the quotient of $\mathcal A^*(Y)$ with respect to the action of $\mathcal G(Y)$. 
A chain level version of taking cap product with cohomology classes $w_2(\mathbb E)$, $w_3(\mathbb E)$ of $\mathcal B^*(Y)$ gives rise to the operators 
\[
  d_2:C(Y) \to C(Y),\hspace{1cm} d_3:C(Y) \to C(Y).
\] 
More precisely, fix generic sections $s_0$, $s_1$ and $s_2$ of $\mathbb E$, and for any $\alpha,\beta\in \mathfrak C^{\rm irr}(Y)$ form the following subspaces of $M(Y;\alpha,\beta)$:
\begin{align*}
  M^{(2)}(Y;\alpha,\beta)_d&=\{[A] \in M(Y;\alpha,\beta)_{d+2} \mid s_1(A|_{\{0\}\times Y})\wedge s_2(A|_{\{0\}\times Y})=0\},\\
  M^{(3)}(Y;\alpha,\beta)_d&=\{[A] \in M(Y;\alpha,\beta)_{d+3} \mid s_0(A|_{\{0\}\times Y})=0\},
\end{align*}
where the notation $v\wedge w=0$ means that $v$ and $w$ are linearly dependent vectors of a vector space.
For $i=2, 3$, let $d_i:C(Y) \to C(Y)$ be the linear operator defined as
\begin{equation}\label{def-di}
  d_i\alpha=\sum_{\beta\in \mathfrak C^{\rm irr}(Y)}\# M^{(i)}(Y;\alpha,\beta)_0 \cdot \beta
\end{equation}
These operators satisfy
\begin{equation}\label{di-rel}
	d_id_1+d_1d_i=0,
\end{equation}
as established in \cite[Proposition 6.1]{Fr:q2}. This relation follows from analyzing the ends of the $1$-dimensional manifold $M^{(i)}(Y;\alpha,\beta)_1$. In fact, this space can be compactified into a manifold with boundary components
\[
  \breve M(Y;\alpha,\gamma)_0\times M^{(i)}(Y;\gamma,\beta)_0, \hspace{1cm}  M^{(i)}(Y;\alpha,\gamma)_0\times \breve M(Y;\gamma,\beta)_0.
\]
The number of boundary components of the first type (resp. second type) is equal to the coefficient of $\beta$ in $d_id_1(\alpha)$ (resp. $d_1d_i(\alpha)$). This implies \eqref{di-rel}. The chain map relation \eqref{di-rel} implies that $d_i$ induces an operator acting on $I(Y)$, which is denoted by $u_i$ following \cite{Fr:q2}. 

There is yet another operator $d_4:C(Y) \to C(Y)$ that measures the extent to which the operators $u_2$ and $u_3$ commute. First define 
\begin{align*}
	M^{(2,3)}(Y;\alpha,\beta)_d=\{(t,[A]) \in \mathbb R\times M(Y;\alpha,\beta)_{d+4}&\mid  s_0(A|_{\{-t\}\times Y}) = 0,\\
	&\,\,\,s_1(A|_{\{t\}\times Y})\wedge s_2(A|_{\{t\}\times Y})=0\}.
\end{align*}
The operator $d_4$ is obtained by replacing $M^{(i)}(Y;\alpha,\beta)_0$ with $M^{(2,3)}(Y;\alpha,\beta)_0$ in \eqref{def-di}. This operator satisfies  
\begin{equation}\label{d4-rel}
  d_1d_4+d_4d_1+d_2d_3+d_3d_2+\delta_4\delta_1=0,
\end{equation}
as established in \cite[Proposition 6.7]{Fr:q2}. In particular, this implies that the operators $u_2$, $u_3$ commute up to the term given by $\delta_4\delta_1$. As with the previous relations, \eqref{d4-rel} is proved by analyzing the ends of a 1-dimensional moduli space. The 1-dimensional moduli space $M^{(2,3)}(Y;\alpha,\beta)_1$ has ends of the form 
\[
  \breve M(Y;\alpha,\gamma)_0\times M^{(2,3)}(Y;\gamma,\beta)_0, \hspace{1cm}  M^{(2,3)}(Y;\alpha,\gamma)_0\times \breve M(Y;\gamma,\beta)_0,
\]
giving the terms $d_1d_4$ and $d_4d_1$, the ends of the form 
\[
  M^{(2)}(Y;\alpha,\gamma)_0\times M^{(3)}(Y;\gamma,\beta)_0, \hspace{1cm}  M^{(3)}(Y;\alpha,\gamma)_0\times M^{(2)}(Y;\gamma,\beta)_0,
\]
giving the terms $d_2d_3$ and $d_3d_2$. The final term of \eqref{d4-rel} is somewhat less transparent. The remaining ends of $\mathbb R \times M(Y; \alpha, \beta)_5$ can be identified with
\[\mathbb R^2 \times SO(3)\times\breve M(Y;\alpha,\theta)_0\times \breve M(Y;\theta,\beta)_0.\]
A model computation for $\mathbb R^2 \times SO(3)$ \cite[Claim 11.1]{Fr:q2} shows that there is an odd number of points for which $s_0 = 0$ and $s_1 \wedge s_2 = 0$, such that the corresponding end of $M^{(2,3)}(Y; \alpha, \beta)_1$ has the same number of points modulo $2$ as $\breve M(Y;\alpha,\theta)_0\times \breve M(Y;\theta,\beta)_0$, giving the term $\delta_4 \delta_1$. 

Now we are ready to define the integer valued invariant $q_3(Y)$, which will play the central role in this paper. 
\begin{definition}If $Y$ is an integer homology sphere, $q_3(Y)$ is the unique integer satisfying the following. For a positive integer $k$, we have $q_3(Y)\geq k$ if and only if there are $\zeta, \eta \in C(Y)$ such that 
\begin{equation}\label{q3-pos-def-ch-cx}
  d_1\zeta=0, \hspace{.5cm} d_2\zeta=d_1\eta, \hspace{.5 cm}\delta_1d_3^{k-1}\zeta \neq 0,\hspace{.5cm } \delta_1d_3^i\zeta = 0 \ \ \text{for }0\le i\le k-2, 
\end{equation}
and for a nonpositive integer $k$, we have $q_3(Y)\geq k$ if and only if there are $a_0, a_1, \dots, a_{-k}\in\mathbb F_2$ and $\zeta, \eta \in C(Y)$ satisfying 
\begin{equation}\label{q3-neg-def-ch-cx}
  \sum_{i=0}^{-k}d_3^i\delta_4(a_i) = d_1\zeta+d_2\eta,\hspace{.5 cm} d_1 \eta = 0, \hspace{.5cm} a_{-k}\neq 0.
\end{equation}
It is straightforward to check that these inequalities are compatible. 
\end{definition}
Since $C(Y)$ is finite dimensional, $q_3(Y)$ is a finite integer, determined uniquely by these inequalities. 
The relations in \eqref{q3-pos-def-ch-cx} are equivalent to the existence of $[\zeta]\in I(Y)$ such that 
\[
  u_2([\zeta])=0, \hspace{.5cm} \delta_1u_3^{k-1}([\zeta]) \neq 0,\hspace{.5cm } \delta_1u_3^i([\zeta]) = 0 \ \ \text{for }0\le i\le k-2, 
\]
and the relations in \eqref{q3-neg-def-ch-cx} are equivalent to the existence of $a_0, a_1, \dots, a_{-k}\in\mathbb F_2$ satisfying
\[
  \sum_{i=0}^{-k}u_3^i\delta_4(a_i) \in \im(u_2), \hspace{0.5cm} a_{-k}\neq 0.
\]
A slightly more conceptual treatment of $q_3$ will be given in Section \ref{sec:equivariant-homology-groups}. Switching the roles of $d_2$ and $d_3$ in the definition of $q_3$ gives the definition of the invariant $q_2$, which is studied extensively in \cite{Fr:q2}. 

\begin{example}\label{ex:SFS-q3-small}
		If $\Sigma$ is a Seifert-fibered integer homology sphere, its Chern--Simons functional is Morse--Bott, and its critical manifolds carry Morse functions with only even critical points \cite{SFS-1, SFS-2}. Therefore, the instanton Floer homology $I_k(Y)$ is supported in only one degree mod $2$. Because $u_3$ has odd degree, the characterization of $q_3$ given above implies $|q_3(\Sigma)| \le 1$. 
\end{example}

It is straightforward to check how changing the orientation of $Y$ affects the instanton Floer homology of $Y$. The chain group $C_i(-Y)$ is isomorphic to \[C_{i}(Y)^{\dagger}={\rm Hom}(C_{-i-3}(Y), \F).\]
With respect to this identification, the operator $d_i$ for $-Y$ is equal to the adjoint of the operator $d_i$ for $Y$, the operator $\delta_1$ for $-Y$ is equal to the adjoint of the operator $\delta_4$ for $Y$, and the operator $\delta_4$ for $-Y$ is equal to the adjoint of the operator $\delta_1$ for $Y$. It follows that $q_3(-Y) = -q_3(Y)$. 

\begin{example}\label{ex:Poincare}
	The 3-dimensional sphere $S^3$ does not admit any irreducible flat $SU(2)$ connection, so $I(S^3)$ is trivial and $q_3(S^3) = 0$.
	The Poincar\'e homology sphere $P$ has two nondegenerate irreducible flat $SU(2)$-connections $\beta_0$, $\beta_1$ with gradings $1$ and $5$. 
	In particular, the operators $d_1$, $d_2$ and $d_3$ are trivial. Furthermore, we have $\delta_1(\beta_0)=1$. It follows that $q_3(P)=1$, and the above discussion implies that $q_3(-P)=-1$.
\end{example}

It is observed in \cite[Proposition 6.5]{Fr:q2} that $u_2$ and $u_3$ are both nilpotent operators acting on $I(Y)$. In particular, if $n_3(Y)$ is the nilpotency degree of $u_3$, then it is clear from the definition of $q_3$ that
\begin{equation}\label{nilp-q3-ineq}
  -n_3(Y)\leq q_3(Y)\leq n_3(Y).
\end{equation}
Here we use the convention that $n_3(Y)=0$ if $I(Y)$ is trivial.
\begin{definition}\label{q3-minimal}
	An integer homology sphere $Y$ is {\it $q_3$-minimal} (resp. {\it $q_3$-maximal}) if $q_3(Y)=-n_3(Y)$ (resp. $q_3(Y)=n_3(Y)$).
\end{definition} 
\noindent
This definition will be important in the next section. Clearly $Y$ is $q_3$-minimal only if $q_3(Y)\le 0$ and $q_3$-maximal only if $q_3(Y)\geq 0$. An integer homology sphere $Y$ is $q_3$-minimal if and only if $-Y$ is $q_3$-maximal. The Poincar\'e sphere is $q_3$-maximal, and $-P$ is $q_3$-minimal, while $Y$ is both $q_3$-maximal and $q_3$-minimal if and only if $I(Y) = 0$. This holds for $Y = S^3$, and no other homology sphere is expected to have this property; compare \cite[Problem 3.50]{K3}.

Instanton Floer homology $I(Y)$ is functorial with respect to cobordisms and this functoriality is compatible with the operators $u_2$ and $u_3$. We review this additional structure in the special case of cobordisms $W:Y\to Y'$ with $b^+(W) = 0$ and $H_1(W;\mathbb Z) = 0$, in which some of the subtleties that arise in the more general setting do not appear. First fix Riemannian metrics on $Y$, $Y'$, perturbations of the associated Chern--Simons functionals, and sections of the associated basepoint fibrations. We may then form the instanton Floer complexes $(C(Y),d_1)$ and $(C(Y'),d_1')$ and the morphisms $d_i$, $\delta_1$ and $\delta_4$ for $Y$ and the morphisms $d_i'$, $\delta_1'$ and $\delta_4'$ for $Y'$.

Next, fix a Riemannian metric on $W$ which is the product metric in a tubular neighborhood of the boundary components of $W$ corresponding to fixed metrics on $Y$, $Y'$. Add cylindrical ends $(-\infty,-1]\times Y$ and $[1,\infty)\times Y'$ with the product metrics to obtain a noncompact complete Riemannian manifold denoted by $W^+$. For any $\alpha \in \mathfrak C(Y)$ and $\alpha' \in \mathfrak C(Y')$, write $M(W; \alpha, \alpha')$ for the set of gauge equivalence classes of solutions to a perturbed ASD equation on $W^+$ that are asymptotic to $\alpha$ and $\alpha'$ on the incoming and outgoing ends, respectively. We assume that this perturbed ASD equation on $W^+$ restricts on the ends to the perturbed ASD equations on the cylinders associated with $Y$, $Y'$, which are used in forming the chain complexes $(C(Y),d_1)$ and $(C(Y'),d_1')$. The dimensions of these moduli spaces associated with $W$ satisfy \[\dim M(W; \alpha, \alpha') \equiv \begin{cases} |\alpha| - |\alpha'| & \alpha \text{ is irreducible} \\ -|\alpha'| - 3 & \alpha = \theta \end{cases} \mod 8.\]
As in the case of cylinders, we write $M(W; \alpha, \alpha')_d$ for the $d$-dimensional component of $M(W; \alpha, \alpha') $.

We define a degree zero map $\lambda_0: C(Y) \to C(Y')$ by the formula \[\lambda_0(\alpha) = \sum_{\alpha' \in \mathfrak C^{\text{irr}}(Y')} \# M(W;\alpha, \alpha')_0 \cdot \alpha'.\] We may also define the operators 
\[
 \Delta_0:C_0(Y) \to \F,\hspace{1cm}\Delta_3: \F \to C_{-3}(Y'),
\]
as
\[
  \Delta_0(\alpha)=\# M(W;\alpha,\theta')_0,\hspace{1cm}\Delta_3(1)=\sum_{\alpha'}\#M(W;\theta,\alpha')_0\cdot \alpha'
\]
where $\theta'$ denotes the trivial connection on $Y'$.

To define the cobordism version of the operators $d_i$ for $i\geq 2$, we need to extend the definition of the basepoint fibration to the case of cobordisms. For $\alpha \in \mathfrak C^{\rm irr}(Y)$ and $\alpha' \in \mathfrak C^{\rm irr}(Y')$, the moduli space $M(W; \alpha, \alpha')$ is a subspace of $\mathcal B(W; \alpha, \alpha')=\mathcal A(W; \alpha, \alpha')/\mathcal G(W)$ with $\mathcal A(W; \alpha, \alpha')$ being the space of connections asymptotic to (fixed representatives of) $\alpha$, $\alpha'$ on the incoming and the outgoing ends, and $\mathcal G(W)$ is the space of maps $u:W^+\to SU(2)$ that are asymptotic to $\{\pm 1\}$ on the ends. In fact, a standard unique continuation result implies that $M(W; \alpha, \alpha')$ is contained in the subspace $\mathcal B^*(W; \alpha, \alpha')$ of $\mathcal B(W; \alpha, \alpha')$ consisting of those connections $A\in \mathcal A(W; \alpha, \alpha')$ whose restrictions to $\{-t\}\times Y$ and $\{t\}\times Y'$ are irreducible for any $t\in [1,\infty)$. In particular, there are well-defined restriction maps 
\begin{equation}\label{restriction-maps}
	(-\infty,-1]\times \mathcal B^*(W; \alpha, \alpha')\to \mathcal B^*(Y), \hspace{1cm}[1,\infty) \times \mathcal B^*(W; \alpha, \alpha')\to \mathcal B^*(Y') 
\end{equation}
given by mapping $(t,[A])\in (-\infty, -1] \times \mathcal B^*(W; \alpha, \alpha')$ to the restriction of $[A]$ to $\{t\}\times Y$ and $(t,[A]) \in [1,\infty) \times \mathcal B^*(W; \alpha, \alpha')$ to the restriction of $[A]$ to $\{t\}\times Y'$.

Now let $\gamma:[-1,1]\to W$ be a path connecting the basepoints $y\in Y$, $y'\in Y'$. Extend $\gamma$ to a path $\gamma^+:\mathbb R\to W^+$ by setting $\gamma^+(t) = (t, y)$ for $t \le -1$ and $\gamma^+(t) = (t, y')$ for $t \ge 1$. As in the 3-dimensional case, define a rank three vector bundle $\mathbb E(\gamma)$ over $\mathbb R \times \mathcal B^*(W; \alpha, \alpha')$ as the quotient of $\su(2)\times \mathbb R \times  \mathcal A(W; \alpha, \alpha')$ with respect to the action of $\mathcal G(W)$ defined by
\[
  u\cdot (\zeta,t,A)=(\text{ad}_{u(\gamma^+(t))}(\zeta), t,(u^{-1})^*A)
\]
with $u:W^+\to SU(2)$ and $(\zeta,t,A)\in \su(2)\times \mathbb R\times \mathcal A(W; \alpha, \alpha')$. The restrictions of $\mathbb E(\gamma)$ to $(-\infty,-1]\times \mathcal B^*(W; \alpha, \alpha')$ and $[1,\infty) \times  \mathcal B^*(W; \alpha, \alpha')$ are identified with the pullbacks of the basepoint fibrations over $\mathcal B^*(Y)$ and $\mathcal B^*(Y')$ with respect to the maps in \eqref{restriction-maps}. Fix generic sections $\sigma_i$ of $\mathbb E(\gamma)$ for $i \in \{0,1,2\}$ whose restrictions to  $(-\infty,-1]\times \mathcal B^*(W; \alpha, \alpha')$ and $[1,\infty) \times  \mathcal B^*(W; \alpha, \alpha')$ are the pullbacks of $s_i$ and $s_i'$ with respect to the maps in \eqref{restriction-maps}.

We then define the cutout moduli spaces 
\begin{align*}M^{(2)}(W; \alpha, \alpha')_d &= \{(t,[A]) \in \mathbb R \times M(W; \alpha, \alpha')_{d+1} \mid \sigma_1(t,A) \wedge \sigma_2(t,A) = 0\}, \\
M^{(3)}(W; \alpha, \alpha')_{d} &= \{(t,[A]) \in \mathbb R \times M(W; \alpha, \alpha')_{d+2} \mid \sigma_0(t,A) = 0\},\\
	M^{(2,3)}(W;\alpha,\alpha')_d&=\{(t,t',[A]) \in \mathbb R \times \mathbb R \times M(W;\alpha,\alpha')_{d+3} \mid  \sigma_0(t,[A])=0,\\
	&\hspace{6cm} \,\,\,\sigma_1(t',[A])\wedge \sigma_2(t',[A])=0\}.
\end{align*}
We now define operators $\lambda_i$ for $i \in \{1,2\}$ by \[\lambda_i(\alpha) = \sum_{\alpha' \in \mathfrak C^{\text{irr}}(Y')} \# M^{(i+1)}(W;\alpha, \alpha')_0 \cdot \alpha'.\] 
Similarly, we define $\lambda_{3}$ by replacing $M^{(i+1)}(W;\alpha, \alpha')_0$ in the above expression with $M^{(2,3)}(W;\alpha,\alpha')_0$.

The same analysis which establishes that $d_1^2, \delta_1 d_1, d_1 \delta_4$ are all zero and verifies equations \eqref{di-rel}-\eqref{d4-rel} now gives the following relations: 
\begin{align}
	d_1'\lambda_0 + \lambda_0 d_1&= 0 \label{lambda0-bdry-relation}\\
	\delta_1'\lambda_0+\Delta_0 d_1 +\delta_1  &=0  \label{Delta-0-bdry-relation}\\
	\lambda_0 \delta_4+d_1\Delta_3+\delta_4' & =0  \label{Delta-3-bdry-relation}\\
	d_1'\lambda_1 + \lambda_1 d_1+d_2'\lambda_0 + \lambda_0 d_2&= 0 \label{lambda1-bdry-relation}\\
	d_1'\lambda_2 + \lambda_2 d_1+d_3'\lambda_0 + \lambda_0 d_3&= 0 \label{lambda2-bdry-relation}\\
	d_1'\lambda_3 + \lambda_3 d_1+d_2'\lambda_2 + \lambda_2 d_2+d_3'\lambda_1 + \lambda_1 d_3+d_4'\lambda_0 + \lambda_0 d_4+\delta'_4\Delta_0+\Delta_3\delta_1&= 0 \label{lambda3-bdry-relation}
\end{align}

We remark that \eqref{Delta-0-bdry-relation} is verified by studying the boundary of $1$-dimensional moduli spaces $M(W;\alpha,\theta')_1$ with $\alpha\in \mathfrak C^{\rm irr}(Y)$, and the boundary points obtained by gluing an element of $\breve M(Y;\alpha,\theta)$ to the trivial connection on $W^+$, which is cut out transversely, are responsible for the term $\delta_1$ in this identity. A similar comment applies to \eqref{Delta-3-bdry-relation}. See \cite[Section 6]{Fr:q2} for more details. 

Functoriality of instanton Floer homology with respect to negative-definite cobordisms has the following {\it monotonicity} consequence for $q_3$, which is a special case of inequality \eqref{b+-ineq} in Theorem \ref{main-thm}. A similar inequality holds for Fr{\o}yshov's monopole and instanton $h$-invariants \cite{Froy:SW1,Froyshov}, the Heegaard Floer $d$-invariant \cite{OzSz}, and the invariant $q_2$ \cite{Fr:q2}. 

\begin{prop}\label{lemma:monotonic}
Suppose that $W: Y \to Y'$ is a cobordism between integer homology spheres with $b^+(W) = 0$ and $H_1(W;\mathbb Z) = 0$. Then $q_3(Y) \le q_3(Y')$.
\end{prop}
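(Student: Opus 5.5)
We transport the chain-level witnesses in the definition of $q_3(Y)$ through the cobordism maps $\lambda_0,\lambda_1,\lambda_2,\lambda_3,\Delta_0,\Delta_3$, using the relations \eqref{lambda0-bdry-relation}--\eqref{lambda3-bdry-relation}. The argument splits according to the sign of $k$.

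\textbf{Case $k\ge 1$.} Suppose $q_3(Y)\ge k$, witnessed by $\zeta,\eta$ satisfying \eqref{q3-pos-def-ch-cx}. I would set
\[\zeta'=\lambda_0\zeta, \qquad \eta'=\lambda_0\eta+\lambda_1\zeta.\]
Then \eqref{lambda0-bdry-relation} gives $d_1'\zeta'=0$. Relation \eqref{lambda1-bdry-relation}, applied to $\zeta$ with $d_2\zeta=d_1\eta$, gives $d_2'\zeta'=d_1'\eta'$. For the $\delta_1$ conditions, the goal is to show, by induction on $i$, that
\[\delta_1' d_3'^{\,i}\zeta' \;=\; \delta_1 d_3^{\,i}\zeta + (\text{terms vanishing for } i\le k-1).\]
The tool is \eqref{lambda2-bdry-relation}, which says $\lambda_0$ commutes with $d_3$ up to the homotopy $\lambda_2$. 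Concretely, $d_3'^{\,i}\lambda_0\zeta=\lambda_0 d_3^{\,i}\zeta+(\text{correction})$, where the correction is a sum of terms $d_3'^{\,a}(d_1'\lambda_2+\lambda_2 d_1)d_3^{\,b}\zeta$ with $a+b=i-1$. Since $d_3d_1=d_1d_3$ mod 2 and $d_1\zeta=0$, the $\lambda_2d_1$ pieces vanish. The $d_1'\lambda_2$ pieces commute past $d_3'^{\,a}$ and are then killed by $\delta_1'd_1'=0$.

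Then \eqref{Delta-0-bdry-relation} gives
\[\delta_1'\lambda_0 d_3^{\,i}\zeta=\delta_1 d_3^{\,i}\zeta+\Delta_0 d_1 d_3^{\,i}\zeta=\delta_1 d_3^{\,i}\zeta,\]
again using $d_1 d_3^{\,i}\zeta=d_3^{\,i}d_1\zeta=0$. Hence $\delta_1'd_3'^{\,i}\zeta'=\delta_1 d_3^{\,i}\zeta$ for all $i$, so $\zeta',\eta'$ witness $q_3(Y')\ge k$.

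I expect the relation $d_2'\zeta'=d_1'\eta'$ to need no extra input beyond \eqref{lambda1-bdry-relation}. The analysis above appears not to require \eqref{lambda3-bdry-relation} at all in this case. Note that with mod 2 coefficients all signs disappear.

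\textbf{Case $k\le 0$.} Suppose $\sum_i d_3^{\,i}\delta_4(a_i)=d_1\zeta+d_2\eta$ with $d_1\eta=0$. Apply $\lambda_0$ to this equation. By \eqref{Delta-3-bdry-relation}, $\lambda_0\delta_4=\delta_4'+d_1'\Delta_3$. Using \eqref{lambda2-bdry-relation} repeatedly to move $\lambda_0$ past powers of $d_3$, $\lambda_0d_3^{\,i}\delta_4(a_i)$ equals $d_3'^{\,i}\delta_4'(a_i)$ plus a $d_1'$-exact term plus terms of the form $d_3'^{\,a}\lambda_2d_1d_3^{\,b}\delta_4$. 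The latter vanish because $d_1\delta_4=0$ and $d_1$ commutes with $d_3$ up to sign, which is irrelevant mod 2.

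On the right-hand side, $\lambda_0d_1\zeta=d_1'\lambda_0\zeta$. Also, by \eqref{lambda1-bdry-relation} and $d_1\eta=0$,
\[\lambda_0d_2\eta=d_2'\lambda_0\eta+d_1'\lambda_1\eta,\]
and $d_1'\lambda_0\eta=0$. So the new witnesses are $\eta'=\lambda_0\eta$ together with an explicit $\zeta'$ collecting the $d_1'$-exact terms, with the same coefficients $a_i$. This yields $q_3(Y')\ge k$.

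Combining the two cases, every lower bound on $q_3(Y)$ is a lower bound on $q_3(Y')$, so $q_3(Y)\le q_3(Y')$.

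\textbf{Main obstacle.} The delicate step is the inductive bookkeeping in the commutation of $\lambda_0$ with $d_3^{\,i}$. We need the cross terms to be genuinely $d_1'$-exact or zero, and we need $d_1$ and $d_3$ to commute at the chain level, which is \eqref{di-rel}. If this bookkeeping runs into trouble, I would instead work on homology: there $\lambda_0$ induces a map commuting with $u_2$ and $u_3$ and intertwining $\delta_1$ and $\delta_4$, which makes the argument immediate from the homological reformulation of $q_3$.
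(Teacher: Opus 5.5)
Your proof is correct and is essentially the paper's argument. The paper also sets $\zeta'=\lambda_0\zeta$, $\eta'=\lambda_1\zeta+\lambda_0\eta$ and cites only \eqref{lambda0-bdry-relation}--\eqref{lambda2-bdry-relation}, a range that includes the $\Delta_0$ and $\Delta_3$ relations; it leaves the bookkeeping you carried out as ``straightforward,'' and your $k\le 0$ witnesses ($\eta'=\lambda_0\eta$, with $\zeta'$ absorbing the $d_1'$-exact terms from $\lambda_1$, $\lambda_2$, $\Delta_3$) are the correct adaptation of the paper's one-line formula, which as written fits only the positive case.
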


\begin{proof}
Suppose $\zeta,\eta \in C(Y)$ are as in \eqref{q3-pos-def-ch-cx} or \eqref{q3-neg-def-ch-cx}, depending on whether $q_3(Y)$ is positive, and define
\[
  \zeta'=\lambda_0(\zeta),\,\eta'=\lambda_1(\zeta)+\lambda_0(\eta)\in C(Y')
\]
Then it is straightforward to see that the identities in \eqref{lambda0-bdry-relation}-\eqref{lambda2-bdry-relation} imply that $\zeta'$ and $\eta'$ satisfy the same properties as in \eqref{q3-pos-def-ch-cx} or \eqref{q3-neg-def-ch-cx}, respectively.
\end{proof}

\begin{cor}
The integer $q_3(Y)$ is a $\mathbb Z$-homology cobordism invariant of $Y$. 
\end{cor}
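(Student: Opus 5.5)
The plan is to deduce the corollary directly from Proposition~\ref{lemma:monotonic}, applied once to a homology cobordism and once to its reverse. Suppose $Y$ and $Y'$ are integer homology spheres and $W: Y \to Y'$ is a $\mathbb Z$-homology cobordism. This means the inclusions of $Y$ and $Y'$ into $W$ induce isomorphisms on integral homology. Since $H_*(Y;\mathbb Z) \cong H_*(S^3;\mathbb Z)$, this gives $H_1(W;\mathbb Z) = 0$ and $H_2(W;\mathbb Z) = 0$. In particular, $b_2(W) = 0$, so the intersection form vanishes and $b^+(W) = b^-(W) = 0$. Proposition~\ref{lemma:monotonic} then applies and gives $q_3(Y) \le q_3(Y')$.

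For the reverse inequality, I will turn $W$ upside down to get a cobordism $\overline W: Y' \to Y$. The underlying manifold is the same, but its orientation is reversed so that $\partial \overline W = -Y' \sqcup Y$. Reversing orientation negates the intersection form, which is zero anyway, and it does not change $H_1$. So $\overline W$ is again a cobordism between integer homology spheres with $b^+ = 0$ and $H_1 = 0$. Applying Proposition~\ref{lemma:monotonic} to $\overline W$ gives $q_3(Y') \le q_3(Y)$, and therefore $q_3(Y) = q_3(Y')$.

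The only step that needs any care is matching the hypotheses of Proposition~\ref{lemma:monotonic}. This means checking that $H_1(W;\mathbb Z) = 0$ follows from the inclusion being an isomorphism, which it does, and that $b^+$ vanishes for both orientations. The latter holds because $H_2(W;\mathbb Q) = 0$. I expect no real obstacle here.
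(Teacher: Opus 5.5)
Your proof is correct and is the same as the paper's: apply Proposition~\ref{lemma:monotonic} to $W$ and to the reversed cobordism $\overline W: Y' \to Y$ to get the two opposite inequalities. Your explicit check that a $\mathbb Z$-homology cobordism has $H_1(W;\mathbb Z)=0$ and $b^{\pm}(W)=0$, for either orientation, supplies the hypothesis verification that the paper leaves implicit.
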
	

\begin{proof}
Let $W: Y \to Y'$ be an integer homology cobordism. Applying Proposition \ref{lemma:monotonic} to $W$ gives the inequality $q_3(Y) \le q_3(Y')$, while applying it to the reversed cobordism $\overline W: Y' \to Y$ gives $q_3(Y') \le q_3(Y)$. 
\end{proof}
\begin{remark}
	Strictly speaking, prior to this corollary, we had not yet shown that $q_3(Y)$ was a topological invariant of $Y$ independent of the auxiliary data involved in the construction of instanton homology. This follows because Proposition \ref{lemma:monotonic} holds regardless of the choice of auxiliary data for $Y$ and $Y'$. 
\end{remark}

\begin{example}\label{ex:Brieskorn}
	Let $Y = \Sigma(p,q,pqk-1)$ be the $-1/k$ surgery on a nontrivial left-handed torus knot $T_{p,q}$. By Example \ref{ex:SFS-q3-small}, we have $q_3(Y) \le 1$. We will argue that there is a negative-definite $2$-handle cobordism $W: P \to Y$, so by Example \ref{ex:Poincare} we have $1 = q_3(P) \le q_3(Y)$ as well. The knot $T_{p,q}$ can be taken to the left-handed trefoil $T_{2,3}$ by changing a sequence of negative crossings; running in reverse, $T_{2,3}$ can be taken to $T_{p,q}$ by changing a sequence of positive crossings. These crossing changes can be effected by performing $(-1)$-surgery along a crossing circle. Furthermore, for any knot $K$, the surgery $S^3_{-1/(k+1)}(K)$ can be obtained by performing $(-1)$-surgery on $S^3_{-1/k}(K)$. Thus, the torus knot $T_{2,3}$ with framing $-1$ can be taken to the torus knot $T_{p,q}$ with framing $-1/k$ by a sequence of $(-1)$-surgeries. The trace of this sequence of surgeries gives the desired cobordism $P \to Y$.
\end{example}
Next, we will discuss the extension of instanton homology to another family of 3-manifolds. A pair $(Y,w)$ of a 3-manifold $Y$ and $w\in H^2(Y;\mathbb Z)$ is admissible if there is an oriented embedded surface $S$ in $Y$ such that the pairing of $w$ and the homology class of $S$ is odd. There is a unique isomorphism class of a $U(2)$-bundle $E$ on $Y$ with $c_1(E)=w$. We may form $\mathcal A(Y,w)$ as the space of all connections on $E$ which induce a fixed connection on $\Lambda^2E$. There is an action on $\mathcal A(Y,w)$ by the gauge group $\mathcal G(Y,w)$ of automorphisms of $E$ with fiberwise determinant one, and we write $\mathcal B(Y,w)$ for the quotient. We may define again a Chern--Simons functional on $\mathcal B(Y,w)$, and the admissibility assumption on $(Y,w)$ implies that all of its critical points are irreducible. As in the case of integer homology spheres, and in fact somewhat simpler due to the irreducibility of critical points of the Chern--Simons functional, we may define the instanton chain complex $(C(Y,w),d_1)$ together with the operators $d_i$ for $2\leq i \leq 4$ associated with the basepoint fibration on $\mathcal B^*(Y,w)$. Then $d_1$ is a differential, \eqref{di-rel} holds for $i=2$ or $3$, and the following simpler version of \eqref{d4-rel} holds:
\begin{equation}\label{d4-rel-amidssible}
  d_1d_4+d_4d_1+d_2d_3+d_3d_2=0.
\end{equation}
The homology $I(Y,w)$ of the chain complex $C(Y,w)$ is a topological invariant of $(Y,w)$, which in fact depends only on the mod $2$ value of $w$, and $d_2$, $d_3$ induce commuting operators $u_2$, $u_3$ acting on $I(Y,w)$. In fact, the operator $u_3$ vanishes because the basepoint fibration on $\mathcal B^*(Y,w)$ lifts to a $U(2)$-bundle for an admissible pair \cite[Proposition 6.3]{Fr:q2}. Analogously to the case of integer homology spheres, this version of instanton homology is functorial with respect to cobordisms between admissible pairs and cobordisms between integer homology spheres and admissible pairs, and analogues of \eqref{lambda0-bdry-relation}-\eqref{lambda3-bdry-relation} hold for such cobordism maps. In particular, the cobordism maps commute with the actions of the operators $u_2$ and $u_3$.

The surgery exact triangle provides a useful tool in the study of instanton Floer homology \cite{Floer:surgery, BD:surgery, Scaduto:odd}. For a knot $K$ in an integer homology sphere $Y$, let $(Y_0(K),w)$ be the admissible pair given by the $0$-surgery on $K$ and a cohomology class $w$, which generates $H^2(Y_0(K))$. Then there is an exact triangle of instanton homology groups
\[
  \cdots \to I(Y_0(K),w) \to I(Y_1(K)) \to I(Y) \to I(Y_0(K),w) \to \cdots 
\]
The arrows in these exact triangles are given by cobordism maps associated with elementary cobordisms between the 3-manifolds involved in this exact triangle. In particular, they commute with the action of the operators $u_2$ and $u_3$. Fr{\o}yshov uses this fact to make the following observation \cite[Proposition 6.4]{Fr:q2}. Since $(Y_0(K),\omega_0)$ is an admissible pair, the action of $u_3$ on $I(Y_0(K),\omega_0)$ is trivial. Therefore, the above exact triangle implies that 
\begin{equation}\label{eqn:n3-surgery-bound}
  |n_3(Y_1(K))-n_3(Y)|\leq 1.
\end{equation}
This inequality will be essential for us in the next section.

\subsection{Integer homology spheres and indefinite manifolds}\label{sec:weak-ineq}
We begin this section by proving a weaker version of the inequality \eqref{b+-ineq} of Theorem \ref{main-thm}. In particular, this weaker version of \eqref{b+-ineq} is sufficient to prove Corollary \ref{surgery-number-bound}. We conclude by computing $q_3$ of the family of Brieskorn spheres $\Sigma(p,q,pqk-1)$. 

\begin{prop}\label{q3_min_lemma}
	Let $Y$ be an integer homology sphere and $L\subseteq Y$ be an integrally-framed link whose linking matrix has determinant $\pm 1$. 
	Let $W\colon Y \to Y'$ denote the trace of $L$.
	Then if $Y$ is $q_3$-minimal, we have
    \begin{equation}\label{main-ineq-weak}
        q_3(Y') - q_3(Y) \ge -b^+(W),
    \end{equation}
    and if $Y$ is $q_3$-maximal, we have
      \begin{equation}\label{main-ineq-weak-maximal}
        q_3(Y') - q_3(Y) \le b^-(W).
    \end{equation}
    In particular, both inequalities hold if $Y=S^3$.
\end{prop}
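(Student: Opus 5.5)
The plan is to reduce both inequalities to two facts already available: the surgery bound \eqref{eqn:n3-surgery-bound}, $|n_3(Y_1(K))-n_3(Y)|\le 1$, and the monotonicity of Proposition~\ref{lemma:monotonic}. The key step is to factor $W$, after a harmless blow-up, into a sequence of $(+1)$- and $(-1)$-surgeries between integer homology spheres.

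\textbf{Step 1 (a $(\pm1)$-surgery version of the bound).} Note first that $n_3(-Y)=n_3(Y)$. Indeed, under the identification $C(-Y)\cong C(Y)^\dagger$ the operator $u_3$ for $-Y$ is the adjoint of $u_3$ for $Y$, so the two have the same nilpotency degree. Now let $K\subset Y$ and let $Y'$ be $(-1)$-surgery on $K$. Then $-Y'$ is $(+1)$-surgery on the mirror knot in $-Y$, so \eqref{eqn:n3-surgery-bound} applied to $-Y$ gives $|n_3(Y')-n_3(Y)|\le 1$. Hence $n_3$ changes by at most one under either $(+1)$- or $(-1)$-surgery.

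\textbf{Step 2 (diagonalizing the link).} Add to $L$ a split $(-1)$-framed unknot. This does not change $Y'$, and it replaces $W$ by $W\#\overline{\mathbb{CP}}^2$. The new linking matrix is an odd unimodular form with $b^+$ unchanged and $b^-$ increased by one.
\begin{itemize}
\item If the form is indefinite, the classification of odd indefinite unimodular forms shows it is isomorphic to $\mathrm{diag}(+1,\dots,+1,-1,\dots,-1)$.
\item If it is definite, it is negative definite. In that case the claimed inequality follows from monotonicity, except for one point: the form need not be diagonalizable. I would then rely only on $b^+=0$ and $H_1(W)=0$ and apply Proposition~\ref{lemma:monotonic} directly, noting that $q_3(Y')\ge q_3(Y)\ge q_3(Y)-0$.
\end{itemize}
In the indefinite case, every change of basis in $GL(n,\mathbb Z)$ is generated by elementary matrices and sign changes. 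These are realized on the link by handle slides and by reversing component orientations, neither of which changes the trace up to diffeomorphism. So we may assume $L=K_1\cup\dots\cup K_{b^+}\cup K'_1\cup\dots\cup K'_{m}$, where the $K_i$ are $(+1)$-framed, the $K'_j$ are $(-1)$-framed, and all pairwise linking numbers vanish. Consequently, surgery on any sublink has unimodular (diagonal $\pm1$) linking matrix. Every intermediate manifold is therefore an integer homology sphere, and each elementary $2$-handle cobordism between them has $H_1=0$.

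\textbf{Step 3 (the minimal case).} Suppose $Y$ is $q_3$-minimal. First perform the $(+1)$-surgeries on $K_1,\dots,K_{b^+}$, obtaining $Z$. By Step 1, $n_3(Z)\le n_3(Y)+b^+(W)$. Using \eqref{nilp-q3-ineq} and minimality,
\[
q_3(Z)\ \ge\ -n_3(Z)\ \ge\ -n_3(Y)-b^+(W)\ =\ q_3(Y)-b^+(W).
\]
Next perform the remaining $(-1)$-surgeries. Each is a negative-definite cobordism with $H_1=0$, so Proposition~\ref{lemma:monotonic} gives $q_3(Y')\ge q_3(Z)$. This proves \eqref{main-ineq-weak}.

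\textbf{Step 4 (the maximal case).} If $Y$ is $q_3$-maximal, then $-Y$ is $q_3$-minimal. Turning $W$ upside down with reversed orientation gives a cobordism $-Y\to -Y'$ which is the trace of the mirrored link, with the roles of $b^+$ and $b^-$ exchanged. Applying Step 3 and $q_3(-\,\cdot)=-q_3(\cdot)$ yields \eqref{main-ineq-weak-maximal}. For $Y=S^3$ we have $I(S^3)=0$, so $S^3$ is both $q_3$-minimal and $q_3$-maximal, and both inequalities hold.

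\textbf{Main obstacle.} I expect the main difficulty to lie in Step 2. One must check carefully that the blow-up and handle slides realize the required diagonalization while keeping every intermediate manifold an integer homology sphere, including the degenerate definite case. One must also verify that \eqref{eqn:n3-surgery-bound} is stated for the correct sign of surgery, so that Step 1's orientation-reversal trick covers both signs.
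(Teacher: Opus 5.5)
Your proposal is correct and follows the paper's proof essentially step for step. You blow up and diagonalize by handle slides (this is Lemma~\ref{blowup_trick}, including the tautological $b^+(W)=0$ case), apply \eqref{eqn:n3-surgery-bound} along the $(+1)$-framed components together with $q_3$-minimality and \eqref{nilp-q3-ineq}, absorb the $(-1)$-framed components with Proposition~\ref{lemma:monotonic}, and get the maximal case by orientation reversal. The only differences are cosmetic: your Step~1 extension of the $n_3$ bound to $(-1)$-surgery is never used, and the cobordism in Step~4 is just $\overline W\colon -Y\to -Y'$ with orientation reversed, not turned upside down.
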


First we establish that after blowing up, $W$ can be decomposed as a composite of simpler cobordisms.
	
\begin{lemma}\label{blowup_trick}
	Let $W:Y\to Y'$ be a 2-handle cobordism between integer homology spheres associated with an integrally framed link $L$, and let $n=b^+(W)$. Then there are cobordisms 
	$W_i:Z_{i-1}\to Z_{i}$ for $1\leq i\leq n+1$ such that 
	$Z_0=Y$, $Z_{n+1}=Y'$, $W_i$ for $1\leq i\leq n$ is the trace cobordism of a $1$-framed knot in $Z_{i-1}$, $W_{n+1}$ is negative-definite, and 
	\[
	W_1\cup_{Z_1}W_2\cup_{Z_2}\cdots\cup_{Z_{n}}W_{n+1} \cong W\# \overline{\mathbb {CP}}^2.
	\]
\end{lemma}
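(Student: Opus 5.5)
Our plan is to work entirely in Kirby calculus and linear algebra on the linking matrix. Let $L = K_1 \cup \dots \cup K_m$ have linking matrix $M_L$; since $W$ goes between integer homology spheres, $M_L$ is unimodular. The intersection form of $W$ is $M_L$, so $n = b^+(W) = n_+(M_L)$. After blowing up, the new form $M_L \oplus \langle -1\rangle$ is odd, indefinite (when $n\ge 1$; when $n=0$ the statement is trivial with $W_1 = W\#\overline{\mathbb{CP}}^2$) and unimodular. By the classification of indefinite odd unimodular forms it is therefore diagonalizable, isomorphic to $n\langle 1\rangle \oplus (m-n+1)\langle -1\rangle$. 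We then realize a diagonalizing basis by handle slides, which are the geometric counterpart of integral change of basis.

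The key steps, in order, are as follows.

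\textbf{Step 1.} Write $W\#\overline{\mathbb{CP}}^2$ as the trace of $L\cup U$ on $Y$, where $U$ is a $(-1)$-framed unknot split from $L$ in a ball.

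\textbf{Step 2.} Choose an integral basis $v_1,\dots,v_{m+1}$ of $\mathbb Z^{m+1}$ in which the form is diagonal, with $v_1,\dots,v_n$ of square $+1$ and the rest of square $-1$.

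\textbf{Step 3.} Realize this change of basis by a sequence of handle slides among the components of $L\cup U$. Since $GL_{m+1}(\mathbb Z)$ is generated by elementary matrices and sign changes, we can do this by sliding handles and reversing orientations. This gives a framed link $L' = J_1\cup\dots\cup J_{m+1}$ in $Y$ with the same trace, whose linking matrix is $\operatorname{diag}(1,\dots,1,-1,\dots,-1)$. Concretely, $J_1,\dots,J_n$ are $(+1)$-framed, the remaining components are $(-1)$-framed, and all pairwise linking numbers vanish.

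\textbf{Step 4.} Attach the handles in order. Set $Z_0 = Y$ and let $Z_i$ be the surgery on $J_1\cup\dots\cup J_i$. Then $W_i$ is the trace of the $(+1)$-framed knot $J_i$, now regarded as a knot in $Z_{i-1}$. Its framing stays $+1$ because $J_i$ has linking number zero with the earlier components, so the framing relative to the new Seifert framing in $Z_{i-1}$ is unchanged. For the same reason each $Z_i$ is an integer homology sphere: the partial linking matrix is diagonal with entries $\pm1$, hence unimodular.

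\textbf{Step 5.} Let $W_{n+1}$ be the trace of the remaining components on $Z_n$. Its linking matrix in $Z_n$ is $-I$, again because all linking numbers are zero, so $W_{n+1}$ is negative definite and its end is $Y'$.

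The main obstacle is Step 2 together with its realization. The algebraic diagonalization requires the form to be odd and indefinite; this is exactly why we blow up, and it needs $n\ge 1$ and $m-n+1\ge 1$, where the second holds automatically thanks to $\langle -1\rangle$. The classification of indefinite unimodular forms, that an odd indefinite form is $p\langle1\rangle\oplus q\langle -1\rangle$, is standard, and every basis change is realized by handle slides in a 2-handlebody relative to $Y$. The only subtle point to verify is the framing and linking bookkeeping in Step 4 when passing from $Y$ to $Z_{i-1}$. Linking numbers in $Y$ make sense because $Y$ is a $\mathbb Z$-homology sphere, and the framing of $J_i$ in $Z_{i-1}$ equals its framing minus the correction $\sum_j \operatorname{lk}(J_i,J_j)^2/(\pm1)$ from the earlier components, which vanishes.
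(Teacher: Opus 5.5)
Your proposal is correct and follows the paper's proof essentially step for step. You blow up by a split $(-1)$-framed unknot, diagonalize the resulting odd indefinite unimodular linking form by handle slides, and take the $Z_i$ to be surgeries on the first $i$ components, ordering the $(+1)$-framed ones first. Your Step 4 check that framings and pairwise linking numbers are unchanged in each $Z_{i-1}$, because all linking numbers vanish, spells out a detail the paper leaves implicit.
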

\begin{proof}
	The claim is tautological for $n = 0$. If $n \ge 1$, let $L'$ be the result of adding a disjoint $(-1)$-framed unknot to $L$. The trace cobordism associated with $L'$ is $W \# \overline{\mathbb{CP}}^2$. Because the linking form of $L'$ is an indefinite odd unimodular form, it is equivalent to a diagonal form. We may perform a sequence of handleslides on the components of $L'$ to obtain a link $L''$ with linking form $\text{diag}\{1,\cdots,1,-1,\cdots,-1\}$ and trace cobordism diffeomorphic to $W \# \overline{\mathbb{CP}}^2$. The manifold $Z_i$ may be defined as the result of surgery on $L''_1 \cup \dots \cup L''_i$, with the cobordism $W_i$ for $1 \le i \le n$ equal to the trace cobordism associated with surgery on $L''_i \subset Z_{i-1}$. 
\end{proof}

\begin{proof}[Proof of Proposition \ref{q3_min_lemma}]
	It suffices to establish \eqref{main-ineq-weak}, because \eqref{main-ineq-weak-maximal} follows from \eqref{main-ineq-weak} by changing the orientation of 
	$W$ and using that $q_3$ negates under orientation-reversal, as well as the fact that the orientation reversal of a $q_3$-maximal integer homology sphere is $q_3$-minimal.
	Since \eqref{main-ineq-weak} is insensitive to blowing up $W$, we may assume that $W$ is diffeomorphic to the decomposition 
	\[
	  W_1\cup_{Z_1}W_2\cup_{Z_2}\cdots\cup_{Z_{n}}W_{n+1}
	\]
	where $W_i$ and $Z_i$ are as in the statement of Lemma \ref{blowup_trick}. In particular, the 3-manifold $Z_{i+1}$ is obtained from $Z_{i}$ by performing $1$-surgery 
	along a knot, and hence \eqref{eqn:n3-surgery-bound} implies that
	\[
	  n_3(Z_{i+1})\leq  n_3(Z_{i})+1
	\]
	for $0 \le i \le n-1$. Since $Z_0=Y$ is $q_3$-minimal, we obtain from the above inequalities and \eqref{nilp-q3-ineq} that 
	\begin{align}
	  q_3(Z_n)-q_3(Y)&\geq -n_{3}(Z_n)+n_3(Z_{0})\nonumber\\
	  &\geq -n = -b^+(W).\label{ineq-Z-n}
	\end{align}
	The cobordism $W_{n+1}:Z_n\to Y'$ is negative-definite, and Proposition \ref{lemma:monotonic} implies that $q_3(Y')$ is at least as large as $q_3(Z_n)$. Combining this 
	and \eqref{ineq-Z-n} gives the desired claim.
\end{proof}

This is all that is needed for the proof of Corollary \ref{surgery-number-bound} presented in the introduction. We conclude this section by using it to present some additional examples of manifolds with $q_3(Y) = S_D(Y) = 1$. 

\subsection{Connected sum with the Poincar\'e homology sphere and $q_3$}\label{connected-sum-P}
The purpose of this section is to prove the following claim, inspired by arguments for $q_2$ presented in \cite[Section 9]{Fr:q2}.

\begin{prop}\label{proposition:superadditivity}
	If $P$ is the Poincar\'e sphere and $Y$ is an integer homology sphere, then $q_3(Y \# P) \ge q_3(Y) + 1$. 
\end{prop}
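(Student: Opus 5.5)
Our plan is to follow Fr{\o}yshov's strategy for $q_2$ in \cite[Section 9]{Fr:q2}. We work with a chain-level model of $C(Y\# P)$ together with its operators $d_1,d_2,d_3,\delta_1,\delta_4$, built from the data for $Y$ and $P$. Since $P$ has only the two irreducibles $\beta_0,\beta_1$ of gradings $1,5$, with all of $d_1,d_2,d_3$ trivial and $\delta_1(\beta_0)=1$, the complex of $P$ is as simple as possible. Rather than prove a full connected-sum theorem, we will only need a chain map in one direction compatible with the operators. We will produce a cobordism-induced map from the ``tensor product'' complex into $C(Y\#P)$, using the standard cobordism $W:Y\sqcup P\to Y\#P$ (a $1$-handle cobordism). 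Alternatively, we use the negative-definite route $Y\#P \leftarrow Y \# \Sigma$: we would rather avoid this and work directly.

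Concretely, the neck-stretching argument of Floer/Fukaya type describes $C(Y\#P)$, up to chain homotopy compatible with the $d_i$, as containing the generators $\alpha\otimes\theta_P$, $\theta_Y\otimes\beta_j$, $\alpha\otimes\beta_j\otimes SO(3)$-cells. On the summands $\theta_Y\otimes\beta_j$ and $\alpha\otimes\theta_P$, the induced operators restrict to those of $P$ and $Y$ respectively. The one genuinely new structure comes from gluing at the reducible, namely the terms $\delta_1$ of each factor.

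The key steps are as follows.

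(1) We set up the map $\Psi$ from the relevant summand of the model into $C(Y\#P)$. We verify, via the analogues of \eqref{lambda0-bdry-relation}--\eqref{lambda2-bdry-relation} for $W$, that $\Psi$ intertwines $d_1$ and intertwines $d_2$ and $d_3$ up to homotopy. We also verify that $\delta_1^{Y\#P}\circ\Psi$ recovers $\delta_1^Y\otimes\delta_1^P$ plus controlled error terms.

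(2) \emph{Case $q_3(Y)=k\ge1$.} Take $\zeta,\eta$ as in \eqref{q3-pos-def-ch-cx} for $Y$. Put $\zeta'=\Psi(\zeta\otimes\beta_0)$ plus a correction, so that $d_3$ acting on $\zeta'$ shifts it toward a class that eventually hits $\theta_Y\otimes\beta_0$ after $k$ steps. This should give $\delta_1 d_3^{k}\zeta'\neq0$ and the lower vanishing conditions, hence $q_3(Y\#P)\ge k+1$.

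(3) \emph{Case $q_3(Y)=k\le0$.} Start from the relation $\sum_{i} d_3^i\delta_4(a_i)=d_1\zeta+d_2\eta$ in $C(Y)$. Tensor it with $\beta_0$. We show that the element $\delta_4$-image arising from $\theta_Y\otimes\beta_0$ gives the needed relation with one fewer power of $d_3$. When $k=0$, we instead obtain a cycle $\zeta'$ with $\delta_1\zeta'\ne0$ and $d_2\zeta'$ exact, namely essentially $\theta_Y\otimes\beta_0$ corrected by $\delta_4$-terms.

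The main obstacle is step (1), specifically controlling the operator $d_3$ on the product-with-$SO(3)$ generators arising from gluing along reducibles. This is where the model computation analogous to \cite[Claim 11.1]{Fr:q2} enters, and we must check that the error terms are killed by the conditions $\delta_1 d_3^i\zeta=0$ and $d_2\zeta=d_1\eta$. We would first try to import Fr{\o}yshov's $q_2$ computation from \cite[Section 9]{Fr:q2} verbatim, swapping the roles of $d_2$ and $d_3$, and check which identities survive mod 2.
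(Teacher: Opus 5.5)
Your outline has the right overall shape: count instantons on the pair of pants $V\colon Y\sqcup P\to Y\#P$ with the $P$-end pinned at $\beta_0$, and let the extra $1$ come from gluing through reducibles. There is a genuine gap, however. The mechanism you state in step (1) is the wrong one, and everything that carries content is left as ``this should give''. (You also need no Fukaya-type model of $C(Y\#P)$ compatible with $d_2,d_3$; counts on $V$ suffice.)

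\textbf{Step (1) cannot produce the shift.} Suppose $\Psi$ intertwined $d_3$ up to homotopy and $\delta_1^{Y\#P}\Psi(\zeta\otimes\beta_0)$ were $\delta_1^Y(\zeta)\delta_1^P(\beta_0)$ plus errors. Then $\delta_1^\#(d_3^\#)^i\zeta'=\delta_1d_3^i\zeta$, which is a height-zero statement giving only $q_3(Y\#P)\ge q_3(Y)$; this is in tension with your own step (2). The actual structure is the reverse. Write $\lambda_{00}$ for the count on $V$ with no cut-down, and $\lambda_{10}$ for the count cut down by $w_2$ along a path joining the basepoints of $Y$ and $P$. Write $\Delta_0$ and $\Delta_0'$ for the counts with outgoing end $\theta^\#$, respectively incoming end $\theta_Y$, and $\Delta_1$ for the $w_2$-cut-down of $\Delta_0$. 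Then $\delta_1^\#\lambda_{00}=\Delta_0d_1$ and $\delta_1^\#\lambda_{10}=\Delta_1d_1+\Delta_0d_2$, so $\delta_1^\#$ sees no $\delta_1^Y$ at all. Meanwhile $d_3$-equivariance fails by a term through $\theta_Y$:
\[
d_3^\#\lambda_{10}+\lambda_{10}d_3=d_1^\#\mu_{10}+\mu_{10}d_1+\mu_{00}d_2+\lambda_{00}d_4+\Delta_0'\delta_1 .
\]
The $\Delta_0'\delta_1$ term is where a Claim~11.1-type model count enters. The decisive normalization is a separate identity, $\delta_1^\#\Delta_0'+\Delta_0\delta_4=1$. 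It comes from the one extra end of $M(V;\theta,\beta_0,\theta^\#)_1$, obtained by gluing the unique point of $\breve M(P;\beta_0,\theta)$ to the regular trivial connection on $V$. This is the only place $\delta_1(\beta_0)=1$ is used, and your plan never isolates it.

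\textbf{The test element is not the right one.} The natural reading $\Psi(\zeta\otimes\beta_0)=\lambda_{00}\zeta$ sits in degree $|\zeta|+4$, not the required $|\zeta|+3$. Moreover, from $d_3^\#\lambda_{00}+\lambda_{00}d_3=d_1^\#\mu_{00}+\mu_{00}d_1$ one gets $\delta_1^\#(d_3^\#)^i\lambda_{00}\zeta=\Delta_0d_1d_3^i\zeta=0$ for every $i$, so no correction in that degree helps. The element that works is $\zeta^\#=\lambda_{10}\zeta+\lambda_{00}\eta$, where $\eta$ is the witness of $d_2\zeta=d_1\eta$. The $\lambda_{00}\eta$ term is exactly what makes $d_2^\#\zeta^\#$ a $d_1^\#$-boundary. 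One then inducts on the number of $d_3^\#$'s pushed past $\lambda_{10}$ and $\lambda_{00}$ using the relation displayed above:
\begin{itemize}
\item the $\mu$-terms cancel by $d_1\zeta=0$, $d_2\zeta=d_1\eta$, and $d_1d_4+d_4d_1+d_2d_3+d_3d_2=\delta_4\delta_1$;
\item the leftover $\delta_4\delta_1d_3^m\zeta$ and $\Delta_0'\delta_1d_3^m\zeta$ terms vanish for $m<k-1$;
\item at $i=k$ what survives is $(\delta_1^\#\Delta_0'+\Delta_0\delta_4)\delta_1d_3^{k-1}\zeta=1$.
\end{itemize}

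\textbf{Step (3) is only asserted, but can be bypassed.} For $k=0$, use $\Delta_0'(1)+\lambda_{00}\zeta+\lambda_{10}\eta$, where $\delta_4(1)=d_1\zeta+d_2\eta$ and $d_1\eta=0$. For $k<0$, set $f(Y)=q_3(Y\#P)-q_3(Y)$. Then $f(Y)=f(-Y\#-P)$ by $q_3(-Y)=-q_3(Y)$ and homology-cobordism invariance. So the cases $q_3(Y)\ge0$ and $q_3(-Y\#-P)\ge0$ suffice, and if both are negative then $f(Y)\ge2$ trivially.
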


We will prove this by a small case analysis: the most difficult case is $q_3(Y) \ge 1$; the argument for $q_3(Y) = 0$ is a mild variation, and $q_3(Y) \le -1$ follows purely formally from these two cases.

Given any pair of 3-manifolds $Y$ and $Y'$, there is a {\it pair-of-pants} cobordism $Y\sqcup Y'\to Y\#Y'$, and we denote this cobordism by $V$ in the special case $Y'=P$. Equip $V$ with three paths: $\gamma_{01}$ connects the basepoints of $Y$ and $P$, $\gamma_{02}$ connects the basepoints $y, y_\#$ of $Y$ and $Y \# P$, and $\gamma_{12}$ connects the basepoints $y_P, y_\#$ of $P$ and $Y \# P$. We extend these to maps $\gamma_{ij}^+:\mathbb R \to V^+$ by demanding that $\gamma^+_{02}(t) = (t, y)$ for $t \le -1$ and $\gamma^+_{02}(t) = (t, y_\#)$ for $t \ge 1$, and similarly for the other two paths. The method of proof uses operators which count instantons on $V$ with prescribed limits on the Poincar\'e sphere, cut down by sections over the bundles associated with $\gamma_{ij}$. As in Section \ref{Fro-approach}, for $\alpha \in \mathfrak C(Y)$, $\alpha' \in \mathfrak C(P)$, $\alpha_\# \in \mathfrak C(Y \# P)$, we may form the configuration space $\mathcal B^*(V;\alpha,\alpha',\alpha_\#)$, and three basepoint fibrations 
\[
  \mathbb E(\gamma_{02}),\, \mathbb E(\gamma_{01}),\,\mathbb E(\gamma_{12}) \to \mathbb R \times \mathcal B^*(V;\alpha,\alpha',\alpha_\#).
\]
Choose generic sections $s^{ij}_k$ for $k \in \{0,1,2\}$ of these bundles which restrict to the generic sections of the basepoint fibrations over $Y$, $P$, and $Y \# P$. Using these generic sections, define the moduli spaces 
\begin{align*}
M^{(2)}_{ij}(V; \alpha, \alpha', \alpha_\#) &= \{(t, A) \in \mathbb R \times M(V; \alpha, \alpha', \alpha_\#) \mid s^{ij}_1(t, A)\wedge s^{ij}_2(t,A)=0\},\\
M^{(3)}_{ij}(V;\alpha, \alpha', \alpha_\#) &= \{(t,A)\in\Bbb R\times M(V;\alpha, \alpha', \alpha_\#)\mid s^{ij}_0(t,A)=0\}.
\end{align*}
We also introduce additional moduli spaces, subspaces of $\Bbb R^2\times M(V;\alpha, \alpha', \alpha_\#)$:
\begin{align*}
    M^{(2,2)}_{01,12}(V; \alpha, \alpha', \alpha_\#) &= \{(t_1, t_2, A) \mid s^{01}_1(t_1, A)\wedge s^{01}_2(t_1,A) = 0,\\
    &\hspace{4cm}s^{12}_1(t_2,A) \wedge s^{12}_2(t_2,A) = 0\},\\
    M^{(2,3)}_{01,02}(V;\alpha, \alpha', \alpha_\#) &= \{(t_1,t_2,A) \mid s^{01}_1(t_1, A)\wedge s^{01}_2(t_1,A) =0,\, s^{02}_0(t_2, A) = 0\}.
\end{align*}
In each case, if the path $\gamma_{ij}$ is involved in the definition of the moduli space, then we assume that the limiting perturbed flat connections on the ends of $V$ interacting with the path are irreducible. Using notation similar to the previous section, we denote the $d$-dimensional part of each of these moduli spaces with the subscript $d$.

Recall from Example \ref{ex:Poincare} that $\beta_0$ is the generator of degree $1$ in $C(P)$. Define operators $\lambda_{ij}: C(Y) \to C(Y \# P)$ for $i,j\in\{0,1\}$ as follows:
\[
    \langle\lambda_{ij}\alpha, \alpha^\#\rangle = \left\{
    \begin{array}{ll}
        \#M(V;\alpha,\beta_0,\alpha^\#)_0 & (i,j)=(0,0),\\
        \#M^{(2)}_{01}(V;\alpha,\beta_0,\alpha^\#)_0 & (i,j)=(1,0),\\
        \#M^{(2)}_{12}(V;\alpha,\beta_0,\alpha^\#)_0 & (i,j)=(0,1),\\
        \#M^{(2,2)}_{01,12}(V;\alpha,\beta_0,\alpha^\#)_0 & (i,j)=(1,1).
    \end{array}\right.
\]

\begin{lemma}\label{lemma:d2-after-lambda}
    The operators $\lambda_{ij}$ above satisfy the relations
  \begin{align}
	d_1^\#\lambda_{00}+\lambda_{00}d_1&= 0 \label{lambda00-bdry-relation}\\
	 d_1^\#\lambda_{10}+\lambda_{10}d_1 + \lambda_{00}d_2&= 0 \label{lambda10-bdry-relation}\\
	d_1^\#\lambda_{01}+d_2^\#\lambda_{00} + \lambda_{01}d_1&= 0 \label{lambda01-bdry-relation}\\
	d_1^\#\lambda_{11} +d_2^\#\lambda_{10} + \lambda_{11}d_1 + \lambda_{01}d_2&= 0 \label{lambda11-bdry-relation}
\end{align}
\end{lemma}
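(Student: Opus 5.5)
Each relation comes, as for \eqref{lambda0-bdry-relation}--\eqref{lambda2-bdry-relation}, from counting the ends of a compactified one-dimensional moduli space. For fixed $\alpha\in\mathfrak C^{\rm irr}(Y)$ and $\alpha^\#\in\mathfrak C^{\rm irr}(Y\# P)$ we look at $M(V;\alpha,\beta_0,\alpha^\#)_1$, $M^{(2)}_{01}(V;\alpha,\beta_0,\alpha^\#)_1$, $M^{(2)}_{12}(V;\alpha,\beta_0,\alpha^\#)_1$ and $M^{(2,2)}_{01,12}(V;\alpha,\beta_0,\alpha^\#)_1$. In each case we list the broken-trajectory ends and check that, mod $2$, their counts give the coefficient of $\alpha^\#$ in the left-hand side. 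The input from $P$ is simple. By Example \ref{ex:Poincare}, $C(P)$ has generators $\beta_0,\beta_1$ in degrees $1,5$, and $d_1,d_2,d_3$ vanish on $C(P)$. So every term in which a trajectory breaks on the $P$-end between $\beta_0$ and an irreducible vanishes. The only new possibility is breaking from $\beta_0$ to $\theta_P$, and this is what has to be ruled out or shown to contribute zero.

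\emph{Unconstrained and single-cut spaces.} For \eqref{lambda00-bdry-relation} the ends of $M(V;\alpha,\beta_0,\alpha^\#)_1$ are breakings on the $Y$-end, giving $\lambda_{00}d_1$, and on the $Y\#P$-end, giving $d_1^\#\lambda_{00}$. Breakings on the $P$-end give $\lambda_{00}$ composed with $d_1^P\beta_0=0$.

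For \eqref{lambda10-bdry-relation} we use $\gamma_{01}$, which runs from $Y$ to $P$. As $t\to-\infty$, the cut $s^{01}_1\wedge s^{01}_2=0$ migrates into the cylinder on $Y$ and produces $\lambda_{00}d_2$. As $t\to+\infty$ it migrates into the cylinder on $P$ and produces $\lambda_{00}(\,\cdot\,,d_2^P\beta_0)=0$. Breakings at the $Y$- and $Y\#P$-ends give $\lambda_{10}d_1$ and $d_1^\#\lambda_{10}$.

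For \eqref{lambda01-bdry-relation} we use $\gamma_{12}$. The cut escapes either into $P$, giving zero, or into $Y\#P$, giving $d_2^\#\lambda_{00}$.

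\emph{The two-parameter space.} For \eqref{lambda11-bdry-relation}, $M^{(2,2)}_{01,12}$ has ends where $t_1\to-\infty$, giving $\lambda_{01}d_2$, and where $t_2\to+\infty$, giving $d_2^\#\lambda_{10}$. The ends where $t_1\to+\infty$ or $t_2\to-\infty$ push a cut into $P$ and give $d_2^P=0$. Ordinary breakings give $d_1^\#\lambda_{11}+\lambda_{11}d_1$. One further type of end needs care: both cuts escaping into the $P$-end at once. There they would contribute a $d_2^Pd_2^P$-type term or a cylinder operator on $P$ analogous to $d_4$. All of these vanish, either because $C(P)$ is concentrated in degrees $1,5$ and an operator of the relevant degree cannot land on a generator, or by a direct grading argument.

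\emph{Main obstacle: reducible breakings at $\theta_P$.} These are ends of the form $\breve M(P;\beta_0,\theta_P)_0$ glued to an instanton on $V$ with trivial limit on $P$. Such ends would produce $\delta_1$-type terms, compare \eqref{Delta-0-bdry-relation}, and no such terms appear in the lemma. My first approach is a dimension count. Gluing at a reducible introduces an $SO(3)$ gluing parameter, so the stratum $\breve M(P;\beta_0,\theta_P)_0\times M(V;\alpha,\theta_P,\alpha^\#)$ has codimension at least $2$ in a one-dimensional space unless $M(V;\alpha,\theta_P,\alpha^\#)$ has negative expected dimension. Since $\theta_P$ is unobstructed on $P$, we need $\dim M(V;\alpha,\theta_P,\alpha^\#)+4\le 1$ for such a boundary point. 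The cut versions then also have dimension too small. It remains to check that the $SO(3)$-orbit of gluing parameters does not meet the cut loci generically, along the lines of \cite[Section 9]{Fr:q2}. I expect this to be the delicate step, especially for the cut spaces whose paths $\gamma_{01},\gamma_{12}$ end at $P$, where the basepoint fibration degenerates at a reducible limit. The convention that the limits on ends touched by $\gamma_{ij}$ are irreducible handles part of this. The rest is a transversality and compactness statement that these strata are empty for generic perturbations.
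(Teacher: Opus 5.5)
Your proposal is correct and is essentially the paper's argument: count the ends of the four one-dimensional cut-down moduli spaces on $V$, note that no breaking along $P$ at an irreducible can occur because no irreducible $\beta$ on $P$ has $0<|\beta_0|-|\beta|\le 3$, and read off the remaining ends as the stated compositions. The step you flag as delicate is moot: since $\alpha$ is irreducible, every connection in $M(V;\alpha,\theta_P,\alpha^\#)$ is irreducible and hence regular for a generic perturbation, and your own count gives it expected dimension $k-4<0$ (where $k\le 3$ is the dimension of the underlying unconstrained space), so strata broken at $\theta_P$ are empty and you need no analysis of the gluing parameters against the cut loci.
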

\begin{proof}
    These four relations can be verified by analyzing the ends of $1$-dimensional spaces respectively used in the definition of the operators $\lambda_{00}$, $\lambda_{10}$, 
    $\lambda_{01}$ and $\lambda_{11}$.
    We argue \eqref{lambda11-bdry-relation} in detail, which is obtained by counting ends of $M^{(2,2)}_{01,12}(V;\alpha,\beta_0,\alpha^\#)_1$. This moduli space is a subspace of 
    $\mathbb R^2 \times M(V; \alpha, \beta_0, \alpha^\#)_3$, and one should interpret elements of the space $\mathbb R^2 \times M(V; \alpha, \beta_0, \alpha^\#)$ 
    as a choice of point on each of $\gamma_{01}^+$ and $\gamma_{12}^+$, as well as an ASD connection on $V^+$. This moduli space admits a compactification in terms of broken solutions; for any broken solution, in addition to a broken trajectory on $V^+$, one chooses a point on each of $\gamma_{01}^+$ and $\gamma_{12}^+$, where the basepoint may be in $V^+$ or a cylinder on one of the boundary components of $V$. First we observe that there is no room for a broken solution where breaking happens along $P$. 
    For that to happen, after forgetting the basepoints, the involved ASD connections should converge to an element of the form 
    \[\breve M(P;\beta_0,\beta)_d\times M(V;\alpha,\beta,\alpha^\#)_{2-d}\]
    for $0<d\leq 2$. But there is no irreducible flat connection $\beta$ on $P$ with $0 < |\beta_0| - |\beta| \le 3$.
    
    Next, we analyze the ends of the moduli space $M^{(2,2)}_{01,12}(V;\alpha,\beta_0,\alpha^\#)_1$, depending on the behavior of the basepoints. The ends for which neither basepoint goes to infinity take the forms
    \[
      \breve M(Y;\alpha,\beta)_0\times M^{(2,2)}_{01,12}(V;\beta,\beta_0,\alpha^\#)_0,\hspace{0.8cm}
      M^{(2,2)}_{01,12}(V;\alpha,\beta_0,\beta^\#)_0\times \breve M(Y \# P;\beta^\#,\alpha^\#)_0.
    \]
    Counting these two types of ends for all possible choices of $\beta$ and 
    $\beta^\#$ respectively gives $\langle \lambda_{11}d_1 (\alpha),\alpha^\#\rangle$ and $\langle d_1^\#\lambda_{11}(\alpha),\alpha^\#\rangle$.
    
    The ends for which one of the basepoints goes to infinity off one of the two ends $Y$ and $Y\#P$ are of the form:
    \[
      M^{(2)}(Y;\alpha,\beta)_0\times M^{(2)}_{12}(V;\beta,\beta_0,\alpha^\#)_0,\hspace{0.8cm}
      M^{(2)}_{01}(V;\alpha,\beta_0,\beta^\#)_0\times M^{(2)}(Y \# P;\beta^\#,\alpha^\#)_0.
    \]
    Counting these two types of ends for all possible choices of $\beta$ and 
    $\beta^\#$ respectively gives $\langle \lambda_{01}d_2 (\alpha),\alpha^\#\rangle$ and $\langle d_2^\#\lambda_{10}(\alpha),\alpha^\#\rangle$.
\end{proof}

The relations of Lemma \ref{lemma:d2-after-lambda} can be written in the following compact matrix form, viewed as an identity of linear maps from $C(Y)\oplus C(Y)$ to $C(Y\#P)\oplus C(Y\#P)$:
\begin{equation}\label{lambdaij-matrix-rel}
	\left[\begin{array}{cc}d_1^\#&0\\d_2^\#&d_1^\#\end{array}\right]\left[\begin{array}{cc}\lambda_{10}&\lambda_{00}\\\lambda_{11}&\lambda_{01}\end{array}\right]=
	\left[\begin{array}{cc}\lambda_{10}&\lambda_{00}\\\lambda_{11}&\lambda_{01}\end{array}\right]\left[\begin{array}{cc}d_1&0\\d_2&d_1\end{array}\right]
\end{equation}
Towards proving Proposition \ref{proposition:superadditivity}, fix $\zeta,\eta\in C(Y)$ satisfying \eqref{q3-pos-def-ch-cx}. In particular, $d_1\zeta=0$ and  $d_2\zeta=d_1\eta$. Define $\zeta^\#,\eta^\#\in C(Y\#P)$ as follows:
\begin{equation}\label{zeta-eta-sharp}
  \left[\begin{array}{c}\zeta^\#\\\eta^\#\end{array}\right]=
  \left[\begin{array}{cc}\lambda_{10}&\lambda_{00}\\\lambda_{11}&\lambda_{01}\end{array}\right]\left[\begin{array}{c}\zeta\\\eta\end{array}\right]
\end{equation}
Then \eqref{lambdaij-matrix-rel} implies that $d_1^\# \zeta_\# = 0$ and $d_2^\# \zeta^\# = d_1^\# \eta^\#$.

To relate $\delta_1^\#(d_3^\#)^i\zeta^\#$ to $\delta_1d_3^i\zeta$, we need to introduce another set of operators. For this purpose, we define $\mu_{i0}\colon C(Y)\to C(Y\#P)$ for $i\in\{0,1\}$:
\[
    \langle\mu_{i0}\alpha,\alpha^\#\rangle = \left\{
        \begin{array}{ll}
            \#M^{(3)}_{02}(V;\alpha,\beta_0,\alpha^\#)_0 & i = 0,\\
            \#M^{(2,3)}_{01,02}(V;\alpha,\beta_0,\alpha^\#)_0 & i = 1.
        \end{array}
    \right.
\]
We also define $\Delta_{0},\Delta_{1}\colon C(Y)\to \Bbb F_2$ as
    \[
        \Delta_{0}(\alpha) = \#M(V;\alpha,\beta_0,\theta^\#)_0, \quad \Delta_{1}(\alpha) = \#M^{(2)}_{01}(V;\alpha,\beta_0,\theta^\#)_0,
    \]
and the operators $ \Delta'_{0}, \Delta'_1 \colon \Bbb F_2\to C(Y\#P)$ as 
\[
   \langle\Delta'_{0}(1),\alpha^\#\rangle = \#M(V;\theta,\beta_0,\alpha^\#)_0, \quad \langle\Delta_{1}'(1), \alpha^\#\rangle = \#M^{(2)}_{12}(V;\theta,\beta_0,\alpha^\#)_0.
\]
\begin{lemma}\label{lemma:d3-after-lambda}
    The operators $\mu_{i0}$, $\Delta_{i}$, and $\Delta'_{i}$ described above satisfy the relations
  \begin{align}
	d_1^\#\mu_{00}+d_3^\#\lambda_{00}+ \mu_{00}d_1 + \lambda_{00}d_3&= 0 \label{mu00-bdry-relation}\\
	  d_1^\#\mu_{10}+ d_3^\#\lambda_{10}+\mu_{10}d_1 + \mu_{00}d_2 + \lambda_{10}d_3 + \lambda_{00}d_4 + \Delta'_{0}\delta_1&= 0 \label{mu10-bdry-relation}\\
	 \Delta_{0}d_1+\delta_1^\#\lambda_{00}&= 0 \label{Delta0-bdry-relation}\\
	 \Delta_{1}d_1+ \Delta_{0}d_2 +\delta_1^\#\lambda_{10}&= 0 \label{Delta1-bdry-relation}\\
	 d_1^\#\Delta'_{0}+ \lambda_{00}\delta_{4}&= 0 \label{Delta0'-bdry-relation}\\
	 d_1^\#\Delta'_{1}+d_2^\#\Delta'_{0}+ \lambda_{01}\delta_{4} &= 0 \label{Delta1'-bdry-relation}\\
	  \delta_1^\#\Delta'_{0}+\Delta_{0}\delta_{4}&= 1 \label{height1-bdry-relation}
\end{align}
\end{lemma}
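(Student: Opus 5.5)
Each relation in Lemma~\ref{lemma:d3-after-lambda} comes from counting, mod 2, the ends of a compactified $1$-dimensional moduli space on $V^+$, exactly as in Lemma~\ref{lemma:d2-after-lambda}. We take the $1$-dimensional parts of
\[
M^{(3)}_{02}(V;\alpha,\beta_0,\alpha^\#),\quad M^{(2,3)}_{01,02}(V;\alpha,\beta_0,\alpha^\#),\quad M(V;\alpha,\beta_0,\theta^\#),\quad M^{(2)}_{01}(V;\alpha,\beta_0,\theta^\#),
\]
\[
M(V;\theta,\beta_0,\alpha^\#),\quad M^{(2)}_{12}(V;\theta,\beta_0,\alpha^\#),\quad M(V;\theta,\beta_0,\theta^\#).
\]
These give, in order, \eqref{mu00-bdry-relation}--\eqref{height1-bdry-relation}.

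The first step, common to all seven relations, rules out breaking along the $P$ end, as before. Breaking there requires a trajectory from $\beta_0$ to some $\beta\in\mathfrak C(P)$ of dimension between $1$ and $3$. The irreducible gradings on $P$ are $1$ and $5$, and $\theta$ has grading $\equiv 0$. By \eqref{dim-mod-8}, $\breve M(P;\beta_0,\theta)$ has dimension $\equiv 0$, so in a $1$-dimensional family it can only appear as a $0$-dimensional factor, i.e.\ with dimension $0$ and hence energy-zero-dimensional. The possibility that needs checking is breaking $\beta_0\to\theta$ on $P$ with the remaining piece on $V$ having limit $\theta$ on $P$. This configuration is reducible along $P$, and the index bookkeeping (the obstruction/gluing parameter $SO(3)$ contributes $3$) should push it out of dimension $\le 1$. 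I would verify this by an index computation, and I expect it to work as in Fr{\o}yshov's arguments.

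Next I enumerate the remaining ends in each case.

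For \eqref{mu00-bdry-relation}, the ends are: breaking on $Y$ or on $Y\#P$ with the $\gamma_{02}$ point in $V$, which gives $\mu_{00}d_1$ and $d_1^\#\mu_{00}$; and the cut-down point running off along $\gamma_{02}^\pm$, which gives $\lambda_{00}d_3$ and $d_3^\#\lambda_{00}$. Breaking through $\theta$ on $Y$ or on $Y\#P$ is excluded by dimension, since it costs $3$.

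For \eqref{mu10-bdry-relation} the same analysis applies, with the two basepoints escaping independently. The new terms are $\mu_{00}d_2$ (the $\gamma_{01}$ point escapes down $Y$) and $\lambda_{10}d_3$, $d_3^\#\lambda_{10}$. There is also the end where both points escape down $Y$, which gives $\lambda_{00}d_4$. The term $\Delta'_0\delta_1$ arises exactly as $\delta_4\delta_1$ does in \eqref{d4-rel}. Both basepoints run to $-\infty$ along $Y$ while the trajectory breaks through $\theta$ on $Y$; the gluing parameter is $SO(3)\times\mathbb R^2$, and Fr{\o}yshov's model count \cite[Claim 11.1]{Fr:q2} shows this end is counted by $\#\breve M(Y;\alpha,\theta)_0\cdot\#M(V;\theta,\beta_0,\alpha^\#)_0$. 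Here I must check that the relevant geometry matches the model, namely that the two paths $\gamma_{01},\gamma_{02}$ coincide along the $Y$ end. This is the main obstacle.

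Relations \eqref{Delta0-bdry-relation}--\eqref{Delta1'-bdry-relation} are simpler. When one end is trivial the path hitting that end is not used, and reducible breaking at $\theta^\#$ (or at $\theta$ on $Y$) is excluded by dimension. For instance, in \eqref{Delta0-bdry-relation} the ends are $\breve M(Y;\alpha,\beta)_0\times M(V;\beta,\beta_0,\theta^\#)_0$ and $M(V;\alpha,\beta_0,\beta^\#)_0\times\breve M(Y\#P;\beta^\#,\theta^\#)_0$.

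Finally, \eqref{height1-bdry-relation} counts the ends of $M(V;\theta,\beta_0,\theta^\#)_1$. Breaking through irreducibles on $Y$ gives $\Delta_0\delta_4$, and breaking through irreducibles on $Y\#P$ gives $\delta_1^\#\Delta'_0$. The constant $1$ comes from the end where the connection converges to the trivial connection on the $Y$ side glued to $\breve M(P;\beta_0,\theta)$ over the pair of pants. Equivalently, this is the standard $V$ with $Y$ capped reducibly, mirroring the $\delta_1$ term in \eqref{Delta-0-bdry-relation}: the unique reducible solution is cut out transversely, and it contributes $\delta_1^P(\beta_0)=1$ by Example~\ref{ex:Poincare}.
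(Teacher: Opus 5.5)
Your strategy is the paper's, and your term-by-term identifications are right, but the step you call common to all seven relations is false as stated.

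\textbf{The blanket exclusion of breaking along $P$.} A breaking $\beta_0\to\theta$ along $P$ costs an $SO(3)$ gluing parameter only when the remaining piece on $V$ is irreducible, i.e.\ when the limits on $Y$ and $Y\#P$ are not both trivial. In \eqref{height1-bdry-relation} the remaining piece is the trivial connection on all of $V$, whose stabilizer is all of $SO(3)$. So the gluing parameter is a point, and the end $\breve M(P;\beta_0,\theta)_0\times(T,\infty)$ is exactly one-dimensional. This is precisely the end that produces the $1$. Your first and last paragraphs therefore contradict each other. The exclusion must be restricted to irreducible $V$-pieces, where neck length plus $SO(3)$ cost four dimensions. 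That disposes of $P$-breaking in every relation whose ambient moduli space has dimension at most $3$.

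\textbf{The case of \eqref{mu10-bdry-relation}.} The corrected count does not handle this relation, whose ambient space is $M(V;\alpha,\beta_0,\alpha^\#)_4$. Three configurations have the right dimension there:
\begin{itemize}
\item the face $\breve M(P;\beta_0,\beta_1)_3\times M(V;\alpha,\beta_1,\alpha^\#)_0$, which the gradings $1,5$ do not exclude, since $|\beta_0|-|\beta_1|-1\equiv 3 \bmod 8$;
\item the ends modeled on $(T,\infty)\times SO(3)\times\breve M(P;\beta_0,\theta)_0\times M(V;\alpha,\theta,\alpha^\#)_0$, with trivial limit on $P$;
\item the ends modeled on $(T,\infty)\times SO(3)\times M(V;\alpha,\beta_0,\theta^\#)_0\times\breve M(Y\#P;\theta^\#,\alpha^\#)_0$, which is exactly what produces $\delta'_4\Delta_0$ in \eqref{lambda3-bdry-relation}.
\end{itemize}
These contribute nothing because of the cut-down geometry. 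The path $\gamma_{02}$ never enters the $P$ end, and $\gamma_{01}$ never enters the $Y\#P$ end. So one of the conditions $s^{02}_0=0$ (codimension $3$) or $s^{01}_1\wedge s^{01}_2=0$ (codimension $2$) would have to hold on a rigid piece with a single free position parameter, which generically fails. Hence the analogue of Fr{\o}yshov's model count occurs only at $\theta$ on $Y$, the one neck both paths traverse. You need this argument to show that \eqref{mu10-bdry-relation} has no further terms.

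\textbf{Your stated ``main obstacle''.} This part is automatic. Both $\gamma_{01}^+$ and $\gamma_{02}^+$ run along $(-\infty,-1]\times\{y\}$ by construction, and both families of sections restrict there to those of $Y$. So the end breaking through $\theta$ on $Y$ is literally the $d_4$ model.
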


\begin{proof}
	The relations \eqref{mu00-bdry-relation}-\eqref{Delta1'-bdry-relation} can be verified by analyzing the ends of the $1$-dimensional counterpart of the moduli spaces involved in the definition of the maps 
	$\mu_{00}$, $\mu_{10}$, $\Delta_{0}$, $\Delta_{1}$, $\Delta'_{0}$ and $\Delta'_1$. The proof of \eqref{mu00-bdry-relation} is similar to the proofs of 
	\eqref{di-rel} and \eqref{lambda2-bdry-relation}. The proof of \eqref{mu10-bdry-relation} is analogous to the proofs of \eqref{d4-rel} and \eqref{lambda3-bdry-relation}; that there are no contributions from solutions broken along $P$ follows from an index argument akin to that used in the proof of Lemma \ref{lemma:d2-after-lambda}. The proofs of \eqref{Delta0-bdry-relation} and \eqref{Delta0'-bdry-relation} are similar to the proofs of \eqref{Delta-0-bdry-relation} and \eqref{Delta-3-bdry-relation}, and a slight variation gives \eqref{Delta1-bdry-relation} and \eqref{Delta1'-bdry-relation}.
	To prove \eqref{height1-bdry-relation}, consider the ends of the moduli space $M(V;\theta,\beta_0,\theta^\#)_1$. In addition to the ends of the form 
	\[
	\breve M(Y;\theta,\alpha)_0\times M(V;\alpha,\beta_0,\theta^\#)_0,\hspace{1cm}M(V;\theta,\beta_0,\alpha^\#)_0\times \breve M(Y\#P;\alpha^\#,\theta^\#)_0,
	\]
	which give the terms $\Delta_0 \delta_4$ and $\delta_1^\# \Delta_0'$, there is another type of end obtained by gluing the elements of $\breve M(P;\beta_0,\theta)$ to the trivial connection on $V$. Because $\breve M(P; \beta_0, \theta)$ is a singleton, we have established \eqref{height1-bdry-relation}.	
\end{proof}

We will handle the easiest case of Proposition \ref{proposition:superadditivity} first.
	
\begin{lemma}\label{lemma:case0}
	If $q_3(Y) \ge 0$, then $q_3(Y \# P) \ge 1$. 
\end{lemma}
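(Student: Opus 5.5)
The plan is to produce explicit chains $\zeta^\#,\eta^\#\in C(Y\#P)$ satisfying \eqref{q3-pos-def-ch-cx} with $k=1$. For $k=1$ the vanishing conditions for $0\le i\le k-2$ are empty, so we need only
\[
d_1^\#\zeta^\#=0,\qquad d_2^\#\zeta^\#=d_1^\#\eta^\#,\qquad \delta_1^\#\zeta^\#\neq 0 .
\]
These will be built from the operators of Lemmas \ref{lemma:d2-after-lambda} and \ref{lemma:d3-after-lambda}, and the conditions checked purely algebraically from the relations stated there.

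\textbf{Input data.} The hypothesis $q_3(Y)\ge 0$ is \eqref{q3-neg-def-ch-cx} with $k=0$, which forces $a_0=1$. It supplies $\zeta,\eta\in C(Y)$ with
\[
\delta_4(1)=d_1\zeta+d_2\eta,\qquad d_1\eta=0 .
\]

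\textbf{Definition of $\zeta^\#$.} The key term is $\Delta'_0(1)$, because \eqref{height1-bdry-relation} is the only relation producing the constant $1$. Its boundary is $d_1^\#\Delta'_0(1)=\lambda_{00}\delta_4(1)$ by \eqref{Delta0'-bdry-relation}. Substituting $\delta_4(1)=d_1\zeta+d_2\eta$, we cancel $\lambda_{00}d_1\zeta$ using $\lambda_{00}\zeta$ and \eqref{lambda00-bdry-relation}. We cancel $\lambda_{00}d_2\eta$ using $\lambda_{10}\eta$ and \eqref{lambda10-bdry-relation}, together with $d_1\eta=0$. So I set
\[
\zeta^\#=\Delta'_0(1)+\lambda_{00}\zeta+\lambda_{10}\eta,
\]
and these three relations give $d_1^\#\zeta^\#=0$.

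\textbf{Nonvanishing of $\delta_1^\#\zeta^\#$.} Apply $\delta_1^\#$ termwise, using \eqref{height1-bdry-relation}, \eqref{Delta0-bdry-relation} and \eqref{Delta1-bdry-relation}. The result is
\[
\delta_1^\#\zeta^\#=1+\Delta_0\delta_4(1)+\Delta_0d_1\zeta+\Delta_1d_1\eta+\Delta_0d_2\eta .
\]
Since $d_1\eta=0$, this equals $1+\Delta_0\bigl(\delta_4(1)+d_1\zeta+d_2\eta\bigr)=1\neq 0$.

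\textbf{Definition of $\eta^\#$.} Apply $d_2^\#$ termwise:
\begin{itemize}
\item By \eqref{Delta1'-bdry-relation}, $d_2^\#\Delta'_0(1)=d_1^\#\Delta'_1(1)+\lambda_{01}d_1\zeta+\lambda_{01}d_2\eta$.
\item By \eqref{lambda01-bdry-relation}, $d_2^\#\lambda_{00}\zeta=d_1^\#\lambda_{01}\zeta+\lambda_{01}d_1\zeta$.
\item By \eqref{lambda11-bdry-relation} and $d_1\eta=0$, $d_2^\#\lambda_{10}\eta=d_1^\#\lambda_{11}\eta+\lambda_{01}d_2\eta$.
\end{itemize}
Summing, the $\lambda_{01}$ terms cancel in pairs mod 2. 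So I take
\[
\eta^\#=\Delta'_1(1)+\lambda_{01}\zeta+\lambda_{11}\eta,
\]
which gives $d_2^\#\zeta^\#=d_1^\#\eta^\#$. Hence $q_3(Y\#P)\ge1$.

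\textbf{Main obstacle.} I expect no real difficulty beyond finding the right combination $\zeta^\#$; once $\Delta'_0(1)$ is identified as the source of the constant $1$, the remaining terms are forced. The only care needed is bookkeeping of the mod 2 cancellations, together with confirming that each relation is applied with operators of matching gradings.
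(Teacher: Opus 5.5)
Your proposal is correct and is essentially the paper's proof: the same $\zeta^\# = \Delta'_0(1)+\lambda_{00}\zeta+\lambda_{10}\eta$ and $\eta^\# = \Delta'_1(1)+\lambda_{01}\zeta+\lambda_{11}\eta$, with the three conditions checked from the same relations of Lemmas \ref{lemma:d2-after-lambda} and \ref{lemma:d3-after-lambda}. Your termwise computations also check out, including the use of $d_1\eta=0$ to remove the $\lambda_{10}d_1\eta$, $\lambda_{11}d_1\eta$ and $\Delta_1 d_1\eta$ terms.
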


\begin{proof} 
Because $q_3(Y) \ge 0$, there exist $\zeta, \eta \in C(Y)$ satisfying $\delta_4(1) = d_1 \zeta + d_2 \eta$ and $d_1 \eta = 0$. Define $\zeta^\#, \eta^\# \in C(Y \# P)$ by \[\zeta^\# = \Delta'_0(1) + \lambda_{00} \zeta + \lambda_{10}\eta, \quad \eta^\# = \Delta_1'(1) + \lambda_{01}\zeta + \lambda_{11}\eta.\] That $d_1^\# \zeta^\# = 0$ follows from the relations \eqref{lambda00-bdry-relation}, \eqref{lambda10-bdry-relation}, and \eqref{Delta0'-bdry-relation}. That $\delta_1^\# \zeta^\# = 1$ follows from \eqref{Delta0-bdry-relation}, \eqref{Delta1-bdry-relation}, and \eqref{height1-bdry-relation}. Finally, that $d_2^\# \zeta^\# = d_1^\# \eta^\#$ follows from \eqref{Delta1'-bdry-relation}, \eqref{lambda01-bdry-relation}, and \eqref{lambda11-bdry-relation}. 
\end{proof}

\begin{lemma}\label{simplify-q3-eq-zeta-ind}
Fix $k > 0$. Let $\zeta$, $\eta$ satisfy $d_1 \zeta = 0$ and $d_2 \zeta = d_1 \eta$, and suppose $\delta_1 d_3^i \zeta = 0$ for $i < k-1$. Let $\zeta^\#$ be defined as in \eqref{zeta-eta-sharp}. Then for any $0\leq j \le i \le k$ with $j < k$, we have
	\begin{equation}\label{q3-eq-zeta-ind}
	  \delta_1^\#(d_3^\#)^{i}\zeta^\#= \delta_1^\#(d_3^\#)^{i-j}\left(\lambda_{10}d_3^j\zeta+\lambda_{00}\left(d_3^j\eta+\sum_{0\leq l\leq j-1}d_3^ld_4d_3^{j-1-l}\zeta\right)\right).
	\end{equation}
	If $i = j = k$, a similar identity holds with the inclusion of the additional term $\delta_1^\#\Delta'_0\delta_1d_3^{k-1}\zeta$.
\end{lemma}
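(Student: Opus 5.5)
The plan is to prove \eqref{q3-eq-zeta-ind} by induction on $j$ for fixed $i$, with all signs irrelevant since we work over $\F$. Write
\[
X_j=\lambda_{10}d_3^j\zeta+\lambda_{00}Y_j,\qquad Y_j=d_3^j\eta+\sum_{0\le l\le j-1}d_3^ld_4d_3^{j-1-l}\zeta .
\]
For $j=0$ we have $X_0=\lambda_{10}\zeta+\lambda_{00}\eta=\zeta^\#$ by \eqref{zeta-eta-sharp}, so the identity is a tautology. For the inductive step, assume the identity for some $j$ with $j+1\le i$ and $j+1\le k$. It then suffices to show that $\delta_1^\#(d_3^\#)^{i-j-1}\bigl(d_3^\#X_j - X_{j+1}\bigr)=0$, except for the stated extra term when $j+1=k=i$. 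Note that $Y_{j+1}=d_3Y_j+d_4d_3^j\zeta$.

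To expand $d_3^\#X_j$, I use \eqref{mu10-bdry-relation} on the term $d_3^\#\lambda_{10}d_3^j\zeta$ and \eqref{mu00-bdry-relation} on the term $d_3^\#\lambda_{00}Y_j$. This gives
\begin{align*}
d_3^\#X_j={}& d_1^\#(\mu_{10}d_3^j\zeta+\mu_{00}Y_j)+\mu_{10}d_1d_3^j\zeta+\mu_{00}\bigl(d_2d_3^j\zeta+d_1Y_j\bigr)\\
&+\lambda_{10}d_3^{j+1}\zeta+\lambda_{00}\bigl(d_4d_3^j\zeta+d_3Y_j\bigr)+\Delta'_0\delta_1d_3^j\zeta .
\end{align*}
The $\lambda$-terms are exactly $X_{j+1}$. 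I will dispose of the remaining terms one at a time.

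\begin{itemize}
\item The term in the image of $d_1^\#$ dies after applying $\delta_1^\#(d_3^\#)^{i-j-1}$. This is because $d_3^\#$ commutes with $d_1^\#$ mod 2 by \eqref{di-rel}, and $\delta_1^\#d_1^\#=0$.
\item The term $\mu_{10}d_1d_3^j\zeta$ vanishes, since $d_1d_3^j\zeta=d_3^jd_1\zeta=0$.
\item The key computation, which I expect to be the main obstacle, is $d_1Y_j=d_2d_3^j\zeta$. Once this holds, the $\mu_{00}$-term vanishes. To prove it, use $d_1\eta=d_2\zeta$ and commute $d_1$ past the powers of $d_3$. Then rewrite each $d_1d_4$ via \eqref{d4-rel} as $d_4d_1+d_2d_3+d_3d_2+\delta_4\delta_1$. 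The $d_4d_1$ contributions die because $d_1d_3^{m}\zeta=0$. The $d_2d_3+d_3d_2$ contributions telescope, summing to $d_3^jd_2\zeta+d_2d_3^j\zeta$. The $\delta_4\delta_1$ contributions are $d_3^l\delta_4\delta_1d_3^{j-1-l}\zeta$ with $j-1-l\le j-1\le k-2$, so they vanish by the hypothesis $\delta_1d_3^m\zeta=0$ for $m<k-1$. Adding $d_3^jd_1\eta=d_3^jd_2\zeta$, everything cancels mod 2 except $d_2d_3^j\zeta$.
\item The final term $\Delta'_0\delta_1d_3^j\zeta$ vanishes by hypothesis when $j<k-1$. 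When $j=k-1$, the step lands at $j+1=k$, which forces $i=k$; the term then survives as $\delta_1^\#\Delta'_0\delta_1d_3^{k-1}\zeta$, which is exactly the extra term in the statement.
\end{itemize}

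This completes the induction for all $j\le i$ with $j<k$, together with the variant for $i=j=k$.
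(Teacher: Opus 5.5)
Your proof is correct and follows the paper's argument essentially step for step. Both fix $i$, induct on $j$, expand $d_3^\#\lambda_{10}$ and $d_3^\#\lambda_{00}$ via \eqref{mu10-bdry-relation} and \eqref{mu00-bdry-relation}, kill the $d_1^\#$-terms and the $\mu$-terms, and isolate $\Delta_0'\delta_1 d_3^j\zeta$ as the only term that can survive. Your explicit check that $d_1Y_j=d_2d_3^j\zeta$ fills in the step the paper only summarizes, including the point that the $\delta_4\delta_1$ contributions from \eqref{d4-rel} vanish by the hypothesis $\delta_1d_3^m\zeta=0$ for $m\le k-2$.
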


\begin{proof}
For a fixed $i \le k$, we argue by induction on $j$. The base case $j = 0$ follows from applying $\delta_1^\# (d_3^\#)^i$ to the defining equation \eqref{zeta-eta-sharp} for $\zeta^\#$. Suppose now that \eqref{q3-eq-zeta-ind} holds for a given $j < i$; we seek to establish this identity for $j+1$. Rewriting $(d_3^\#)^{i-j} = (d_3^\#)^{i-j-1} d_3^\#$ and applying \eqref{mu10-bdry-relation} and \eqref{mu00-bdry-relation} to expand the terms $d_3^\# \lambda_{10}$ and $d_3^\# \lambda_{00}$ in \eqref{q3-eq-zeta-ind}, we obtain
	\begin{align*}
		\delta_1^\#(d_3^\#)^{i}\zeta^\# =&\; \delta_1^\#(d_3^\#)^{i-j}\left(\lambda_{10}d_3^j\zeta+\lambda_{00}\left(d_3^j\eta+\sum_{\ell}d_3^\ell d_4d_3^{j-1-\ell}\zeta\right)\right)\\
		=&\;\delta_1^\#(d_3^\#)^{i-j-1}\left(d_1^\#\mu_{10}+ \mu_{10}d_1 + \mu_{00}d_2 + \lambda_{10}d_3 + \lambda_{00}d_4 + \Delta'_{0}\delta_1\right)d_3^j\zeta\\
		&+\delta_1^\#(d_3^\#)^{i-j-1}\left(d_1^\#\mu_{00}+ \mu_{00}d_1 + \lambda_{00}d_3\right)\left(d_3^j\eta+\sum_{\ell}d_3^\ell d_4d_3^{j-1-\ell}\zeta\right)
	\end{align*}
	Because $\delta_1^\# (d_3^\#)^\ell d_1^\# = \delta_1^\# d_1^\# (d_3^\#)^\ell = 0$, the terms involving $d_1^\# \mu_{10}$ and $d_1^\# \mu_{00}$ are zero. Similarly, the term $\mu_{10} d_1 d_3^j \zeta = 0$ because $d_1 \zeta = 0$ and $d_1$ commutes with $d_3$. The remaining terms involving $\mu_{00}$ can be seen to sum to zero by applying \eqref{d4-rel} as well as the relation $d_2 \zeta = d_1 \eta$. This leaves the relation \eqref{q3-eq-zeta-ind} with an additional term $\delta_1^\#(d_3^\#)^{i-j-1}\Delta'_{0}\delta_1d_3^j\zeta$, which vanishes by hypothesis on $\zeta$ unless $j = i-1 = k-1$. This completes the induction.
\end{proof}

This is enough to establish Proposition \ref{proposition:superadditivity} in the most intricate case.

\begin{lemma}\label{lemma:case1}
If $q_3(Y) \ge 1$, then $q_3(Y \# P) \ge q_3(Y) + 1$. 
\end{lemma}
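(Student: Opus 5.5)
Let $k = q_3(Y) \ge 1$ and fix $\zeta, \eta \in C(Y)$ satisfying \eqref{q3-pos-def-ch-cx} for this $k$. I will show that the pair $\zeta^\#, \eta^\#$ defined by \eqref{zeta-eta-sharp} satisfies \eqref{q3-pos-def-ch-cx} with $k$ replaced by $k+1$. We already know from \eqref{lambdaij-matrix-rel} that $d_1^\#\zeta^\# = 0$ and $d_2^\#\zeta^\# = d_1^\#\eta^\#$. It then remains to show two things:
\[
\delta_1^\#(d_3^\#)^i\zeta^\# = 0 \text{ for } 0 \le i \le k-1, \qquad \delta_1^\#(d_3^\#)^k\zeta^\# \neq 0.
\]

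For $i \le k-1$, apply Lemma \ref{simplify-q3-eq-zeta-ind} with $j = i$. Then $\delta_1^\#(d_3^\#)^i\zeta^\#$ equals $\delta_1^\#$ applied to $\lambda_{10}d_3^i\zeta + \lambda_{00}\xi_i$, where
\[
\xi_i = d_3^i\eta + \sum_{l} d_3^l d_4 d_3^{i-1-l}\zeta.
\]
I rewrite $\delta_1^\#\lambda_{10}$ and $\delta_1^\#\lambda_{00}$ using \eqref{Delta1-bdry-relation} and \eqref{Delta0-bdry-relation}, which gives
\[
\delta_1^\#(d_3^\#)^i\zeta^\# = \Delta_1 d_1 d_3^i\zeta + \Delta_0\bigl(d_2 d_3^i\zeta + d_1\xi_i\bigr).
\]
The first term vanishes since $d_1$ anticommutes with $d_3$ and $d_1\zeta = 0$. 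For the second term I compute $d_1\xi_i$ by pushing $d_1$ through. In $d_1 d_3^i\eta$, moving $d_1$ past $d_3^i$ and using $d_1\eta = d_2\zeta$ gives $d_3^i d_2\zeta$. In each $d_1 d_3^l d_4 d_3^{i-1-l}\zeta$, I replace $d_1d_4$ via \eqref{d4-rel}. This produces terms $d_3^l(d_2d_3 + d_3d_2)d_3^{i-1-l}\zeta$, which telescope to $d_2 d_3^i\zeta + d_3^i d_2\zeta$. It also produces terms $d_3^l\delta_4\delta_1 d_3^{i-1-l}\zeta$, which vanish because $i-1-l \le k-2$. Finally the terms with $d_4 d_1$ vanish after moving $d_1$ to $\zeta$. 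Altogether $d_2 d_3^i\zeta + d_1\xi_i = 0$ (mod 2), so $\delta_1^\#(d_3^\#)^i\zeta^\# = 0$.

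For $i = k$, use the version of Lemma \ref{simplify-q3-eq-zeta-ind} with $i = j = k$. The same computation applies, except that now the term with $l = 0$ contributes $\Delta_0\delta_4\delta_1 d_3^{k-1}\zeta$, and the lemma supplies the additional term $\delta_1^\#\Delta'_0\delta_1 d_3^{k-1}\zeta$. Their sum is
\[
(\Delta_0\delta_4 + \delta_1^\#\Delta'_0)\,\delta_1 d_3^{k-1}\zeta = \delta_1 d_3^{k-1}\zeta \neq 0,
\]
using \eqref{height1-bdry-relation}. Hence $q_3(Y\#P) \ge k+1$.

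The main obstacle is the bookkeeping in the telescoping step. One has to track the signs (trivial mod 2) and the index ranges of the $\delta_4\delta_1$ terms carefully, so that all of them vanish for $i<k$ and exactly one survives at $i=k$, where it combines with the extra term of Lemma \ref{simplify-q3-eq-zeta-ind}.
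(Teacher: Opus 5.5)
Your proof is correct and follows the paper's argument. In both, you apply Lemma \ref{simplify-q3-eq-zeta-ind} with $j=i$ and rewrite via \eqref{Delta0-bdry-relation} and \eqref{Delta1-bdry-relation}; using $d_1\zeta=0$, $d_2\zeta=d_1\eta$ and \eqref{d4-rel}, everything reduces to $\delta_4\delta_1$ terms that vanish for $i\le k-1$, and at $i=k$ the single surviving term $\Delta_0\delta_4\delta_1 d_3^{k-1}\zeta$ combines with the extra $\delta_1^\#\Delta_0'$ term through \eqref{height1-bdry-relation}. Your explicit telescoping computation simply spells out the step the paper summarizes as an iterated application of \eqref{d4-rel}.
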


\begin{proof}
Let $k = q_3(Y) > 0$ and suppose $\zeta, \eta$ are chosen such that $\delta_1 d_3^i \zeta = 0$ for $i < k-1$, but $\delta_1 d_3^{k-1} \zeta = 1$.	Taking $j = i \le k-1$ and substituting \eqref{Delta0-bdry-relation} and \eqref{Delta1-bdry-relation} into \eqref{q3-eq-zeta-ind} gives
	\begin{align*}
		\delta_1^\#(d_3^\#)^{i}\zeta^\#&= \delta_1^\#\lambda_{10}d_3^i\zeta+\delta_1^\#\lambda_{00}(d_3^i\eta+\sum_{0\leq \ell\leq i-1}d_3^\ell d_4d_3^{i-1-\ell}\zeta)\\
		&= ( \Delta_{1}d_1+ \Delta_{0}d_2 )d_3^i\zeta+\Delta_{0}d_1(d_3^i\eta+\sum_{0\leq \ell\leq i-1}d_3^\ell d_4d_3^{i-1-\ell}\zeta)\\
		&=  \Delta_{0}\sum_{0\leq \ell\leq i-1}d_3^\ell\delta_4\delta_1d_3^{i-1-\ell}\zeta = 0.
	\end{align*}
	The third equality follows from a combination of the relations $d_1 \zeta = 0, d_2 \zeta = d_1 \eta$, and an iterated application of \eqref{d4-rel}. The final equality follows because $\delta_1 d_3^m \zeta = 0$ for $m < k-1$. A similar computation when $j=i=k$ gives the relation  
	\[
	\delta_1^\#(d_3^\#)^{k}\zeta^\#=(\delta_1^\# \Delta_0' + \Delta_0 \delta_4) \delta_1 d_3^{k-1} \zeta,
	\]
	which is equal to $\delta_1 d_3^{k-1}\zeta = 1$ by \eqref{height1-bdry-relation}. Therefore, $q_3(Y\#P)\geq k+1$. 
\end{proof}

The remaining case follows purely formally.
\begin{proof}[Proof of Proposition \ref{proposition:superadditivity}.] Set \[f(Y) = q_3(Y \# P) - q_3(Y);\] we aim to show $f(Y) \ge 1$ for all $Y$. Note that $f(Y) = f(-Y \# -P)$ because $q_3(-Y) = -q_3(Y)$ and $q_3$ is a homology cobordism invariant. If $q_3(Y) \ge 0$ or $q_3(-Y \# -P) \ge 0$, then $f(Y) \ge 1$ by Lemma \ref{lemma:case0} and Lemma \ref{lemma:case1}. If both quantities are negative, then $q_3(Y \# P) > 0 > q_3(Y)$, so $f(Y) \ge 2$. 
\end{proof}	

\begin{remark}
This proof of Proposition \ref{proposition:superadditivity} is reconceptualized in Example \ref{example:height-one}, such that the argument is uniform in $q_3(Y)$. We delay the proof of the reverse inequality, and thus Theorem \ref{main-thm}\ref{main-thm-iii} in full, until Section~\ref{sec:susp}.
\end{remark}

\subsection{Proof of Theorem \ref{intro1}}\label{sec:hyperbolic-example}
The first part of Theorem \ref{intro1} is now straightforward. Because $\#^n P$ is surgery on a disjoint union of $n$ trefoil knots, it follows from Example \ref{ex:Poincare}, Proposition \ref{proposition:superadditivity}, and Corollary \ref{surgery-number-bound} that for $n \ge 0$ \[n \le \hp(\#^n P) \le S_D(\#^n P) \le S(\#^n P) \le n.\] 

Myers proved that every closed oriented $3$-manifold is integer homology cobordant to a hyperbolic $3$-manifold \cite[Theorem 5.1]{Myers}. Because $q_3$ is a homology cobordism invariant, it follows immediately that the Dehn surgery number (indeed, the signed surgery number) of hyperbolic integer homology spheres is unbounded. Myers also proved that every link is concordant to a hyperbolic link \cite[Theorem 7.2]{Myers}. Choose a hyperbolic link $L$ concordant to $\sqcup_n T_{2,3}$. By Thurston's hyperbolic Dehn surgery theorem \cite[Theorem 5.8.2]{Thurston4}, performing $-1/k$ surgery on each component gives a hyperbolic manifold $M_k$ for large $k$. This manifold is homology cobordant to $\#^n \Sigma(2,3,6k-1)$, and an argument similar to the above gives that $q_3(M_k) = n$ for all $k$. Thus, there exist hyperbolic homology spheres with prescribed Dehn surgery number as well. However, in addition to being inexplicit, we lose control of the integer surgery number.

\begin{figure}
	\centering
	\begin{minipage}{.5\textwidth}
		\centering
		\begin{tikzpicture}[scale=1]
			
			\tikzset{
				knot/.style={
					line width=0.5mm,
					line cap=round,
					line join=round
				}
			}
			
			\draw[knot] (-0.2,-0.15) .. controls (2.4,0.9) and (2,-3.2) .. (-0.2,-1.4);
			\draw[knot] (0.3,-0.2) .. controls (-0.2,-2.2) and (-2.4,-1.8) .. (-1,-0.7);
			\draw[knot] (-0.45,-1.2) .. controls (-1.75,0.65) and (-0.35,1.3) .. (0, 0.9);
			
			\draw[knot] (-1,0.75) .. controls (-2.4,3.15) and (3.35,2.3) .. (-0.6, 0.05);
			\draw[line width=1.5mm, draw=white] (-0.5, 2.2) -- (-0.32, 2.2);
			
			\draw[knot] (-0.8,0.95) .. controls (-1.7,2.6) and (2.1,2) .. (-0.38,0.48);
			\draw[line width=1.5mm, draw=white] (-0.5, 1.95) -- (-0.32, 1.95);
			
			\draw[knot] (-0.6,0.05) .. controls (-1, -0.25) and (-0.9, 0.25) .. (-0.78,0.25);
			\draw[knot] (0.3,0.55) .. controls (0.35,0.5) and (0.35,0.2) .. (0.35,0.15);
			\draw[knot] (-0.7,-0.5) -- (-0.45,-0.3);
			\draw[knot] (-0.45,-0.3) -- (-0.55,-0.07);
			\draw[knot] (-0.35, 0.08) -- (-0.2,-0.15);
			\draw[knot] (-0.65, 0.15) -- (-0.8,0.45);
			\draw[knot] (-0.45,0.25) -- (-0.65,0.7);
			\draw[knot] (-0.65,0.35) -- (-0.6, 0.38);
			\draw[knot] (0.16,0.78) -- (0.23,0.7);
			
			{\color{blue}\node at (-0.22, 2.52) {$\bullet$};
				
				\draw[line width=0.3mm] (-0.04, 2.25)
				arc[start angle=25,end angle=335,
				x radius=0.2,y radius=0.5];
				
				\draw[line width=0.3mm] (-0.03, 1.98)
				arc[start angle=-5,end angle=7,
				x radius=0.2,y radius=0.5];}
			
		\end{tikzpicture}
	\end{minipage}
	\caption{The link $L$ is the union of a knotted component depicted in black and an unknotted component, depicted in blue with a dot.} \label{K1}
\end{figure}

We now move to showing that the manifolds of Theorem \ref{intro1} are hyperbolic homology spheres with integer and Dehn surgery numbers both equal to $n$. The link $L_{n,k}$ is concordant to the disjoint union of $n$ left-handed trefoils, and $q_3(Y)$ is a homology cobordism invariant. Thus, \[n = q_3(Y_{n,k}) \le S_D(Y_{n,k}) \le S(Y_{n,k}) \le n\] by construction of $Y_{n,k}$. It remains to show that each $Y_{n,k}$ is hyperbolic. To do so, we use an alternate description of $Y_{n,k}$. Take the complement of the unknotted component of the link $L$ depicted in Figure \ref{K1}, and perform $(-1)$-surgery on its knotted component; denote the resulting manifold with torus boundary by $Z_1$, and its $n$-fold cyclic cover by $Z_n$. The manifold $Y_{n,k}$ is obtained by Dehn filling $Z_n$ along the slope $-1/k$. 

The SnapPy code below, executed in Sage, verifies that $Z_1$ is hyperbolic, so its covers $Z_n$ are as well. In general, given a one-cusped hyperbolic manifold $Z$, there is a function $L_Z: H_1(\partial Z) \to [0,\infty)$, the length of a geodesic in the given homology class using the metric on the boundary of a maximal horoball in the cusp neighborhood \cite{BleilerHodgson}. It is clear that $L_Z(mx) = |m| L_Z(x).$ If $f: Z' \to Z$ is an $n$-fold cyclic covering with $Z'$ also one-cusped, then $L_{Z'}(x) \ge L_Z(f_* x)$, because the maximal horoball in $Z'$ contains the preimage of the maximal horoball in $Z$. 

A theorem of Agol \cite{AgolDehn} and Lackenby \cite{LackenbyDehn} states that if $x$ is a primitive element of $H_1(\partial Z)$ with $L_Z(x) > 6$, then Dehn filling on $x$ produces a hyperbolic manifold. The same SnapPy code verifies that the only slopes of $Z_1$ of length less than $6$ are $0, \pm 1, \infty$, but that the Dehn fillings along slopes $\pm 1$ and $\infty$ are nevertheless hyperbolic, so $Y_{1,k}$ is hyperbolic for all $k$. Furthermore, SnapPy verifies that $\pm 1$ and $\infty$ have length larger than $5.5$. The map $H_1(\partial Z_n) \to H_1(\partial Z_1)$ sends the slope $-1/k$ to the homology class $(-n, k)$. It follows that $L_{Z_n}(-1/k) \ge \gcd(k,n) L_{Z_1}(-n/k)$, which is at least $6$ for all $n \ge 2$ and all $k \in \mathbb Z$. Thus, $Y_{n,k}$ is also hyperbolic for $n \ge 2$ and $k \in \mathbb Z$, completing the proof.

{\tiny
\begin{verbatim}
import snappy
L = snappy.Link('DT: [(-20,10,-22,14,2,4,18,-28,6,-26,12,30,-8),(-16,24)], [0,1,1,1,0,0,0,0,0,0,1,0,0,1,1]')
M = L.exterior()
M.dehn_fill((-1,1),0)
print('Z hyperbolic?', M.verify_hyperbolicity())
	
Z = M.filled_triangulation().canonical_retriangulation(verified=True)
print()
print('Z slopes of length <= 6:', Z.short_slopes(verified=True, bits_prec=100))
print('Z slopes of length <= 5.5:', Z.short_slopes(verified=True, length=5.5, bits_prec=100))
print()
	
M.dehn_fill((1,1),1)
T_pos = snappy.Manifold(M.filled_triangulation())
M.dehn_fill((1,0),1)
T_inf = snappy.Manifold(M.filled_triangulation())
M.dehn_fill((-1,1),1)
T_neg = snappy.Manifold(M.filled_triangulation())

# The triangulations are not always geometric.
# Randomize to find a geometric triangulation before verifying hyperbolicity.

def make_geometric(T):
	for i in range(100):
		if T.solution_type() == 'all tetrahedra positively oriented':
			break
		T.randomize()
	else:
		print("Could not find geometric triangulation for {T}.")
		return False
	
make_geometric(T_pos)
make_geometric(T_inf)
make_geometric(T_neg)

print('-1 filling on Z hyperbolic?', T_neg.verify_hyperbolicity())
print()
print('infty filling on Z hyperbolic?', T_inf.verify_hyperbolicity())
print()
print('+1 filling on Z hyperbolic?', T_pos.verify_hyperbolicity())
\end{verbatim}
}


\section{Fr{\o}yshov's invariant and $\cS$-complexes}\label{sec:S-complexes}
The goal of this section is to place the constructions and results of the previous section into a more conceptual framework. We begin by reviewing the definition of $\cS$-complexes and morphisms between them from \cites{DS1} in slightly greater generality. We then define the equivariant homology groups of an $\cS$-complex. The definition of the abstract $h$-invariant of an $\cS$-complex is reviewed using these constructions. We also mention examples of $\mathcal S$-complexes and morphisms between them that naturally arise from the previous section. In particular, $q_3$ of an integer homology sphere $Y$ is the $h$-invariant of an $\cS$-complex associated with $Y$ using instanton homology with coefficients in $\F$. This observation is due to Fr{\o}yshov. This also sets the stage for the next section, where similar algebraic objects are associated with more general 3-manifolds $Y$ and the proof of Theorem~\ref{main-thm}\ref{main-thm-ii} is given in greater generality.

\subsection{$\cS$-complexes and morphisms}
The following is the main algebraic object of this section.
\begin{definition}\label{def:S-complex}
    An (ungraded) \textit{$\cS$-complex} is an $\Bbb F_2$-vector space $\widetilde C$ of finite dimension, equipped with operators $\widetilde d, \chi\colon \widetilde C\to \widetilde C$ satisfying
    \[
        \widetilde d^2=\chi^2=\widetilde d\chi+\chi\widetilde d=0.
    \]
    In other words, $\widetilde C$ is a dg-module over $\Lambda(\chi)=\Bbb F_2[\chi]/(\chi^2)$.
\end{definition}

Since $\chi$ defines a differential on $\widetilde C$, we may take the homology $Z(\widetilde C)$ of $\widetilde C$ with respect to this differential. The homomorphism $\widetilde d$ defines a chain map of the chain complex $(\widetilde C,\chi)$, and we write $c_0$ for the induced homomorphism on $Z(\widetilde C)$. If $c_0$ is trivial, then for any $[x_0]\in Z(\widetilde C)$, there is $x_1\in \widetilde C$ satisfying $\chi x_1=\widetilde dx_0$. We also have
\[
  \chi\widetilde d x_1 = \widetilde d\chi x_1=\widetilde d^2x_0 = 0
\]  
Thus, $[\widetilde dx_1]$ defines an element of $Z(\widetilde C)$, and it can be easily checked that it depends only on $[x_0]$. In particular, we have a homomorphism $c_1\colon Z(\widetilde C)\to Z(\widetilde C)$, sending $[x_0]$ to $[\widetilde dx_1]$. In a similar way, we may inductively define the operator $c_i$ on $Z(\widetilde C)$ so long as $c_j = 0$ for $0\le j<i$; for $[x_0]\in Z(\widetilde C)$, define a sequence $\{x_j\}_{0\leq j\leq i}$ by requiring $\chi x_j = \widetilde dx_{j-1}$, and then set $c_i[x_0] = [\widetilde dx_i]$. 

\begin{definition}\label{def:perfectness}
    An $\cS$-complex $\widetilde C$ is called \emph{perfect} if it satisfies $c_i = 0$ for all $i\ge 0$.
\end{definition}

For an $\cS$-complex $\widetilde C$, choose a complement $Z\subset\ker\chi$ to $\im\chi$ and set $C=\im\chi$, so that we have the short exact sequence
\[
    0 \longrightarrow C\oplus Z \longrightarrow \widetilde C \xrightarrow{\;\;\chi\;\;} C \longrightarrow 0.
\]
A splitting of this exact sequence expresses $\widetilde C$ as $C\oplus C\oplus Z$, and with respect to this splitting, we have:
\begin{equation}\label{chi-dtilde-special-form}
    \chi = \begin{bmatrix}
        0 & 0 & 0\\
        1 & 0 & 0\\
        0 & 0 & 0
    \end{bmatrix},\quad
    \widetilde d = \begin{bmatrix}
        d & 0 & 0\\
        v & d & \delta'\\
        \delta & 0 & r
    \end{bmatrix},
\end{equation}
where $r$ can be identified with $c_0$. The special form of $\widetilde d$ follows from the assumption that it commutes with $\chi$. Now $\widetilde C$ is perfect if and only if $c_0=r=0$ and $c_i=\delta v^{i-1}\delta' = 0$ for each $i\ge 1$.

\begin{example}\label{S-cplx:integer-sphere}
    For an integer homology sphere $Y$, let $C(Y)$ and the associated homomorphisms be defined as in the previous section. Define an $\cS$-complex 
    $(\widetilde C(Y),\widetilde d,\chi_2)$ in terms of the instanton complex of $Y$ with $\widetilde C(Y)=C^{(3)}(Y)\oplus C^{(3)}(Y)\oplus Z$ where
    \begin{equation}\label{C3}
      C^{(3)}(Y)= C(Y) \oplus C(Y), \qquad Z = \F,
    \end{equation}
    and the operators 
    \begin{align}\label{chi-tilded}
        &\chi_2 = \left[
            \begin{array}{cc|cc|c}
                0 & 0 & 0 & 0 & 0\\
                0 & 0 & 0 & 0 & 0\\\hline
                1 & 0 & 0 & 0 & 0\\
                0 & 1 & 0 & 0 & 0\\\hline
                0 & 0 & 0 & 0 & 0\\
            \end{array}
        \right],\quad
        \widetilde d = \left[
            \begin{array}{cc|cc|c}
                d_1 & 0 & 0 & 0 & 0\\
                d_2 & d_1 & 0 & 0 & 0\\\hline
                d_3 & 0 & d_1 & 0 & 0\\
                d_4 & d_3 & d_2 & d_1 & \delta_4\\\hline
                \delta_1 & 0 & 0 & 0 & 0
            \end{array}
        \right].
    \end{align}
    In particular, in terms of the given splitting of $\widetilde d$, the components of $\widetilde d$ as in \eqref{chi-dtilde-special-form} are given by 
    \begin{equation}\label{eqn:differential-components}
        d = \begin{bmatrix}
            d_1 & 0 \\
            d_2 & d_1
        \end{bmatrix},\quad
        v = \begin{bmatrix}
            d_3 & 0 \\
            d_4 & d_3
        \end{bmatrix},\quad
        \delta = \begin{bmatrix}
            \delta_1 & 0
        \end{bmatrix},\quad
        \delta' = \begin{bmatrix}
            0\\\delta_4
        \end{bmatrix}.
    \end{equation}
    The relations satisfied by the components of $\widetilde d$, discussed in Section \ref{Fro-approach}, imply that $\widetilde d$ is a differential and $\widetilde C(Y)$ is an $\cS$-complex.
    In fact, $\widetilde C(Y)$ is perfect. The condition $r=0$ is obviously satisfied, and $\delta v^{j-1}\delta'=0$ because $\delta'$ has image in the second summand $C(Y)$ of $C$, 
    which is preserved by $v$ and annihilated by $\delta$.
\end{example} 

\begin{remark}
While $Z(\widetilde C(Y))$ is $1$-dimensional for an integer homology sphere $Y$, we will later apply the theory of perfect $\cS$-complexes to study rational homology spheres, for which the dimension of $Z(\widetilde C(Y))$ is equal to $|H^1(Y;\mathbb F_2)|$. 
\end{remark}

\begin{remark}\label{rmk:Scpx-q2}
	There is another $\cS$-complex that one can associate to an integer homology sphere $Y$ which has the same underlying chain complex $(\widetilde C(Y),\widetilde d)$, 
	but for which the operator $\chi_2$ is replaced by
	    \begin{align}\label{chip}
        &\chi_1= \left[
            \begin{array}{cc|cc|c}
                0 & 0 & 0 & 0 & 0\\
                1 & 0 & 0 & 0 & 0\\\hline
                0 & 0 & 0 & 0 & 0\\
                0 &0& 1 & 0 & 0\\\hline
                0 & 0 & 0 & 0 & 0\\
            \end{array}
        \right].
    \end{align}
	Then a splitting of $\widetilde C(Y)$ compatible with $\chi_1$ is given by $\widetilde C(Y)=C^{(2)}(Y)\oplus C^{(2)}(Y)\oplus Z$, where $C^{(2)}$ is isomorphic to $C^{(3)}$, 
	but the operators 
	in \eqref{eqn:differential-components} are modified by switching the roles of $d_2$ and $d_3$. We will explain the justification for the notation for these two
	$\cS$-complexes in the next section, where we also explain why 
	the $\cS$-complex in Example \ref{S-cplx:integer-sphere} is more important for us than the one in this remark. See also Remark \ref{QHS^3-to-ZHS^3}.
\end{remark}

Next, we consider {\it morphisms of $\cS$-complexes}. For a pair of $\cS$-complexes $(\widetilde C_{\pm},\widetilde d_{\pm},\chi_{\pm})$, a linear map $\widetilde\lambda\colon \widetilde C_-\to \widetilde C_+$ is a morphism of $\cS$-complexes if it satisfies
\[
  \widetilde\lambda\widetilde d_-=\widetilde d_+\widetilde\lambda,\qquad \widetilde\lambda\chi_-=\chi_+\widetilde\lambda.
\]
The mapping cone of this morphism is the following $\cS$-complex:
\[
    \text{Cone}(\widetilde\lambda)=\widetilde C_-\oplus\widetilde C_+,\qquad \widetilde d_{\text{Cone}(\widetilde\lambda)}=\begin{bmatrix}
        \widetilde d_- & 0\\ \widetilde\lambda & \widetilde d_+
    \end{bmatrix},\qquad
    \chi_{\text{Cone}(\widetilde\lambda)}=\begin{bmatrix}
        \chi_- & 0\\ 0& \chi_+
    \end{bmatrix} .
\]
In particular, $Z(\text{Cone}(\widetilde\lambda))=Z(\widetilde C_-)\oplus Z(\widetilde C_+)$. For $i\ge 0$, the operator $c_i(\text{Cone}(\widetilde\lambda))$ has the following form with respect to this splitting of $Z(\text{Cone}(\widetilde\lambda))$:
\[
    c_i(\text{Cone}(\widetilde\lambda)) = \begin{bmatrix}
        c_i(\widetilde C_-) & 0\\ \tau_i & c_i(\widetilde C_+)
    \end{bmatrix},
\]
where $\tau_i\colon Z(\widetilde C_-)\to Z(\widetilde C_+)$. In particular, the operator $\tau_i$ associated with a morphism $\widetilde \lambda$ is well-defined so long as $c_j(\widetilde C_-)$, $c_j(\widetilde C_+)$, $\tau_j$ are trivial for $j < i$. 

For the remainder of this section, we assume all $\cS$-complexes are perfect. 

\begin{definition}
    For $i\in\Bbb Z_{\ge 0}$, a \textit{height $i$ morphism} is a morphism $\widetilde\lambda\colon\widetilde C_-\to\widetilde C_+$ of perfect $\cS$-complexes satisfying $\tau_j = 0$ 
    for all $0\le j<i$. The morphism is further called \textit{strong height $i$} if in addition $\tau_i\colon Z(\widetilde C_-)\to Z(\widetilde C_+)$ is an isomorphism.
\end{definition}

With respect to our chosen decompositions of $\widetilde C_-, \widetilde C_+$, a morphism takes the form
\[
    \widetilde\lambda = \begin{bmatrix}
        \lambda & 0 & 0\\
        \mu & \lambda & \Delta'\\
        \Delta & 0 & \varepsilon
    \end{bmatrix}.
\]
It can be checked easily that
\begin{equation}\label{tau-i-formulas}
    \tau_0 = \varepsilon,\qquad \tau_{i+1} = \delta_+v_+^i\Delta'+\Delta v_-^i\delta'_-+\sum_{j=0}^{i-1}\delta_+v_+^j\mu v_-^{i-1-j}\delta_-'\;\;(i\ge 0).
\end{equation}

\begin{example}\label{example:height-zero}
    Given a negative-definite cobordism $W\colon Y\to Y'$ satisfying $H_1(W;\Bbb Z)=0$, we can construct a strong height $0$ morphism 
    $\widetilde\lambda\colon\widetilde C(Y)\to\widetilde C(Y')$ between the $\cS$-complexes associated with $Y$, $Y'$ following Example \ref{S-cplx:integer-sphere}:
    \[
        \widetilde\lambda = \left[\begin{array}{cc|cc|c}
            \lambda_0 & 0 & 0 & 0 & 0\\
            \lambda_1 & \lambda_0 & 0 & 0 & 0\\\hline
            \lambda_2 & 0 & \lambda_0 & 0 & 0\\
            \lambda_3 & \lambda_2 & \lambda_1 & \lambda_0 & \Delta_3\\\hline
            \Delta_0 & 0 & 0 & 0 & 1
        \end{array}\right].
    \]
    Identities \eqref{lambda0-bdry-relation}-\eqref{lambda3-bdry-relation} may be used to verify that $\widetilde\lambda$ is a morphism of $\cS$-complexes. 
\end{example}

\begin{example}\label{example:height-one}
	We define a morphism $\widetilde\lambda\colon (\widetilde C(Y), \chi_2)\to(\widetilde C(Y\#P), \chi_2)$ by the following matrix, using the cobordism $V\colon Y\sqcup P\to Y\#P$ introduced in Section \ref{connected-sum-P}:
    \begin{equation}\label{eqn:height-1-matrix}
        \widetilde\lambda = 
        \left[
            \begin{array}{cc|cc|c}
                \lambda_{10} & \lambda_{00} & 0 & 0 & 0\\
                \lambda_{11} & \lambda_{01} & 0 & 0 & 0\\
                \hline
                \mu_{10} & \mu_{00} & \lambda_{10} & \lambda_{00} & \Delta_{0}'\\
                {\mu_{11}} & {\mu_{01}} & \lambda_{11} & \lambda_{01} & {\Delta'_{1}}\\
                \hline
                \Delta_{1} & \Delta_{0} & 0 & 0 & 0
            \end{array}
        \right].
    \end{equation}
    All the operators are defined in Section \ref{connected-sum-P} except $ \mu_{01}$ and $ \mu_{11}$, which are given as 
    \begin{align*}
        \langle\mu_{01}\alpha,\alpha^\#\rangle &= \#M^{(2,3)}_{12,02}(V;\alpha,\beta_0,\alpha^\#)_0,\\
        \langle\mu_{11}\alpha,\alpha^\#\rangle &= \#M^{(2,2,3)}_{01,12,02}(V;\alpha,\beta_0,\alpha^\#)_0.
    \end{align*}
    Here the moduli space involved in the definition of $\mu_{11}$ is the subspace of $\Bbb R^3\times M(V;\alpha,\beta_0,\alpha^\#)$ defined by
    \begin{align*}
        M^{(2,2,3)}_{01,12,02}(V;\alpha,\beta_0,\alpha^\#) = \{(t_1,t_2,t_3,A)\mid &s_1^{01}(t_1,A)\wedge s_2^{01}(t_1,A)=0\\  &s_1^{12}(t_2,A)\wedge s_2^{12}(t_2,A)=0,\,\,s_0^{02}(t_3,A)=0\}.
    \end{align*}
    Then $\widetilde\lambda$ is a strong height $1$ morphism. The verification of $\widetilde d\widetilde\lambda=\widetilde\lambda\widetilde d$ uses 
    \eqref{lambdaij-matrix-rel}, Lemma \ref{lemma:d3-after-lambda} and the following relations that can be proved as in Lemma \ref{lemma:d3-after-lambda}:
      \begin{align*}
	d_1^\#\mu_{01}+d_2^\#\mu_{00}+d_3^\#\lambda_{01}+d_4^\#\lambda_{00}+ \mu_{01}d_1 + \lambda_{01}d_3+\delta_4^\#\Delta_{0}&= 0\\
	  d_1^\#\mu_{11}\hspace{-1pt}+\hspace{-1pt}d_2^\#\mu_{10}\hspace{-1pt}+\hspace{-1pt}d_3^\#\lambda_{11}\hspace{-1pt}+ \hspace{-1pt}d_4^\#\lambda_{10}\hspace{-1pt}+\hspace{-1pt}\mu_{11}d_1\hspace{-1pt} +\hspace{-1pt} \mu_{01}d_2 \hspace{-1pt}+\hspace{-1pt} \lambda_{11}d_3\hspace{-1pt} +\hspace{-1pt} \lambda_{01}d_4 \hspace{-1pt}+\hspace{-1pt} \delta_4^\#\Delta_{1}\hspace{-1pt}+\hspace{-1pt}\Delta'_{1}\delta_1&= 0.
	\end{align*}
    From the definition it is clear that $\tau_0=0$ and we conclude from \eqref{height1-bdry-relation} that
    \[
        \tau_1 =
        \begin{bmatrix}
            \delta_1^\# & 0
        \end{bmatrix}
        \begin{bmatrix}
            \Delta_{0}'\\\Delta_{1}'
        \end{bmatrix}+
        \begin{bmatrix}
            \Delta_{1} & \Delta_{0}
        \end{bmatrix}
        \begin{bmatrix}
            0\\\delta_4
        \end{bmatrix}=\delta_1^\#\Delta_{0}'+\Delta_{0}\delta_4=1.\qedhere
    \]
\end{example}

\begin{remark}
    We note that the same cobordism $V$ also induces a strong height-$1$ morphism between $\cS$-complexes $(\widetilde C(Y),\chi_1)$ and $(\widetilde C(Y\#P),\chi_1)$, with $\chi_1$-action defined in equation \eqref{chip}. This is achieved by switching the roles of $w_2$ and $w_3$ in the definitions of the cut-down moduli spaces used to define the operators which appear in matrix \eqref{eqn:height-1-matrix}. 
A closely related construction appears in \cite[Section 9.2]{Fr:q2}.
\end{remark}

\subsection{Equivariant homology and $h$-invariant}\label{sec:equivariant-homology-groups}
Given a perfect $\cS$-complex $\widetilde C$, we define its equivariant homology groups $\medhat H(\widetilde C), \medcheck H(\widetilde C), \medbar H(\widetilde C)$ to be the homology of the complexes defined by the formulas
\[
    \begin{array}{ll}
        \widehat C =\widetilde C\otimes_{\F} \F[x], & \widehat d=\widetilde d+x\chi\\
        \widecheck C = \widetilde C\otimes_{\F} (\F\llbracket x^{-1},x]/\F[x]), & \widecheck d=\widetilde d+x\chi\\
        \medbar C = \widetilde C\otimes_{\F} \F\llbracket x^{-1},x], & \overline d=\widetilde d+x\chi.
    \end{array}
\]
These homology groups are $\F[x]$-modules, and the first is finitely generated as such. The inclusion map $i\colon \widehat C\to \medbar C$ has quotient $\widecheck C$, so the equivariant homology groups fit in a natural exact triangle: 
\begin{equation}\label{exact-tri-equiv}
	\begin{tikzcd}[column sep=1ex, row sep=5ex, fill=none, /tikz/baseline=-10pt]
\medhat H(\widetilde C) \arrow[rr, "\mathfrak i"] & & \medbar H(\widetilde C) \arrow{dl} \\
& \medcheck H(\widetilde C) \arrow{ul} & 
\end{tikzcd}
\end{equation}
The {\it bar} version $\medbar H(\widetilde C)$ of equivariant homology can be identified explicitly. The following is proved in \cite[Corollary 4.12]{DS1}; though the $\cS$-complexes in that reference are defined more strictly than they are here, the only conditions used in the proof of the cited result are the relations $r=0, \delta v^{i-1}\delta'=0$ for $i\ge 1$. That is, the proof only uses that $\widetilde C$ is a perfect $\cS$-complex.

\begin{lemma}\label{iso-I-bar}
    There exists a privileged isomorphism of $\Bbb F_2[x]$-modules \[\Phi\colon\medbar H(\widetilde C)\xrightarrow{\cong} Z(\widetilde C)\llbracket x^{-1},x],\] which is well-defined up to multiplication by power series of the form $1+\sum_{i=1}^\infty a_ix^{-i}$, where $a_i\colon Z(\widetilde C)\to Z(\widetilde C)$ are linear maps. In particular, the leading term of an element in $\medbar H(\widetilde C)$ is well-defined.
\end{lemma}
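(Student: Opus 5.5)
We will construct $\Phi$ explicitly in terms of the splitting $\widetilde C = C\oplus C\oplus Z$ of \eqref{chi-dtilde-special-form}. Over $\F\llbracket x^{-1},x]$ the operator $x\chi$ becomes invertible on the $C\oplus C$ part. The plan is to show that the subcomplex generated by $C\oplus C$ is acyclic after inverting $x$, so that $\medbar H$ is computed by a complex with underlying module $Z\llbracket x^{-1},x]$. Perfectness will then force the induced differential on $Z\llbracket x^{-1},x]$ to vanish.

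Concretely, $\overline d = \widetilde d + x\chi$ has the block form
\[
\begin{bmatrix} d & 0 & 0\\ v+x & d & \delta'\\ \delta & 0 & r\end{bmatrix}.
\]
The map $v+x = x(1+x^{-1}v)$ is invertible over $\F\llbracket x^{-1},x]$, with inverse $x^{-1}\sum_{i\ge0}x^{-i}v^i$; this series converges because we allow infinitely many negative powers of $x$. We will apply a standard Gaussian elimination (homological perturbation) along the invertible component $v+x\colon C\to C$. This cancels the first and second $C$ summands and leaves a complex on $Z\llbracket x^{-1},x]$ with differential
\[
r + \delta(v+x)^{-1}\delta' = r + \sum_{i\ge0}x^{-i-1}\delta v^i\delta'.
\]
We must also check that the cancellation is valid, i.e.\ that the part involving $d$ is absorbed. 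Writing the reduction formula as $D_{ZZ} - D_{Z C_1}(D_{C_2C_1})^{-1}D_{C_2 Z}$ with $D_{C_2C_1}=v+x$, $D_{ZC_1}=\delta$ and $D_{C_2Z}=\delta'$ gives exactly the formula above. Perfectness says $r=0$ and $\delta v^{i-1}\delta'=0$ for $i\ge1$, so the reduced differential vanishes. The resulting homotopy equivalence gives an isomorphism $\Phi\colon \medbar H(\widetilde C)\to Z\llbracket x^{-1},x]$, and since $Z\cong Z(\widetilde C)$ via $\ker\chi/\im\chi$, this is the claimed target. $\F[x]$-linearity holds because all maps are built from $\F\llbracket x^{-1},x]$-linear operators.

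For the well-definedness statement, we will examine how $\Phi$ changes under a different choice of complement $Z$ and of splitting. Changing the splitting conjugates $\widetilde d$ by a block unipotent matrix commuting with $\chi$, and the new reduction differs from the old one by terms coming from the inverse series of $v+x$. These terms carry strictly negative powers of $x$ relative to the identity, so the induced automorphism of $Z(\widetilde C)\llbracket x^{-1},x]$ has the form $1+\sum_{i\ge1}a_ix^{-i}$. Since the leading coefficient in $x$ is unchanged by such a factor, leading terms are well-defined.

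The main obstacle is this bookkeeping: tracking precisely how a change of splitting, including changes that mix $Z$ into $C\oplus C$ via maps commuting with $\chi$, affects the perturbation-lemma isomorphism. What must be verified is that every correction term has negative $x$-degree, so that the constant coefficient of the induced automorphism is the identity.
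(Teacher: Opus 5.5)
Your argument is correct and is in substance the one the paper imports from \cite[Corollary 4.12]{DS1}. Cancelling along the invertible component $x+v$ gives the explicit equivalence $(a,b,z)\mapsto z+\sum_{i\ge 0}x^{-i-1}\delta v^i b$, and composing it with $\overline\lambda$ reproduces the formulas \eqref{tau-i-formulas}; the only input is perfectness, $r=0$ and $\delta v^{i-1}\delta'=0$. The change-of-splitting check you flag goes through as you predict: every correction term in the comparison map passes through $(x+v)^{-1}$ or its analogue for the new splitting, and is therefore $O(x^{-1})$.
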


Following \cite[Section 7.4]{DS:-unori-skein-tr}, we use Lemma \ref{iso-I-bar} to define the following subspace of $Z(\widetilde C)$:
\begin{equation}\label{Jn}
	J_n(\widetilde C)=\{z\in Z(\widetilde C)\mid\exists\, \hat c\in\medhat H(\widetilde C)\colon (\Phi\mathfrak i)(\hat c)=zx^{-n}+O(x^{-n-1})\}.
\end{equation}
By $O(x^{-n-1})$, we mean an element  of $Z(\widetilde C)\llbracket x^{-1},x]$ that includes powers of $x$ of degree at most $-n-1$. Since $\Phi$ is well-defined  up to multiplication  by terms of the form $1+\sum_{i=1}^\infty a_ix^{-i}$, $J_n(\widetilde C)$ is well-defined. Furthermore, we may use the module structure over $\F[x]$ to see that $\{J_n(\widetilde C)\}_{n\in\Bbb Z}$ defines a decreasing filtration on $Z(\widetilde C)$:
\begin{equation}\label{Jn-filtration}
  \cdots \subseteq J_{n+1}(\widetilde C)\subseteq J_{n}(\widetilde C)\subseteq J_{n-1}(\widetilde C) \subseteq \cdots \subseteq Z(\widetilde C)
\end{equation}
Finite dimensionality of $\widetilde C$ implies that $J_n(\widetilde C)$ is trivial if $n$ is large enough and $J_n(\widetilde C)=Z(\widetilde C)$ if $n$ is small enough. In particular, if $\vartheta \in Z(\widetilde C)$, we may define 
\begin{equation}\label{h-inv}
	h(\widetilde C; \vartheta) = \max\{n\in\Bbb Z\mid \vartheta \in J_n(\widetilde C)\}.
\end{equation}
This is an integer when $\vartheta$ is nonzero, and $\infty$ when $\vartheta = 0$. If $Z(\widetilde C)$ is $1$-dimensional, there is a unique nonzero element $\vartheta \in Z(\widetilde C)$, and we write $h(\widetilde C) = h(\widetilde C; \vartheta)$. 

\begin{remark}
The filtration $J_n(\widetilde C)$ and the function $h(\widetilde C; \vartheta)$ contain precisely the same information. The fact that $J_n(\widetilde C)$ is a vector space corresponds to the statement that \[\text{min}(h(\widetilde C; \vartheta), h(\widetilde C; \vartheta')) \le h(\widetilde C; \vartheta + \vartheta').\qedhere\]
\end{remark}

The $h$-invariant of an $\cS$-complex can be computed explicitly in terms of the components of $\widetilde d$ with respect to a chosen splitting $\widetilde C = C \oplus C \oplus Z$ (see \cite[Proposition 4.15]{DS1}):
\begin{lemma}\label{lemma:explicit-h-copy}
    The $h$-invariant of a perfect $\cS$-complex $\widetilde C$ is uniquely determined as follows. For a positive integer $k$, we have $h(\widetilde C;\vartheta)\geq k$ if and only if 
    there exists $\alpha\in C$ satisfying the following properties:
    \[
        d\alpha=0, \quad \delta v^{k-1}(\alpha) = \vartheta, \quad \delta v^i(\alpha)=0\;\; \text{ for } 0\le i\le k-2.
    \]
    For a nonpositive integer $k$, we have $h(\widetilde C; \vartheta)\geq k$ if and only if there are elements $a_0,\dots, a_{-k}\in Z$ and $\alpha\in C$ such that 
    $a_{-k} = \vartheta$ and 
    \[
       d\alpha + \sum_{i=0}^{-k}v^{i}\delta'(a_i)=0.
    \]
\end{lemma}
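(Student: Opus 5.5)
We want to prove Lemma~\ref{lemma:explicit-h-copy}: $h(\widetilde C;\vartheta)\ge k$ is characterized by the existence of the stated chains. We unwind the definition \eqref{Jn} of $J_n$ using the explicit model of Lemma~\ref{iso-I-bar}.

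\textbf{Setup.} Fix the splitting $\widetilde C = C\oplus C\oplus Z$ from \eqref{chi-dtilde-special-form}. Then $\widehat d = \widetilde d + x\chi$ has the form
\[
\widehat d(a,b,z) = \bigl(da,\; va + db + \delta' z + xa,\; \delta a + rz\bigr), \qquad r=0.
\]
The first step is to make the isomorphism $\Phi$ explicit on $\medbar C$. Since $x$ is invertible there, the $(C,C)$-block $\begin{bmatrix} d & 0\\ v+x & d\end{bmatrix}$ is contractible: $x+v$ is invertible in $\F\llbracket x^{-1},x]$, with inverse $x^{-1}\sum_{i\ge0} v^i x^{-i}$. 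Gaussian elimination then shows that $\medbar H \cong Z\llbracket x^{-1},x]$, with differential
\[
r + \delta (x+v)^{-1}\delta' = \sum_{i\ge 0}\delta v^i\delta' x^{-i-1},
\]
which vanishes by perfectness. The projection $\Phi$ sends a cycle $(a,b,z)$ to $z + \delta(x+v)^{-1}(\text{correction})$. More precisely, for a cycle we solve $b$-terms away. I will use the formula $\Phi(a,b,z) = z + \delta(x+v)^{-1}a$ up to the allowed ambiguity, which I take from \cite{DS1}. This normalization is the main bookkeeping obstacle, and I expect it to reduce to checking that this map is a chain map killing the contractible part.

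\textbf{Case $k\ge1$.} Take $\alpha$ with $d\alpha=0$ and the stated conditions on $\delta v^i\alpha$. Consider $\hat c = (\alpha, 0, 0)\in\widehat C$; here $\alpha$ is a constant in $x$, so it is not a cycle because of the $x\alpha+v\alpha$ term. Instead I use the fact that $(\alpha,0,0)$ is homologous in $\medbar C$ to something, or better, I build a cycle $\hat c = (0, \alpha, 0)$: we have $\widehat d(0,\alpha,0) = (0, d\alpha, 0)=0$. Its image is $\Phi(\hat c)$. Here I expect the correct formula to give $\delta(x+v)^{-1}$ applied after moving $\alpha$ into the first slot, that is,
\[
\Phi(0,\alpha,0)=\sum_i \delta v^i\alpha\, x^{-i-1} = \vartheta x^{-k}+O(x^{-k-1}).
\]
So $\vartheta\in J_k$. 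Conversely, given $\hat c=\sum_j (a_j,b_j,z_j)x^j$ with leading term $\vartheta x^{-k}$, $k>0$, no $Z$-component can have nonnegative powers. Reading off the cycle equations degree by degree, the top-degree $b$ gives an $\alpha$ with $d\alpha = 0$ (after correcting by the $a$'s, using $x a$ terms to solve recursively). The vanishing of the lower coefficients gives $\delta v^i\alpha=0$ for $i\le k-2$.

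\textbf{Case $k\le 0$.} Take $\hat c=(\alpha',\alpha,\sum_i a_i x^i)$-type elements. The condition $d\alpha+\sum v^i\delta'(a_i)=0$ is exactly what is needed to solve the middle equation $va+db+\delta' z+xa=0$ with $z=\sum_{i=0}^{-k} a_i x^{-k-i}$ and $a$ a polynomial built from $\delta' a_i$ and its $v$-iterates. The leading term of $\Phi$ is then $a_{-k}x^{-k}=\vartheta x^{-k}$. The converse again follows by comparing coefficients of powers of $x$ in $\widehat d\hat c=0$.

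The main obstacle is pinning down the explicit form of $\Phi$ and tracking signs and degree shifts (the index $k$ versus $k-1$) consistently. Once that is fixed, both directions are mechanical coefficient comparisons, exactly as in \cite[Proposition 4.15]{DS1}, whose proof uses only perfectness.
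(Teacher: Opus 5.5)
Your strategy of cancelling the invertible component $x+v$ in $\medbar C$, writing $\Phi$ explicitly, and comparing coefficients is the right one. It is the computation behind \cite[Proposition 4.15]{DS1}, and the paper gives no proof of this lemma beyond that citation.

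\textbf{The formula for $\Phi$.} The formula you adopt is wrong. Cancelling $x+v$ from the first summand to the second gives $\Phi(a,b,z)=z+\delta(x+v)^{-1}b$, using the second component $b$. Your version with $a$ is not a chain map: it sends the boundary $\widehat d(a',0,0)=(da',(x+v)a',\delta a')$ to $\delta a'$, because $\delta v^i d=0$. Your value of $\Phi(0,\alpha,0)$ is correct only for the corrected formula. With that formula, the ``if'' direction for $k\ge1$ goes through.

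\textbf{The converse.} The genuine gap is the ``only if'' direction. You leave it as ``comparing coefficients'', and the candidate you suggest does not work. For a cycle $\hat c=\sum_{j=0}^N(a_j,b_j,z_j)x^j$, the top coefficient $b_N$ is not closed in general, since the cycle equations give $db_N=a_{N-1}+\delta'z_N$. The $z_j$ also need not vanish: the boundary $\widehat d(a',0,0)$ has $z_0=\delta a'$.

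The missing idea is to take $\alpha=\sum_j v^jb_j$. The middle cycle equation $(x+v)a(x)=db(x)+\delta'z(x)$ telescopes to $\sum_jv^jdb_j=\sum_jv^j\delta'z_j$. Combine this with $dv^j=v^jd+\sum_{s<j}v^s\delta'\delta v^{j-1-s}$, which follows from $dv+vd=\delta'\delta$. This yields
\[
\Phi\mathfrak i(\hat c)=\sum_{s\ge0}c_sx^s+\sum_{m\ge1}\delta v^{m-1}\alpha\,x^{-m},\qquad d\alpha=\sum_{s\ge0}v^s\delta'(c_s),
\]
where $c_s=z_s+\sum_{j>s}\delta v^{j-s-1}b_j$. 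Both converses follow at once. For $k\ge1$ every $c_s$ vanishes, so $d\alpha=0$, and the conditions on $\delta v^i\alpha$ are exactly the vanishing and leading coefficients in negative degree. For $k\le 0$, put $a_i=c_i$ for $0\le i\le -k$.

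\textbf{Indexing for $k\le0$.} In the ``if'' direction for $k\le0$ you need $z(x)=\sum_{i=0}^{-k}a_ix^{i}$. The formula $\sum_i a_ix^{-k-i}$ has leading coefficient $a_0$, not $\vartheta$, and it does not match the hypothesis. Take $b=\alpha$ and $a(x)=\sum_m A_mx^m$ with $A_m=\sum_{i>m}v^{i-m-1}\delta'(a_i)$. The middle cycle equation then holds precisely because $d\alpha+\sum_i v^i\delta'(a_i)=0$. Checking $dA_m=\delta A_m=0$ uses $d\delta'=0$ and perfectness, $\delta v^j\delta'=0$. Your sketch never invokes perfectness at this step.
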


When $Y$ is an integer homology sphere, applying the characterization in Lemma \ref{lemma:explicit-h-copy} to the $\cS$-complex $(\widetilde C(Y), \chi_2)$ from Example \ref{S-cplx:integer-sphere} gives precisely the definition of $q_3(Y)$ from Section \ref{Fro-approach}. Thus, we obtain an alternate definition of $q_3(Y)$: 

\begin{cor}\label{cor:q3-determines-filtration}
	For an integer homology sphere $Y$, we have $q_3(Y)=h(\widetilde C(Y), \chi_2)$.
\end{cor}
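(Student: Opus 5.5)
The plan is to apply Lemma \ref{lemma:explicit-h-copy} to the perfect $\cS$-complex $(\widetilde C(Y),\chi_2)$ of Example \ref{S-cplx:integer-sphere}, with the components $d,v,\delta,\delta'$ written out as in \eqref{eqn:differential-components}. We then check that the resulting conditions match \eqref{q3-pos-def-ch-cx} and \eqref{q3-neg-def-ch-cx} term by term. Here $Z=\F$ is one-dimensional, so $\vartheta=1$ and $h(\widetilde C(Y))=h(\widetilde C(Y);1)$. Since both $q_3(Y)$ and $h$ are determined by the family of inequalities ``$\ge k$'' for all $k\in\mathbb Z$, it suffices to show that these inequalities agree for each $k$.

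\textbf{Case $k>0$.} Write $\alpha=(\zeta,\eta)\in C^{(3)}(Y)=C(Y)\oplus C(Y)$. The condition $d\alpha=0$ reads $d_1\zeta=0$ and $d_2\zeta+d_1\eta=0$, which over $\F$ is $d_2\zeta=d_1\eta$. Since $v$ is lower triangular with diagonal entries $d_3$ and $\delta=[\delta_1\ 0]$ only sees the first coordinate, we get $\delta v^i\alpha=\delta_1 d_3^i\zeta$. Thus the conditions of Lemma \ref{lemma:explicit-h-copy} are exactly \eqref{q3-pos-def-ch-cx}: the value $1$ at $i=k-1$ and vanishing for $i\le k-2$.

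\textbf{Case $k\le 0$.} Take $a_i\in\F$ and $\alpha=(\eta',\zeta')$. Here $\delta'(a)=(0,\delta_4 a)$, and the second summand is preserved by $v$, which acts on it by $d_3$. Hence $v^i\delta'(a_i)=(0,d_3^i\delta_4 a_i)$. The equation $d\alpha+\sum_i v^i\delta'(a_i)=0$ then becomes
\[
d_1\eta'=0,\qquad d_2\eta'+d_1\zeta'+\sum_{i=0}^{-k} d_3^i\delta_4(a_i)=0.
\]
After renaming $\eta=\eta'$, $\zeta=\zeta'$ and using characteristic $2$, this is \eqref{q3-neg-def-ch-cx}, with $a_{-k}=\vartheta=1\neq 0$.

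The only step needing any care is the bookkeeping of which $C(Y)$-summand plays which role, together with the triangular form of $v$. Everything else is immediate, because Lemma \ref{lemma:explicit-h-copy} already applies once Example \ref{S-cplx:integer-sphere} has established that $\widetilde C(Y)$ is perfect.
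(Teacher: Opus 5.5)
Your proposal is correct and is essentially the paper's own argument: the paper obtains the corollary by applying Lemma~\ref{lemma:explicit-h-copy} to the perfect $\cS$-complex $(\widetilde C(Y),\chi_2)$ of Example~\ref{S-cplx:integer-sphere} and noting that the resulting conditions are literally \eqref{q3-pos-def-ch-cx} and \eqref{q3-neg-def-ch-cx}. You spell out the bookkeeping that the paper leaves to the reader, namely the triangular form of $v$, that $\delta$ sees only the first summand, and that $\delta'$ lands in the $v$-invariant second summand.
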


\begin{remark}\label{rmk:q2}
In \cite{Fr:q2}, Fr{\o}yshov defines and extensively studies an invariant $q_2(Y)$ of integer homology spheres. This is equal to $h(\widetilde C(Y), \chi_1)$, as seen by applying Lemma \ref{lemma:explicit-h-copy} to the $\cS$-complex $(\widetilde C(Y), \chi_1)$ and comparing to the definition of \cite[Section 7]{Fr:q2}.
\end{remark}

Morphisms of $\cS$-complexes induce maps between the associated equivariant theories. More specifically, if $\widetilde\lambda\colon \widetilde C_-\to \widetilde C_+$ is a morphism of $\cS$-complexes, then 
\[\widehat\lambda=\widetilde\lambda\otimes 1_{\F[x]}:\widetilde C_-\otimes \F[x]\to \widetilde C_+\otimes \F[x]\]
defines a chain map $\widehat C_-\to \widehat C_+$, and it induces an $\F[x]$-module homomorphism of equivariant homology groups 
\[\widehat\lambda:\medhat H(\widetilde C_-) \to \medhat H(\widetilde C_+).\]
Similarly, we may define $\F[x]$-module homomorphisms 
\[\widecheck\lambda:\medcheck H(\widetilde C_-) \to \medcheck H(\widetilde C_+),\hspace{1cm}\overline\lambda:\medbar H(\widetilde C_-) \to \medbar H(\widetilde C_+),\]
which together define a homomorphism of exact triangles:
\begin{equation}\label{morphism-exact-tri}
\begin{array}{cccccccccc}
\cdots\xrightarrow{}&\medcheck H(\widetilde C_-)
& \xrightarrow{}
& \medhat H(\widetilde C_-)
& \xrightarrow{\,\mathfrak i_-\,}
& \medbar H(\widetilde C_-)
& \longrightarrow 
& \cdots \\[6pt]
& \downarrow \widecheck \lambda
& 
& \downarrow \widehat \lambda
& 
& \downarrow \overline \lambda
& \\[6pt]
\cdots\xrightarrow{}&\medcheck H(\widetilde C_+)
& \xrightarrow{}
& \medhat H(\widetilde C_+)
& \xrightarrow{\,\mathfrak i_+\,}
& \medbar H(\widetilde C_+)
& \longrightarrow 
& \cdots
\end{array}
\end{equation}
With respect to the identifications of $\medbar H(\widetilde C_{\pm})$ provided by Lemma \ref{iso-I-bar}, the leading term of $\overline\lambda$ for a height $n$ morphism $\widetilde\lambda$ is computed in \cite[Corollary 4.12]{DS1}:

\begin{lemma}\label{lemma:filtered-map}
    Suppose $\widetilde\lambda\colon\widetilde C_-\to\widetilde C_+$ is a height $i$ morphism of $\cS$-complexes for some nonnegative $i$. Then there are homomorphisms
     $b_j\colon Z(\widetilde C_-)\to Z(\widetilde C_+)$ for $j\geq i+1$ such that 
    \[
        \Phi_+\circ \overline\lambda\circ (\Phi_-)^{-1}=\tau_i x^{-i}+\sum_{j=i+1}^\infty b_jx^{-j}.
    \]
\end{lemma}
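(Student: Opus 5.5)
Let $T_- = \Phi_-$, $T_+ = \Phi_+$. I would prove Lemma~\ref{lemma:filtered-map} by making the isomorphism $\Phi$ of Lemma~\ref{iso-I-bar} explicit and then computing $\overline\lambda$ in these coordinates. Work in the splitting $\widetilde C = C\oplus C\oplus Z$ of \eqref{chi-dtilde-special-form}. In $\overline C$ we have $\overline d = \widetilde d + x\chi$, and the component $x\chi$ maps the first copy of $C$ isomorphically (over $\F\llbracket x^{-1},x]$, since $x$ is invertible) onto the second copy. This makes the pair of $C$-summands acyclic after a change of basis. So I would cancel them, via Gaussian elimination or the homological perturbation lemma, to obtain a deformation retraction of $(\overline C,\overline d)$ onto $Z\llbracket x^{-1},x]$ with zero differential.

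Concretely, for $z\in Z$ the cycle representing $z$ is
\[
\Psi(z)=z+\sum_{j\ge 0} x^{-j-1}v^j\delta'(z)
\]
up to terms in the first copy of $C$. Here the $j$-th correction cancels the previous error term $\delta v^{j-1}\delta'$, and the corrections vanish by perfectness, exactly as in the definition of the $c_i$. The inverse $\Phi$ is the projection onto $Z$ twisted by $\delta$: a class $(a,b,z)$ maps to $z+\sum_{j\ge0}x^{-j-1}\delta v^j a$, suitably corrected. Different choices of splitting change these formulas by terms with strictly negative powers of $x$. This recovers the ambiguity $1+\sum a_ix^{-i}$ and, I expect, the argument of \cite[Corollary 4.12]{DS1}.

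Next I would compute $\Phi_+\overline\lambda\Phi_-^{-1}(z)=\Phi_+\overline\lambda\Psi_-(z)$. Writing $\widetilde\lambda$ in block form with entries $\lambda,\mu,\Delta,\Delta',\varepsilon$, apply $\widetilde\lambda$ to $\Psi_-(z)$ and then apply $\Phi_+$, which reads the $Z$-component and adds $\sum_j x^{-j-1}\delta_+v_+^j(\text{first }C\text{-component})$. Collecting powers of $x$ gives:
\begin{itemize}
\item the $x^0$ coefficient is $\varepsilon=\tau_0$;
\item the $x^{-(i+1)}$ coefficient is $\delta_+v_+^i\Delta'+\Delta v_-^i\delta'_-+\sum_{j}\delta_+v_+^j\mu v_-^{i-1-j}\delta'_-$.
\end{itemize}
The three terms come respectively from $\Delta'$ hitting $\Phi_+$'s tail, from $\Delta$ applied to $\Psi_-$'s tail, and from $\mu$ sandwiched between both tails. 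This is precisely $\tau_{i+1}$ from \eqref{tau-i-formulas}, modulo contributions that vanish by perfectness of $\widetilde C_\pm$.

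So the composite is $\sum_{j\ge0}\tau_jx^{-j}$, up to the correction terms. If $\widetilde\lambda$ has height $i$, then $\tau_j=0$ for $j<i$, and the leading term is $\tau_ix^{-i}$; the remaining coefficients define the $b_j$.

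The main obstacle is bookkeeping. The formula for $\Phi$ involves corrections from the first $C$-component that are only determined modulo lower-order terms. I need to verify that the choice ambiguity in $\Phi_\pm$, namely multiplication by $1+O(x^{-1})$ on both sides, cannot alter coefficients of order $\ge -i$ when $\tau_j=0$ for $j<i$. This holds because $(1+O(x^{-1}))\,(\tau_ix^{-i}+O(x^{-i-1}))\,(1+O(x^{-1}))$ has leading term $\tau_ix^{-i}$. The earlier vanishing of $\tau_j$ must also be used to kill the cross terms in which lower $\tau_j$ would be multiplied by the $a_k$.
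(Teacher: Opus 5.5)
Your strategy of cancelling the pair of $C$-summands over the field of Laurent series in $x^{-1}$, writing $\Phi$ explicitly, and reading off coefficients is a reasonable self-contained substitute for the paper, which simply cites \cite[Corollary 4.12]{DS1}. Your $\Psi(z)$ is right: the corrections sit in the first copy of $C$, and it is an honest cycle. However, your explicit formula for $\Phi$ is wrong, and the key coefficient computation does not follow from what you wrote.

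The twist by $\delta\sum_j x^{-j-1}v^j$ must act on the \emph{second} copy of $C$ (the image of $\chi$), not the first. Your map $(a,b,z)\mapsto z+\delta(x+v)^{-1}a$ is not a chain map. If $da=0$, the boundary $\overline d(a,0,0)=(0,(x+v)a,\delta a)$ is sent to $\delta a$, which is typically nonzero (take $a=\beta_0$ for the Poincar\'e sphere).

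It also contradicts your own bookkeeping. With $a_-=(x+v_-)^{-1}\delta'_-z$, one has
\[
\overline\lambda\Psi_-(z)=\bigl(\lambda a_-,\ \mu a_-+\Delta' z,\ \Delta a_-+\varepsilon z\bigr),
\]
so $\Delta'$ and $\mu$ occur only in the second component, which your $\Phi_+$ never reads. What your recipe actually produces is $\varepsilon+\Delta(x+v_-)^{-1}\delta'_-+\delta_+(x+v_+)^{-1}\lambda(x+v_-)^{-1}\delta'_-$. Its $x^{-1}$ coefficient misses the term $\delta_+\Delta'$ of $\tau_1$. For instance, when $C_-=0$ your recipe returns just $\varepsilon$, whereas $\tau_1=\delta_+\Delta'$ can equal $1$, as in Example~\ref{example:height-one} with $Y=S^3$.

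The fix is what Gaussian elimination of the isomorphism $x+v$ (first copy to second copy) actually gives: $\Psi(z)=((x+v)^{-1}\delta'z,0,z)$ and $\Phi(a,b,z)=z+\delta(x+v)^{-1}b$.
\begin{itemize}
\item The reduced differential $\delta(x+v)^{-1}\delta'$ vanishes by perfectness.
\item $\Psi(z)$ is a cycle because $d\delta'=0$, and hence $dv^j\delta'=0$.
\item $\Phi\overline d=0$ because $\delta d=0$ and $dv+vd=\delta'\delta$ give $\delta v^jd=0$.
\end{itemize}
All three use $r=0$ and $\delta v^k\delta'=0$. With this $\Phi_+$,
\[
\Phi_+\overline\lambda\Psi_-(z)=\varepsilon z+\Delta(x+v_-)^{-1}\delta'_-z+\delta_+(x+v_+)^{-1}\Delta'z+\delta_+(x+v_+)^{-1}\mu(x+v_-)^{-1}\delta'_-z .
\]
Expanding $(x+v)^{-1}=\sum_{j\ge0}x^{-j-1}v^j$ shows that the coefficient of $x^{-j}$ is \emph{exactly} the expression for $\tau_j$ in \eqref{tau-i-formulas}, with no leftover corrections. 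Height $i$ makes these vanish for $j<i$, so you may take $b_j=\tau_j$ (the splitting formula) for $j>i$. Your closing remark about the $1+O(x^{-1})$ ambiguity then covers other normalizations of $\Phi_\pm$. Once corrected, the argument is a complete proof, and it yields the whole series rather than only its leading term.
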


We will apply this in the following form: 

\begin{prop}\label{prop:tau-filtered}
	Suppose $\widetilde \lambda \colon \widetilde C_- \to \widetilde C_+$ is a height $i$ morphism of $\cS$-complexes for some nonnegative $i$. Then the map $\tau_i: Z(\widetilde C_-) \to Z(\widetilde C_+)$ satisfies $\tau_i(J_n(\widetilde C_-)) \subset J_{n+i}(\widetilde C_+)$. If $\widetilde \lambda$ has strong height $i$ and $\widehat \lambda$ is an isomorphism, this containment is an equality. 
\end{prop}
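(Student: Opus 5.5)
The plan is to chase elements through the commutative square of \eqref{morphism-exact-tri} formed by $\mathfrak i_\pm$, $\widehat\lambda$ and $\overline\lambda$, and then to read off leading terms using Lemma \ref{lemma:filtered-map}.

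For the containment, let $z\in J_n(\widetilde C_-)$. By \eqref{Jn} there is $\hat c\in \medhat H(\widetilde C_-)$ with $(\Phi_-\mathfrak i_-)(\hat c)=zx^{-n}+O(x^{-n-1})$. Commutativity of the diagram gives
\[
(\Phi_+\mathfrak i_+)(\widehat\lambda\hat c)=(\Phi_+\overline\lambda\Phi_-^{-1})(\Phi_-\mathfrak i_-\hat c).
\]
By Lemma \ref{lemma:filtered-map}, $\Phi_+\overline\lambda\Phi_-^{-1}=\tau_i x^{-i}+\sum_{j>i}b_jx^{-j}$. This operator is $\F[x]$-linear and continuous in the $x^{-1}$-adic sense, and it lowers the order in $x^{-1}$ by at least $i$. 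Hence the right-hand side equals $\tau_i(z)x^{-n-i}+O(x^{-n-i-1})$, so $\tau_i(z)\in J_{n+i}(\widetilde C_+)$, with $\widehat\lambda\hat c$ as the witness. If $\tau_i(z)=0$ the statement is trivial, since $0$ lies in every $J_m$. I should also check that $J_m$ as defined allows a zero leading coefficient; each $J_m$ is a subspace containing $0$, so this is fine.

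For the equality, assume $\tau_i$ is an isomorphism and $\widehat\lambda$ is an isomorphism. Let $w\in J_{n+i}(\widetilde C_+)$, witnessed by $\hat c_+$. Set $\hat c=\widehat\lambda^{-1}\hat c_+$. Then
\[
(\Phi_+\overline\lambda\Phi_-^{-1})(\Phi_-\mathfrak i_-\hat c)=wx^{-n-i}+O(x^{-n-i-1}).
\]
Write $\Phi_-\mathfrak i_-\hat c=\sum_{m\le N} z_m x^{m}$ with leading index $N$ and $z_N\neq 0$ (if $\hat c$ maps to zero then $w=0$, which is trivial). Because $\tau_i$ is injective, the image has leading term $\tau_i(z_N)x^{N-i}\neq 0$. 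Comparing with the right-hand side gives $N-i\le -n-i$.

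There are two cases. If $N=-n$, then $\tau_i(z_N)=w$, so $w=\tau_i(z_{-n})$ with $z_{-n}\in J_n(\widetilde C_-)$. If $N<-n$, then $w=0$, which is trivially in the image. In either case $J_{n+i}(\widetilde C_+)\subset\tau_i(J_n(\widetilde C_-))$.

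The main subtlety is that the expansion $O(\cdot)$ must be interpreted so that the leading terms are well-defined; Lemma \ref{iso-I-bar} ensures this up to unipotent power series, which do not affect leading terms. Isomorphism of $\widehat\lambda$ is used only to lift witnesses back to $\medhat H(\widetilde C_-)$.
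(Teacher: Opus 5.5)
Your proposal is correct and follows the same route as the paper: you chase witnesses through the square $\mathfrak i_+\widehat\lambda=\overline\lambda\,\mathfrak i_-$, read off leading terms via Lemma \ref{lemma:filtered-map}, and for the equality use $\widehat\lambda^{-1}$ to lift witnesses back to $\medhat H(\widetilde C_-)$. Your explicit leading-index comparison (with the cases $N=-n$ and $N<-n$) just spells out what the paper compresses into ``the leading term $\tau_i x^{-i}$ is an isomorphism,'' and it is fine that you use only injectivity of $\tau_i$ there.
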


\begin{proof}
Consider the commutative diagram \eqref{morphism-exact-tri}. The vector space $J_n(\widetilde C_-)$ is defined to be the set of elements $z \in Z(\widetilde C_-)$ for which there is an element in the image of $\Phi_-\mathfrak i_-$ with leading term $zx^{-n}$. By Lemma \ref{lemma:filtered-map}, the leading term of $\overline \lambda$ is given by $\tau_i x^{-i}$, which immediately gives the stated containment. When $\widehat \lambda$ is an isomorphism, the image of $\Phi_+\mathfrak i_+$ is the same as the image of \[\Phi_+\mathfrak i_+ \widehat \lambda = (\Phi_+\overline \lambda \Phi_-^{-1})(\Phi_-\mathfrak i_-).\] Furthermore, if $\widetilde \lambda$ is a strong height $i$ morphism, then the map $\overline \Phi_+\overline \lambda \Phi_-^{-1}$ has the leading term $\tau_i x^{-i}$, which is an isomorphism. Therefore, $\tau_i$ maps $J_n(\widetilde C_-)$ onto $J_{n+i}(\widetilde C_+)$.
\end{proof}

It follows from Proposition \ref{prop:tau-filtered} that a height $i$ morphism $\widetilde\lambda\colon\widetilde C_-\to\widetilde C_+$ gives 
\begin{equation}\label{h-ineq}
	h(\widetilde C_-;\vartheta)+i \leq h(\widetilde C_+; \tau_i(\vartheta)),
\end{equation} 
which is an equality when  $\tau_i$ and $\widehat \lambda$ are isomorphisms. Applying \eqref{h-ineq} to the morphisms of $\cS$-complexes introduced in Examples \ref{example:height-zero} and \ref{example:height-one} gives a more conceptual explanation for Proposition \ref{lemma:monotonic} and Proposition \ref{proposition:superadditivity}.

Given an $\cS$-complex $\widetilde C$, we may define the dual $\cS$-complex $\widetilde C^\dagger = \Hom(\widetilde C,\Bbb F_2)$ with dual differential and $\chi$-action. For example, if $\widetilde C(Y)$ is the $\cS$-complex of an integer homology sphere $Y$ provided by Example \ref{S-cplx:integer-sphere}, then the behavior of instanton Floer homology with respect to orientation reversal discussed in Section \ref{Fro-approach} shows that $\widetilde C(Y)^\dagger$ is the $\cS$-complex of $-Y$. Identifying $Z(\widetilde C^\dagger)$ with $Z(\widetilde C)^\dagger$, the argument in \cite[Proposition 4.21]{DS1} can be adapted to prove the following.

\begin{prop}\label{prop:filtration-under-duality}
    We have the following identity for the subspace $J_n(\widetilde C^\dagger)$ of $Z(\widetilde C^\dagger)$:
    \[
        J_n(\widetilde C^\dagger) =\{\psi\in Z(\widetilde C)^\dagger\mid \psi(J_{-n+1}(\widetilde C))=0\}.
    \]
\end{prop}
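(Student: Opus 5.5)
The plan is to dualize the whole package: the exact triangle \eqref{exact-tri-equiv}, the identification of Lemma \ref{iso-I-bar}, and the definition \eqref{Jn} of $J_n$. The dual of $\widehat C=\widetilde C\otimes\F[x]$ under the residue pairing $\langle f,g\rangle=\operatorname{Res}(fg\,dx)$ is the check complex of $\widetilde C^\dagger$, and vice versa. So duality should exchange the hat and check theories while turning the bar theory into the bar theory of $\widetilde C^\dagger$. The characterization of $J_n$ as leading terms of the image of $\mathfrak i$ should then dualize to an annihilator statement, with an index shift coming from the residue pairing.

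First I will set up the pairing at chain level. Define
\[
\langle\cdot,\cdot\rangle\colon \medbar C(\widetilde C^\dagger)\otimes\medbar C(\widetilde C)\to\F, \qquad \langle \psi x^a, c\,x^b\rangle=\psi(c)\,[a+b=-1].
\]
This is well defined on the relevant Laurent series, since for each pair only finitely many terms contribute once one factor is truncated appropriately. Because $\widetilde d^\dagger$ and $\chi^\dagger$ are adjoints and $x$ is self-adjoint, $\overline d^\dagger$ is the adjoint of $\overline d$. Under this pairing, $\widehat C(\widetilde C^\dagger)$ is the annihilator of $\widehat C(\widetilde C)$, since $x^{\ge0}x^{\ge0}$ never has total degree $-1$. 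It also identifies $\widecheck C(\widetilde C)^\dagger$ with $\widehat C(\widetilde C^\dagger)$, since the pairing on $x^{\ge0}$ versus $x^{<0}$ is perfect.

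I will then check that the induced pairing on bar homologies matches the pairing on $Z(\widetilde C)^\dagger\llbracket x^{-1},x]\times Z(\widetilde C)\llbracket x^{-1},x]$ under $\Phi^\dagger,\Phi$, up to unipotent corrections $1+O(x^{-1})$. For this I will use the explicit construction of $\Phi$ in \cite[Corollary 4.12]{DS1}, built from the splitting \eqref{chi-dtilde-special-form}. The dual splitting of $\widetilde C^\dagger$ has the same shape, with the roles of $\delta$ and $\delta'$ swapped and $v$ replaced by $v^\dagger$. The expected output is that $\Phi^\dagger$ is adjoint to $\Phi^{-1}$ up to such corrections, possibly together with a shift by a fixed power of $x$ coming from the convention $a+b=-1$. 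I expect this to be the main obstacle: pinning down the exact power of $x$ and confirming that the correction terms only affect lower-order coefficients. I would verify it on the basic example $Z$ alone, where $J_n=Z$ for $n\le0$ and $0$ for $n>0$, so that $h=0$. Consistency with $h(\widetilde C^\dagger)=-h(\widetilde C)$ for Example \ref{ex:Poincare} then fixes the shift as $-n+1$.

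Next I will show that $\operatorname{im}(\mathfrak i)\subset\medbar H(\widetilde C)$ and $\operatorname{im}(\mathfrak i^\dagger)\subset\medbar H(\widetilde C^\dagger)$ are exact annihilators of each other under the homology pairing. Isotropy comes from the chain-level observation above. Maximality comes from exactness of \eqref{exact-tri-equiv} together with the duality $\medcheck H(\widetilde C)^\dagger\cong\medhat H(\widetilde C^\dagger)$, which holds degreewise over the field $\F$, using finite generation of $\medhat H$.

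Finally I will translate annihilation of images into the filtration statement. Suppose $\psi\in J_n(\widetilde C^\dagger)$, realized by $\psi x^{-n}+\cdots$, and $z\in J_{m}(\widetilde C)$, realized by $zx^{-m}+\cdots$. Multiply the second element by a suitable power $x^{k}$, which stays in the image of $\mathfrak i$ because it is an $\F[x]$-module map. Pairing then shows that the coefficient of total degree $-1$ vanishes. When $m=-n+1$ this coefficient is exactly $\psi(z)$, since all other terms have lower total degree. This gives the inclusion $\subseteq$. For the reverse inclusion, take $\psi$ annihilating $J_{-n+1}(\widetilde C)$. I will use the fact that the leading-term filtrations on a pair of mutually annihilating $\F[x]$-submodules of the form $\operatorname{im}\Phi\mathfrak i$ determine each other. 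This is a linear-algebra statement about Lagrangian-type submodules of $Z\llbracket x^{-1},x]$, and I will prove it by a dimension count on the finite-dimensional quotients $J_{a}/J_{b}$. That count gives the equality.
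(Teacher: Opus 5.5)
Your overall strategy works, but it is not the paper's route. The paper gives no argument beyond saying that \cite[Proposition 4.21]{DS1} adapts. With Lemma~\ref{lemma:explicit-h-copy} available, the statement is in fact a finite-dimensional Fredholm alternative. For $k\ge 1$ let $T\colon \ker d\to Z^{\oplus k}$, $T\alpha=(\delta v^i\alpha)_{0\le i<k}$. Then $J_k(\widetilde C)=\{z\mid (0,\dots,0,z)\in\im T\}$. The dual $\widetilde C^\dagger$ has structure maps $d^\dagger$, $v^\dagger$, $\delta'^\dagger$ (in the role of $\delta$) and $\delta^\dagger$ (in the role of $\delta'$), and $\im d^\dagger=(\ker d)^\perp$. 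Hence $J_{1-k}(\widetilde C^\dagger)$ is the set of last components of elements of $(\im T)^\perp$, and these two subspaces are mutual annihilators. That is the case $n\le 0$; the case $n\ge 1$ follows by applying it to $\widetilde C^\dagger$ and taking annihilators. Your route buys more: it gives $\im\mathfrak i_{\widetilde C^\dagger}=(\im\mathfrak i_{\widetilde C})^\perp$ inside the bar theories, in the spirit of $HF^\pm$-duality arguments. The price is infinite-dimensional bookkeeping, and two steps as written need repair.

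\textbf{First repair: the duality is stated backwards.} Your maximality step invokes $\medcheck H(\widetilde C)^\dagger\cong\medhat H(\widetilde C^\dagger)$, justified by a degreewise finiteness that is not available. These $\mathcal S$-complexes are ungraded, and even with a $\mathbb Z/8$-grading the graded pieces of $\medhat H$ are infinite-dimensional. So the algebraic dual of $\widecheck C(\widetilde C)=\prod_{b\le -1}\widetilde C x^b$ is far larger than $\widehat C(\widetilde C^\dagger)$; your identification holds only for continuous duals.

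What does hold, with no hypotheses, is $\medcheck H(\widetilde C^\dagger)\cong\medhat H(\widetilde C)^\dagger$. Under your pairing, $\widecheck C(\widetilde C^\dagger)$ is literally the algebraic dual of $\widehat C(\widetilde C)=\bigoplus_{a\ge 0}\widetilde C x^a$, and the universal coefficient theorem over $\F$ applies. Under this identification the map $\medbar H(\widetilde C^\dagger)\to\medcheck H(\widetilde C^\dagger)$ sends $\bar f$ to $\langle\bar f,\mathfrak i(\cdot)\rangle$. Exactness then gives $\im\mathfrak i_{\widetilde C^\dagger}=(\im\mathfrak i_{\widetilde C})^\perp$ at once.

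Separately, the step you flag as the main obstacle is routine. Take, in the splitting \eqref{chi-dtilde-special-form}, $\Phi(\alpha,\beta,a)=a+\delta(x+v)^{-1}\beta$ and its analogue for $\widetilde C^\dagger$. A direct computation using $dv+vd=\delta'\delta$ shows the homology pairing is exactly $\operatorname{Res}(\Phi^\dagger\bar f\cdot\Phi\bar g)$. No shift appears, so nothing needs calibrating on examples.

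\textbf{Second repair (the real gap): the reverse inclusion.} Mutual annihilation plus a dimension count on the quotients $J_a/J_b$ does not suffice. The $J$'s of $L=\Phi\mathfrak i(\medhat H(\widetilde C))$ alone carry no information about $L^\perp$. The missing idea is that annihilators commute with truncation to a finite window, and this rests on cocompactness of $L$.

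Write $V=Z\llbracket x^{-1},x]$ and let $F_kV$ be the series of degree $\le k$. Since $J_n(\widetilde C)=Z$ for $n\ll 0$, one can cancel top coefficients to get $V=L+F_aV$ for $a\gg 0$. For $b<a$, the residue pairing identifies $F_{-2-b}V^\dagger/F_{-2-a}V^\dagger$ with the dual of $Q=F_aV/F_bV$.

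Now take a functional on $Q$ vanishing on the image $\bar L$ of $L\cap F_aV$. It extends by zero on $L$ to a functional on $V=L+F_aV$ that vanishes on $F_bV$. Any such functional is the pairing with an element of $F_{-2-b}V^\dagger$, and here that element lies in $L^\perp$. Hence $L^\perp$ truncates exactly onto $\bar L^\perp$.

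Leading terms inside the window are unchanged by truncation. In the finite-dimensional space $Q$, the leading-term inclusion together with $\dim\bar L+\dim\bar L^\perp=\dim Q$ then forces $J_n(\widetilde C^\dagger)=J_{1-n}(\widetilde C)^\perp$ for $b+2\le n\le a+1$. Letting $a\to\infty$ and $b\to-\infty$ completes your argument.
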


This gives the following characterization of the $h$-invariant of the dual complex. 

\begin{cor}\label{cor:h-under-duality}
	For each $\vartheta^\dagger\neq 0\in Z(\widetilde C)^\dagger$, we have
	\begin{equation}\label{eqn:h-under-duality}
		h(\widetilde C^\dagger;\vartheta^\dagger) = -\max\{h(\widetilde C;\vartheta')\mid\langle\vartheta^\dagger, \vartheta'\rangle = 1\}.
	\end{equation}
\end{cor}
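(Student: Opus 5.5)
The plan is to deduce the formula directly from Proposition~\ref{prop:filtration-under-duality} and the definition \eqref{h-inv}, treating the filtration $J_\bullet(\widetilde C)$ as the basic data. Write $M = \max\{h(\widetilde C;\vartheta') \mid \langle \vartheta^\dagger,\vartheta'\rangle = 1\}$. The set over which the maximum is taken is nonempty, since $\vartheta^\dagger \neq 0$. It consists of nonzero vectors, so each $h(\widetilde C;\vartheta')$ is a finite integer. These values are bounded above, because $J_n(\widetilde C) = 0$ for $n$ large. Hence $M$ is a well-defined integer.

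By Proposition~\ref{prop:filtration-under-duality} and \eqref{h-inv}, we have $h(\widetilde C^\dagger;\vartheta^\dagger) \ge n$ exactly when $\vartheta^\dagger$ annihilates $J_{-n+1}(\widetilde C)$. We then check two inequalities.

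\emph{First}, take $n = -M$, so that $-n+1 = M+1$. If some $\vartheta' \in J_{M+1}(\widetilde C)$ had $\langle \vartheta^\dagger,\vartheta'\rangle = 1$, then $h(\widetilde C;\vartheta') \ge M+1$. This contradicts the maximality of $M$. Therefore $\vartheta^\dagger$ vanishes on $J_{M+1}(\widetilde C)$, which gives $h(\widetilde C^\dagger;\vartheta^\dagger) \ge -M$.

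\emph{Second}, take $n = -M+1$, so that $-n+1 = M$. Choose $\vartheta'$ achieving the maximum. Then $\vartheta' \in J_M(\widetilde C)$ and $\langle \vartheta^\dagger,\vartheta'\rangle = 1$, so $\vartheta^\dagger$ does not annihilate $J_M(\widetilde C)$. Hence $h(\widetilde C^\dagger;\vartheta^\dagger) < -M+1$.

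Combining the two inequalities gives \eqref{eqn:h-under-duality}.

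The only step needing care is the monotonicity that turns \eqref{h-inv} into the stated threshold characterization. Because the filtration \eqref{Jn-filtration} is decreasing, the condition $\vartheta \in J_n$ is downward closed in $n$. Consequently, $h(\widetilde C;\vartheta) \ge n$ if and only if $\vartheta \in J_n(\widetilde C)$. The same holds for $\widetilde C^\dagger$, since it is again a perfect $\cS$-complex with its own decreasing filtration. Its perfectness follows by dualizing the relations $r = 0$ and $\delta v^{i-1}\delta' = 0$.

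There is no real obstacle beyond this bookkeeping. The substantive input is Proposition~\ref{prop:filtration-under-duality}, which we are allowed to assume.
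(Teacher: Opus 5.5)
Your argument is correct and is the same route the paper takes. The paper states this corollary as an immediate consequence of Proposition~\ref{prop:filtration-under-duality}: one uses that $h(\widetilde C^\dagger;\vartheta^\dagger)\ge n$ exactly when $\vartheta^\dagger$ annihilates $J_{-n+1}(\widetilde C)$, and then tests the thresholds $M+1$ and $M$, which is precisely your unpacking. Your side remarks on the well-definedness of $M$ and on the perfectness of $\widetilde C^\dagger$ (obtained by transposing the splitting) fill in details the paper leaves implicit.
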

If $Z(\widetilde C)$ is $1$-dimensional (e.g. when $\widetilde C$ is the $\cS$-complex of an integer homology sphere), we obtain $h(\widetilde C^\dagger)=-h(\widetilde C)$. In particular, this recovers the relation $q_3(-Y) = -q_3(Y)$. 


\section{Cellular model for instanton Floer homology}\label{cellular-model}
In the previous section, we introduced the notion of $\mathcal S$-complexes as a natural home for Fr{\o}yshov-type invariants. The most natural sources of $\mathcal S$-complexes are $S^1$-manifolds equipped with a Morse function. An example is given by the Chern--Simons functional for singular $SU(2)$ connections on a pair $(Y, K)$, and the language of $\mathcal S$-complexes was developed in that context \cite{DS1,DS2,DISST}. The space of nonsingular $SU(2)$ connections on $Y$ admits instead an $SO(3)$-action, and our next goal is to obtain a dg-module from this $SO(3)$-equivariant setting.

In Section \ref{subsec:flowcat}, we explain the material of \cite{DMES} used to define a finite-dimensional dg-module $\widetilde C(Y,\mathfrak a)$ associated with a rational homology sphere $Y$ equipped with certain auxiliary data. In Section \ref{subsec:bimod}, we carry out a similar discussion for the maps induced by ``nice'' cobordisms, a notion introduced in \cite[Chapter 8]{DMES}. While our presentation is self-contained, additional details are provided in that reference. Together with the main theorem of \cite{DME1}, these are used to make precise the sense in which $\widetilde C(Y,\mathfrak a)$ is independent of the auxiliary data $\mathfrak a$. Finally, in Section \ref{subsec:QHS-Scpx}, we use this material to define the invariant $q_3(Y)$ for $\mathbb F_2$-homology spheres and give an extension to the more general case of rational homology spheres. 

\subsection{$SO(3)$-equivariant flow categories and dg-modules}\label{subsec:flowcat}
The dg-module $\widetilde C(Y,\mathfrak a)$ is defined by first producing a \emph{geometric} structure, which is an {\it $SO(3)$-equivariant flow category}. To give the definition of $SO(3)$-equivariant flow categories, we need an appropriate notion of manifolds with corners. Many competing definitions are in use; compare \cite{Joyce}. The definition used here follows \cite[Definition 3.16]{DME1}.

We begin by fixing notation for the standard stratification of $[0,\infty)^k \times \mathbb R^\ell$. For $\sigma \in \{0,1\}^k$, the face $P_\sigma$ of $[0,\infty)^k \times \mathbb R^{\ell}$ is the set of elements $x$ with $x_i = 0$ if and only if $\sigma(i) = 0$. For instance, $(0,1,0)$ corresponds to the face $\{0\} \times (0,\infty) \times \{0\} \times \mathbb R^\ell$ of $[0,\infty)^3 \times \mathbb R^\ell$. Given a partially-ordered set $\Delta$ and an element $\sigma \in \Delta$, we define $\Delta_{\ge \sigma} = \{\tau \in \Delta : \tau \ge \sigma\}$.

\begin{definition}
	A \textit{stratified-smooth manifold} $(P,\Delta)$ (or simply $P$) of dimension $n$ is the following data: 
	\begin{itemize}
		\item a partially ordered set $\Delta$ equipped with a dimension function $d: \Delta \to \mathbb N$ with maximum value $n$, 
		\item a compact topological space $P$ equipped with a partition into locally-closed subsets $P_\sigma$ for $\sigma \in \Delta$, satisfying $\overline P_\sigma = \bigcup_{\tau \le \sigma} P_\tau$, 
		\item the structure on each stratum $P_\sigma$ of a smooth manifold of dimension $d(\sigma)$.
	\end{itemize}
	We demand that the following {\it locally standard} hypothesis holds: for each $p \in P_\sigma$, there exists an isomorphism of posets $\bar \phi: \Delta_{\ge \sigma} \cong \{0,1\}^k$ for some integer $k$. This isomorphism may be chosen so that for an appropriate neighborhood $p \in U \subset P$, there exists a homeomorphism $\phi: U \to [0,\infty)^k \times \mathbb R^{n-k}$ so that $\phi$ sends the stratum $P_\tau$ onto the stratum corresponding to $\bar \phi(\tau)$. Finally, we demand that $\phi$ is a diffeomorphism on each stratum.
	
	A \textit{stratified-smooth $SO(3)$-manifold} is a stratified-smooth manifold $P$ equipped with a continuous $SO(3)$-action, for which each $P_\sigma$ is $SO(3)$-invariant, and the action is smooth on each stratum.

	A \textit{diffeomorphism} of stratified-smooth manifolds $(\phi,\bar \phi): (P,\Delta)\to (P',\Delta')$  is the pair of an isomorphism of partially-ordered sets 
	$\bar \phi:\Delta \to \Delta'$ and a homeomorphism $\phi:P\to P'$ that restricts to a diffeomorphism from $P_\sigma$ to $P'_{\bar \phi(\sigma)}$ for each 
	$\sigma\in \Delta$.
\end{definition}

Moduli spaces of Morse flowlines are naturally provided with a smooth structure on each stratum, and this structure allows us to carry out certain constructions of smooth topology in Section \ref{sec:susp}. 

\begin{definition}\label{defn:bdry-stratsmooth}
	Let $P$ be a stratified-smooth manifold of dimension $n$. The boundary of $P$ is defined as \[\partial P = \bigsqcup_{d(\sigma) = n-1} \overline{P}_\sigma,\] equipped with its natural structure as a stratified-smooth manifold.
\end{definition}

The fact that $\partial P$ is indeed a stratified-smooth manifold uses the locally standard hypothesis; a more general claim is established in \cite[Lemma A.13]{DME1}. Then $P$ admits a natural filtration by its corner strata: let $P^{(i)}$ denote the union of all strata $P_\sigma$ with $d(\sigma) \le i$. The underlying space of $P$ is an $n$-dimensional topological manifold with boundary $P^{(n-1)}$. Note that the topological boundary $P^{(n-1)}$ is not quite the same as $\partial P$, but there is an obvious quotient map $\pi:\partial P\to P^{(n-1)}$. We will use the filtration of $P$ by the subspaces $P^{(i)}$ for certain inductive arguments, as well as to perform cellular constructions.

We now use these to define certain cellular chains. 

\begin{definition}\label{cellular-fundamental-class}
Let $(P,\Delta)$ be a stratified-smooth manifold of dimension $n$ and $X$ be a CW-complex. For each nonnegative integer $i$, let $X^{(i)}$ denote the $i$-skeleton of $X$.
A continuous map $f: P \to X$ is said to be \textit{cellular} if $f(P^{(i)}) \subset X^{(i)}$ for any $i$. With coefficients in $\Bbb F_2$, the cellular fundamental class $f_*[P]$ is the element of $C_n^{\rm cell}(X)$ defined as 
\begin{equation}\label{degree-str-smooth-space-map}
	\langle f_*[P], e_\alpha\rangle = \deg(f: (P, P^{(n-1)}) \to (e_\alpha/ \partial e_\alpha,*)) \mod 2,
\end{equation}
where $e_\alpha$ denotes an $n$-cell in $X$.
\end{definition}

To be more precise, the map in \eqref{degree-str-smooth-space-map} is the composition of $f$ and the quotient map that collapses to a point the complement of $e_\alpha$ in $X^{(n)}$. In particular, $f_*[P]$ is given by mapping $[P]\in H_n(P,P^{(n-1)})$ into $H_n(X^{(n)},X^{(n-1)})$ by the map $f$. If $X$ is equipped with a smooth structure for which the characteristic maps of the CW-structure are smooth, the degree in \eqref{degree-str-smooth-space-map} has the following alternative description. Suppose $f: P \to X$ is smooth, in the sense that $f$ is continuous and restricts to a smooth map $P_\sigma\to X$ for each $\sigma\in\Delta$. Let $D_\alpha$ be a small disc intersecting $e_\alpha$ once transversely. Then $\langle f_* [P], e_\alpha \rangle$ is the intersection number of $f: P \to X$ with $D_\alpha$. Precisely, we may choose a homotopy $H$ from $f$ to a nearby smooth map $\tilde f$ so that $\tilde f$ is transverse to $D_\alpha$ and the homotopy is small enough that $H(I \times P^{(n-1)})$ does not intersect $D_\alpha$. Then $\tilde f^{-1}(D_\alpha)$ is a finite set, and the mod $2$ number of points in this set, which is independent of the choice of $D_\alpha$ and the homotopy, is equal to \eqref{degree-str-smooth-space-map}.

Because the restriction of $f$ to $\partial P$ is again cellular, the boundary also has a cellular fundamental class. 
\begin{lemma}\label{Lemma:bdry-relation}
	The cellular fundamental classes of $f$ and its restriction to $\partial P$ are related as 
	\begin{equation}\label{eqn:cellular-fundamental-class}f_*[\partial P]=\partial^{\textup{cell}} f_*[P].\end{equation}
\end{lemma}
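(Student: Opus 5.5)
We prove \eqref{eqn:cellular-fundamental-class} one $(n-1)$-cell at a time. The plan is to reduce both sides to statements about homology of pairs. Then we check that the two resulting maps agree, using naturality of connecting homomorphisms.

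First, we describe the right-hand side. By the remark following Definition \ref{cellular-fundamental-class}, $f_*[P]$ is the image of the fundamental class $[P]\in H_n(P,P^{(n-1)})$ under
\[
f_*: H_n(P,P^{(n-1)})\to H_n(X^{(n)},X^{(n-1)}).
\]
The cellular boundary $\partial^{\rm cell}$ is the connecting map
\[
H_n(X^{(n)},X^{(n-1)})\to H_{n-1}(X^{(n-1)})
\]
followed by the map to $H_{n-1}(X^{(n-1)},X^{(n-2)})$. Since $f$ is cellular, it is a map of triples $(P,P^{(n-1)},P^{(n-2)})\to (X^{(n)},X^{(n-1)},X^{(n-2)})$. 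Naturality of the long exact sequence of triples then gives
\[
\partial^{\rm cell} f_*[P] = f_*\,\partial_T [P],
\]
where
\[
\partial_T : H_n(P,P^{(n-1)})\to H_{n-1}(P^{(n-1)},P^{(n-2)})
\]
is the connecting map of the triple. So it suffices to show the following: $\partial_T[P]$, pushed forward to $X$, equals the cellular class of $f|_{\partial P}$.

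Second, we describe the left-hand side. By definition, $f_*[\partial P]$ is the image of $[\partial P]\in H_{n-1}(\partial P,(\partial P)^{(n-2)})$ under the composite $f\circ\pi$, where $\pi:\partial P\to P^{(n-1)}$ is the quotient map. Note that $\pi$ maps $(\partial P)^{(n-2)}$ into $P^{(n-2)}$. So everything reduces to the identity
\[
\pi_*[\partial P]=\partial_T[P]\quad\text{in } H_{n-1}(P^{(n-1)},P^{(n-2)}).
\]
By excision, the group $H_{n-1}(P^{(n-1)},P^{(n-2)})$ splits as a direct sum over the codimension-one strata $P_\sigma$ with $d(\sigma)=n-1$. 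Each summand is $H_{n-1}(\overline P_\sigma, \overline P_\sigma\cap P^{(n-2)})\cong \F$ when $\overline P_\sigma$ is compact, and in that case it is generated by the relative fundamental class of the open $(n-1)$-manifold $P_\sigma$. Hence it suffices to check that both sides restrict, on each such summand, to the generator.

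For $\pi_*[\partial P]$ this is immediate. The map $\pi$ is a homeomorphism from the interior of each component $\overline P_\sigma$ of $\partial P$ onto $P_\sigma$, so it carries the fundamental class of that component to the generator.

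For $\partial_T[P]$, we localize at a point $p\in P_\sigma$. The locally standard hypothesis gives a chart in which $P$ looks like $[0,\infty)\times\mathbb R^{n-1}$ near $p$ and $P_\sigma$ is $\{0\}\times\mathbb R^{n-1}$. Here the fundamental class of a topological manifold with boundary restricts, under the connecting map, to the local orientation class of the boundary. This is the standard fact $\partial[M,\partial M]=[\partial M]$, taken mod 2, which is why orientations play no role. Naturality of the local homology restriction then identifies the $\sigma$-component of $\partial_T[P]$ with the generator.

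The main obstacle is making this localization rigorous. We need to know that $[P]$ exists, i.e. that $(P,P^{(n-1)})$ is a compact topological manifold with boundary. We also need to know that the $\sigma$-summand of $H_{n-1}(P^{(n-1)},P^{(n-2)})$ is detected by local homology at a single point of $P_\sigma$. This uses that each $P_\sigma$ is connected; if it is not, we apply the same argument to each component separately. If this becomes delicate, the fallback is the smooth description following Definition \ref{cellular-fundamental-class}. In that approach we perturb $f$ to be transverse to a small disc $D_\beta$ dual to an $(n-1)$-cell $e_\beta$. The preimage $\tilde f^{-1}$ of the arc obtained by sweeping $D_\beta$ slightly into the adjacent $n$-cells is then a compact 1-manifold. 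Counting its ends mod 2 gives exactly \eqref{eqn:cellular-fundamental-class}: its ends on $\partial P$ give $\langle f_*[\partial P],e_\beta\rangle$, and its ends over the $n$-cells give $\langle \partial^{\rm cell}f_*[P],e_\beta\rangle$.
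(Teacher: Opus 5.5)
Your proposal is correct and is essentially the paper's first proof. The paper also uses naturality of the connecting maps under the cellular map of triples $(P,P^{(n-1)},P^{(n-2)})\to(X^{(n)},X^{(n-1)},X^{(n-2)})$ and reduces to the identity $\pi_*[\partial P]=j_Pi_P[P]$; the paper simply asserts this identity, whereas you justify it stratum by stratum via local homology. Your transversality fallback is the paper's second proof, which counts mod $2$ the boundary points of the compact $1$-dimensional space $\tilde f^{-1}(D_\alpha)$. No ``sweeping'' of the disc is needed there, since $D_\alpha$ already has codimension $n-1$.
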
 
\begin{proof}
We present two proofs. The first relies only on the assumption that $P$ is a topological manifold with corners. The second uses an additional smoothness assumption on the strata of $X$. For our purposes, the advantage of the second proof is that it adapts easily to the more general setup of \textit{stratified-smooth spaces} that is employed in Section \ref{sec:susp}. The reader might want to see the definition of stratified-smooth spaces and their properties developed in \cite{DME1} before reading the second proof (see also Section \ref{subsec:susp-chains}).

We have the following commutative diagram:  
\begin{equation}
	\begin{array}{ccccc}
		H_n(P^{(n)},P^{(n-1)}) & \xrightarrow{i_P} & H_{n-1}(P^{(n-1)}) & \xrightarrow{j_P} & H_{n-1}(P^{(n-1)},P^{(n-2)})\\
		\downarrow f_* & & \downarrow f_* & & \downarrow f_*\\
		H_n(X^{(n)},X^{(n-1)}) & \xrightarrow{i_X} & H_{n-1}(X^{(n-1)}) & \xrightarrow{j_X} & H_{n-1}(X^{(n-1)},X^{(n-2)})
	\end{array}\label{eqn:digram-relating-fundamental-class}
\end{equation}
The cellular fundamental class $f_*[P]$ is the image of $[P]$ under the first vertical map, while the composite of the bottom row is precisely the cellular boundary map $\partial^{\text{cell}}$. The projection map $\pi\colon(\partial P, (\partial P)^{(n-2)}) \to (P^{(n-1)}, P^{(n-2)})$ satisfies $\pi_* [\partial P] = j_P i_P [P],$ so the result follows from the commutativity of the given diagram.

For the second argument, suppose that $X$ is equipped with a smooth structure for which the characteristic map of each cell is smooth. For a cell $e_\alpha$ with $\dim e_\alpha=n-1$, pick a small disk $D_\alpha$ as above. Since $f$ is cellular, the intersection of $f(P^{(n-2)})$ and $D_\alpha$ is empty, and $f(P^{(n-1)})$ intersects $D_\alpha$ only in the point $D_\alpha\cap e_\alpha$. As in the discussion preceding the lemma, we may choose a homotopy from $f$ to a smooth map $\tilde f$ so that $H(I \times P^{(n-2)})$ is disjoint from $D_\alpha$ and $\tilde f$ is transverse to $D_\alpha$. In particular, $\tilde f^{-1}(D_\alpha)\cap P^{(n-1)}$ is a finite subset of $P^{(n-1)}$, and the number of points in this set is equal to $\langle  f_*[\partial P],e_\alpha\rangle$. In addition, $\tilde f^{-1}(D_\alpha)$ is a compact $1$-dimensional stratified-smooth space \cite[Proposition A.17]{DME1}. Because the boundary of a compact $1$-dimensional stratified-smooth space consists of an even number of points \cite[Proposition A.15]{DME1}, we have 
\begin{align*}\langle f_* [\partial P], e_\alpha\rangle + \langle \partial^{\text{cell}} f_*[P], e_\alpha\rangle&= \#(\partial P \cap D_\alpha) + \#(P \cap \partial D_\alpha)\\ 
&= \#\partial(P \cap D_\alpha)=0.\qedhere
\end{align*}
\end{proof}

\begin{remark}
		The preferred definition for a manifold with corners in \cite{DMES} is based on the language of $\langle n\rangle$-manifolds, where we demand $\Delta = \{0,1\}^n$. There are also weaker definitions of manifolds with corners \cite[Remark 2.11]{Joyce}. We prefer the definition above for two reasons. First, spaces which appear in Section \ref{sec:susp} are stratified by posets more general than $\{0,1\}^n$, so $\langle n\rangle$-manifolds are too restrictive. Second, weaker definitions do not permit a definition of the boundary as simple as Definition \ref{defn:bdry-stratsmooth}; compare \cite[Definition 2.6]{Joyce}. 
\end{remark}

We now move on to the definition of flow categories. The following definition is reproduced from \cite[Definition 8.1.1]{DMES}. In its statement, it is convenient to pass between a set of $SO(3)$-orbits $\mathsf{Ob}$ and the corresponding discrete set $\mathsf{orb} = \mathsf{Ob}/SO(3)$. Elements of the latter will be indicated by Greek letters $\alpha, \beta, \dots$, and the corresponding orbit in $\mathsf{Ob}$ will be indicated by $\lift(\alpha)$. Fiber products over $\lift(\alpha)$ will be written $\times_\alpha$. 

\begin{definition}
An \textit{instanton $SO(3)$-flow category} $\mathcal M$ consists of a set of $SO(3)$-orbits $\mathsf{Ob}(\mathcal M)$ and a grading function $| \cdot | : \mathsf{orb}(\mathcal M) \to \mathbb Z$ with finitely many orbits in each grading. For each $ \alpha, \beta \in \mathsf{orb}(\mathcal M)$ with \begin{equation}\label{eqn:dim-restriction} 1\leq |\alpha| - |\beta| \le \dim \lift(\beta) + 2,\end{equation}
there is a compact stratified-smooth $SO(3)$-manifold of dimension $|\alpha| - |\beta|+\dim \lift(\alpha)-1$, denoted $\eqm(\alpha, \beta)$. These spaces are equipped with equivariant maps \[\lift(\alpha) \xleftarrow{s_{\alpha \beta}} \eqm(\alpha, \beta) \xrightarrow{t_{\alpha \beta}} \lift(\beta)\]
called the {\it source} and {\it target} maps. The stabilizer of any point in $\lift(\alpha)$ or $\eqm(\alpha, \beta)$ is required to be conjugate to one of $\{I\}, SO(2)$, or $SO(3)$. The orbits $\lift(\alpha)$ with trivial stabilizer are called \emph{irreducible}. The orbits $\lift(\alpha) \cong S^2$, with stabilizer conjugate to $SO(2)$, are called {\it abelian}, and the orbits $\lift(\alpha) = \{\ast\}$ with stabilizer $SO(3)$ are called {\it central}.\\ 

\noindent Whenever $\alpha, \beta$ satisfy \eqref{eqn:dim-restriction}, these spaces are also equipped with composition maps \[\circ: \eqm(\alpha, \gamma) \times_{\gamma} \eqm(\gamma, \beta) \to \partial \eqm(\alpha, \beta)\] which are $SO(3)$-equivariant, associative, and preserve the source and target maps. We require that these maps induce a diffeomorphism 
\begin{equation}\label{eqn:flowcat-boundary-relation}\partial \eqm(\alpha, \beta) \cong \bigsqcup_{\gamma \in \mathsf{orb}(\mathcal M)} \eqm(\alpha, \gamma) \times_{\gamma} \eqm(\gamma, \beta).\end{equation}
Finally, there is an equivariant periodicity homeomorphism $U: \mathsf{Ob}(\mathcal M)_i \to \mathsf{Ob}(\mathcal M)_{i+8}$ and $U: \eqm(\alpha, \beta) \to \eqm(U\alpha, U\beta)$ preserving the source, target, and composition maps.
\end{definition}

\begin{remark}
These are called \emph{instanton} flow categories because of the degree-$8$ periodicity isomorphism $U$, which is the structure that appears in flow categories arising from the instanton theory to be discussed later in this section. Even though we will ultimately work modulo $U$, it is useful to include it in the definition so that indices of critical orbits are integers, and spaces $\eqm(\alpha, \beta)$ are of a constant dimension.
\end{remark}

The approach of \cite{DME1} associates with an $SO(3)$-flow category a large chain complex $\widetilde C^{gm}(\mathcal M;\mathbb F_2)$. In this article we prefer a finite-dimensional chain complex, and we use the cellular approach of \cite[Chapters 6-8]{DMES}. To do so, we fix the standard cell structure on $SO(3)$, with $0$-cell given by the identity, $1$-cell given by the nontrivial rotations around the $z$-axis, $2$-cell given by the set of rotations around some axis in the $xz$-plane, less the rotations around the $z$-axis, and $3$-cell given by the remainder. For this cell structure, both multiplication and inversion are cellular maps. We also fix the standard cell structure on $S^2$ with two cells. For this cell structure, the standard action $\mu: SO(3)\times S^2 \to S^2$ is a cellular map. In particular, the $0$-cell is $SO(2)$-invariant. Thus the cellular chain complex of $SO(3)$ is a dg-algebra and the cellular chain complex of $S^2$ a dg-module. With coefficients in $\mathbb F_2$, these are identified with the exterior algebra $\Lambda(\chi_1, \chi_2)$ and its module $\Lambda(\chi_2)$. 

\begin{definition}
	A \textit{pointed instanton $SO(3)$-flow category} $(\cM, p)$ is an instanton $SO(3)$-flow category for which each orbit is equipped with a choice of basepoint $p_\alpha\in\lift(\alpha)$ satisfying the following conditions:
	\begin{itemize}
		\item $Up_\alpha = p_{U\alpha}$.
		\item If $\lift(\alpha)\cong S^2$, the basepoint $p_\alpha$ is $SO(2)$-invariant.
	\end{itemize}
The pointing $p$ of $\cM$ gives rise to an equivariant identification $\lift(\alpha)\cong SO(3)/\Gamma, \Gamma\in\{1, SO(2), SO(3)\}$ for each orbit, and hence a privileged cell structure on $\lift(\alpha)$ for which $p_\alpha$ is its unique $0$-cell.
\end{definition} 

Given a pointed instanton $SO(3)$-flow category $(\cM,p)$,
for $\lift(\alpha) \leftarrow \eqm(\alpha, \beta) \to \lift(\beta)$ to induce a map on cellular chains, we need to impose an additional condition on this correspondence. Suppose $X$ is a finite-dimensional CW-complex, $W$ is a space equipped with a filtration by subspaces, and $X \xleftarrow{s} W$ is a continuous map. For each cell $e: D^n \to X$ of codimension $j$, define $W_e = D^n \times_X W$ and equip it with the filtration $W_e^{(k)} = D^n \times_X W^{(k+j)} \cup S^{n-1} \times_X W^{(k+j+1)}$.
	
The space $\eqm(\alpha, \beta)$ is filtered by $\eqm(\alpha, \beta)^{(k)}$. Because the map $s_{\alpha \beta}$ is a smooth fiber bundle projection, for each cell $e$ of $\lift(\alpha)$ the space $\eqm(\alpha, \beta)_e$ is again a stratified-smooth manifold. 
	
\begin{definition}
A \textit{cellular correspondence} consists of the data of CW-complexes $X$ and $Y$, a filtered space $W$, and a correspondence $X \xleftarrow{s} W \xrightarrow{t} Y$ with the following property: for each cell $e$ of $X$, the map $t: W_e \to Y$ is filtered.
\end{definition}

When $W$ is a stratified-smooth manifold and $s$ is a smooth fiber bundle projection, $W$ induces a map on cellular chains by the formula \[W_*(e) = t_*[W_e] \in C_*^{\text{cell}}(Y).\] It is a consequence of Lemma \ref{Lemma:bdry-relation} that $W_*$ satisfies the boundary relation 

\begin{equation}\label{eqn:bdry-reln-corresp}
	\partial^{\text{cell}} W_* + W_* \partial^{\text{cell}} = (\partial W)_*.
\end{equation}

If $X \leftarrow V \to Y$ and $Y \leftarrow W \to Z$ are cellular correspondences, and $V \times_Y W$ is equipped with the filtration $(V \times_Y W)^{(k)} = \bigcup_{i+j = k + \dim Y} V^{(i)} \times_Y W^{(j)}$, it is straightforward to verify that $V \times_Y W$ is also a cellular correspondence. When $V, W$ are stratified-smooth manifolds with $V \to X$ and $W \to Y$ smooth fiber bundle projections, the fiber product $X \leftarrow V \times_Y W \to Z$ is a cellular correspondence of stratified-smooth manifolds, with source map a smooth fiber bundle projection, and the induced map satisfies 
\begin{equation}\label{eqn:fiber-product-correspondence}
	(V \times_Y W)_* = W_* \circ V_*.
\end{equation}

We will only consider the case that $X, Y$ are $SO(3)$-orbits equipped with the standard cell structure and $W$ is a stratified-smooth $SO(3)$-manifold equipped with its corner filtration, with $s$ and $t$ equivariant. In this case, the hypothesis that the correspondence is cellular can be stated more simply: if $p \in X$ is the unique $0$-cell, then $(W,s,t)$ defines a cellular correspondence if and only if the map $t: W_p \to Y$ is cellular. For further details on the definition of cellular correspondences and their induced maps, see \cite[\S6.2]{DMES}.

\begin{definition}
A pointed instanton $SO(3)$-flow category $(\cM, p)$ is said to be \textit{cellular} if each correspondence $\lift(\alpha) \xleftarrow{s_{\alpha \beta}} \eqm(\alpha, \beta) \xrightarrow{t_{\alpha \beta}} \lift(\beta)$ is cellular.
\end{definition}

We will write $F(e_\alpha, \beta)$ for the fiber product $e_\alpha \times_\alpha \eqm(\alpha, \beta)$. The preceding construction gives rise to a map on cellular chains:
\begin{align*}
{\mathfrak m}_{\alpha \beta}: C_*^{\text{cell}}(\lift(\alpha)) &\to C_*^{\text{cell}}(\lift(\beta)) \\
{\mathfrak m}_{\alpha \beta}(e_\alpha) &= (t_{\alpha \beta})_*[F(e_\alpha, \beta)].
\end{align*}
These maps are equivariant under the action of $C_*^{\text{cell}}(SO(3))$ and satisfy the relations 
\begin{equation}\label{eqn:cell-diff-bdry} 
\partial^{\text{cell}} {\mathfrak m}_{\alpha \beta} + {\mathfrak m}_{\alpha \beta} \partial^{\text{cell}} = \sum_{\gamma \in \mathsf{orb}(\mathcal M)}  {\mathfrak m}_{\gamma \beta} {\mathfrak m}_{\alpha \gamma},
\end{equation} 
owing to the relations \eqref{eqn:flowcat-boundary-relation}, \eqref{eqn:bdry-reln-corresp} and \eqref{eqn:fiber-product-correspondence}; see \cite[Lemmas 6.2.4 and 6.2.5]{DMES} for more details. In the present article we take coefficients in $\mathbb F_2$, so that the left-hand side of this equation is identically zero. We also remark that the inequalities in \eqref{eqn:dim-restriction} are imposed because this is the dimension range in which $\fm_{\alpha \beta}$ and $\fm_{\gamma \beta} \fm_{\alpha \gamma}$ are not vacuously zero: for stratified-smooth manifolds of larger dimension, these maps take values in $C_k^{\text{cell}}(\lift(\beta))$ for $k > \dim \lift(\beta)$.

Thus, given a cellular instanton $SO(3)$-flow category $\mathcal M$, we may define a chain complex over $\mathbb F_2[U, U^{-1}]$ 
\begin{align*}
\widetilde C(\mathcal M; \Delta_{\mathbb F_2}) &= \bigoplus_{\alpha \in \mathsf{orb}(\mathcal M)} C_*^{\text{cell}}(\lift(\alpha))[-|\alpha|] \\ 
\widetilde d(e_\alpha) &= \partial^{\text{cell}}(e_\alpha) + \sum_{\beta \in \mathsf{orb}(\mathcal M)} {\mathfrak m}_{\alpha \beta}(e_\alpha).
\end{align*}
Here we use the convention that $C_i[k]=C_{i+k}$ for a graded chain complex $C_*$.
Again, the cellular differential is identically zero because we take coefficients in $\mathbb F_2$. That this differential squares to zero is an immediate consequence of \eqref{eqn:cell-diff-bdry}, and that it gives $\widetilde C$ the structure of a dg-module follows from the fact that ${\mathfrak m}$ is equivariant. The action of $U$ is defined by the periodicity operator in the definition of instanton $SO(3)$-flow category. For the purposes of this paper, it is convenient to restrict attention to the $\mathbb Z/8$-graded dg-module 
\begin{equation}\label{eqn:quotienting-U-action}
	\widetilde C(\mathcal M) = \widetilde C(\mathcal M; \Delta_{\mathbb F_2}) \otimes_{\mathbb F_2[U, U^{-1}]} \mathbb F_2,
\end{equation} 
which is essentially equivalent to quotienting $\mathsf{orb}(\mathcal M)$ by the $U$-action. 

We may make the differential more explicit, as follows. Given a cellular instanton $SO(3)$-flow category, write $Z(\mathcal M)$ for the graded $\mathbb F_2$-vector space generated by central orbits, $A(\mathcal M)$ for the graded vector space generated by abelian orbits, and $C(\mathcal M)$ for the graded vector space generated by irreducible orbits, all considered modulo the periodicity operator $U$. Then
\begin{align}
\widetilde C(\mathcal M) &= C(\mathcal M) \otimes_{\mathbb F_2} \Lambda(\chi_1, \chi_2) \; \oplus \; A(\mathcal M) \otimes_{\mathbb F_2} \Lambda(\chi_2) \; \oplus \; Z(\mathcal M) 
\nonumber 
\end{align}
This induces the following direct sum decomposition of $\widetilde C(\mathcal M)$ as a vector space over $\F$:
 \begin{equation}\label{eqn:dg-module-decomposition}
	C(\mathcal M) \oplus \chi_1 C(\mathcal M) \oplus \chi_2 C(\mathcal M) \oplus \chi_3 C(\mathcal M) \oplus A(\mathcal M) \oplus \chi_2 A(\mathcal M) \oplus Z(\mathcal M),
 \end{equation}
where $\chi_3 = \chi_1 \chi_2$. In terms of this direct sum decomposition, the differential takes the form:
\begin{equation}\label{eq:so3complexdtildeshape}
\widetilde d  = \left[
\begin{array}{cccc|cc|c} 
d_1 & 0   & 0   & 0   & 0   & 0   & 0\\
d_2 & d_1 & 0   & 0   & e_2 & 0   & 0\\
d_3 & 0   & d_1 & 0   & 0   & 0   & 0\\
d_4 & d_3 & d_2 & d_1 & e_4 & e_2 & \delta_4\\
\hline
e_1 & 0   & 0   & 0   & r_1 & 0   & 0\\
e_3 & 0   & e_1 & 0   & r_3 & r_1 & s_3\\
\hline
\delta_1 & 0 & 0 & 0 & s_1 & 0 & t_1\\
\end{array} \right]
\end{equation}
Here, for instance, the map $d_i: C_*(\mathcal M) \to C_{*-i}(\mathcal M)$ is defined as
 \[\langle d_i \alpha, \beta\rangle = \langle {\mathfrak m}_{\alpha \beta}(e^0),e^{i-1}\rangle , \quad \quad \alpha, \beta \text{ irreducible},\] 
 where $e^{i-1}$ denotes the $(i-1)$-dimensional cell of $SO(3)$.
The components $r_1, r_3, s_1, s_3, t_1$ can only be nonzero if some point in $\eqm(\alpha, \beta)$ has nontrivial stabilizer; for the case of $r_3$, compare \cite[Proposition 7.2.4]{DMES}.

Instanton $SO(3)$-flow categories that arise in nature are not typically cellular, but we can make them so: 

\begin{prop}\label{prop:flowcat-cell-approx}
Suppose $(\mathcal M, p)$ is a pointed instanton $SO(3)$-flow category for which the $SO(3)$-action on each $\eqm(\alpha, \beta)$ is free. Then there exists a coherent homotopy of the right endpoint maps $t_{\alpha \beta}^r: [0,1] \times \eqm(\alpha, \beta) \to \lift(\beta)$ so that $(\eqm(\alpha, \beta), s_{\alpha \beta}, t^1_{\alpha \beta})$ are the correspondences of a cellular instanton $SO(3)$-flow category.
\end{prop}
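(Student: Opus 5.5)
We build the homotopies $t^r_{\alpha\beta}$ by induction over the flow category, and within each space by induction over its corner filtration; this is a relative cellular approximation argument. First we order the pairs $(\alpha,\beta)$ by the energy $|\alpha|-|\beta|$, which is finite modulo the periodicity $U$ since there are finitely many orbits in each grading. Because of \eqref{eqn:flowcat-boundary-relation}, the boundary $\partial\eqm(\alpha,\beta)$ is the union of fiber products $\eqm(\alpha,\gamma)\times_\gamma\eqm(\gamma,\beta)$ with $|\alpha|-|\gamma|$ and $|\gamma|-|\beta|$ strictly smaller. Coherence asks that $t^r_{\alpha\beta}$ on such a boundary face equal $t^r_{\gamma\beta}$ applied to the second factor. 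So once the homotopies have been chosen for all pairs of smaller energy, they determine a homotopy on $[0,1]\times\partial\eqm(\alpha,\beta)$. We must check that these agree on overlaps, that is, on codimension-two corners given by triple fiber products. This follows from associativity of composition, together with the fact that the target map of a composite only depends on the last factor. We then make these choices $U$-equivariantly by working with orbits modulo $U$ and transporting the choices.

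The core step is to extend an equivariant homotopy from $\partial\eqm(\alpha,\beta)$ to all of $\eqm(\alpha,\beta)$, so that at time $1$ the map $t: F(p_\alpha,\beta)\to\lift(\beta)$ is cellular. Here $F(p_\alpha,\beta)=s^{-1}(p_\alpha)$ is the fiber over the $0$-cell, and by the remark before the definition of cellular flow categories this cellularity is all that needs to be checked. We use freeness of the $SO(3)$-action. Since $s$ is an equivariant fiber bundle over $\lift(\alpha)\cong SO(3)/\Gamma$, equivariant maps $\eqm(\alpha,\beta)\to\lift(\beta)$ correspond to $\Gamma$-equivariant maps $F(p_\alpha,\beta)\to\lift(\beta)$. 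When $\alpha$ is irreducible, $\Gamma$ is trivial and we have no constraint at all. When $\alpha$ is abelian or central, $\Gamma=SO(2)$ or $SO(3)$ acts freely on the fiber. Then equivariant maps are the same as sections of the associated bundle $F\times_\Gamma\lift(\beta)\to F/\Gamma$, and we construct the homotopy on the quotient.

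Next we carry out relative cellular approximation skeleton by skeleton, using the filtration by corner strata $F^{(k)}$. On $F^{(k)}$ minus a neighborhood of $F^{(k-1)}$, the strata are smooth manifolds of dimension $k$. We push the map off the cells of $\lift(\beta)$ of dimension greater than $k$ by a small homotopy, rel the part already fixed on lower strata and on the boundary. Because $\lift(\beta)$ has cells only up to dimension $3$ and these are explicit, this is a transversality or general position argument. Note that $\dim F^{(k)}=k$ is smaller than the dimension of the cells we are avoiding. We then extend the homotopy to the rest of the space using a collar coming from the locally standard hypothesis, together with the homotopy extension property of the pair $(F,F^{(k)})$.

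The main obstacle is doing all of this equivariantly and compatibly in the non-irreducible source case. There the basepoint $p_\alpha$ is only $SO(2)$-invariant, so the fiber carries a residual $\Gamma$-action, and the cellular condition must hold for a $\Gamma$-equivariant map. We would first try to exploit the $SO(2)$-invariance of the $0$-cell of $S^2$ and the cellularity of the action map $\mu$, so that avoidance of higher cells is a $\Gamma$-invariant condition. We then perform the general position step on the free quotient $F/\Gamma$, whose dimension is even smaller than that of $F$. Finally, we must make sure the boundary data from the induction are themselves $\Gamma$-equivariant. This holds because they are restrictions of equivariant homotopies built at earlier stages.
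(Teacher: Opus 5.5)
Your argument has a genuine gap in the coherence step. You impose coherence using the \emph{original} composition maps: on the face $\eqm(\alpha,\gamma)\times_\gamma\eqm(\gamma,\beta)\subset\partial\eqm(\alpha,\beta)$ you set $t^r_{\alpha\beta}(A\circ B)=t^r_{\gamma\beta}(B)$. That fiber product is taken over $t_{\alpha\gamma}$ and $s_{\gamma\beta}$. The conclusion, however, asks for $(\eqm(\alpha,\beta),s_{\alpha\beta},t^1_{\alpha\beta})$ to form a flow category. So each face must be identified with the \emph{different} space $\eqm(\alpha,\gamma)\mathbin{{}_{t^1_{\alpha\gamma}}\!\times_{s_{\gamma\beta}}}\eqm(\gamma,\beta)$, compatibly with $t^1_{\alpha\beta}$ and $t^1_{\gamma\beta}\circ\mathrm{pr}_2$. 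This is what \eqref{eqn:fiber-product-correspondence}, and hence \eqref{eqn:cell-diff-bdry}, requires.

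With your prescription, the faces at $r=1$ are still fiber products over $t^0_{\alpha\gamma}$, whose values are not confined to skeleta of $\lift(\gamma)$. So they need not even be cellular correspondences, let alone compose correctly. The missing idea, which the paper supplies, is to deform the compositions together with the targets. One inductively constructs embeddings $\circ^r\colon\eqm(\alpha,\gamma)\mathbin{{}_{t^r_{\alpha\gamma}}\!\times_{s_{\gamma\beta}}}\eqm(\gamma,\beta)\to\partial\eqm(\alpha,\beta)$ such that:
\begin{itemize}
\item $\circ^0=\circ$;
\item $\circ^r$ agrees with the already constructed maps on lower strata;
\item each $\circ^r$ induces the diffeomorphism \eqref{eqn:flowcat-boundary-relation}.
\end{itemize}
This is possible because $s_{\gamma\beta}$ is a fiber bundle projection, so these fiber products form a bundle over $[0,1]$ that can be trivialized. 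Coherence then means $t^r_{\alpha\beta}\circ\circ^r=t^r_{\gamma\beta}\circ\mathrm{pr}_2$, and the agreement on codimension-two corners must be checked for the $\circ^r$, not for the original compositions.

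Your equivariant step is also not right as stated. The skeleta of $\lift(\beta)\cong SO(3)$ are not invariant under the residual $SO(2)$; the $2$-skeleton is not preserved. So cellularity is not a well-defined condition on sections of $F\times_\Gamma\lift(\beta)\to F/\Gamma$. Moreover, pushing a slice of a $k$-dimensional stratum into the $k$-skeleton only puts its $SO(2)$-sweep into the $(k+1)$-skeleton.

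The paper avoids quotients entirely. If $\Gamma_\alpha=SO(3)$, or $\Gamma_\alpha=SO(2)$ with $\dim F(\alpha,\beta)\ge 3$, then all strata of $F$ of dimension at most $2$ (if any) lie in $\partial F$ and were handled inductively. Any equivariant extension of the boundary homotopy then works, since $\dim\lift(\beta)\le 3$.

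The only substantive case is $\Gamma_\alpha=SO(2)$ with $\dim F\le 2$, where $F\cong SO(2)\times C$ and $\dim C\le 1$. The boundary orbits already land in $\lift(\beta)^{(1)}=SO(2)\cdot p_\beta$, so the slice can be chosen with $t(\partial C)=p_\beta$. Classical cellular approximation of $t|_C$ rel $\partial C$ into the $(\dim C)$-skeleton, extended equivariantly, then finishes. Your appeal to cellularity of the action map is the right reason this works, but it must be applied to a global slice with this shifted target skeleton.
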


We sketch the proof of this fact below; full details are provided in \cite[\S7.4]{DMES}. A similar claim holds for uniqueness: two cellular approximations are themselves coherently homotopic by a cellular homotopy. 

\begin{proof}[Sketch of proof.]
A flow category also includes the data of composition maps, which are used in the induction procedure. The domain of the composition map is given by the fiber product of $t_{\alpha \gamma}$ and $s_{\gamma \beta}$. In modifying $t_{\alpha \gamma}$ by a homotopy, the domain varies. Thus, one should also inductively construct a map \[\circ^r: \eqm(\alpha, \gamma) \mathbin{{}_{t^r_{\alpha \gamma}}\!\times_{s_{\gamma \beta}}} \eqm(\gamma, \beta)\to\partial\eqm(\alpha,\beta),\;\;r\in[0,1]\] so that, for each fixed time parameter $r$, the map $\circ^r$ is an embedding which determines a diffeomorphism as in \eqref{eqn:flowcat-boundary-relation}. Furthermore, $\circ^r$ should restrict to the existing composition maps on lower strata, so that the map is already specified for $r = 0$ and along the boundary of the domain. Because $s_{\gamma \beta}$ is a fiber bundle projection, the domain is a fiber bundle over $[0,1]$. As a consequence, the desired extension can be constructed by trivializing this bundle.

Write $F(\alpha, \beta) = s_{\alpha \beta}^{-1}(p_\alpha)$; this space carries an action of $\Gamma_\alpha$, the stabilizer of $p_\alpha$. Inducting on the dimension of $F(\alpha, \beta)$, we seek to find an equivariant homotopy from the map $t_{\alpha \beta}: F(\alpha, \beta) \to \lift(\beta)$ to a cellular map, equal to a homotopy on its boundary determined by the inductive hypothesis and the identification of $\partial F(\alpha, \beta)$ with the disjoint union of fiber products of $F(\alpha, \gamma)$ with $\eqm(\gamma, \beta)$, the fiber product being taken with respect to the composition maps $\circ^r$ described above. (That $t^r$ is prescribed in this way on the boundary is precisely what we mean by the term `coherent homotopy'.) When $\Gamma_\alpha$ is trivial, this is classical cellular approximation. When $\Gamma_\alpha = SO(3)$, the claim is tautological for reasons of dimension: $\dim F(\alpha, \beta) \ge 3$, while the codomain has dimension at most $3$, so any extension of the given homotopy will do. When $\Gamma_\alpha = SO(2)$ and $\dim F(\alpha, \beta) \ge 3$, the result is again tautological. If $\dim F(\alpha, \beta) \le 2$, the domain may be identified with $SO(2) \times C$ for a compact manifold $C$ of dimension at most $1$; by inductive hypothesis and the fact that $SO(2) \cdot \lift(\beta)^{(0)} = \lift(\beta)^{(1)}$, this decomposition may be chosen so that $t_{\alpha \beta}(\partial C) = p_\beta$. Applying classical cellular approximation to $t_{\alpha \beta}$ restricted to $C$, and extending in the obvious way to $SO(2) \times C$, gives the desired homotopy. 
\end{proof}

In particular, if the $SO(3)$-action on each $\eqm(\alpha, \beta)$ is free, there is a corresponding dg-module $\widetilde C(\mathcal M)$ for each pointing data $p$, well-defined up to equivariant homotopy equivalence, for which $r_1, r_3, s_1, s_3, t_1$ are all zero. Moreover, the homotopy equivalence class of $\widetilde C(\cM)$ is independent of the choice of basepoints on the irreducible orbits: if $p$ and $p'$ agree except possibly on free orbits, then any cellular approximations of $(\cM,p)$ and $(\cM,p')$ can be connected by a coherent cellular homotopy. However, because $S^2$ has exactly two $SO(2)$-invariant points, the choice of basepoints on each abelian orbit is a nontrivial extra piece of information. We will return to this point soon.
	
One may relax the hypothesis that the $SO(3)$-action is free in Proposition \ref{prop:flowcat-cell-approx}. The only obstruction to achieving cellularity arises in the case that $\alpha, \beta$ are abelian, $\eqm(\alpha, \beta)$ contains an abelian trajectory and that $\dim\eqm(\alpha,\beta)\le\dim\lift(\alpha)+1$. If no such trajectory exists, or if the obstruction vanishes for each such trajectory, then the $SO(3)$-flow category may be made cellular.\\

Suppose $Y$ is a rational homology sphere with a chosen basepoint $y$, equipped with a regular perturbation $\pi$ of the Chern--Simons functional associated with the trivial $SU(2)$-bundle. Then $(Y, \pi)$ gives rise to an instanton $SO(3)$-flow category as follows. 

The group $\mathcal G(Y)$ acts freely on $SU(2) \times \mathcal A(Y)$ via its standard action on $\mathcal A(Y)$, and on the $SU(2)$-factor by left multiplication by the value of elements of $\mathcal G(Y)$ at the basepoint $y$. Its identity component, denoted $\mathcal G_0(Y)$, consists of gauge transformations $\sigma: Y \to SU(2)$ of degree zero. We write $\underline{\widetilde {\mathcal B}}(Y)$ for the quotient space by the action of $\mathcal G_0(Y)$, called the {\it configuration space of framed connections}. The right action of $SU(2)$ on the first factor of $SU(2) \times \mathcal A(Y)$ commutes with the action of $\mathcal G(Y)$ and induces an action of $SO(3)=SU(2)/{\pm I}$ on $\underline{\widetilde {\mathcal B}}(Y)$. With respect to this action, irreducible orbits correspond to connections with nonabelian holonomy group, abelian orbits correspond to connections with abelian but noncentral holonomy group, and central orbits correspond to connections with holonomy in $\{\pm I\}$. 
	
The objects $\mathsf{Ob}(Y,\pi)$ of the $SO(3)$-flow category associated with $(Y,\pi)$ are the critical points of $CS+\pi$ on $\underline{\widetilde{\mathcal B}}(Y)$. The interior of the manifold $\eqm(\alpha, \beta)$ is the space of pairs $(g, A)$, where $A$ is a (perturbed) ASD connection on $\mathbb R \times Y$ asymptotic to the (perturbed) flat connections $\alpha, \beta$ at $\pm \infty$ and $g \in SU(2)$. These are considered modulo the action of asymptotically parallel gauge transformations, with $\sigma$ acting on the first factor by the value $\sigma(y,0)$, and modulo translation. The source and target maps are given by sending $(g, A)$ to the pair $(g_\pm, A_\pm)$, where $g_\pm$ is the $A$-parallel transport of $g$ along $(-\infty, 0] \times \{y\}$ or $[0,\infty) \times \{y\}$ and $A_\pm$ is the restriction of $A$ to $\{\pm \infty\} \times Y$. The moduli spaces $\eqm(\alpha, \beta)$ are obtained as a compactification by broken trajectories. The lower strata are determined by a sequence of flat connections $\gamma_i$, and are given by trajectories which break along the $\gamma_i$. We denote these moduli spaces $\eqm(Y; \alpha, \beta)$ when we wish to emphasize the $3$-manifold $Y$.  

There is a grading function $|\alpha|$ on the set of critical orbits, defined using a convention similar to Section \ref{Fro-approach}; it is chosen so that both of the following hold: \[\dim \eqm(\alpha,\beta) = \dim \lift(\alpha) + |\alpha| - |\beta| - 1, \quad \quad	|\theta| = 0.\] 
The periodicity isomorphism $U: \mathsf{Ob}(Y,\pi)_i \to \mathsf{Ob}(Y,\pi)_{i+8}$ is given by the action of a generator $U \in \pi_0 \mathcal G(Y)$, or equivalently, a degree-$1$ gauge transformation $\sigma: Y \to SU(2)$. 

In general, these moduli spaces need not be compact because of bubbling phenomena. But assuming the inequality
\begin{equation}\label{eqn:bubbling-dim-res}
	|\alpha| - |\beta| + \dim \lift(\alpha) < 8 + \max(\dim \lift(\alpha), \dim \lift(\beta)),
\end{equation}
which is satisfied in the degree range of \eqref{eqn:dim-restriction}, the moduli spaces $\eqm(\alpha, \beta)$ of possibly broken framed ASD connections are compact. 

Finally, because a reducible ASD connection on $\mathbb R \times Y$ is gauge-equivalent to a constant trajectory, the moduli spaces $\eqm(\alpha, \beta)$ consist of irreducible connections, and therefore carry a free $SO(3)$-action.

It will be useful to explicitly enumerate the abelian and central flat connections modulo the periodicity isomorphism $U$, denoted $\mathfrak A(Y)$ and $\mathfrak Z(Y)$, respectively. A reducible connection gives rise to a splitting of the trivial $SU(2)$-bundle into a direct sum of complex line bundles, and taking the first Chern class of this decomposition gives a bijection 
\begin{align*} 
	\mathfrak A(Y) &\cong \left\{\{x, -x\} \subset H^2(Y;\mathbb Z) \mid 2x \ne 0\right\} \\ \mathfrak Z(Y) &\cong \{x \in H^2(Y;\mathbb Z) \mid 2x = 0\}.
\end{align*}

In addition to the choice of perturbation, to construct the dg-module of $Y$, we also need to choose a basepoint on each orbit. In the next section, we will discuss the precise dependence on this data. It turns out that the complex depends up to equivariant homotopy only on the following:

\begin{definition}
Let $Y$ be a rational homology sphere. \textit{Abelian data} on $Y$ consists of a pair $\mathfrak a = (\sigma, p)$, where \[\sigma: \mathfrak A(Y) \to 2\mathbb Z, \quad p: \mathfrak A(Y) \to H^2(Y;\mathbb Z)\] are functions satisfying the following hypotheses: 
\begin{itemize}
\item For $\alpha = \{x, -x\} \in \mathfrak A(Y)$, we demand $\sigma(\alpha) \equiv \dim_{\mathbb R} H^1(Y; \mathbb C_{2x}) \mod 4$. 
\item For $\alpha = \{x, -x\} \in \mathfrak A(Y)$, we demand $p(\alpha) \in \{x, -x\}$.
\end{itemize}
A class $z\in H^2(Y;\mathbb Z)$ determines a flat complex line bundle on $Y$, and hence a local system $\mathbb C_z$; here $\mathbb C_{2x}$ denotes the bundle corresponding to $2x$.
\end{definition}

The function $\sigma$ is called the \emph{signature data}. Given a regular perturbation $\pi$ of the Chern--Simons functional, there is an associated signature data function $\sigma_\pi$, which records spectral flow information at each abelian orbit. The precise definition is such that when $\pi$ is a small perturbation, $\sigma_\pi(\alpha)$ is the signature of $\text{Hess}_\alpha(\pi)$ restricted to $\ker \text{Hess}_\alpha(CS)$ \cite[Definition 3.2]{DME1}. The second component of the abelian data corresponds to a choice of $SO(2)$-fixed basepoint on each abelian orbit.

Because the $SO(3)$-action on $\eqm(\alpha, \beta)$ is free, Proposition \ref{prop:flowcat-cell-approx} applies, and we may modify the endpoint maps to obtain a cellular instanton $SO(3)$-flow category $\cM(Y, \pi, p)$. Thus, a choice of regular perturbation $\pi$ and function $p$ gives rise to a dg-module, well-defined up to equivariant homotopy equivalence, in which the components $r_1, r_3, s_1, s_3, t_1$ of the differential are identically zero. It will be argued in the next section that the resulting homotopy type depends on $\pi$ only up to $\sigma_\pi$, so we will indicate this complex by $\widetilde C(Y, \mathfrak a)$. 

\begin{remark}\label{QHS^3-to-ZHS^3}
For an integer homology sphere$Y$, the complex $\widetilde C(Y, \mathfrak a)$ discussed above is the complex constructed in Example \ref{S-cplx:integer-sphere}. Note that in this case $\mathfrak a$ is vacuous because $\mathfrak A(Y)$ is empty. This explains the notation chosen in that example, especially for the operators $\chi_1$ and $\chi_2$. A detailed proof is given in \cite[\S8.3]{DMES}. A weaker version of this claim, which does not address the operator $d_4$, is established as Proposition \ref{prop:froy-vs-cell} below.
\end{remark}

\subsection{Bimodules and dg-module maps}\label{subsec:bimod}
To study the dependence of $\widetilde C(Y,\fa)$ on different abelian data, we need to consider chain maps between these dg-modules. To do so, and to upgrade these dg-modules to functorial invariants under cobordism maps, we need the following definition \cite[Chapter 6]{DMES}.

\begin{definition}
Suppose $\mathcal M, \mathcal M'$ are instanton $SO(3)$-flow categories. An \textit{instanton $SO(3)$-bimodule} $W: \mathcal M \to \mathcal M'$ of degree $d$ consists of the following data. For each $\alpha \in \mathsf{orb}(\mathcal M)$ and $\alpha' \in \mathsf{orb}(\mathcal M')$ with \begin{equation}\label{eqn:dim-res-2}|\alpha| - |\alpha'| + d \le \dim \lift(\alpha') + 1,\end{equation} 
there is a compact stratified-smooth $SO(3)$-manifold of dimension $|\alpha| - |\alpha'| + \dim \lift(\alpha)+ d$, denoted $\eqm(W;\alpha, \alpha')$. As in the case of flow categories, we demand that each element of $\eqm(W; \alpha, \alpha')$ has stabilizer conjugate to one of $\{I\}, SO(2)$, or $SO(3)$. This manifold is equipped with equivariant maps 
\[\lift(\alpha) \xleftarrow{s_{\alpha\alpha'}} \eqm(W;\alpha, \alpha') \xrightarrow{t_{\alpha\alpha'}} \lift(\alpha')\]
called the source and target maps. Whenever $\alpha, \alpha'$ satisfy \eqref{eqn:dim-res-2}, there are equivariant composition maps 
\begin{align*}
\circ: \eqm(\alpha, \beta) \times_{\beta} \eqm(W; \beta, \alpha') &\to \partial\eqm(W; \alpha, \alpha') \\
\circ: \eqm(W; \alpha, \beta') \times_{\beta'} \eqm(\beta', \alpha') &\to \partial\eqm(W; \alpha, \alpha')
\end{align*}
which are compatible with the source and target maps, and associate with the composition maps of $\mathcal M$ and $\mathcal M'$. We also require that these induce a diffeomorphism 
\begin{align}
\partial \eqm(W; \alpha, \alpha') \cong &\bigsqcup_{\substack{\beta \in \mathsf{orb}(\mathcal M)}} \eqm(\alpha, \beta) \times_{\beta} \eqm(W; \beta, \alpha') \label{eqn:bimodule-boundary-1}\\
&\bigsqcup_{\substack{\beta' \in \mathsf{orb}(\mathcal M')}} \eqm(W;\alpha, \beta') \times_{\beta'} \eqm(\beta', \alpha').\label{eqn:bimodule-boundary-2}
\end{align}
Finally, there is an equivariant diffeomorphism $U: \eqm(W; \alpha, \alpha') \to \eqm(W; U\alpha, U\alpha')$ which is compatible with the source, target, and composition maps defined above.
\end{definition}

Cellular $SO(3)$-bimodules can be defined in the same spirit as cellular $SO(3)$-flow categories. Suppose $W:\mathcal M \to \mathcal M'$ is an $SO(3)$-bimodule between cellular $SO(3)$-flow categories. We say $W$ is \textit{cellular} if, for any cell $e_\alpha$ of $\lift(\alpha)$ and $F(W; e_\alpha, \alpha') = e_\alpha \times_{\alpha} \eqm(W; \alpha, \alpha')$, the map $t^W_{\alpha \alpha'}: F(W; e_\alpha, \alpha') \to \lift(\alpha')$ is cellular. This follows for all cells as soon as it holds for the $0$-cell $p_\alpha$.

Given an instanton $SO(3)$-bimodule $W$, we define a collection of maps $\fm^W_{\alpha\alpha'}$ as follows. For any $\alpha\in\mathsf{orb}(\cM),\; \alpha'\in\mathsf{orb}(\cM')$, set
\begin{align*}
	\fm^W_{\alpha\alpha'}\colon C^\text{cell}(\lift(\alpha))&\to C^\text{cell}(\lift(\alpha')) \\
	\fm^W_{\alpha\alpha'}(e_\alpha) &= (t_{\alpha\alpha'})_*[F(W;e_\alpha,\alpha')].
\end{align*}
These maps are $SO(3)$-equivariant in the sense that $\fm^W_{\alpha\alpha'}(\chi\cdot p_\alpha) = \chi\cdot\fm^W_{\alpha\alpha'}(p_\alpha)$ for any $\chi\in C_*^\text{cell}(SO(3))$, and they satisfy the following boundary relation:
\begin{equation}\label{eqn:bimodule-boundary-relation}
	\partial^\text{cell}\fm^W_{\alpha \alpha'} + \fm^W_{\alpha\alpha'}\partial^\text{cell} = \sum_{\beta\in\mathsf{orb}(\cM)}\fm^W_{\beta\alpha'}\fm_{\alpha\beta}+\sum_{\beta'\in\mathsf{orb}(\cM')}\fm_{\beta'\alpha'}\fm^W_{\alpha\beta'}
\end{equation}
The first and second terms on the right are related to boundary components \eqref{eqn:bimodule-boundary-1} and \eqref{eqn:bimodule-boundary-2}, respectively. Since we are working over the field $\Bbb F_2$ throughout, the left-hand side of this equation is identically zero. 

The dg-module homomorphism associated with $W:\cM\to\cM'$ is defined by 
\begin{align*}
	\widetilde\lambda_W: \widetilde C(\cM)&\to \widetilde C(\cM') \\
	\widetilde\lambda_W(e_\alpha) &= \sum_{\alpha'}\fm^W_{\alpha\alpha'}(e_\alpha),
\end{align*}
and in terms of the decomposition \eqref{eqn:dg-module-decomposition}, takes the following form:
\begin{equation}\label{eq:so3morphism-shape}
\widetilde \lambda_W  = \left[
\begin{array}{cccc|cc|c} 
\lambda_0 & 0   & 0   & 0   & 0   & 0   & 0\\
\lambda_1 & \lambda_0 & 0   & 0   & \mu_1 & 0   & 0\\
\lambda_2 & 0   & \lambda_0 & 0   & 0   & 0   & 0\\
\lambda_3 & \lambda_2 & \lambda_1 & \lambda_0 & \mu_3 & \mu_1 & \Delta_3\\
\hline
\mu_0 & 0   & 0   & 0   & \nu_0 & 0   & 0\\
\mu_2 & 0   & \mu_0 & 0   & \nu_2 & \nu_0 & \xi_2\\
\hline
\Delta_0 & 0 & 0 & 0 & \xi_0 & 0 & \epsilon_0\\
\end{array} \right]
\end{equation}
In this matrix, an operator with subscript $i$ is induced by correspondences $\eqm(W;\alpha,\alpha')$ of dimension $\dim\lift(\alpha)+i$; equivalently, the dimension of $s_{\alpha \alpha'}^{-1}(p_\alpha) = F(W;\alpha,\alpha')$ is equal to $i$. The component $\varepsilon_0$ counts isolated points in $0$-dimensional moduli spaces $\eqm(W;\theta,\theta')$ with central orbits on both ends, and will play a special role later. The fact that $\widetilde\lambda_W$ is a dg-module homomorphism consists of two relations:
\[
	\widetilde d'\widetilde\lambda_W = \widetilde\lambda_W\widetilde d, \qquad \chi\widetilde\lambda_W = \widetilde\lambda_W\chi, \;\;\; \text{ for any }\chi\in C_*^\text{cell}(SO(3)),
\]
which follow immediately from relation \eqref{eqn:bimodule-boundary-relation} and $SO(3)$-equivariance of $\fm$.

\bigskip

Under certain circumstances, the preceding machinery can be applied to instanton Floer theory to obtain dg-module morphisms between the dg-modules associated with rational homology spheres. Suppose $(Y, \pi, p)$ is a rational homology sphere with a chosen basepoint $y$, a regular perturbation of the Chern--Simons functional $\pi$, and a choice of basepoint on each abelian orbit. Let $(Y', \pi', p')$ be another such tuple. Suppose $W: Y \to Y'$ is an oriented cobordism with $b_1(W) = 0$, equipped with a path $\gamma$ connecting the basepoints $y$ of $Y$ and $y'$ of $Y'$, cylindrical in a neighborhood of the boundary. Let $W^+$, $\gamma^+$ be defined as in Section \ref{Fro-approach} by adding cylindrical ends to $W$, $\gamma$.

We move on to constructing a bimodule between $\cM(Y, \pi, p)$ and $\cM(Y', \pi', p')$. Choose an extension of $\pi, \pi'$ to a perturbation $\pi_W$ over $W$. Given (perturbed) flat connections $\alpha$ and $\alpha'$, the interior of $\eqm(W;\alpha,\alpha')$ consists of pairs $(g, A)$, where $A$ is a (perturbed) ASD connection on $E = SU(2)\times W^+$ asymptotic to $\alpha, \alpha'$ at the corresponding ends and $g \in SU(2)$. These pairs are considered modulo the action of asymptotically parallel gauge transformations, with $\sigma$ acting on the first factor by the value $\sigma(\gamma(0))$. The source and target maps are determined by the $A$-parallel transport of $g$ along $\gamma$, and the space $\eqm(W; \alpha, \alpha')$ is the broken-trajectory compactification of this space. Its lower strata are determined by a sequence of flat connections on $Y$ and a sequence of flat connections on $Y'$, and consists of trajectories which break at these flat connections. The periodicity isomorphism $U$ is determined by a choice of gauge transformation $\sigma: W^+ \to SU(2)$ which is asymptotically equal to a degree-$1$ gauge transformation on each end.

If we can choose $\pi_W$ so that all of the instanton moduli spaces are cut out transversely, this construction produces an instanton $SO(3)$-bimodule, and $W$ is said to be \textit{unobstructed}. It is straightforward to arrange that the irreducible locus is cut out transversely, but there are topological obstructions to ensuring the reducible locus is cut out transversely. For central ASD connections, we must have $b^+(W) = 0$. If $A$ is an abelian connection, the adjoint bundle $\mathfrak{su}(2) \times W^+$ admits an $A$-parallel splitting $\mathfrak{su}(2) \cong \mathbb R \oplus L$, and we require that the index of the deformation operator restricted to $L$ is nonnegative. This \textit{normal index} can be computed in terms of the cohomology ring of $W$, the Atiyah--Patodi--Singer $\rho$ invariants of the restrictions of $A$ to the ends, and the signature data $\sigma_\pi$ of these restrictions \cite[Proposition 3.23]{DME1}. 

If we allow $\sigma_\pi$ to vary, this quantity can be ensured nonnegative except when $A$ is a flat abelian connection whose restriction to the ends is central \cite[Lemma 3.32]{DME1}, which necessarily has normal index equal to $-2$. Such abelian connections are called \emph{pseudocentral}. Pseudocentral connections are in one-to-one correspondence with elements of \[\{x\in \text{Tors}\;H^2(W;\Bbb Z)\mid 2x \neq 0 \text{ and } 2x|_{\partial W} = 0\},\] hence are unavoidable even for $\Bbb F_2$-homology cobordisms. The above discussion leads to the following definition.

\begin{definition}
A cobordism $W: (Y,\pi,p)\to (Y', \pi', p')$ with $b_1(W) = b^+(W) = 0$ is said to be \textit{pseudo-unobstructed} if there exists a perturbation $\pi_W$ so that all abelian ASD connections in $E$ are cut out transversely except for the pseudocentral ones.
\end{definition}

For a pseudo-unobstructed cobordism, one can choose a perturbation $\pi_W$ so that the moduli spaces $\eqm(W;\alpha, \alpha')$ have the structure of stratified-smooth manifolds away from the strata consisting of trajectories broken through pseudocentral connections; however, they are not manifolds near the pseudocentral strata. This issue is already observed in \cite[Proposition 2.13]{Don}. It can be remedied using the \textit{obstructed gluing theory} pioneered by Taubes \cite{Taubes-indefinite}; this is carried out in \cite[Section 6.1]{DME1}. To do so, one carefully deletes a neighborhood of these obstructed abelian connections from each moduli space and glues back additional pieces with desirable boundaries. This procedure, as is carried out in \cite[Proposition 6.3]{DME1}, gives rise to the desired bimodule between instanton $SO(3)$-flow categories.

\begin{remark}
	The spaces obtained by the procedure above are not necessarily stratified-smooth manifolds. They fall in the broader class of stratified-smooth spaces, whose definition will be explained in Section \ref{subsec:susp-chains}. The bimodule of \cite[Proposition 6.3]{DME1}, and thus the resulting chain map, is constructed at the level of geometric chains. To obtain a construction at the level of cellular chains, one may follow the recipe presented in Section \ref{sec:susp}.
\end{remark}

When $W$ is pseudo-unobstructed, we obtain an instanton $SO(3)$-bimodule between the flow categories $\cM(Y, \pi, p)$ and $\cM(Y', \pi', p')$. However, there may be obstructions to making this bimodule cellular. Suppose $\alpha, \alpha'$ are abelian flat connections on $Y$, $Y'$ with chosen basepoints $p_\alpha$, $p_{\alpha'}$ and $|\alpha| - |\alpha'| \le 1$, so that $\dim F(W; \alpha, \alpha') \le 1$. If $A \in F(W; \alpha, \alpha')$ is an abelian ASD connection with $s^W_{\alpha \alpha'}(A) = p_\alpha$, then we must have $t^W_{\alpha \alpha'}(A) = p_{\alpha'}$, or it is impossible to make the map $F(W; \alpha, \alpha') \to \lift(\alpha')$ cellular by any $SO(2)$-equivariant homotopy. Such ASD connections are called \emph{small antipodal flowlines}. As discussed after the proof of Proposition \ref{prop:flowcat-cell-approx}, they are the only obstruction to cellularity: if $W$ is pseudo-unobstructed and admits no small antipodal flowlines, then by a coherent homotopy of the endpoint maps we obtain a cellular instanton $SO(3)$-bimodule $W: \cM(Y, \pi, p) \to \cM(Y', \pi', p')$, well-defined up to cellular homotopy \cite[\S7.4]{DMES}.

\begin{definition}
A cobordism $W: (Y, \pi, p) \to (Y', \pi', p')$ is called \textit{nice} if $b_1(W) = b^+(W) = 0$, $W$ is pseudo-unobstructed, and $W$ admits no small antipodal flowlines. 
\end{definition}

A nice cobordism gives rise to a dg-module homomorphism $\widetilde C(Y, \pi, p)\to\widetilde C(Y', \pi', p')$, well-defined up to equivariant homotopy, and these morphisms compose functorially up to equivariant homotopy \cite[\S8.2]{DMES}. The following computations are straightforward:

\begin{itemize}
	\item The cobordism $I\times Y: (Y,\pi, p)\to (Y, \pi', p')$ is nice if and only if $\fa\le\fa'$, in the sense that $\sigma_\pi\le\sigma_{\pi'}$ and $\sigma_\pi(\alpha)=\sigma_{\pi'}(\alpha)$ implies $p(\alpha)=p'(\alpha)$.
	\item For any cobordism $W: Y\to Y'$ with $b_1(W)=b^+(W)=0$, there is a choice of auxiliary data $(\pi, p), (\pi', p')$ so that $W: (Y, \pi, p)\to (Y', \pi', p')$ is nice.
\end{itemize}

As a consequence, we find that the equivariant homotopy type of $\widetilde C(Y, \pi, p)$ depends on $(\pi, p)$ only up to the underlying abelian data $\fa = (\sigma_\pi, p)$.\bigskip

The final item of business is the geometric description of $\epsilon_0$ in matrix \eqref{eq:so3morphism-shape}. Recall that $\mathfrak Z(Y)$ is identified with the set of $2$-torsion elements in $H^2(Y;\mathbb Z)$, which can further be identified with $H^1(Y;\mathbb F_2)$ via the integral Bockstein map. The following is the special case of \cite[Proposition 6.6]{DME1} in which the bundle data $c$ is empty.

\begin{lemma}\label{lemma:component-epsilon}
If $W: (Y, \mathfrak a) \to (Y', \mathfrak a')$ is a nice cobordism, and $\theta \in \mathfrak Z(Y) \cong H^1(Y;\mathbb F_2)$, the map $\epsilon_0$ is given by 
\begin{equation}
	\epsilon_0(\theta) = \sum_{\theta' \in \mathfrak Z(Y')} \left(\# \{\Theta \in H^1(W;\mathbb F_2) \mid \;\; \Theta|_Y = \theta, \;\; \Theta|_{Y'} = \theta'\}\right) \theta'.\label{eqn:epsilon-description}
\end{equation}
\end{lemma}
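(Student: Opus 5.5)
The plan is to compute $\epsilon_0$ directly from its definition. By \eqref{eq:so3morphism-shape}, the coefficient $\langle \epsilon_0(\theta),\theta'\rangle$ is the mod 2 count of the $0$-dimensional space $F(W;\theta,\theta') = s^{-1}_{\theta\theta'}(p_\theta)$, which equals $\eqm(W;\theta,\theta')$ since $\lift(\theta)$ is a point. So the first step is to identify all elements of $\eqm(W;\theta,\theta')$ in this dimension.

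First, I will show that every element is an unbroken, reducible, central flat connection. Broken trajectories lie in codimension at least one, so a $0$-dimensional space has no boundary strata; this is forced by the boundary relations \eqref{eqn:bimodule-boundary-1}--\eqref{eqn:bimodule-boundary-2} together with the dimension formula $\dim \eqm(W;\alpha,\alpha') = |\alpha|-|\alpha'|+\dim\lift(\alpha)+d$. Next, an irreducible solution asymptotic to central limits sweeps out a free $SO(3)$-orbit inside $\eqm(W;\theta,\theta')$. Such an orbit has dimension $3$, so it cannot occur in a $0$-dimensional space; irreducibles therefore contribute nothing. For abelian (noncentral) solutions, the stabilizer is $SO(2)$ and the orbit is $S^2$, which is again excluded by dimension. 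The only exception is the pseudocentral abelian connections. These are not cut out transversely, and in the construction of \cite[Proposition 6.3]{DME1} they are excised and replaced by glued-in pieces. I must check that these replacements contribute nothing to the $0$-dimensional count. The natural argument is that they occur as positive-dimensional (free or $S^2$) families, or as boundary pieces of higher strata, and this is exactly where I would lean on \cite[Proposition 6.6]{DME1}. That leaves only central ASD connections on $W^+$.

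Second, I will count the central connections. A connection with holonomy in $\{\pm I\}$ is flat, and since $b^+(W)=0$ and $b_1(W)=0$, each such connection is cut out transversely: its deformation complex has $H^0=\mathfrak{su}(2)$, with $H^1 = H^1(W;\mathbb R)\otimes\mathfrak{su}(2) = 0$ and $H^+ = 0$, once the perturbation $\pi_W$ is taken small near reducibles. Central flat $SU(2)$ connections up to gauge correspond to $\Hom(\pi_1 W,\{\pm1\}) = H^1(W;\mathbb F_2)$. Under the identification $\mathfrak Z(Y)\cong H^1(Y;\mathbb F_2)$ via the Bockstein, the limits are simply the restrictions $\Theta|_Y$ and $\Theta|_{Y'}$. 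Each such $\Theta$ gives exactly one point, because the framing $g$ is absorbed by the $SO(3)$ stabilizer. Summing over $\theta'$ then yields \eqref{eqn:epsilon-description}.

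The main obstacle is the pseudocentral analysis: showing that the obstructed-gluing modification does not add isolated points to $\eqm(W;\theta,\theta')_0$. I would first try to verify that in the Taubes-type gluing the inserted pieces fiber over the abelian orbit with positive-dimensional fibers. Failing that, I would cite \cite[Proposition 6.6]{DME1} with empty bundle data $c$, which is precisely this statement.
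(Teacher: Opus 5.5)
Your proposal is correct and rests on the same input as the paper, whose entire proof of this lemma is the citation of \cite[Proposition 6.6]{DME1} with empty bundle data $c$. Your identification of the $0$-dimensional space $\eqm(W;\theta,\theta')$ with the transversely cut out central flat connections, parametrized by $H^1(W;\mathbb F_2)$ and restricted to the ends, correctly unpacks the unobstructed part; deferring the pseudocentral excision to that same citation is exactly what the paper does. A slightly cleaner way to rule out irreducible and abelian points in one step: in a compact $0$-dimensional $SO(3)$-space every orbit is connected and finite, so every point is $SO(3)$-fixed.
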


In particular, the map $\widetilde \lambda_{I \times Y}$ has $\epsilon_0$ given by the identity map. In the special case that $Y$, $Y'$ are $\mathbb F_2$-homology spheres, $\epsilon_0 \in \mathbb F_2$ is equal to $|H_1(W;\mathbb F_2)| \mod 2$, hence $1$ if $H_1(W;\mathbb Z)$ consists of odd torsion and $0$ if $H_1(W;\mathbb Z)$ has even torsion.

\subsection{Fr{\o}yshov's invariant}\label{subsec:QHS-Scpx}
In this section we apply the algebraic package of $\cS$-complexes reviewed in Section \ref{sec:S-complexes} to the dg-modules $\widetilde C(\mathcal M)$ and thus $\widetilde C(Y, \fa)$ constructed in the preceding sections. As a consequence, we define an extension of the invariant $q_3(Y)$ to rational homology spheres.\bigskip 

Suppose first that $\mathcal M$ is a pointed instanton $SO(3)$-flow category. Recall that we have a dg-module $\widetilde C(\mathcal M)$, well-defined up to equivariant homotopy. Since $\widetilde C(\mathcal M)$ is a finite-dimensional dg-module over $\Lambda(\chi_1, \chi_2)$, it is also an $\cS$-complex with respect to the action of $\chi_i, i \in \{1,2\}$. The direct sum decomposition \eqref{eqn:dg-module-decomposition} gives rise to a canonical splitting for the $\cS$-complex $(\widetilde C(\mathcal M), \chi_2)$ as $\widetilde C(\mathcal M) = C^{(3)} \oplus \chi_2 C^{(3)} \oplus Z$, where
\[
C^{(3)}(\mathcal M) = C(\mathcal M)\oplus\chi_1C(\mathcal M)\oplus A(\mathcal M),\qquad Z = Z(\mathcal M).
\]

In terms of the given splitting, the components of $\widetilde d$ as in \eqref{chi-dtilde-special-form} are given by
\[
	d = \begin{bmatrix}
		d_1 & 0 & 0\\
		d_2 & d_1 & e_2\\
		e_1 & 0 & 0
	\end{bmatrix},\;
	v = \begin{bmatrix}
		d_3 & 0 & 0\\
		d_4 & d_3 & e_4\\
		e_3 & 0 & 0
	\end{bmatrix},\;
	\delta = \begin{bmatrix}
		\delta_1 & 0 & 0
	\end{bmatrix},\;
	\delta' = \begin{bmatrix}
		0\\ \delta_4\\ 0
	\end{bmatrix},\; r=0.
\]
It is straightforward to check that $\delta v^{i-1}\delta' = 0$ for each $i\ge 1$, so we obtain a perfect $\cS$-complex in the sense of Definition \ref{def:perfectness}.

In Section \ref{sec:equivariant-homology-groups} we introduced the equivariant homology groups $\medhat{H}(\widetilde C), \medcheck H(\widetilde C), \medbar H(\widetilde C)$ of a perfect $\cS$-complex $\widetilde C$. These groups fit in the exact triangle \eqref{exact-tri-equiv}, and we use the homomorphism $\mathfrak i:\medhat H(\widetilde C)\to\medbar H(\widetilde C)$ to define a filtration $\{J_n(\widetilde C)\}_{n\in\Bbb Z}$ on $Z(\widetilde C)$. This is used to define, for each $\vartheta \in Z(\widetilde C)$,
\[
h(\widetilde C;\vartheta) = \max\{n\in\Bbb Z\mid \vartheta\in J_n(\widetilde C)\}.
\] 
For $\vartheta \ne 0$, this quantity is finite. Specializing to the case $(\widetilde C(\mathcal M), \chi_2)$, we write 
\[
	J^{(3)}_n(\mathcal M) = J_n(\widetilde C(\mathcal M), \chi_2), \quad q_3(\mathcal M; \vartheta) = h(\widetilde C(\mathcal M), \chi_2; \vartheta).
\]
In the special case that $\mathfrak Z(\mathcal M)$ is a singleton, there is a unique nonzero $\vartheta_0 \in Z(\mathcal M)$, and we set $q_3(\mathcal M) = q_3(\mathcal M; \vartheta_0)$.\\

We now attend to the case of rational homology spheres. A pair $(Y, \mathfrak a)$ of a rational homology sphere and abelian data gives rise to an instanton $SO(3)$-flow category $\mathcal M(Y,\mathfrak a)$, well-defined up to homotopy equivalence. Before proceeding, observe that the sets $\mathfrak A(Y)$ and $\mathfrak Z(Y)$ of abelian and central connections on $Y$ are independent of the choice of abelian data.

The preceding discussion immediately gives rise to several invariants. First, we define the homology group \[\widehat I^{(3)}(Y,\fa) = \medhat H(\widetilde C(Y,\fa), \chi_2),\] and similarly $\overline I^{(3)}(Y,\fa)$ and $\widecheck I^{(3)}(Y,\fa)$. These are related by a long exact sequence, which is used to define the filtration $J^{(3)}_n(Y,\fa)$ and integer quantities $q_3(Y,\fa; \vartheta)$ for $\vartheta \in Z(Y)$. 

In the special case that $Y$ is an $\Bbb F_2$-homology sphere, $Z(Y)$ is generated by the trivial connection $\vartheta_0$ of $Y$, and we simplify the notation $q_3(Y,\fa) = q_3(Y, \fa; \vartheta_0)$. 

We first address the case of integer homology spheres, for which there is no auxiliary data $\fa$, the vector space $A(Y)$ is zero, the vector space $Z(Y)$ is $1$-dimensional, and the splitting of $\widetilde C(Y)$ discussed above is precisely the one given in Example \ref{S-cplx:integer-sphere}. In this case, in addition to the invariant $q_3(Y)$ defined above, we had previously defined an invariant by the same name in Section \ref{Fro-approach}. They coincide:
	
\begin{prop}\label{prop:froy-vs-cell}
	If $Y$ is an integer homology sphere, the invariant $q_3(Y)$ defined above coincides with the invariant $q_3(Y)$ defined in Section \ref{Fro-approach}.
\end{prop}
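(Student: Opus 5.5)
The plan is to compare the two $\cS$-complexes directly and to identify only the components that enter the characterization of $q_3$ in Lemma~\ref{lemma:explicit-h-copy}. For an integer homology sphere there is no abelian data, so $A(Y)=0$ and $Z(Y)=\F\langle\theta\rangle$. The cellular complex $(\widetilde C(Y),\chi_2)$ then has the canonical splitting $C^{(3)}\oplus\chi_2C^{(3)}\oplus Z$ with $C^{(3)}=C(Y)\oplus\chi_1C(Y)$. In this splitting,
\[
d=\begin{bmatrix} d_1^{c} & 0\\ d_2^{c} & d_1^{c}\end{bmatrix},\qquad v=\begin{bmatrix} d_3^{c} & 0\\ d_4^{c} & d_3^{c}\end{bmatrix},\qquad \delta=\begin{bmatrix}\delta_1^{c} & 0\end{bmatrix},\qquad \delta'=\begin{bmatrix} 0\\ \delta_4^{c}\end{bmatrix},
\]
where the superscript $c$ marks the cellular counts $\langle\fm_{\alpha\beta}(e^0),e^{i-1}\rangle$. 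Lemma~\ref{lemma:explicit-h-copy} expresses $h$ through the conditions $d\alpha=0$, $\delta v^{k-1}\alpha=\vartheta$, and $d\alpha+\sum v^i\delta'(a_i)=0$. Comparing with \eqref{q3-pos-def-ch-cx} and \eqref{q3-neg-def-ch-cx}, the entry $d_4$ never appears in $\delta v^{i}$ applied to vectors $(\zeta,\eta)$ in the positive case. In the negative case, the entries of $v^i\delta'$ land in the second summand, where only $d_3$ acts. So it suffices to show that $d_1^c,d_2^c,d_3^c,\delta_1^c,\delta_4^c$ agree with Fr{\o}yshov's $d_1,d_2,d_3,\delta_1,\delta_4$ for suitable auxiliary choices. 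More precisely, it suffices that the cellular complex is isomorphic to one with those entries by a change of basis of the form $\begin{bmatrix}1&0\\ \ast&1\end{bmatrix}$ on $C^{(3)}$. Such a change preserves the solvability of those equations, and it only affects $d_4$ and $\eta$.

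First, $d_1^c$, $\delta_1^c$, $\delta_4^c$ count zero-dimensional quotients $F(\alpha,\beta)/$translation with target in the $0$-cell (or the central orbit). These are exactly $\#\breve M(Y;\alpha,\beta)_0$ and the $\theta$-counts, so they agree on the nose.

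Second, I would identify $d_2^c$ and $d_3^c$ as degree computations. The space $F(\alpha,\beta)$ is a framed moduli space of dimension $1$ or $2$. After cellular approximation, $\langle d_{i}^c\alpha,\beta\rangle$ is the mod $2$ degree of $F(\alpha,\beta)\to SO(3)$ onto the $(i-1)$-cell, that is, the intersection with a dual disc $D$ transverse to that cell. The dual of the $2$-cell of $SO(3)=\mathbb{RP}^3$ is a $1$-dimensional class, and the dual of the $1$-cell is $2$-dimensional; these represent $w_2$ and $w_3$ of the associated bundle $\mathbb E$ via the framing map. Pulling back along the basepoint evaluation at $\{0\}\times Y$ identifies these counts with the cut-down counts $M^{(2)}$ and $M^{(3)}$. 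Here $\{s_1\wedge s_2=0\}$ and $\{s_0=0\}$ are cycles Poincar\'e dual to $w_2(\mathbb E)$ and $w_3(\mathbb E)$ on $\mathcal B^*(Y)$.

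The main obstacle is that the two constructions use different chain-level representatives: a coherent cellular homotopy of endpoint maps on one side, and generic sections at a fixed time slice on the other. Consequently $d_2^c,d_3^c$ agree with $d_2,d_3$ only up to chain homotopy. I would handle this by building a one-parameter family interpolating the section-defined cycles and the cellular dual discs. This yields maps $K_2,K_3$ with $d_i^c-d_i=d_1K_i+K_id_1$, together with the corresponding correction terms on $\delta$-compositions. I would then check that the isomorphism with lower-triangular entries $K_2$, $K_3$ in the $\chi$-filtered matrix is an isomorphism of $\cS$-complexes commuting with $\chi_2$. This would give equality of $h$, and by Corollary~\ref{cor:q3-determines-filtration} equality of the two $q_3$. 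If a full identification is hard, I would fall back on constructing a strong height $0$ morphism (the product cobordism) between the two $\cS$-complexes in both directions and invoking \eqref{h-ineq}.
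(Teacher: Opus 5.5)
Your proposal is correct and follows essentially the paper's route. You reduce via Lemma~\ref{lemma:explicit-h-copy} to matching $d_1,d_2,d_3,\delta_1,\delta_4$, then match $d_2,d_3$ by comparing $w_2,w_3$ of the basepoint bundle with the degrees of the cellularly approximated holonomy (clutching) maps onto the cells of $SO(3)$. The paper instead asserts exact agreement for a suitable choice of sections and cellular approximations, so your chain-homotopy layer is an optional robustness step.

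Two small corrections:
\begin{itemize}
\item $\delta_4$ is the degree of $\eqm(\theta,\beta)\to SO(3)$ onto the $3$-cell, not a count with target in the $0$-cell. It equals $\#\breve M(\theta,\beta)_0$ because the $SO(3)$-action is free.
\item A change of basis $\begin{bmatrix}1&0\\ K&1\end{bmatrix}$ on $C^{(3)}$ alters $d_2$ by $d_1K+Kd_1$, not only $d_4$. Absorbing the $d_3$-homotopy instead needs the block $C^{(3)}\to\chi_2C^{(3)}$, as in your last step. That step works and leaves $\delta$ and $\delta'$ unchanged.
\end{itemize}
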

\begin{proof}
Lemma \ref{lemma:explicit-h-copy} makes clear that it suffices to show, for an appropriate choice of sections and cellular approximations, that the maps $d_1, d_2, d_3, \delta_1, \delta_4$ defined in Section \ref{Fro-approach} coincide with the maps of the same name defined in Section \ref{subsec:flowcat}. Note that when $\alpha$ is irreducible, $F(\alpha, \beta) \cong \eqm(\alpha, \beta)/SO(3) = \breve M(\alpha, \beta)$ is identified with the moduli spaces considered in Section \ref{Fro-approach}, and when $\beta$ is also irreducible, the endpoint map $t_{\alpha \beta}$ is identified with the map $h_{\alpha \beta}: \breve M(\alpha, \beta) \to SO(3)$ given by taking the parallel transport along $\mathbb R \times \{y\}$ of the chosen framing $p_\alpha$, and comparing it to the chosen framing $p_\beta$; this is a holonomy map. The operators $d_1$ and $\delta_1$ in both cases count unframed instantons in $0$-dimensional moduli spaces. The operator $\delta_4$ in both approaches agree because when $\alpha$ is the trivial connection and $\beta$ is irreducible, $F(\alpha, \beta)=\eqm(\alpha, \beta)$ admits a free $SO(3)$ action, and the degree of  $t_{\alpha\beta}:\eqm(\alpha, \beta)\to SO(3)$ equal the number of elements in $\eqm(\alpha, \beta)/SO(3) = \breve M(\alpha, \beta)$. In the approach of Section \ref{Fro-approach}, the operators $d_2, d_3$ are constructed in terms of sections of the vector bundle associated with the basepoint fibration, and in the approach of this section in terms of cellular approximations of the holonomy maps. Thus, we are comparing the Stiefel--Whitney classes of $SO(3)$-bundles over $\mathbb R \times \breve M(\alpha, \beta)$ to the degrees of their cellular clutching functions $\breve M(\alpha, \beta) \to SO(3)$. These coincide, by an argument akin to the computation of \cite[Section 7.3.2]{DonBook}.
\end{proof}

Returning to the case of rational homology spheres, we next show that $q_3(Y, \fa; \vartheta)$ is independent of the choice of abelian data, so it can be understood as an extension of $q_3(Y)$ to the case of rational homology spheres.

\bigskip

Given a nice cobordism $W:(Y,\fa)\to(Y',\fa')$, we have a dg-module homomorphism $\widetilde\lambda_W:\widetilde C(Y,\fa)\to\widetilde C(Y', \fa')$. This gives rise to a height-$0$ morphism between $\cS$-complexes $(\widetilde C, \chi_2)$, well-defined up to $\chi_2$-equivariant homotopy. In particular, the component $\varepsilon: Z(Y)\to Z(Y')$ is well-defined. It was discussed under the name $\varepsilon_0$ in the previous sections, and is determined by homological information by Lemma \ref{lemma:component-epsilon}. Moreover, for the product cobordism $I \times Y:(Y,\fa)\to(Y,\fa')$ with $\fa\le\fa'$, the cobordism map $\widetilde\lambda_{I \times Y}$ induces an isomorphism on instanton homology groups, and in fact on all variations of equivariant instanton homology $I^\bullet$. This was established in \cite[Theorem 5.26]{DME1}, using an alternate model for $\widetilde C(Y, \fa)$ defined in terms of geometric chains. The resulting equivariant homology groups are canonically isomorphic to those of the cellular model $\widetilde C(Y, \fa)$. This is discussed in \cite[Chapter 8]{DMES}; a self-contained argument that $\lambda^\bullet_{I \times Y}$ is an isomorphism, presented in the language of cellular chains, is given in \cite[Chapter 9]{DMES}. This is closely related to the material we will discuss in Section \ref{sec:susp}.
	
Applying these facts to the product cobordism $I\times Y$, we have the following.
\begin{lemma}\label{lemma:abelian-data-independence}
If $Y$ is a rational homology sphere, the filtration $J_n^{(3)}(Y,\fa)$ on $Z(Y)$ is independent of the choice of $\fa$.
\end{lemma}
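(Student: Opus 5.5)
The plan is to compare two choices of abelian data $\fa$ and $\fa'$ through a third datum that dominates both, and to use the product cobordism to get a strong height $0$ morphism that is an isomorphism on equivariant homology. Given $\fa=(\sigma,p)$ and $\fa'=(\sigma',p')$, we first construct $\fa''=(\sigma'',p'')$ with $\fa\le\fa''$ and $\fa'\le\fa''$. For each abelian orbit $\alpha$, take $\sigma''(\alpha)$ to be an integer strictly larger than both $\sigma(\alpha)$ and $\sigma'(\alpha)$ and in the required residue class modulo $4$; for instance $\max(\sigma(\alpha),\sigma'(\alpha))+4$ works. Because the inequalities are strict, the condition on basepoints is vacuous, so $p''$ may be chosen arbitrarily. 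We also need $\sigma''$ to be realized by some regular perturbation $\pi''$. Since abelian data were defined abstractly, we use that every admissible signature function arises from some perturbation; this is implicit in the construction of $\widetilde C(Y,\fa)$.

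By the first bulleted computation, the product cobordisms $I\times Y:(Y,\fa)\to(Y,\fa'')$ and $I\times Y:(Y,\fa')\to(Y,\fa'')$ are nice. Each therefore gives a dg-module map and hence a morphism of $\cS$-complexes $(\widetilde C,\chi_2)$. Write the morphism for the first cobordism as $\widetilde\lambda$. Its component $\varepsilon=\tau_0$ is the identity on $Z(Y)\cong H^1(Y;\F)$ by Lemma \ref{lemma:component-epsilon}, since restriction $H^1(I\times Y;\F)\to H^1(Y;\F)$ is an isomorphism. So $\widetilde\lambda$ is a strong height $0$ morphism. By the cited result \cite[Theorem 5.26]{DME1}, transported to the cellular model as discussed just before the lemma, $\widehat\lambda$ is an isomorphism on $\widehat I^{(3)}$.

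Proposition \ref{prop:tau-filtered} then gives $J^{(3)}_n(Y,\fa'')=\tau_0(J^{(3)}_n(Y,\fa))=J^{(3)}_n(Y,\fa)$ for every $n$, where $Z(Y)$ is identified on both sides via $\tau_0=\id$. The same argument for $\fa'$ gives $J^{(3)}_n(Y,\fa')=J^{(3)}_n(Y,\fa'')$, and combining the two equalities proves the lemma.

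The main obstacle is making sure that the identification of $Z(Y,\fa)$ with $Z(Y,\fa'')$ used here is the canonical one, namely the identification of both with $H^1(Y;\F)$ by central orbits. We also need the morphism to be well defined only up to $\chi_2$-equivariant homotopy without affecting $\tau_0$ or the statement about $\widehat\lambda$. Both points follow because $\tau_0$ is computed homologically, so the geometric representative does not matter.
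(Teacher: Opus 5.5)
Your proposal is correct and follows the paper's proof. Both arguments pass to abelian data dominating both choices, use that the nice product cobordism $I\times Y$ has $\tau_0=\varepsilon=\id$ and induces an isomorphism on $\widehat I^{(3)}$, and then apply Proposition~\ref{prop:tau-filtered}; your explicit choice $\sigma''=\max(\sigma,\sigma')+4$, which makes the basepoint condition vacuous, simply fills in the paper's bare assertion that such a common upper bound exists.
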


\begin{proof}
Given two choices of abelian data on $Y$ which satisfy $\mathfrak a \le \mathfrak a'$, the cobordism $I \times Y: (Y, \mathfrak a) \to (Y, \mathfrak a')$ is nice and has $\varepsilon = \id$. Because this cobordism induces an isomorphism on all equivariant homology groups, and by \eqref{tau-i-formulas} we have $\tau_0 = \varepsilon = \id$, it follows from Proposition \ref{prop:tau-filtered} that the identity map induces a filtration-preserving isomorphism $Z(Y) \to Z(Y)$. To complete the proof, it suffices to note that for every two choices of abelian data on $Y$, there is another choice which is larger than both.
\end{proof}

We thus omit $\fa$ from notation and write $\{J^{(3)}_n(Y)\}_{n\in\Bbb Z}$ for the filtration on $Z(Y)$, which is an invariant of $Y$. As an immediate consequence, the value $q_3(Y,\fa; \vartheta)$ is independent of $\fa$, and will be simply denoted by $q_3(Y;\vartheta)$. A monotonicity result generalizing Proposition \ref{lemma:monotonic} can be stated as follows.

\begin{prop}\label{prop:generalized-monotonicity}
	If $W:Y\to Y'$ is a cobordism with $b_1(W)=b^+(W)=0$, then the map $\varepsilon_0:Z(Y)\to Z(Y')$ defined as in \eqref{eqn:epsilon-description} satisfies $\varepsilon_0(J_n^{(3)}(Y)) \subset J_n^{(3)}(Y')$. In particular, for each $\vartheta\in Z(Y)$, one has $q_3(Y;\vartheta) \le q_3(Y';\varepsilon_0(\vartheta))$.
\end{prop}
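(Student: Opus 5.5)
The plan is to reduce the statement to Proposition \ref{prop:tau-filtered} with $i=0$, applied to the dg-module map induced by $W$ after choosing suitable auxiliary data. First, by the second bulleted computation in Section \ref{subsec:bimod}, since $b_1(W)=b^+(W)=0$ there are abelian data $\fa$ on $Y$ and $\fa'$ on $Y'$, realized by perturbations $\pi,\pi'$ and basepoints $p,p'$, such that $W\colon (Y,\fa)\to(Y',\fa')$ is nice. Niceness gives a cellular instanton $SO(3)$-bimodule, and hence a dg-module homomorphism
\[
\widetilde\lambda_W\colon \widetilde C(Y,\fa)\to\widetilde C(Y',\fa')
\]
of the shape \eqref{eq:so3morphism-shape}. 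It is well-defined up to equivariant homotopy and commutes with $\widetilde d$ and with all of $C_*^{\rm cell}(SO(3))$, in particular with $\chi_2$. So it is a morphism of the perfect $\cS$-complexes $(\widetilde C(Y,\fa),\chi_2)\to(\widetilde C(Y',\fa'),\chi_2)$.

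Next, I will identify $\tau_0$. Every morphism is a height $0$ morphism vacuously, and by \eqref{tau-i-formulas} we have $\tau_0=\varepsilon$, the $Z\to Z$ block of $\widetilde\lambda_W$ with respect to the canonical splittings $C^{(3)}\oplus\chi_2C^{(3)}\oplus Z$. From \eqref{eq:so3morphism-shape}, this block is exactly $\epsilon_0$. Lemma \ref{lemma:component-epsilon} then identifies $\epsilon_0$ with the homological formula \eqref{eqn:epsilon-description}. I need to check that the canonical splitting is the one used to define $\tau_0$, meaning $Z=Z(\mathcal M)$ is a complement to $\im\chi_2$ inside $\ker\chi_2$ and $\widetilde\lambda_W$ maps this $Z$ into $Z'$ modulo $\im\chi_2$. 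This is immediate from the block form, since the last row and column already display it.

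Proposition \ref{prop:tau-filtered} now gives $\varepsilon_0(J_n(\widetilde C(Y,\fa),\chi_2))\subset J_n(\widetilde C(Y',\fa'),\chi_2)$. By Lemma \ref{lemma:abelian-data-independence}, these filtrations are $J^{(3)}_n(Y)$ and $J^{(3)}_n(Y')$ independently of $\fa,\fa'$, which proves the containment. The inequality $q_3(Y;\vartheta)\le q_3(Y';\varepsilon_0(\vartheta))$ follows from the definition \eqref{h-inv}: if $\vartheta\in J_n^{(3)}(Y)$ then $\varepsilon_0(\vartheta)\in J_n^{(3)}(Y')$. This is valid even when $\varepsilon_0(\vartheta)=0$, where the right side is $\infty$.

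The main obstacle is not the algebra but justifying that the induced map is well-defined enough for the filtration statement. Changing the equivariant homotopy representative or the cellular approximation changes $\widetilde\lambda_W$ by a $\chi_2$-equivariant homotopy. I must argue that this does not change $\varepsilon_0$, nor the induced map on $\medhat H$ and $\medbar H$. For the first point, a homotopy $K$ commuting with $\chi_2$ alters the $Z$-block by terms of the form $\delta'$- and $\delta$-compositions. These vanish on $Z(\widetilde C)$ because they are $\chi_2$-boundary terms, or because $r=0$, so $\varepsilon_0$ is intrinsic; in any case, Lemma \ref{lemma:component-epsilon} pins it down. For the second point, equivariant homotopies induce chain homotopies of $\widehat C$ and $\overline C$ after tensoring with $\F[x]$ and $\F\llbracket x^{-1},x]$. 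Since the map on $\overline H$ is only used through its leading term (Lemma \ref{lemma:filtered-map}), the containment is unaffected.
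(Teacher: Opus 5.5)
Your proposal is correct and follows the paper's argument. You choose abelian data making $W$ nice, observe that the induced morphism of $\chi_2$-$\cS$-complexes has height $0$ with $\tau_0=\varepsilon=\epsilon_0$, apply Proposition~\ref{prop:tau-filtered}, and remove the dependence on $\fa,\fa'$ via Lemma~\ref{lemma:abelian-data-independence}. Your extra homotopy-invariance discussion goes beyond what the paper spells out; note that under a $\chi_2$-equivariant homotopy the $Z$--$Z$ block changes only by $r_+e+er_-$ (with $e$ the $Z$--$Z$ block of the homotopy), which vanishes since $r=0$, rather than by ``$\delta$- and $\delta'$-compositions.''
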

\begin{proof}
	One may always choose abelian data $\fa, \fa'$ so that $W:(Y,\fa)\to (Y',\fa')$ is nice. Because the filtration $J_n^{(3)}(Y,\fa)$ is independent of $\fa$, the result follows from Proposition \ref{prop:tau-filtered}.
\end{proof}

We end this section with a discussion of duality. Suppose $Y$ is equipped with a perturbation $\pi$ of its Chern--Simons functional. If $-Y$ is equipped with the perturbation $-\pi$, the instanton moduli spaces on $\R \times Y$ are canonically identified with those on $\R \times -Y$ by time-reversal in the first factor, and the correspondences $\lift(\alpha) \leftarrow \eqm(Y; \alpha, \beta) \to \lift(\beta)$ are identified with $\lift(\beta) \leftarrow \eqm(-Y; \beta, \alpha) \to \lift(\alpha)$. If $Y$ is further equipped with a choice of basepoints $s$ and cellular approximation of its $SO(3)$-flow category, we may use the same data to furnish $-Y$ with a cellular approximation of its $SO(3)$-flow category. Under these identifications, the maps $d_i$ for $-Y$ are identified with the duals $d_i^\dagger$ of the maps for $Y$, while $e_1, e_3$ are identified with $e_2^\dagger, e_4^\dagger$; similarly $\delta_1, \delta_4$ are exchanged by duality. Thus, writing $-\fa = (-\sigma_\pi, s)$ for the dual abelian data on $-Y$, it follows that $\widetilde C(Y, \fa)^\dagger$ is identified with $\widetilde C(-Y, -\fa)$; this is presented as \cite[Theorem 7.3.3]{DMES}. This gives rise to an isomorphism $(\widetilde C(-Y,-\fa),\chi_2)\cong (\widetilde C(Y,\fa), \chi_2)^\dagger$ on the level of $\cS$-complexes. Note that $Z(Y)$ is equipped with a canonical basis $\fZ(Y)$, and the duality isomorphism identifies $Z(-Y)=Z(Y)$ with $Z(Y)^\dagger$ by sending each basis vector to its dual vector. The following result is then the specialization of Proposition \ref{prop:filtration-under-duality} and Corollary \ref{cor:h-under-duality}.

\begin{prop}\label{prop:J-3mfd-duality}
The filtration $J_n^{(3)}(-Y)$ satisfies 
\[
	J_n^{(3)}(-Y) =\{\psi\in Z(Y)^\dagger\mid \psi(J^{(3)}_{-n+1}(Y))=0\}.
\]
For each $\vartheta^\dagger\neq 0\in Z(Y)^\dagger$, $q_3(-Y;\vartheta^\dagger)$ satisfies
\begin{equation}\label{eqn:q3-under-duality}
		q_3(-Y;\vartheta^\dagger) = -\max\{q_3(Y;\vartheta')\mid\langle\vartheta^\dagger, \vartheta'\rangle = 1\}.
\end{equation}
\end{prop}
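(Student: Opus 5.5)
The plan is to reduce Proposition \ref{prop:J-3mfd-duality} to the purely algebraic statements Proposition \ref{prop:filtration-under-duality} and Corollary \ref{cor:h-under-duality}. The only geometric input is the identification, described just before the statement, of $\cS$-complexes
\[
(\widetilde C(-Y,-\fa),\chi_2)\cong(\widetilde C(Y,\fa),\chi_2)^\dagger .
\]
This is obtained by time-reversal of the moduli spaces $\eqm(Y;\alpha,\beta)$ together with reuse of the same basepoints and cellular approximation.

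\textbf{Step 1: compatibility of the duality with the $\chi_2$-structure.} Under the decomposition \eqref{eqn:dg-module-decomposition}, dualizing the cellular chains of $SO(3)$ exchanges the top cell with the $0$-cell. So the $\cS$-complex splitting $C^{(3)}\oplus\chi_2C^{(3)}\oplus Z$ for $-Y$ corresponds to the dual splitting for $Y$, with the two copies of $C^{(3)}$ interchanged. Two consequences follow.
\begin{itemize}
\item The dual of $\chi_2$ is $\chi_2$, because cellular multiplication by the $2$-cell is self-adjoint up to this reindexing.
\item The induced identification of $Z(\widetilde C(-Y))$ with $Z(\widetilde C(Y))^\dagger$ is the one sending each central orbit $\theta\in\fZ(Y)$ to its dual basis vector.
\end{itemize}
Here one uses that $Z(\widetilde C)$ is spanned by the central summand $Z$ (since $r=0$), and that dualization preserves the central summand.

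\textbf{Step 2: well-definedness of the filtration.} I would note that $-\fa$ is valid abelian data for $-Y$. By Lemma \ref{lemma:abelian-data-independence}, the filtration $J^{(3)}_n(-Y)$ may therefore be computed from $\widetilde C(-Y,-\fa)$ for any $\fa$. Perfectness of the dual complex follows from the formula $\delta v^{i-1}\delta'$ with the roles of $\delta$ and $\delta'$ swapped.

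\textbf{Step 3: apply the algebra.} Proposition \ref{prop:filtration-under-duality}, applied to $\widetilde C=\widetilde C(Y,\fa)$, gives the annihilator description of $J^{(3)}_n(-Y)$. Corollary \ref{cor:h-under-duality} then gives \eqref{eqn:q3-under-duality}. To recover the corollary directly from the annihilator statement, argue as follows. First, $\vartheta^\dagger\in J_n(-Y)$ if and only if $\vartheta^\dagger$ kills $J_{-n+1}(Y)$. Second, this holds if and only if every $\vartheta'$ with $\langle\vartheta^\dagger,\vartheta'\rangle=1$ has $h(Y;\vartheta')\le -n$. Taking the largest such $n$ yields the formula.

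\textbf{Main obstacle.} The main obstacle is Step 1: checking that the chain-level duality matches $\chi_2$ with $\chi_2$, rather than with $\chi_1$ or with some mixture. This also requires the abelian summands $A(\mathcal M)\oplus\chi_2A(\mathcal M)$ to be correctly paired, with $e_1,e_3$ corresponding to $e_2^\dagger,e_4^\dagger$. I would verify this entrywise on the matrix \eqref{eq:so3complexdtildeshape}, relying on the cited \cite[Theorem 7.3.3]{DMES}.
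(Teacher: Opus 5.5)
Your proposal is correct and follows the paper's route. You identify $(\widetilde C(-Y,-\fa),\chi_2)$ with the dual $\cS$-complex of $(\widetilde C(Y,\fa),\chi_2)$ via time-reversal and \cite[Theorem 7.3.3]{DMES}, match $Z(-Y)$ with $Z(Y)^\dagger$ through the dual basis of $\fZ(Y)$, and then specialize Proposition~\ref{prop:filtration-under-duality} and Corollary~\ref{cor:h-under-duality}. Your additional checks are sound and only make explicit what the paper leaves implicit: the adjoint of $\chi_2$ is again $\chi_2$ after reindexing cells, the dual complex is perfect, and the $q_3$ formula follows from the annihilator description (the remark ``since $r=0$'' is unnecessary, as $Z(\widetilde C^\dagger)\cong Z(\widetilde C)^\dagger$ holds naturally).
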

If $Y$ is an $\Bbb F_2$-homology sphere, the above formula implies $q_3(-Y)=-q_3(Y)$.

\begin{remark}
As discussed in Remark \ref{rmk:Scpx-q2}, the dg-module $\widetilde C(Y, \mathfrak a)$ can also be understood as an $\mathcal S$-complex over $\chi_1$. In this case, $Z(\widetilde C) = A(Y) \oplus \chi_2 A(Y) \oplus Z(Y)$. The discussion above goes through with little change in the case that $A(Y) = 0$, or equivalently that $H_1(Y;\mathbb Z) \cong (\mathbb Z/2)^m$. As mentioned in Remark \ref{rmk:q2}, the corresponding $h$-invariant for integer homology spheres is called $q_2(Y)$, and was introduced and studied by Fr{\o}yshov in \cite{Fr:q2}. When $A(Y) \ne 0$, it is more difficult to obtain a well-defined filtration, as in this case the map \begin{align*}\tau_0: A(Y) \oplus \chi_2 A(Y) &\oplus Z(Y) \to A(Y) \oplus \chi_2 A(Y) \oplus Z(Y) \\
	\tau_0 &= \left[\begin{array}{cc|c} \nu_0 & 0 & 0 \\ \nu_2 & \nu_0 & \xi_2 \\ \hline \xi_0 & 0 & 1\end{array}\right]\end{align*} induced by the cobordism $I \times Y: (Y, \mathfrak a) \to (Y, \fa')$ is not the identity when $\mathfrak a \ne \mathfrak a'$. 
\end{remark}


\section{Suspension and indefinite cobordism maps}\label{sec:susp}

In this section, we prove the main inequality \eqref{b+-ineq} in full generality. By the behavior of $q_3$ under orientation-reversal, it suffices to prove the inequality $-b^+(W) \le q_3(Y') - q_3(Y)$. In fact, we prove Theorem \ref{thm:main-ineq-QHS}, as a generalization of this inequality which holds for cobordisms between rational homology spheres. 

When $b^+(W) = 0$ and $H^1(W;\mathbb F_2) = 0$, this inequality is proved by showing the existence of an $\mathcal S$-complex morphism $\widetilde \lambda_W: \widetilde C(Y) \to \widetilde C(Y')$ of height zero. In the case $b^+(W) > 0$, there is no such morphism. Instead, in Section \ref{subsec:susp-chains} we construct a new $\mathcal S$-complex, the {\it suspension complex}; this construction shifts $q_3$ up by $1$. When $b^+(W) = 1$, in Section \ref{subsec:susp-cobmap} we construct a morphism of height $0$ from $\widetilde C(Y)$ to $S \widetilde C(Y')$. The inequality is then established in general by decomposing an arbitrary cobordism into pieces with $b^+ = 1$.

\subsection{Suspension complexes}\label{subsec:susp-chains}
In this subsection, we introduce a definition of \emph{suspension complex} of a flow category. This will be used to define obstructed cobordism maps in the case that $b^+(W) = 1$. To define the suspension complex, we need the following extension of the notion of stratified-smooth manifolds from \cite{DME1}.

\begin{definition}
	A \textit{stratified-smooth space} $(P,\Delta)$ (or simply $P$) of dimension $n$ is the following data: 
	\begin{itemize}
		\item a partially ordered set $\Delta$ equipped with a dimension function $d: \Delta \to \mathbb N$ with maximum value $n$, 
		\item a compact topological space $P$ equipped with a partition into locally-closed subsets $P_\sigma$ for $\sigma \in \Delta$, satisfying $\overline P_\sigma = \bigcup_{\tau \le \sigma} P_\tau$, 
		\item the structure on each stratum $P_\sigma$ of a smooth manifold of dimension $d(\sigma)$.
	\end{itemize}
	We demand that the following {\it locally standard} hypothesis holds: for each $p \in P_\sigma$, there are non-negative integers $k$, $l$, an isomorphism of posets 
	$\bar \phi: \Delta_{\ge \sigma} \cong \{0,1\}^k$, an open neighborhood $U$ of $p$ in $P$, an open neighborhood $V$ of the origin in $[0,\infty)^k \times \mathbb R^{n+l-k}$, 
	a smooth map $\psi: V\to \mathbb R^l$, and a homeomorphism $\phi:U\to \psi^{-1}(0)$ with the following properties. For any stratum $Q_\sigma$ of $[0,\infty)^k \times \mathbb R^{n+l-k}$,
	the restriction of the map $\psi$ to $V\cap Q_\sigma$ is smooth and transverse to $0$. Furthermore, $\phi$ restricted to $P_\sigma$ takes values in 
	$V\cap Q_{\overline \phi(\sigma)}$ and this restriction is a diffeomorphism.
	
	A \textit{stratified-smooth $SO(3)$-space} is a stratified-smooth space $P$ equipped with a continuous $SO(3)$-action, for which each $P_\sigma$ is $SO(3)$-invariant, and the action is smooth on each stratum.

	A \textit{diffeomorphism} of stratified-smooth spaces $(\phi,\bar \phi): (P,\Delta)\to (P',\Delta')$  is the pair of an isomorphism of partially-ordered sets 
	$\bar \phi:\Delta \to \Delta'$ and a homeomorphism $\phi:P\to P'$ that restricts to a diffeomorphism from $P_\sigma$ to $P'_{\bar \phi(\sigma)}$ for each 
	$\sigma\in \Delta$.
\end{definition}

We refer the reader to \cite[Appendix A]{DME1} for more details on properties of stratified-smooth spaces. While the notion of a stratified-smooth space is weaker than that of a stratified-smooth manifold, it is adequate for the constructions with cellular homology from Section \ref{subsec:flowcat}. Let $P$ be a stratified-smooth  space of dimension $n$ and let $X$ be a CW-complex equipped with a smooth structure such that all cells are submanifolds. Following Definition \ref{cellular-fundamental-class}, a continuous map $f: P \to X$ is cellular if $f(P_\sigma)\subset X^{(d(\sigma))}$ for each stratum $\sigma$. The fundamental class $f_*[P]$ of a cellular map $f:P\to X$ is the element of $C_n^{\rm cell}(X)$ with the same formula as in \eqref{degree-str-smooth-space-map}:
\begin{equation}\label{degree-str-smooth-space-map-2}
	\langle f_*[P], e_\alpha\rangle = \deg(f: (P, \partial P) \to (e_\alpha/\partial e_\alpha, *)) \mod 2.
\end{equation}
Here $e_\alpha$ denotes an $n$-cell in $X$, and we use the second interpretation of the degree in Subsection \ref{subsec:flowcat} to make sense of the above expression. We define the boundary of a stratified-smooth space $(P,\Delta)$ as in Definition \ref{defn:bdry-stratsmooth}, and the second proof of Lemma \ref{Lemma:bdry-relation} implies that the following relation still holds for a cellular map $f: P \to X$:
\begin{equation}\label{eqn:cellular-fundamental-class-2}
	\partial^{\rm cell} f_*[P] = f_*[\partial P].
\end{equation}

Now fix an instanton $SO(3)$-flow category $\mathcal M$. Throughout this section, we make two standing assumptions on $\mathcal M$: 
\begin{itemize}
	\item We assume that for any $ \alpha, \beta \in \mathsf{orb}(\mathcal M)$, the elements of the stratified-smooth $SO(3)$-manifold $\eqm(\alpha, \beta)$ have trivial stabilizer. This assumption is stronger than necessary for most of what follows, but it suffices for our applications and simplifies the discussion; in particular, we may apply Proposition \ref{prop:flowcat-cell-approx}.
	\item We assume that the compact spaces $\eqm(\alpha, \beta)$, together with the source, target, and composition maps in the definition of a flow category, are provided through the dimension range \eqref{eqn:bubbling-dim-res}. This is used in the proof of Lemma \ref{lemma:zb-bdry-relns}.
\end{itemize} 
Both assumptions hold for the flow category of a rational homology sphere.

For any central orbit $\theta \in \mathsf{orb}(\mathcal M)$ and any orbit $\beta \in \mathsf{orb}(\mathcal M)$, let $\psi_{\theta \beta}: \eqm(\theta, \beta) \to \mathbb R^3$ be an $SO(3)$-equivariant map, where $\mathbb R^3$ is equipped with the standard $SO(3)$ action. These are required to be continuous and smooth on each stratum. They are furthermore required to be transverse to zero on each stratum and satisfy the following compatibility relation on the image of the composition maps: 
\[([A], [A']) \in \eqm(\theta, \alpha) \times_{\alpha} \eqm(\alpha, \beta) \subset \eqm(\theta, \beta) \quad \implies \quad \psi_{\theta \beta}([A], [A']) = \psi_{\theta \alpha}([A]).\]
We define $\eqz(\theta, \beta) = \psi_{\theta \beta}^{-1}(0)$. We also define $\eqb(\theta, \beta)$, the {\it real blowup} of $\psi_{\theta \beta}$, as 
\[\eqb(\theta, \beta):=\{([A], \lambda, v) \in \eqm(\theta, \beta) \times [0,\infty) \times S^2 \mid \psi_{\theta \beta}([A]) = \lambda v\}.\] 
The subspace of $\eqb(\theta, \beta)$ given by $\lambda\neq 0$ projects homeomorphically to the complement of $\eqz(\theta, \beta)$ inside $\eqm(\theta, \beta)$, while the preimage of $\eqz(\theta, \beta)$ is $\eqz(\theta, \beta) \times S^2$. These spaces determine the following correspondences:
\begin{equation}\label{cor-eqz-eqb}
  \lift(\theta) \leftarrow \eqz(\theta, \beta) \to \lift(\beta),\hspace{2cm}S^2 \leftarrow\eqb(\theta, \beta) \to \lift(\beta).
\end{equation}
The source map in the second correspondence involving $\eqb(\theta, \beta)$ is induced by the projection map into $S^2$. Equivalently, it can be interpreted as $\psi/\|\psi\|$. We equip $S^2$ with the standard $SO(3)$ action, induced by the natural action of $SO(3)$ on $\mathbb R^3$. The spaces $ \eqz(\theta, \beta)$ and $ \eqb(\theta, \beta)$ inherit $SO(3)$ actions from $ \eqm(\theta, \beta)$, and the correspondences in \eqref{cor-eqz-eqb} are $SO(3)$-equivariant.

\begin{lemma}\label{lemma:zb-bdry-relns}
So long as $|\theta| - |\beta| < 8 + \dim \lift(\beta)$, the spaces $\eqz(\theta, \beta)$ and $\eqb(\theta, \beta)$ are stratified-smooth spaces of dimensions $n = |\theta| - |\beta| - 4$ and $n = |\theta| - |\beta| - 1$, respectively.
The boundary $\partial \eqz(\theta, \beta)$ is given by 
\begin{equation} \label{boundary-Z}
	\bigsqcup_{\alpha \in \mathsf{orb}(\mathcal M)} \eqz(\theta, \alpha) \times_{\alpha} \eqm(\alpha, \beta),
\end{equation}
and the boundary $\partial \eqb(\theta, \beta)$ is given by the union of 
\begin{equation} \label{boundary-B-1}
	\bigsqcup_{\alpha \in \mathsf{orb}(\mathcal M)} \eqb(\theta, \alpha) \times_{\alpha} \eqm(\alpha, \beta),
\end{equation} 
and 
\begin{equation} \label{boundary-B-2}
	S^2 \times \eqz(\theta, \beta).
\end{equation} 	
Furthermore, the restriction of the source map to \eqref{boundary-B-2} is projection to the first factor.
\end{lemma}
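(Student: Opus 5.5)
The plan is to work locally, stratum by stratum, using the locally standard structure of $\eqm(\theta,\beta)$ together with transversality of $\psi_{\theta\beta}$ on each stratum. First I would settle the dimension and compactness. By the definition of an instanton flow category, $\eqm(\theta,\beta)$ has dimension $|\theta|-|\beta|+\dim\lift(\theta)-1 = |\theta|-|\beta|-1$, since $\lift(\theta)$ is a point. The hypothesis $|\theta|-|\beta|<8+\dim\lift(\beta)$ is exactly \eqref{eqn:bubbling-dim-res} for $\alpha=\theta$. By the second standing assumption, $\eqm(\theta,\beta)$ is therefore compact and provided with its boundary identification \eqref{eqn:flowcat-boundary-relation} in this range.

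\textbf{The zero set $\eqz$.} Near a point $p$ in a stratum $P_\sigma$, take a chart $\phi:U\to V\subset[0,\infty)^k\times\mathbb R^{n-k}$. The map $\psi_{\theta\beta}\circ\phi^{-1}$ is continuous, smooth on each stratum, and transverse to $0$ on each stratum. This is precisely the local model in the definition of a stratified-smooth space, with $l=3$, once $\psi$ is extended to a neighborhood $V$. Such an extension can be chosen smooth stratum-wise, for instance by taking collar-compatible extensions. This yields the stratified-smooth space structure on $\eqz(\theta,\beta)$, of dimension $n-3=|\theta|-|\beta|-4$.

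The codimension-one strata of $\eqz$ are the zero loci of $\psi$ on the codimension-one strata of $\eqm(\theta,\beta)$. These are $\eqm(\theta,\alpha)\times_\alpha\eqm(\alpha,\beta)$. On them the compatibility relation gives $\psi_{\theta\beta}=\psi_{\theta\alpha}\circ \mathrm{pr}_1$, so the zero locus is $\eqz(\theta,\alpha)\times_\alpha\eqm(\alpha,\beta)$. Taking closures of these strata gives \eqref{boundary-Z}.

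\textbf{The real blowup $\eqb$.} Away from $\eqz(\theta,\beta)$, the blowup $\eqb$ is diffeomorphic, stratum-wise, to $\eqm(\theta,\beta)\setminus\eqz(\theta,\beta)$, so the structure there is inherited. Near $\eqz(\theta,\beta)$, transversality on each stratum lets me choose local coordinates in which $\psi$ is the projection onto an $\mathbb R^3$ factor. The blowup is then locally $[0,\infty)\times S^2\times(\text{stratum chart of }\eqz)$. This introduces one new boundary direction $\lambda\ge 0$ and gives dimension $n=|\theta|-|\beta|-1$.

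To exhibit this in the required form $\psi^{-1}(0)$, I would present $\eqb$ as the zero set of the map
\[
([A],\lambda,v)\mapsto \psi([A])-\lambda v
\]
on $\eqm\times[0,\infty)\times S^2$, locally embedded in a Euclidean chart. I then check that this map is transverse to $0$ on each product stratum. That is clear where $\lambda>0$, since one can vary $\lambda v$ freely. Where $\lambda=0$, it follows from the transversality of $\psi$.

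The codimension-one strata of $\eqb$ are of two kinds:
\begin{itemize}
\item $\lambda=0$, which gives $\eqz(\theta,\beta)\times S^2$, with source map $v$; this is \eqref{boundary-B-2};
\item preimages of the boundary strata of $\eqm$, which by the compatibility of $\psi$ are $\eqb(\theta,\alpha)\times_\alpha\eqm(\alpha,\beta)$; this is \eqref{boundary-B-1}.
\end{itemize}
Here it is again used that $\psi_{\theta\beta}$ restricted to that face factors through $\psi_{\theta\alpha}$, so that $(\lambda,v)$ are determined by the first factor.

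\textbf{Main obstacle.} The step I expect to be hardest is verifying the locally standard condition at the corners where the two kinds of boundary meet. At such a point, $\lambda=0$ and the point of $\eqm$ also lies on a broken stratum. One must check that the poset $\Delta_{\ge\sigma}$ is still $\{0,1\}^{k+1}$ and that the chart is compatible with both stratifications simultaneously. I would handle this by working in product charts $[0,\infty)^k\times\mathbb R^{n-k}$ for $\eqm$ and appending the $\lambda$ coordinate. The delicate point is that the transverse coordinate change straightening $\psi$ must respect the corner structure. I would rely on stratum-wise transversality, together with \cite[Appendix A]{DME1}, to justify that this can be done.
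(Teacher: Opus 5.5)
Your proposal is correct. For $\eqz(\theta,\beta)$ and for compactness it is the paper's argument: the hypothesis is \eqref{eqn:bubbling-dim-res} with $\alpha=\theta$, and stratum-wise transversality of $\psi_{\theta\beta}$ gives the locally standard structure.

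The real difference is at $\eqb(\theta,\beta)$. The paper gives no argument there; it cites \cite[Construction A.32]{DME1} and leaves the boundary formulas \eqref{boundary-Z}--\eqref{boundary-B-2} implicit. You derive those formulas from the compatibility $\psi_{\theta\beta}=\psi_{\theta\alpha}\circ\mathrm{pr}_1$. You also check $\eqb$ directly, as the zero set of $\psi([A])-\lambda v$ on $\eqm(\theta,\beta)\times[0,\infty)\times S^2$.

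That zero-set presentation is the better of your two routes, and it removes your ``main obstacle.'' The lemma only claims that $\eqb$ is a stratified-smooth \emph{space}, i.e.\ locally a stratum-wise transverse zero set in a corner model. A corner chart of $\eqm(\theta,\beta)$ times $[0,\infty)\times\mathbb R^2$ already encodes the corners where $\{\lambda=0\}$ meets broken strata. Your two-case check ($\lambda>0$ versus $\lambda=0$) then covers every product stratum. So you never need to straighten $\psi$ into a coordinate projection, and you should drop that route. It would only be needed for a stratified-smooth \emph{manifold} structure, and it is delicate because $\psi$ is merely continuous across strata.

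Two small points remain to add:
\begin{itemize}
\item Every ambient stratum near a corner point must actually meet the zero set. This makes the local poset the full cube and makes closures of strata of $\eqz$ and $\eqb$ equal to the zero loci on closures. It follows from transversality on the deepest stratum plus continuity of $\psi$, via a degree argument.
\item $\eqb$ is compact because $\lambda=\|\psi\|$ is bounded on the compact space $\eqm(\theta,\beta)$.
\end{itemize}

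The citation buys the paper brevity. Your version buys a self-contained check that makes the boundary formulas, including the source map on $S^2\times\eqz(\theta,\beta)$, transparent.
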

\begin{proof}
The locally standard hypothesis for $\eqz(\theta, \beta)$ follows immediately from the fact that $\eqm(\theta, \beta)$ is a stratified-smooth manifold and $\psi$ is continuous and transverse to zero on each stratum. The proof of the locally standard hypothesis for $\eqb(\theta, \beta)$ is presented in \cite[Construction A.32]{DME1}. That these spaces are compact when $|\theta| - |\beta| < 8 + \dim \lift(\beta)$ follows because $\eqm(\theta, \beta)$ is, which is true by the hypothesis that our moduli spaces are defined and compact when \eqref{eqn:bubbling-dim-res} holds.
\end{proof}

As was discussed in Section 4, there is a coherent equivariant homotopy of the endpoint maps so that we obtain a cellular instanton $SO(3)$-flow category. In particular, we may define the maps 
\begin{equation}\label{eqn:hab-2} 
\fm_{\alpha \beta}: C_*^{\text{cell}}(\lift(\alpha)) \to C_*^{\text{cell}}(\lift(\beta))
\end{equation}
satisfying the relations
\begin{equation}\label{eqn:cell-diff-bdry-2} 
\partial^{\text{cell}} \fm_{\alpha \beta} + \fm_{\alpha \beta} \partial^{\text{cell}} = \sum_{\gamma \in \mathsf{orb}(\mathcal M)}  \fm_{\gamma \beta} \fm_{\alpha \gamma}.
\end{equation} 

\begin{lemma}\label{B-Z-cellular}
	If $\mathcal M$ and $\psi_{\theta\beta}: \eqm(\theta, \beta) \to \mathbb R^3$ are as above, then there exists a cellular approximation of $\mathcal M$ so that $\eqz(\theta, \beta)$ and $\eqb(\theta, \beta)$ also define cellular correspondences.
\end{lemma}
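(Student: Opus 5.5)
We will run the inductive cellular approximation from the sketch of Proposition \ref{prop:flowcat-cell-approx} once more, now including the new correspondences $\eqz(\theta,\beta)$ and $\eqb(\theta,\beta)$ in the induction. The target maps on all three families are $t_{\theta\beta}$ restricted to the appropriate subspace, or pulled back along $\eqb(\theta,\beta)\to\eqm(\theta,\beta)$. So a coherent homotopy $t^r_{\theta\beta}$ of the endpoint maps of $\mathcal M$ automatically induces homotopies of the target maps of $\eqz$ and $\eqb$. These induced homotopies respect the boundary decompositions \eqref{boundary-Z}, \eqref{boundary-B-1} and \eqref{boundary-B-2} of Lemma \ref{lemma:zb-bdry-relns}, provided the maps $\psi_{\theta\beta}$ are kept fixed.

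\textbf{Step 1: compatibility of $\psi$ with the homotopy.} We must check that the compatibility $\psi_{\theta\beta}([A],[A'])=\psi_{\theta\alpha}([A])$ survives when the composition maps $\circ$ are deformed to $\circ^r$. The relevant fiber products $\eqm(\theta,\alpha)\times_\alpha\eqm(\alpha,\beta)$ are formed using $t^r_{\theta\alpha}$. Since $\psi$ depends only on the first factor, the trivialization of the fiber bundle over $[0,1]$ used in the sketch can be chosen to cover the identity on $\eqm(\theta,\alpha)$. Then $\psi_{\theta\beta}\circ\circ^r$ still equals $\psi_{\theta\alpha}$ on the first factor, and $\eqz$, $\eqb$ remain stratified-smooth spaces with the boundary descriptions of Lemma \ref{lemma:zb-bdry-relns} for every $r$.

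\textbf{Step 2: the induction.} The only source cells to check are the $0$-cells: the point of $\lift(\theta)$ for $\eqz$, and the $0$-cell $p\in S^2$ for $\eqb$. For $\eqz$ we have $s^{-1}(\ast)=\eqz(\theta,\beta)$ with free $SO(3)$-action. For $\eqb$ the fiber over $p$ is $\{\psi/\|\psi\|=p\}$, which carries a free $SO(2)$-action. We induct on the dimension of these fibers, using the filtration $W_e^{(k)}$ defined before the notion of cellular correspondence.

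We need an equivariant (for $SO(3)$, respectively the stabilizer $SO(2)$ of $p$) homotopy of the target map to a cellular map, extending the homotopy already prescribed on the boundary strata. The prescription on the boundary comes from the homotopies of $\eqm$ and of lower $\eqz$, $\eqb$, by induction.

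On the face \eqref{boundary-B-2}, $S^2\times\eqz(\theta,\beta)$, the fiber over $p$ is $\{p\}\times\eqz(\theta,\beta)$. There the target map is the $\eqz$-target map, which is already cellular once $\eqz$ has been treated. So we treat $\eqz(\theta,\beta)$ before $\eqb(\theta,\beta)$ at each stage.

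\textbf{Step 3: the extension problem.} For $\eqz$ the action is free, so the space is a principal $SO(3)$-bundle over its quotient. Equivariant maps to $\lift(\beta)$ then correspond to sections of an associated bundle, and relative cellular approximation applies directly.

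For $\eqb$ the relevant group is $SO(2)$, acting freely on the fiber over $p$. We repeat the argument of Proposition \ref{prop:flowcat-cell-approx} verbatim. For large dimensions the condition is vacuous, since $\dim\lift(\beta)\le 3$. In small dimensions we write the fiber locally as $SO(2)\times C$ and use $SO(2)\cdot\lift(\beta)^{(0)}=\lift(\beta)^{(1)}$, together with classical cellular approximation on $C$.

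\textbf{Main obstacle.} The hard part is the $\eqb$ step when $\lift(\beta)$ is abelian, i.e.\ $S^2$ with $SO(2)$-fixed $0$-cell. Here the $SO(2)$-equivariance could produce an obstruction analogous to the small antipodal flowlines. The plan is to rule this out by freeness: every point of $\eqm(\theta,\beta)$ has trivial stabilizer, so the $SO(2)$-action on the $\eqb$ fiber is free and no point is forced to map to an $SO(2)$-fixed point of $\lift(\beta)$. The approximation then reduces to the $SO(2)\times C$ case already handled.
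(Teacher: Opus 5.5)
There is a genuine gap: the homotopies you build on $\eqz$ and $\eqb$ are never tied back to a cellular approximation of $\mathcal M$. The lemma asks for a homotopy of the endpoint maps of $\mathcal M$ whose \emph{induced} target maps on $\eqz(\theta,\beta)$ and $\eqb(\theta,\beta)$ are cellular.

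Your first paragraph correctly observes that a coherent homotopy $t^r_{\theta\beta}$ induces homotopies on $\eqz$ and $\eqb$. However, Proposition \ref{prop:flowcat-cell-approx} handles the central-source case $\Gamma_\alpha=SO(3)$ by an \emph{arbitrary} extension. The induced map on the fiber $F_B(\theta,\beta)=\{\psi_{\theta\beta}\in\mathbb R_{\ge 0}\,p\}$ of $\eqb$ over the $0$-cell then need not be cellular. Concretely, take $\beta$ irreducible with $|\theta|-|\beta|=4$:
\begin{itemize}
\item $\eqm(\theta,\beta)$ is a finite union of free orbits $SO(3)\cdot A$;
\item each orbit meets $F_B$ in a single $SO(2)$-orbit $SO(2)\,g_0A$;
\item cellularity of $\eqb$ therefore demands $g_0\,t(A)\in SO(3)^{(1)}=SO(2)$.
\end{itemize}
An arbitrary equivariant homotopy does not produce this. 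In Steps 2--3 you instead homotope the target map on $F_B$ directly. If that homotopy is independent of $t^r_{\theta\beta}$, the result is not a cellular approximation of $\mathcal M$. Moreover, the target map of $\eqb$ then no longer factors through the blow-down $\eqb(\theta,\beta)\to\eqm(\theta,\beta)$. That factorization is exactly what is used immediately afterwards to prove $\fb_{\theta\beta}\iota=\fm_{\theta\beta}$ in Lemma \ref{iota-b=m}.

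The missing step, which is the substance of the paper's proof, is to use the freedom in the central case to \emph{choose} the homotopy of $t_{\theta\beta}$ on $F(\theta,\beta)=\eqm(\theta,\beta)$ through $F_B$:
\begin{enumerate}
\item Construct the $SO(2)$-equivariant cellular homotopy on $F_B(\theta,\beta)\subset\eqm(\theta,\beta)$ exactly as in the $\Gamma_\alpha=SO(2)$ case. Your Step 3 argument is fine here, and freeness does rule out the antipodal obstruction.
\item Extend it canonically and $SO(3)$-equivariantly to $\eqm(\theta,\beta)=SO(3)\cdot F_B(\theta,\beta)$ by setting $H(A)=g^{-1}H(gA)$ for any $g$ with $gA\in F_B$.
\end{enumerate}
This extension is well defined: off $\eqz(\theta,\beta)$ such a $g$ is unique up to $SO(2)$, and on $\eqz(\theta,\beta)\subset F_B$ the homotopy is already $SO(3)$-equivariant. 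So your ordering ``$\eqz$ before $\eqb$'' is exactly what makes the extension consistent, and the repair is short.

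Note also that the $\eqz$ step needs no approximation at all. The $SO(3)$-action is free, so every stratum of $\eqz(\theta,\beta)$ has dimension at least $3\ge\dim\lift(\beta)$, and any equivariant target map is automatically cellular.
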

\begin{proof}
The claim for $\eqz$ is tautologically true for dimension reasons, because its domain is central; the map to the codomain is a submersion on each stratum, hence cellular. As for $\eqb(\theta, \beta)$, the fiber space $F_B(\theta, \beta)$ is identified with the subset of $F(\theta, \beta)$ for which $\psi_{\theta \beta}(A) = (0, 0, t)$ for some $t \ge 0$. We wish to choose the coherent homotopy of Proposition \ref{prop:flowcat-cell-approx} so that the resulting map $F_B(\theta, \beta) \to \lift(\beta)$ is also cellular. This can be argued exactly as in the case of $F(\alpha, \beta)$ when $\Gamma_\alpha = SO(2)$, and then the homotopy admits a canonical extension to $F(\theta, \beta)$.
\end{proof}

Lemma \ref{B-Z-cellular} implies that we may define 
\[\fz_{\theta\beta}: C_*^{\text{cell}}(\lift(\theta))\cong \F \to C_*^{\text{cell}}(\lift(\beta)),\hspace{1cm}
\fb_{\theta\beta}: C_*^{\text{cell}}(S^2)\cong  \Lambda(\chi_2) \to C_*^{\text{cell}}(\lift(\beta)),\]
using a similar definition as \eqref{eqn:hab-2} and replacing $\eqm(\alpha, \beta)$ with the moduli spaces $\eqz(\theta, \beta)$ and $\eqb(\theta, \beta)$, respectively. The relation in \eqref{eqn:cellular-fundamental-class-2} and the description of $\partial \eqz(\theta, \beta)$ and $\partial \eqb(\theta, \beta)$ in Lemma \ref{lemma:zb-bdry-relns} give the following analogues of \eqref{eqn:cell-diff-bdry-2}:
\begin{align}
	\partial^{\text{cell}} \fz_{\theta\beta}+ \fz_{\theta\beta}\partial^{\text{cell}} &= \sum_\alpha \fm_{\alpha\beta} \fz_{\theta\alpha}, \label{eqn:Z-reln} \\
	\partial^{\text{cell}} \fb_{\theta\beta}+ \fb_{\theta\beta}\partial^{\text{cell}} &= \fz_{\theta\beta}\pi + \sum_\alpha \fm_{\alpha\beta} \fb_{\theta\alpha}, \label{eqn:B-reln}
\end{align}
where $\pi: C_*^{\text{cell}}(S^2) \to C_*^{\text{cell}}(\text{pt})$ is the map induced by the correspondence $S^2 \leftarrow S^2 \to \text{pt}$. In particular, we have $\pi(1)=1$ and $\pi(\chi_2)=0$, and $\pi$ is (tautologically) a chain map. We also remark that $\partial^{\text{cell}}$ in \eqref{eqn:cell-diff-bdry-2}, \eqref{eqn:Z-reln} and \eqref{eqn:B-reln} is again trivial because we work with coefficients in $\F$.
We will need one more relation. Write $\iota:C_*^{\rm{cell}}({\rm{pt}})\to C_*^{\rm{cell}}(S^2) $ for the map induced by the correspondence ${\rm pt} \leftarrow S^2 \to S^2$, which is given more specifically by $\iota(1)=\chi_2$. Notice again that $\iota$ is tautologically a chain map.
\begin{lemma}\label{iota-b=m}
For any object $\beta$ of $\mathcal M$, we have  \begin{equation}\label{eqn:BM-reln}\fb_{\theta\beta} \iota = \fm_{\theta\beta}.\end{equation}
\end{lemma}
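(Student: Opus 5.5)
The plan is to compare the two correspondences defining the two sides at the level of fiber products, and then read off the cellular fundamental classes. By definition, $\fm_{\theta\beta}(1) = (t_{\theta\beta})_*[\eqm(\theta,\beta)]$, since $\lift(\theta)$ is a point and its unique cell is the $0$-cell. On the other side, $\iota(1)=\chi_2$ is the top ($2$-dimensional) cell of $S^2$. So $\fb_{\theta\beta}\iota(1) = \fb_{\theta\beta}(\chi_2)$, which is $t_*[F_B(e^2,\beta)]$, where $F_B(e^2,\beta) = e^2\times_{S^2}\eqb(\theta,\beta)$ is the part of the blowup lying over the $2$-cell. Equivalently, by \eqref{eqn:fiber-product-correspondence}, $\fb_{\theta\beta}\iota$ is induced by the composite correspondence $\mathrm{pt}\leftarrow S^2\times_{S^2}\eqb(\theta,\beta)=\eqb(\theta,\beta)\to\lift(\beta)$. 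The claim therefore reduces to showing that the correspondences $\mathrm{pt}\leftarrow\eqb(\theta,\beta)\to\lift(\beta)$ and $\mathrm{pt}\leftarrow\eqm(\theta,\beta)\to\lift(\beta)$ induce the same map on cellular chains.

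These two spaces have the same dimension $|\theta|-|\beta|-1$, and the blowdown map $\eqb(\theta,\beta)\to\eqm(\theta,\beta)$, $([A],\lambda,v)\mapsto[A]$, commutes with the target maps; the target map on $\eqb$ is the pullback. The key steps are as follows.

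First, observe that the blowdown is a homeomorphism off the preimage $S^2\times\eqz(\theta,\beta)$. That preimage has dimension $|\theta|-|\beta|-2$, so it lies in the codimension-one skeleton of $\eqb$, and its image $\eqz(\theta,\beta)$ has codimension $3$ in $\eqm$.

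Second, compute both sides with the transverse-disk description of the degree from Section~\ref{subsec:flowcat}. Fix a top-dimensional cell $e$ of $\lift(\beta)$ and a small disk $D$ meeting it transversely. Perturb $t$ on $\eqm$ by a small homotopy that avoids $D$ on the codimension-one skeleton and makes the map transverse to $D$; then pull the homotopy back to $\eqb$. The preimage of $D$ in $\eqm$ is a finite set of interior points. By a dimension count we may take these points to avoid $\eqz$, since $\eqz$ has codimension $3$ and the perturbation can be chosen generic relative to it. The pulled-back preimage in $\eqb$ is then in bijection with this set. Hence $\langle t_*[\eqb],e\rangle=\langle t_*[\eqm],e\rangle$ mod $2$, which gives \eqref{eqn:BM-reln}.

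The main obstacle is making sure the cellular approximation chosen in Lemma~\ref{B-Z-cellular} is the same one used on $\eqm$ (pulled back), so that both classes are computed from compatible target maps. I would arrange this by taking the homotopy on $\eqb$ to be the pullback of the one on $\eqm$, which is how Lemma~\ref{B-Z-cellular} extends it canonically. A second point to check is that the exceptional stratum $S^2\times\eqz$, which maps into the lower skeleton, genuinely contributes nothing. I would handle this by the transversality argument above rather than by citing any further structural result.
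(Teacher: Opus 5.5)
Your proposal is correct and follows the paper's proof. You identify $\fb_{\theta\beta}\iota$ with the map induced by $\mathrm{pt}\leftarrow\eqb(\theta,\beta)\to\lift(\beta)$ via the composite correspondence, then use that the target map factors through the blowdown to $\eqm(\theta,\beta)$, a homeomorphism off $S^2\times\eqz(\theta,\beta)$, so the cellular fundamental classes agree; your transverse-disk count just makes this explicit.

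One nuance: Lemma~\ref{B-Z-cellular} builds the homotopy on $F_B(\theta,\beta)$ and then extends it to $\eqm(\theta,\beta)$, and that is what guarantees the compatibility you want. Pulling back an arbitrary cellular approximation of $\eqm(\theta,\beta)$ would not in general make $\eqb(\theta,\beta)$ cellular over the $0$-cell of $S^2$.
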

\begin{proof}
The left-hand side is the map induced by the composite correspondence of $\theta \leftarrow S^2 \to S^2$ and $S^2 \leftarrow \eqb(\theta, \beta) \to \lift(\beta)$, which is equal to the induced map of $\theta \leftarrow \eqb(\theta, \beta) \to \lift(\beta)$. Equivalently, this map sends $1$ to the cellular fundamental class of $\eqb(\theta, \beta)$ in $\lift(\beta)$. Because the map $\eqb(\theta, \beta) \to \lift(\beta)$ factors through the projection to $\eqm(\theta, \beta)$, which is a diffeomorphism away from a positive-codimension subset, the two cellular fundamental classes coincide. 
\end{proof}

\begin{definition}\label{def:suspension-complex}
	Suppose we are given a cellular instanton $SO(3)$-flow category $\mathcal M$ and a collection of sections $\psi_{\theta \beta}$ as above. For each $\theta \in \mathfrak Z(\mathcal M)$, 
	let $\lift(\underline \theta) $ be a copy of $S^2$ with its standard $SO(3)$-action and cellular structure. Let also $ \mathsf{orb}_S(\mathcal M)$ be the disjoint union of 
	the following sets:
	\[
	  \mathsf{orb}(\mathcal M) \setminus \mathfrak Z(\mathcal M),\hspace{1cm} \mathfrak Z(\mathcal M) ,\hspace{1cm} \underline{\mathfrak Z}(\mathcal M),
	\]
	where $\underline{\mathfrak Z}(\mathcal M)$ is another copy of $\mathfrak Z(\mathcal M)$ and consists of the elements $\underline \theta$ for any
	$\theta\in \mathfrak Z(\mathcal M)$. Let $| \cdot |_S : \mathsf{orb}_S(\mathcal M) \to \mathbb Z$ be defined as follows. For any $\alpha\in \mathsf{orb}(\mathcal M) \setminus \mathfrak Z(\mathcal M)$, we have $|\alpha|_S = |\alpha|$, and for any $\theta \in \mathfrak Z(\mathcal M)$ we have
	\[
	| \theta |_S=| \theta |-3,\hspace{1cm}| \underline \theta |_S=| \theta |-2.
	\]
	The \textit{geometric suspension complex} $S \widetilde C(\mathcal M)$ is the dg-module over $\Lambda(\chi_1, \chi_2)$ with the underlying module 
	\begin{equation}\label{eqn:susp-complex-module}
	  \bigoplus_{\alpha \in \mathsf{orb}_S(\mathcal M)} C_*^{\rm cell}(\lift(\alpha))[-|\alpha|_S]
	\end{equation}
	and the differential
	\[
	  \hspace{2cm} \widetilde d_S(e_\alpha) = \partial^{\rm cell}(e_\alpha) + \sum_{\beta\in \mathsf{orb}_S(\mathcal M)} \fm^S_{\alpha \beta}(e_\alpha),\hspace{1cm} 
	  \text{for all $e_\alpha\in C_*^{\rm cell}(\lift(\alpha))$,}
	\]	
	where for $\alpha,\beta\in \mathsf{orb}(\mathcal M) \setminus \mathfrak Z(\mathcal M)$ and $\theta,\tau \in  \mathfrak Z(\mathcal M)$, we have
	\begin{align*}
	  &\fm^S_{\alpha \beta}=\fm_{\alpha \beta},\hspace{1cm} \fm^S_{\underline{\theta}\beta}=\fb_{\theta \beta},\hspace{1cm}\fm^S_{\theta\beta}=\fz_{\theta \beta},\\
	  &\fm^S_{\alpha \underline{\tau}}=\iota\fm_{\alpha \tau},\hspace{0.9cm}\fm^S_{\underline{\theta}\underline{\tau}}=\iota\fb_{\theta \tau},\hspace{0.9cm}\fm^S_{\theta\underline{\tau}}=\iota\fz_{\theta \tau},\\
	  &\fm^S_{\alpha {\tau}}=0,\hspace{1.52cm}\fm^S_{\underline{\theta}{\tau}}=\pi\delta_{\theta\tau},\hspace{0.83cm}\fm^S_{\theta{\tau}}=0. 
	\end{align*}	
	In the last row above, $\delta_{\theta\tau}$ is the identity map of $C_*^{\rm cell}(S^2)$ if $\theta=\tau$ and is zero otherwise.
\end{definition}

\begin{remark}
The sections $\psi$ are omitted from the notation. It is a consequence of Proposition \ref{two-suspensions-are-equivalent} below that the resulting dg-module is independent of this choice up to isomorphism.
\end{remark}

\begin{prop}\label{prop:Sg-is-complex}
	The complex $(S \widetilde C(\mathcal M),\widetilde d_S)$  is a dg-module over $\Lambda(\chi_1, \chi_2)$. 
\end{prop}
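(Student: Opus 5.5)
We need to verify two things: each component $\fm^S_{\alpha\beta}$ is $\Lambda(\chi_1,\chi_2)$-equivariant, and $\widetilde d_S^2=0$. Over $\F$ the cellular differentials vanish, so the second condition amounts to
\[
\sum_{\gamma\in\mathsf{orb}_S(\mathcal M)} \fm^S_{\gamma\beta}\fm^S_{\alpha\gamma}=0
\]
for every pair $\alpha,\beta\in\mathsf{orb}_S(\mathcal M)$.

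\textbf{Equivariance.} Each component is built from three kinds of maps.
\begin{itemize}
\item The first are maps induced by equivariant cellular correspondences: $\fm$, $\fz$ and $\fb$. These are equivariant by the same argument as for $\fm_{\alpha\beta}$ in Section~\ref{subsec:flowcat}, using Lemma~\ref{B-Z-cellular}.
\item The second are the maps $\iota$ and $\pi$, induced by the equivariant correspondences ${\rm pt}\leftarrow S^2\to S^2$ and $S^2\leftarrow S^2\to{\rm pt}$.
\item The third is the identity.
\end{itemize}
Composites of equivariant maps are equivariant, so this part is routine.

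\textbf{The relation $\widetilde d_S^2=0$.} I would split the pairs $(\alpha,\beta)$ by type: non-central, central $\theta$, or suspended $\underline\theta$. Relations with a non-central target use \eqref{eqn:cell-diff-bdry-2}, \eqref{eqn:Z-reln} and \eqref{eqn:B-reln}. Relations with a suspended target $\underline\tau$ are obtained from those by post-composing with $\iota$.

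Most cases are direct.
\begin{itemize}
\item For $\alpha,\beta$ both non-central, the sum over non-central $\gamma$ is \eqref{eqn:cell-diff-bdry-2} with the central terms removed. The central terms $\gamma=\tau$ contribute $\fm^S_{\tau\beta}\fm^S_{\alpha\tau}=\fz_{\tau\beta}\cdot 0=0$. The terms $\gamma=\underline\tau$ contribute $\fb_{\tau\beta}\iota\fm_{\alpha\tau}=\fm_{\tau\beta}\fm_{\alpha\tau}$ by Lemma~\ref{iota-b=m}, and these are exactly the missing central terms of \eqref{eqn:cell-diff-bdry-2}.
\item For source $\theta$ and non-central target $\beta$, the only terms are $\fm_{\gamma\beta}\fz_{\theta\gamma}$ and $\fb_{\tau\beta}\iota\fz_{\theta\tau}=\fm_{\tau\beta}\fz_{\theta\tau}$. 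This is \eqref{eqn:Z-reln} once central $\gamma$ are reinstated.
\item For source $\underline\theta$ and non-central target $\beta$, the terms are: $\fm_{\gamma\beta}\fb_{\theta\gamma}$ over non-central $\gamma$; the pair $\gamma=\tau$, giving $\fz_{\tau\beta}\pi\delta_{\theta\tau}=\fz_{\theta\beta}\pi$; and $\gamma=\underline\tau$, giving $\fb_{\tau\beta}\iota\fb_{\theta\tau}=\fm_{\tau\beta}\fb_{\theta\tau}$. Together this is \eqref{eqn:B-reln}.
\end{itemize}

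The remaining cases have a central target $\tau$. A source $\alpha$ or $\theta$ gives zero because the maps into $\tau$ vanish except $\pi\delta$ from $\underline\tau$; the surviving term is $\pi\iota\fm_{\alpha\tau}$ or $\pi\iota\fz_{\theta\tau}$, which vanishes since $\pi\iota=0$. For source $\underline\theta$ and target $\tau$, the sum is $\pi\iota\fb_{\theta\tau}+\pi\delta\,\pi\delta=\pi\iota\fb_{\theta\tau}$, again zero. (Here the composition $\underline\theta\to\tau$ has no second term, since $\pi\delta_{\tau\tau}$ lands in a point orbit and $\fm^S_{\tau\tau}=0$.)

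\textbf{Main obstacle.} The main obstacle is making sure the relations \eqref{eqn:Z-reln}--\eqref{eqn:B-reln} are available in the relevant degree range, which is only guaranteed by Lemma~\ref{lemma:zb-bdry-relns}. I would verify that, with the shifted gradings $|\theta|_S=|\theta|-3$ and $|\underline\theta|_S=|\theta|-2$, the only nonvacuous compositions occur when $|\theta|-|\beta|<8+\dim\lift(\beta)$. This should follow from a dimension count like the one behind \eqref{eqn:dim-restriction}; outside that range the maps land in cellular degrees above $\dim\lift(\beta)$ and vanish automatically.
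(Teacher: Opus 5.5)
Your proposal is correct and follows the paper's proof essentially step for step. You check equivariance of every component, then verify the nine matrix-entry relations by reducing them via Lemma~\ref{iota-b=m} to \eqref{eqn:cell-diff-bdry-2}, \eqref{eqn:Z-reln}, \eqref{eqn:B-reln} (post-composed with $\iota$ for suspended targets) and to $\pi\iota=0$ for central targets. The only blemish is the stray expression $\pi\delta\,\pi\delta$ in the $\underline\theta\to\tau$ case: the term through a central $\rho$ is $\fm^S_{\rho\tau}\,\pi\delta_{\theta\rho}$, which vanishes because $\fm^S_{\rho\tau}=0$, as you then note. Your degree-range check is a harmless addition that the paper absorbs into its standing assumption and Lemma~\ref{lemma:zb-bdry-relns}.
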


\begin{proof}
	Since all the operators $\fm^S_{\alpha,\beta}$ commute with the action of $C_*^{\rm cell}(SO(3)) = \Lambda(\chi_1, \chi_2)$, the differential
	$\widetilde d_S$ satisfies the Leibniz rule. It remains to show that this operator squares to zero. This is equivalent to 
	\[
	  \partial^{\rm cell} \fm^S_{\alpha\beta}+\fm^S_{\alpha\beta}\partial^{\rm cell}=\sum_{\gamma\in \mathsf{orb}_S(\mathcal M)}\fm^S_{\gamma\beta}\fm^S_{\alpha\gamma}
	\]
	where $\alpha$ (resp. $\beta$) is either in $\mathsf{orb}(\mathcal M) \setminus \mathfrak Z(\mathcal M)$ or is equal to $\theta$ or $\underline \theta$ 
	(resp. $\tau$ or $\underline \tau$) for some $\theta\in \mathfrak Z(\mathcal M)$ (or $\tau\in \mathfrak Z(\mathcal M)$). These nine relations are given as
	\begin{align}
		\partial^{\rm cell} \fm_{\alpha\beta}+\fm_{\alpha\beta}\partial^{\rm cell}&= \sum_{\gamma} \fm_{\gamma\beta} \fm_{\alpha\gamma} 
		+ \sum_{\rho} \fb_{\rho \beta} \iota \fm_{\alpha\rho}\label{module-11}\\
		\partial^{\rm cell} \fb_{\theta\beta}+\fb_{\theta\beta}\partial^{\rm cell}&=\fz_{\theta\beta} \pi +\sum_{\gamma} \fm_{\gamma\beta} \fb_{\theta\gamma} + \sum_{\rho} \fb_{\rho \beta} \iota \fb_{\theta\rho}\label{module-12}\\
		\partial^{\rm cell} \fz_{\theta\beta}+\fz_{\theta\beta}\partial^{\rm cell}&=\sum_{\gamma} \fm_{\gamma\beta} \fz_{\theta\gamma}+ \sum_{\rho} \fb_{\rho \beta} \iota \fz_{\theta \rho}\label{module-13}\\
		\partial^{\rm cell} \iota\fm_{\alpha\tau}+\iota\fm_{\alpha\tau}\partial^{\rm cell}&= \sum_{\gamma } \iota \fm_{\gamma\tau} \fm_{\alpha\gamma}+ \sum_{\rho} \iota\fb_{\rho \tau} \iota \fm_{\alpha\rho}\label{module-21}\\
		\partial^{\rm cell} \iota\fb_{\theta\tau}+\iota\fb_{\theta\tau}\partial^{\rm cell}&=\iota \fz_{\theta\tau} \pi +\sum_{\gamma} \iota \fm_{\gamma\tau} \fb_{\theta\gamma}+ \sum_{\rho} \iota\fb_{\rho \tau} \iota \fb_{\theta\rho}\label{module-22}\\
		\partial^{\rm cell} \iota\fz_{\theta\tau}+\iota\fz_{\theta\tau}\partial^{\rm cell}&=\sum_{\gamma} \iota\fm_{\gamma\tau} \fz_{\theta \gamma}+\sum_{\rho} \iota\fb_{\rho \tau} \iota \fz_{\theta \rho}\label{module-23}
	\end{align}
	together with the three relations
	\begin{equation}\label{last-row-rel}
		\pi \iota \fm_{\alpha\tau}=0,\hspace{1cm}\pi \iota \fb_{\theta\tau}=0,\hspace{1cm}\pi \iota \fz_{\theta\tau}=0.
	\end{equation}
	In \eqref{module-11}-\eqref{module-23}, $\gamma$ runs through $\mathsf{orb}(\mathcal M) \setminus \mathfrak Z(\mathcal M)$ and $\rho$ runs through $\mathfrak Z(\mathcal M)$.
	To verify these relations, we first use \eqref{eqn:BM-reln} to simplify, and then apply \eqref{eqn:cell-diff-bdry-2}, \eqref{eqn:Z-reln}, \eqref{eqn:B-reln}. The identities in 
	\eqref{last-row-rel} follow from $\pi \iota = 0$.
\end{proof}

\begin{remark}
By decomposing the direct sum \eqref{eqn:susp-complex-module} further into summands corresponding to $\mathsf{orb}(\mathcal M) \setminus \mathfrak Z(\mathcal M)$, $\underline{\mathfrak Z}(\mathcal M)$, and $\mathfrak Z(\mathcal M)$, the array of equations defining $\mathfrak m^S_{\alpha \beta}$ can be understood as the matrix coefficients of $\mathfrak m^S$ with respect to this decomposition, and the relations \eqref{module-11}-\eqref{module-23} can be understood as arising from matrix multiplication.
\end{remark}	

\begin{remark}
The geometric suspension complex is inspired by a construction from Morse theory. Suppose $X$ is an $SO(3)$-manifold equipped with an $SO(3)$-equivariant Morse--Smale function $f: X \to \mathbb R$. Let $\beta: X \to [-1,1]$ be an $SO(3)$-invariant bump function equal to $-1$ in a neighborhood of the fixed critical points and equal to $1$ outside a slightly larger neighborhood, and let $\epsilon > 0$ be small. Define a function on $X \times \mathbb R^3$ by \[f_S(x,v) = f(x) + (2\epsilon^2 \beta(x)\|v\|^2 + \|v\|^4).\] This Morse function gives rise to a flow category, which has associated dg-module $\widetilde C(\mathcal M_{f_S})$. This dg-module coincides with the geometric suspension complex $S\widetilde C(\mathcal M_f)$ associated with the flow category of $f$. For this reason, we expect that the complex $S\widetilde C(\mathcal M)$ is the dg-module associated with an $SO(3)$-equivariant flow category. We do not endeavor to prove this, as it is not needed in what follows.
\end{remark}

To give a more explicit description of $(S \widetilde C(\mathcal M),\widetilde d_S)$, let $C = C(\mathcal M)$ be the vector space freely generated by irreducible orbits of $\mathcal M$, $A$ be the vector space freely generated by the abelian orbits, and $Z$ be the vector space freely generated by central orbits. Then the underlying vector space of $S\widetilde C(\mathcal M)$ is given by 
\begin{equation}\label{suspension-splitting}
	C \oplus \chi_1 C \oplus \chi_2 C \oplus \chi_3 C \;\; \oplus \;\; A \oplus \chi_2 A \;\; \oplus \;\; Z[2] \oplus \chi_2 Z[2] \;\; \oplus Z[3],\end{equation} 
and the differential can be written explicitly as 
\[\widetilde d_S = \left[\begin{array}{cccc|cc|cc|c}
d_1 & 0 & 0 & 0 	&	0 & 0 &	0 & 0 & 	0 \\
d_2 & d_1 & 0 & 0 	&	e_2 & 0 &	\delta_4 & 0 & 0 \\
d_3 & 0 & d_1 & 0 	& 	0 & 0 &	0 & 0 &	0 \\
d_4 & d_3 & d_2 & d_1 & e_4 & e_2 & b_6 & \delta_4 & z_7 \\
\hline
e_1 & 0 & 0 & 0 	& 	0 & 0 & 	0 & 0 & 	0 \\
e_3 & 0 & e_1 & 0 	& 	0 & 0 & 	0 & 0 & 	0 \\
\hline
0 & 0 & 0 & 0 		& 	0 & 0 & 	0 & 0 & 	0 \\
\delta_1 & 0 & 0 & 0 &	0 & 0 & 	0 & 0 & 	0 \\
\hline
0 & 0 & 0 & 0 		& 	0 & 0 & 	1 & 0 & 	0
\end{array}\right].\]
Here $\delta_4, b_6$ are induced by the correspondence $S^2 \leftarrow \eqb(\theta, \beta) \to \lift(\beta)$ when $\beta$ is irreducible; similarly, $z_7$ is induced by the correspondence $\lift(\theta) \leftarrow \eqz(\theta, \beta) \to \lift(\beta)$ when $\beta$ is irreducible. Proposition \ref{prop:Sg-is-complex} ensures the above differential squares to zero. In addition to the relations entailed by $\widetilde d^2 = 0$, this implies the novel relation \begin{equation}\label{nove-rel}d_3 \delta_4 + d_1 b_6 = z_7,\end{equation} which can also easily be argued directly.

Most terms in the above expression for the differential $\widetilde d_S$ are already determined by the differential $\widetilde d$. Only the two terms $b_6$, $z_7$ depend on the choice of the sections $\{\psi_{\theta \beta}\}$. It is natural to ask to what extent $\widetilde d_S$ genuinely depends on the particular choice of sections. To address this, define another dg-module $\Sigma \widetilde C(\mathcal M)$ over $\Lambda(\chi_1, \chi_2)$ with the same underlying module as $S \widetilde C(\mathcal M)$ and the differential 
\[\widetilde d_\Sigma = \left[\begin{array}{cccc|cc|cc|c}
d_1 & 0 & 0 & 0 	&	0 & 0 &	0 & 0 & 	0 \\
d_2 & d_1 & 0 & 0 	&	e_2 & 0 &	\delta_4 & 0 & 0 \\
d_3 & 0 & d_1 & 0 	& 	0 & 0 &	0 & 0 &	0 \\
d_4 & d_3 & d_2 & d_1 & e_4 & e_2 & 0 & \delta_4 & d_3 \delta_4 \\
\hline
e_1 & 0 & 0 & 0 	& 	0 & 0 & 	0 & 0 & 	0 \\
e_3 & 0 & e_1 & 0 	& 	0 & 0 & 	0 & 0 & 	0 \\
\hline
0 & 0 & 0 & 0 		& 	0 & 0 & 	0 & 0 & 	0 \\
\delta_1 & 0 & 0 & 0 &	0 & 0 & 	0 & 0 & 	0 \\
\hline
0 & 0 & 0 & 0 		& 	0 & 0 & 	1 & 0 & 	0
\end{array}\right].\]
Note that $\widetilde d_\Sigma$ is determined by $\widetilde d$. The $\cS$-complex $(\Sigma \widetilde C(\mathcal M), \chi_2)$ is precisely the suspension complex of \cite[Definition 2.23]{DS:-unori-skein-tr}, applied to the $\cS$-complex $(\widetilde C(\mathcal M), \chi_2)$.

\begin{prop}\label{two-suspensions-are-equivalent}
The dg-modules $(S \widetilde C(\mathcal M), \widetilde d_{S})$ and $(\Sigma \widetilde C(\mathcal M), \widetilde d_{\Sigma})$ are isomorphic.
\end{prop}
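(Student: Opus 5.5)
The plan is to write down an explicit change of basis on the underlying module \eqref{suspension-splitting} that conjugates $\widetilde d_S$ into $\widetilde d_\Sigma$ and commutes with the $\Lambda(\chi_1,\chi_2)$-action. The two differentials agree except in two entries of the row for $\chi_3 C$: the column for $Z[2]$ carries $b_6$ versus $0$, and the column for $Z[3]$ carries $z_7$ versus $d_3\delta_4$. The relation \eqref{nove-rel}, $z_7 = d_3\delta_4 + d_1 b_6$, says exactly that the second discrepancy is $d_1 b_6$. This suggests an isomorphism $\Phi = 1 + N$, where $N$ is built from $b_6$ and maps the central summands into the $C$-summands.

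First I would try the simplest ansatz: $N$ sends the generator of $Z[3]$ to $b_6$ in the $\chi_1 C$ slot. Equivalently, $N$ is the operator $\chi_1$-equivariantly induced from $b_6\colon Z\to C$, placed in the column for $Z[3]$ and the row for $\chi_1 C$. Then I would compute $\widetilde d_\Sigma\Phi + \Phi\widetilde d_S = \widetilde d_\Sigma N + N\widetilde d_S + (\widetilde d_\Sigma+\widetilde d_S)$ and check that it vanishes.

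The correction terms work out as follows.
\begin{itemize}
\item In $\widetilde d_\Sigma N$, the column for $Z[3]$ picks up $d_1 b_6$ in the $\chi_1 C$ row, $d_3 b_6$ in the $\chi_3 C$ row, and $\delta_1$-type terms in the central rows.
\item In $N\widetilde d_S$, the only input is the entry $1$ from $Z[2]$ to $Z[3]$, which produces $b_6$ in the column for $Z[2]$.
\end{itemize}
Since $\chi_1 C$ is the wrong row, I expect the correct placement to be different. The right choice is probably to send $Z[3]\to\chi_2 C$, or rather $\chi_3$-compatibly, so that the $1$-entry from $Z[2]$ to $Z[3]$ produces exactly $b_6$ in the $(\chi_3 C, Z[2])$ position, while $d_1$ applied after $N$ produces $d_1 b_6$ in the $(\chi_3 C, Z[3])$ position. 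The main bookkeeping step is therefore choosing the row and degree placement of $N$ correctly: it must have degree compatible with the grading shifts $|\theta|_S = |\theta|-3$ and $|\underline\theta|_S = |\theta|-2$. It must also commute with $\chi_1$ and $\chi_2$, which forces $N$ to be of the form $e_\theta \mapsto \chi_1\chi_2 b_6(\theta)$ or a sum of $\Lambda$-translates of such terms.

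The expected obstacle is the leftover terms. Some come from the other entries hitting $N$, namely $d_2$, $d_3$ and $d_4$ composed with $b_6$, and $\delta_1 b_6$ landing in $\chi_2 Z[2]$. Others come from $\chi$-equivariance generating extra components. To kill them I would use the remaining relations encoded in $\widetilde d_S^2=0$ and $\widetilde d_\Sigma^2=0$, such as $\delta_1 d_3^{i}\delta_4$-type vanishing and $\delta_1 b_6 = 0$ (the latter follows from the $\chi_2 Z$ row of $\widetilde d_S^2$). If some term survives, I would enlarge $N$ by a further component, for instance one involving $d_3 b_6$ or $b_6$ placed in the $\chi_2$-translate, and iterate. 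Finiteness of the filtration makes this terminate.

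Finally, $\Phi$ is unipotent with respect to the evident filtration, so it is invertible, and this gives the isomorphism of dg-modules.
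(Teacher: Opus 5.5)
Your eventual choice is exactly the map the paper uses. The $0$-cell generating the $Z[3]$ summand is killed by $\chi_1$ and $\chi_2$, so equivariance forces $N(e_\theta)\in\chi_3C$. The paper takes $\Phi=1+\widetilde\phi$, where $\widetilde\phi$ has a single nonzero entry, $b_6$, from $Z[3]$ to $\chi_3C$.

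The weakness is that you never commit to this map and check it. Instead you predict leftover terms and propose an open-ended iteration, and neither is needed. Because $\widetilde\phi$ has one entry, $\widetilde d_\Sigma\widetilde\phi$ only involves the $\chi_3C$ column of $\widetilde d_\Sigma$, which is just $d_1$ on the diagonal. Likewise $\widetilde\phi\,\widetilde d_S$ only involves the $Z[3]$ row of $\widetilde d_S$, which is just the entry $1$ from $Z[2]$. So $\widetilde d_\Sigma\Phi+\Phi\widetilde d_S$ has at most two nonzero entries, both in the $\chi_3C$ row:
\begin{itemize}
\item in the $Z[2]$ column it is $b_6+b_6=0$;
\item in the $Z[3]$ column it is $z_7+d_3\delta_4+d_1b_6$, which vanishes by \eqref{nove-rel}.
\end{itemize}
No $d_2$, $d_3$, $d_4$ or $\delta_1$ terms ever appear. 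Since $\widetilde\phi^2=0$, $\Phi$ is its own inverse, and that is the whole proof.

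Several of your side claims are wrong and should be dropped.
\begin{itemize}
\item The first ansatz into $\chi_1C$ fails not merely because of the row, but because it is not $\chi_2$-equivariant: $\chi_2$ carries $\chi_1C$ to $\chi_3C$ but kills $Z[3]$.
\item That ansatz also produces no $\delta_1$-terms, since the $\chi_1C$ column of $\widetilde d_\Sigma$ contains only $d_1$ and $d_3$.
\item The relation $\delta_1b_6=0$ does not follow from the $\chi_2Z[2]$ row of $\widetilde d_S^2=0$; that row only yields $\delta_1d_1=0$.
\item ``Finiteness of the filtration makes this terminate'' is not an argument that an iterated correction converges to a chain isomorphism.
\end{itemize}
None of this matters once you see that the one-step map already works.
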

\begin{proof}
	Let $\widetilde \phi:S \widetilde C(\mathcal M) \to \Sigma \widetilde C(\mathcal M)$ be the map that sends the summand $Z[3]$ in \eqref{suspension-splitting} to 
	the summand $\chi_3 C$ by the map $b_6$ and is zero on the other summands. Then \eqref{nove-rel} implies that the isomorphism 
	$1+\widetilde \phi:S \widetilde C(\mathcal M) \to \Sigma \widetilde C(\mathcal M)$ is a chain map, which is also clearly a module homomorphism over the ring 
	$\Lambda(\chi_1, \chi_2)$.	
\end{proof}

\begin{prop}\label{prop:filt-shift}
	For any integer $n$, we have $J^{(3)}_n(S \widetilde C(\mathcal M))=J^{(3)}_{n-1}(\mathcal M)$. Equivalently, for any $\vartheta \in Z(\mathcal M)$, we have 
	\[q_3(S \widetilde C(\mathcal M); \vartheta) = q_3(\mathcal M; \vartheta) + 1.\] 
\end{prop}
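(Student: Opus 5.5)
By Proposition \ref{two-suspensions-are-equivalent}, I may replace $S\widetilde C(\mathcal M)$ with $\Sigma\widetilde C(\mathcal M)$, since an isomorphism of dg-modules is in particular an isomorphism of $\cS$-complexes over $\chi_2$ and preserves the filtration $J_n$. The differential of $\Sigma\widetilde C(\mathcal M)$ is determined by $\widetilde d$, so the claim becomes purely algebraic about the $\cS$-complex $(\widetilde C(\mathcal M),\chi_2)$. The plan is to write down the $\chi_2$-splitting of $\Sigma\widetilde C$ and apply Lemma \ref{lemma:explicit-h-copy} to both complexes, showing $h(\Sigma\widetilde C;\vartheta)\ge k+1$ if and only if $h(\widetilde C;\vartheta)\ge k$. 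This is the statement $q_3(\Sigma\widetilde C;\vartheta)=q_3(\mathcal M;\vartheta)+1$ for every $\vartheta$, which is equivalent to $J_n(S\widetilde C)=J_{n-1}(\mathcal M)$. (Alternatively, one could cite the corresponding computation for the suspension of \cite{DS:-unori-skein-tr}, but I will argue directly.)

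First I identify the splitting. In \eqref{suspension-splitting} the $\chi_2$-image is $\chi_2C\oplus\chi_3C\oplus\chi_2A\oplus\chi_2Z[2]$, so the new ``$C$'' is $C_\Sigma=C^{(3)}\oplus Z[2]$. The new central part is $Z_\Sigma=Z[3]$, and $Z(\Sigma\widetilde C)=Z[3]\cong Z$. Reading off the matrix $\widetilde d_\Sigma$ gives
\[
d_\Sigma=\begin{bmatrix} d & \delta'_0\\ 0&0\end{bmatrix},\quad v_\Sigma=\begin{bmatrix} v & \delta' \\ \delta & 0\end{bmatrix},\quad \delta_\Sigma=\begin{bmatrix}0 & 1\end{bmatrix},\quad \delta'_\Sigma=\begin{bmatrix} v\delta'\\ 0\end{bmatrix}.
\]
Here $\delta'_0$ denotes the $\delta_4$-entry landing in the $\chi_1C$ component, i.e.\ the $\delta'$ of the original complex. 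The $d_3\delta_4$ entry is exactly $v\delta'$. The component $\delta_\Sigma$ is the ``$1$'' from $\chi_2Z[2]$ to $Z[3]$, with $\pi$ identifying it with the identity on $Z$. The $\delta_1$ row in the $\chi_2Z[2]$ component contributes the block $\delta$ to $v_\Sigma$. I will double-check that $\delta'_0$ equals $\delta'$ as a map into $C^{(3)}$, using the block placements in \eqref{eqn:differential-components}.

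Positive case. Suppose $h(\widetilde C;\vartheta)\ge k\ge1$, witnessed by $\alpha$ with $d\alpha=0$, $\delta v^{k-1}\alpha=\vartheta$ and $\delta v^i\alpha=0$ for $i<k-1$. Take $\alpha_\Sigma=(\alpha,0)$. Then $\delta_\Sigma v_\Sigma^{j}(\alpha,0)$ should equal $\delta v^{j-1}\alpha$, up to cross terms involving $\delta v^{i}\delta'$, and these vanish by perfectness. This gives $h(\Sigma\widetilde C;\vartheta)\ge k+1$. Conversely, for $(\alpha,z)$ with $d_\Sigma(\alpha,z)=0$, so that $d\alpha=\delta'z$, I peel off one power of $v_\Sigma$ to recover a witness for the original complex; any $z$-contribution gets absorbed in the same way. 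At $k=1$ the witness for $\Sigma$ is $(\alpha,z)$ with $z=\vartheta$ and $d\alpha+\delta' z=0$, which is exactly the nonpositive condition $h(\widetilde C;\vartheta)\ge0$. This shows the index shift matches across the sign boundary.

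Nonpositive case. The condition $d_\Sigma\alpha_\Sigma+\sum_i v_\Sigma^i\delta'_\Sigma(a_i)=0$ unpacks to $d\alpha+\delta'z+\sum_i v^{i+1}\delta'(a_i)=0$, using perfectness to kill the $\delta v^{\cdot}\delta'$ cross terms. Setting $a'_0=z$ and $a'_{i+1}=a_i$ reindexes this into the original condition with one more term, giving $h(\widetilde C)\ge k-1$ if and only if $h(\Sigma\widetilde C)\ge k$. The main obstacle is bookkeeping: correctly identifying the splitting components of $\widetilde d_\Sigma$, especially where $\delta_4$ and $d_3\delta_4$ land, and verifying that powers $v_\Sigma^j$ contain no surviving cross terms. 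I expect perfectness ($\delta v^{i}\delta'=0$) to handle the latter.
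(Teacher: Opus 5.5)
Your argument is correct, but it takes a different route from the paper. The paper does not pass to $\Sigma\widetilde C(\mathcal M)$ and does not use Lemma~\ref{lemma:explicit-h-copy}. It writes down the inclusion $\widetilde\lambda\colon\widetilde C(\mathcal M)\to S\widetilde C(\mathcal M)$, which is the identity on $C\oplus\chi_1C\oplus\chi_2C\oplus\chi_3C\oplus A\oplus\chi_2A$ and sends $Z$ to $\chi_2Z[2]$. It then checks that this is a strong height-$1$ morphism ($\tau_0=0$, $\tau_1=\id$) inducing an isomorphism on $\medhat H$; the latter holds because the cokernel is $Z[2]\xrightarrow{1}Z[3]$ with $\chi_2=0$, hence acyclic. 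The conclusion follows from Proposition~\ref{prop:tau-filtered}.

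That proof works on $S\widetilde C$ directly: the inclusion is a chain map whatever $b_6,z_7$ are, so Proposition~\ref{two-suspensions-are-equivalent} is not needed. It also fits the height-$i$ morphism formalism the paper reuses for cobordism maps. Your route is more elementary and fully explicit, with perfectness $\delta v^i\delta'=0$ as the only input. In exchange, it relies on $S\widetilde C\cong\Sigma\widetilde C$, and you should record two things:
\begin{enumerate}
\item $1+\widetilde\phi$ induces the identity on $Z[3]$, because $\widetilde\phi(Z[3])\subset\chi_3C\subset\im\chi_2$. So both filtrations live on the same copy of $Z(\mathcal M)$.
\item $\Sigma\widetilde C$ is perfect, so Lemma~\ref{lemma:explicit-h-copy} applies to it.
\end{enumerate}

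There is one bookkeeping slip: the upper-right block of $v_\Sigma$ is $0$, not $\delta'$. In the $Z[2]$ column of $\widetilde d_\Sigma$ the only nonzero entries are $\delta_4$ into $\chi_1C$, which is the block $\delta'$ of $d_\Sigma$, and $1$ into $Z[3]$, which is $\delta_\Sigma$. So $\delta_\Sigma$ originates in $Z[2]$, not in $\chi_2Z[2]$. The $\delta_4$ in the $\chi_2Z[2]$ column belongs to the second diagonal copy of $d_\Sigma$. With $v_\Sigma=\begin{bmatrix} v&0\\ \delta&0\end{bmatrix}$ you get exactly $v_\Sigma^i(\alpha,z)=(v^i\alpha,\delta v^{i-1}\alpha)$ for $i\ge1$. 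Hence $\delta_\Sigma v_\Sigma^i(\alpha,z)=\delta v^{i-1}\alpha$ with no cross terms at all in the positive case. Since $\delta_\Sigma(\alpha,z)=z$, the $i=0$ condition forces $z=0$ whenever $k\ge2$, which makes your converse direction immediate rather than a ``peeling off''. The spurious entry happens not to change any of your final identities, so the proof stands once it is corrected.
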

\begin{proof}
	Consider the morphism $\widetilde \lambda: \widetilde C(\mathcal M) \to S\widetilde C(\mathcal M)$ given as 
	\[\widetilde \lambda =
	\left[\begin{array}{cccc|cc|c}
		1 & 0 & 0 & 0 	& 	0 & 0 	& 	0 \\
		0 & 1 & 0 & 0	&	0 & 0 	& 	0 \\
		0 & 0 & 1 & 0 	& 	0 & 0 	& 	0 \\
		 0& 0 & 0 & 1	& 	0 & 0	& 	0 \\
		 \hline
		 0 & 0 & 0 & 0 	& 	1 & 0 	& 	0 \\
		 0 & 0 & 0 & 0 	& 	0 & 1		& 	0 \\
		 \hline
		 0 & 0 & 0 & 0 	& 	0 & 0	& 	0 \\
		 0 & 0 & 0 & 0	& 	0 & 0 	& 	1 \\
		 \hline
		 0 & 0 & 0 & 0 	& 	0 & 0 	& 	0
	\end{array}\right]\]
	defined with respect to the decompositions of $\widetilde C(\mathcal M)$ and $S\widetilde C(\mathcal M)$ as in \eqref{eqn:dg-module-decomposition} 
	and \eqref{suspension-splitting}. This is similar to the morphisms of \cite[Construction 5.19]{DME1} and \cite[Equation (2.34)]{DS:-unori-skein-tr}. It is straightforward to check that $\widetilde \lambda:(\widetilde C(\mathcal M),\chi_2)\to (S\widetilde C(\mathcal M),\chi_2)$ 
	is a strong height-$1$ morphism such that $\medhat \lambda: \medhat{H}(\widetilde C(\mathcal M),\chi_2)\to \medhat{H} (S\widetilde C(\mathcal M),\chi_2)$ is 
	an isomorphism; compare \cite[Proposition 5.20]{DME1} and \cite[Proposition 7.4]{DS:-unori-skein-tr}. Thus the claim follows from Proposition \ref{prop:tau-filtered} and the 
	discussion following its proof.
\end{proof}

Now, suppose that $(Y, \mathfrak a)$ is a rational homology sphere with abelian data and a choice of cellular approximation of the holonomy maps. For each $\theta \in \mathfrak Z(Y)$, suppose that $\psi_{\theta \beta}: \eqm(Y;\theta, \beta) \to \mathbb R^3$ are sections as above: continuous, smooth on each stratum, and compatible with gluing on the right. Then define $S\widetilde C(Y, \mathfrak a)$ to be the geometric suspension of $\widetilde C(Y, \mathfrak a)$ associated with this choice of sections. Similarly, let $\Sigma\widetilde C(Y, \mathfrak a)$ be the algebraic suspension of $\widetilde C(Y, \mathfrak a)$. These two dg-modules are isomorphic by Proposition \ref{two-suspensions-are-equivalent}, and Proposition \ref{prop:filt-shift} implies that suspension shifts $q_3$ by $1$.

\subsection{Obstructed gluing theory}\label{subsec:susp-bdry-relns}
Our goal in this section is to extend the functoriality of Section \ref{subsec:bimod} to those cobordisms $W:Y\to Y'$ satisfying $b_1(W) = b_1(\partial W) = 0$ and $b^+(W) = 1$. After a perturbation, it may be assumed that $W$ supports no abelian ASD connections, and that irreducible ASD connections on this cobordism are cut out transversely \cite[Proposition 2.12]{Don}.
The novel difficulty with such cobordisms is that the moduli spaces of ASD connections on $W$ are not guaranteed to be cut out transversely, even after a perturbation of the ASD equation. However, corresponding to any representation $\rho:\pi_1(W)\to \mathbb Z/2$ (or equivalently an element of $H^1(W;\mathbb Z/2)$), there is a central flat $SU(2)$-connection $\Theta$ on $W$. The connection $\Theta$ remains a solution of the ASD equation after any (gauge invariant) perturbation, but it is not cut out transversely. In fact, the cokernel of the linearization of the ASD equation can be identified with $H^+(W;\mathbb R^3)$, which is $3$-dimensional. We will write $\mathfrak Z(W)$ for the set of all such central connections on $W$ up to the action of the gauge group. 

In order to define cobordism maps associated with $W$, we need to use {\it obstructed gluing theory} to analyze neighborhoods of elements of $\eqm(W; \alpha, \alpha')$ where there is a contribution of an element of $\mathfrak Z(W)$. 
Fix auxiliary data $\mathfrak a$, $\mathfrak a'$ for $Y$, $Y'$, and following \eqref{eqn:dim-res-2}, let $\alpha\in \mathsf{orb}(Y,\mathfrak a)$ and $\alpha'\in \mathsf{orb}(Y',\mathfrak a')$ be such that 
\begin{equation}\label{eqn:dim-res-3}|\alpha| - |\alpha'| - 3 \le \dim \lift(\alpha') + 1.\end{equation} 
If the perturbation of the ASD equation on $W$ is chosen as in the last paragraph, then $\eqm(W; \alpha, \alpha')$ is a stratified-smooth manifold of dimension $|\alpha| - |\alpha'| + \dim \lift(\alpha)- 3$ away from the strata of trajectories broken through a connection $\Theta$ in $\mathfrak Z(W)$. In the following we write $\theta, \theta'$ for the restrictions of $\Theta$ to the ends. Each such $\Theta$ can appear in three types of boundary strata of $\eqm(W; \alpha, \alpha')$:
\begin{itemize}
\item \textbf{Type I.} If $\alpha' = \theta'$ but $\alpha \ne \theta$, then a neighborhood of the connections broken through $\Theta$ is modeled on the zero set of an $SO(3)$-equivariant map
\begin{equation}\label{obs-section-1}
	\psi_{\alpha \Theta}: (0, \infty] \times \eqm(Y; \alpha, \theta) \to \mathbb R^3,
\end{equation} 
which is transverse to zero over $(0, \infty)$ but zero at $\infty$.
\item \textbf{Type II.} If $\alpha = \theta$ but $\alpha' \ne \theta'$, a neighborhood of connections broken through $\Theta$ is modeled on the zero set of an $SO(3)$-equivariant map
\begin{equation}\label{obs-section-2}
\psi_{\Theta \alpha'}: [-\infty, 0) \times \eqm(Y'; \theta', \alpha') \to \mathbb R^3,
\end{equation} 
which is transverse to zero over $(-\infty,0)$ but zero at $-\infty$. 
\item \textbf{Type III.} If $\alpha \ne \theta$ and $\alpha' \ne \theta'$, a neighborhood of connections broken through $\Theta$ is modeled on the zero set of an $SO(3)$-equivariant map
\begin{equation}\label{obs-section-3}
	\psi_{\alpha \Theta \alpha'}: (0, \infty] \times [-\infty, 0) \times \eqm(Y; \alpha, \theta) \times \eqm(Y'; \theta', \alpha') \to \mathbb R^3,
\end{equation}
zero at $(\infty, -\infty)$ but transverse to zero over its complement.
\end{itemize}
These identifications can be made compatibly on moduli spaces satisfying \eqref{eqn:dim-res-3}. We refer to the maps in \eqref{obs-section-1}-\eqref{obs-section-3} as obstruction sections. The above claims can be verified as in \cite[Section 5]{DME1} (see also \cite[Section 4]{DS1}). The only difference between the present setup and that of  \cite{DME1} is that here $\Theta$ is central, whereas in  \cite{DME1} we consider abelian obstructed instantons.

Choosing $L$ generic and large, we truncate these moduli spaces by deleting the portion of Type I trajectories lying above $(L, \infty]$, the portion of Type II trajectories lying above $[-\infty, -L)$, and the portion of Type III trajectories lying above $\{(t,s) : 1/t - 1/s < 1/L\}$. For generic $L$, the resulting space is stratified-smooth, with a new boundary component corresponding to the zero set of a section over $\eqm(Y; \alpha, \theta)$, $\eqm(Y'; \theta', \alpha')$, or $I \times \eqm(Y; \alpha, \theta) \times \eqm(Y'; \theta', \alpha')$, respectively, where 
\[I = \{(t,s) \in (0, \infty] \times [-\infty, 0) : 1/t - 1/s = 1/L\},\]
which we identify with $[0,1]$ via the map $(t,s) \mapsto L/t$. The resulting subspace of $\eqm(W; \alpha, \alpha')$ is denoted by $\eqn(W; \alpha, \alpha')$. The following lemma gives a description of the boundary of this space. To state the lemma, let $\widetilde Z_{\alpha \Theta}$ be the zero set of $\psi_{\alpha \Theta}$ restricted to time $L$, and similarly $\widetilde Z_{\Theta \alpha'}$ and $\widetilde Z_{\alpha \Theta \alpha'}$.

\begin{lemma}\label{lemma:N-relations}
	The truncated moduli space $\eqn(W; \alpha, \alpha')$ is a stratified-smooth space satisfying 
	\begin{align*}
		\partial \eqn(W; \alpha, \alpha') =  &\bigsqcup_{\beta \in \mathsf{orb}(Y,\mathfrak a)} \eqm(Y; \alpha, \beta) \times_\beta \eqn(W; \beta, \alpha') \\
		& \bigsqcup_{\beta' \in \mathsf{orb}(Y',\mathfrak a')} \eqn(W;\alpha, \beta') \times_{\beta'} \eqm(Y';\beta',\alpha') \\
		& \left(\bigsqcup\limits_{\Theta \in \mathfrak Z(W)} \widetilde Z_{\alpha \Theta \alpha'}\right)\sqcup \left(\bigsqcup\limits_{\Theta|_{Y'} = \alpha'}  \widetilde Z_{\alpha \Theta}\right) 
		\sqcup \left(\bigsqcup\limits_{\Theta|_{Y} = \alpha} \widetilde Z_{\Theta \alpha'}\right)
\end{align*} 
\end{lemma}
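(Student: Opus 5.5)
The plan is to reduce the statement to two local facts. First, away from the deleted neighborhoods of $\mathfrak Z(W)$, the space $\eqn(W;\alpha,\alpha')$ coincides with the original compactified moduli space $\eqm(W;\alpha,\alpha')$, whose structure is known. Second, near each cut, the truncation is a transverse slice of a stratified-smooth space by a single function. I will check the locally standard hypothesis separately in these two regions, and then read off the boundary strata.

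\emph{Away from the truncated regions.} Every point of $\eqn(W;\alpha,\alpha')$ that is not broken through some $\Theta\in\mathfrak Z(W)$ lies in the portion of $\eqm(W;\alpha,\alpha')$ where the perturbation is regular. There, standard gluing for irreducible (and, by the choice of perturbation, absent abelian) instantons gives stratified-smooth manifold charts, exactly as in the unobstructed case of Section \ref{subsec:bimod}. The codimension-one strata in this region are the broken trajectories $\eqm(Y;\alpha,\beta)\times_\beta\eqn(W;\beta,\alpha')$ and $\eqn(W;\alpha,\beta')\times_{\beta'}\eqm(Y';\beta',\alpha')$. These are the first two unions in the statement. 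One point must be checked here: when $\beta$ or $\beta'$ is central, the factor $\eqn(W;\beta,\alpha')$ (resp.\ $\eqn(W;\alpha,\beta')$) must itself be the truncated space. This holds because the truncations were chosen compatibly with the obstruction sections \eqref{obs-section-1}--\eqref{obs-section-3} under gluing. The chosen identifications of neighborhoods are made coherently on all moduli spaces satisfying \eqref{eqn:dim-res-3}, in the sense of \cite[Section 5]{DME1}.

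\emph{Near each $\Theta$.} I will use the local models. In the Type III case, a neighborhood is homeomorphic to the zero set of $\psi_{\alpha\Theta\alpha'}$ over $(0,\infty]\times[-\infty,0)\times\eqm(Y;\alpha,\theta)\times\eqm(Y';\theta',\alpha')$, and this map is transverse to zero off the corner point. The truncation keeps the region $1/t-1/s\ge 1/L$. The function $f=1/t-1/s$ is smooth on each stratum. By Sard's theorem applied stratumwise to $f$ restricted to the zero set of $\psi$, a generic $L$ is a regular value on every stratum. The sublevel set is therefore a stratified-smooth space in the sense of the definition, using charts of the form $\psi^{-1}(0)$ intersected with a half-space. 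Its new boundary face is $f^{-1}(1/L)\cap\psi^{-1}(0)=\widetilde Z_{\alpha\Theta\alpha'}$, with $I\cong[0,1]$ via $L/t$. Types I and II are handled in the same way with the single coordinate $t$ or $s$, which produces the faces $\widetilde Z_{\alpha\Theta}$ and $\widetilde Z_{\Theta\alpha'}$. These last faces occur only when $\Theta|_{Y'}=\alpha'$ or $\Theta|_Y=\alpha$.

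\emph{Main obstacle.} The delicate point is the interaction at corners. The new faces $\widetilde Z$ meet the broken strata, for example where a Type III end further breaks along $Y$. At such corners I must verify that the local chart is still a product $[0,\infty)^k\times\mathbb R^m$ cut out transversely. In particular, the endpoints $t=L$ and $s=-L$ of $I$ must match the Type I and II truncations of the adjacent spaces $\eqn(W;\theta,\alpha')$ and $\eqn(W;\alpha,\theta')$, so that no extra boundary appears and nothing is double counted. I would first try to arrange this by choosing the gluing parameters in \eqref{obs-section-3} so that, near $t=\infty$ (resp.\ $s=-\infty$), $\psi_{\alpha\Theta\alpha'}$ agrees with $\psi_{\alpha\Theta}$ (resp.\ $\psi_{\Theta\alpha'}$) composed with the gluing map. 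This is the compatibility stated before the lemma. Genericity of $L$ must then hold simultaneously on all finitely many strata in the relevant dimension range, and this is possible because there are only finitely many such strata.
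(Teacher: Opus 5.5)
Your proposal is correct and takes essentially the same route as the paper. The paper likewise uses the local obstruction-section models near each $\Theta\in\mathfrak Z(W)$, truncates at a generic large $L$, and reads off the faces $\widetilde Z_{\alpha\Theta}$, $\widetilde Z_{\Theta\alpha'}$, $\widetilde Z_{\alpha\Theta\alpha'}$ alongside the broken strata; it leaves implicit the stratumwise Sard argument and the corner matching that you spell out.

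There is one slip in your last paragraph: the matching at the two ends of $I$ is swapped. At $t=\infty$ the Type III section should restrict to $\psi_{\Theta\alpha'}(s,B)$, so the endpoint $s=-L$ of $I$ matches the Type II truncation of $\eqn(W;\theta,\alpha')$. At $s=-\infty$ it should restrict to $\psi_{\alpha\Theta}(t,A)$, so the endpoint $t=L$ matches the Type I truncation of $\eqn(W;\alpha,\theta')$. This is exactly what the paper's end conditions on $\Xi_{\alpha\Theta\alpha'}$ encode.
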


We want to use the moduli spaces $\eqn(W; \alpha, \alpha')$ to define a chain map $\widetilde C(Y, \mathfrak a) \to S \widetilde C(Y', \mathfrak a')$. The construction of the suspension complex $S \widetilde C(Y', \mathfrak a')$ requires a collection of sections $\psi_{\theta'\alpha'}$ for each $\theta' \in \mathfrak Z(Y')$ and $\alpha'\in \mathsf{orb}(Y',\mathfrak a')$. Ideally, we could choose these to be the sections $\psi_{\Theta \alpha'}$ above restricted to $\{-L\} \times \eqm(Y'; \theta', \alpha')$. However, this is only appropriate if the restriction map $\mathfrak Z(W)\to \mathfrak Z(Y')$ is a bijection. To address this problem, we choose {\it privileged} sections $\psi^{\text{priv}}_{\alpha \theta}: \eqm(Y; \alpha, \theta) \to \mathbb R^3$ for each $\theta \in \mathfrak Z(Y)$ and $\alpha\in \mathsf{orb}(Y,\mathfrak a)$, and similarly $\psi^{\text{priv}}_{\theta' \alpha'}: \eqm(Y'; \theta', \alpha') \to \mathbb R^3$ for each $\theta' \in \mathfrak Z(Y')$ and $\alpha'\in \mathsf{orb}(Y',\mathfrak a')$. As in the previous section, these are continuous, smooth on each stratum, and satisfy
\begin{equation}\label{gluing-comp}
  \psi^{\text{priv}}_{\alpha \theta}(A \circ B) = \psi^{\text{priv}}_{\beta \theta}(B), \quad \quad \psi^{\text{priv}}_{\theta' \alpha'}(A \circ B) = \psi^{\text{priv}}_{\theta' \beta'}(A).
\end{equation}
We also require that these are transverse to zero on each stratum, and that on each stratum of $[0,1] \times \eqm(Y; \alpha, \theta) \times \eqm(Y'; \theta', \alpha')$ the map 
\[\psi^{\text{priv}}_{\alpha \alpha'}(t, A, B) = t \psi_{\alpha \theta}^{\text{priv}}(A) + (1-t) \psi_{\theta'\alpha'}^{\text{priv}}(B)\] is transverse to zero. We denote the zero sets of these maps by $\widetilde Z(Y; \alpha, \theta)$, $\widetilde Z(Y'; \theta', \alpha')$, and $\widetilde Z(\alpha, \Theta, \alpha')$, respectively. Note that the last space depends only on the restrictions of $\Theta$ to the two ends. As in the previous section, we may also define the real blowup spaces $\widetilde B(Y; \alpha, \theta)$ and $\widetilde B(Y'; \theta', \alpha')$. The first is equipped with the map $-\psi^{\text{priv}}_{\alpha \theta}/\|\psi^{\text{priv}}_{\alpha \theta}\|$ to $S^2 = \lift(\underline \theta)$, while the latter is equipped with the map $\psi^{\text{priv}}_{\theta'\alpha'}/\|\psi^{\text{priv}}_{\theta'\alpha'}\|$.

Lemma \ref{B-Z-cellular} shows that one may choose the cellular approximations for $\eqm(Y'; \alpha', \beta')$ such that the spaces $\eqz(Y';\theta', \alpha')$ and $\eqb(Y';\theta', \alpha')$ define cellular correspondences. Using the same argument we may find cellular approximations for $\eqm(Y; \alpha, \beta)$ such that $\eqz(Y;\alpha, \theta)$ and $\eqb(Y;\alpha, \theta)$ define cellular correspondences. In particular, we may define the module homomorphisms
\begin{equation}\label{zata'}
  \fz_{\alpha\theta}: C_*^{\text{cell}}(\lift(\alpha)) \to C_*^{\text{cell}}(\lift(\theta)),\hspace{1cm}
  \fz_{\theta'\alpha'}: C_*^{\text{cell}}(\lift(\theta'))\to C_*^{\text{cell}}(\lift(\alpha')),
\end{equation}
associated with $\eqz(Y;\alpha, \theta)$, $\eqz(Y';\theta', \alpha')$, and the module homomorphisms
\begin{equation}\label{bata'}
  \fb_{\alpha\theta}: C_*^{\text{cell}}(\lift(\alpha)) \to C_*^{\text{cell}}(\lift(\underline \theta)),\hspace{1cm}
  \fb_{\theta'\alpha'}: C_*^{\text{cell}}(\lift(\underline \theta'))\to C_*^{\text{cell}}(\lift(\alpha')),
\end{equation}
associated with $\eqb(Y;\alpha, \theta)$, $\eqb(Y';\theta', \alpha')$. We also use the operators in \eqref{zata'} to define
\begin{equation}\label{zeta-I-II-def}
{\mathfrak z}^{I}_{\alpha\alpha'} = \sum_{\Theta|_{Y'} = \alpha'}  \fz_ {\alpha\Theta|_Y},\hspace{1cm}
{\mathfrak z}^{I\!I}_{\alpha\alpha'} = \sum_{\Theta|_{Y} = \alpha} \fz_{ \Theta|_{Y'}\alpha'}
\end{equation}
By definition, ${\mathfrak z}^{I}_{\alpha\alpha'}$ (resp. $\mathfrak z^{I\!I}_{\alpha\alpha'}$) is zero unless $\alpha'$ (resp. $\alpha$) is central.

Next, we analyze the space $\eqz(\alpha, \Theta, \alpha')$ and the module homomorphism associated with this correspondence. Recall that this is the zero set of the map $[0,1] \times \eqm(Y; \alpha, \theta) \times \eqm(Y'; \theta', \alpha') \to \mathbb R^3$ defined by \[t \psi_{\alpha \theta}^{\text{priv}}(A) + (1-t) \psi_{\theta'\alpha'}^{\text{priv}}(B).\] The complement of the locus where $\psi_{\alpha \theta}^{\text{priv}}(A) = 0$ or $\psi_{\theta'\alpha'}^{\text{priv}}(B) = 0$ is equal to the set of points in $(0,1) \times \eqm(Y; \alpha, \theta) \times \eqm(Y'; \theta', \alpha')$ for which \[\psi_{\alpha \theta}^{\text{priv}}(A) = -\frac{1-t}{t} \psi_{\theta'\alpha'}^{\text{priv}}(B).\] The factor $\frac{1-t}{t}$ is uniquely determined by the values of the sections, and the content of this equation is that \[\frac{\psi_{\alpha \theta}^{\text{priv}}(A)}{\|\psi_{\alpha \theta}^{\text{priv}}(A)\|} = -\frac{\psi_{\theta'\alpha'}^{\text{priv}}(B)}{\|\psi_{\theta'\alpha'}^{\text{priv}}(B)\|}.\] That is, this locus is canonically identified with the locus of 
\begin{equation}\label{finer-prod-b}
	\eqb(Y; \alpha, \theta) \times_{S^2} \eqb(Y'; \theta', \alpha')
\end{equation}
 over which neither section is zero. 
 
 From the preceding analysis, one can easily see that $\eqz(\alpha, \Theta, \alpha')$ is a cellular correspondence and it can be used to define a module homomorphism. Adding up these module homomorphisms for all $\Theta\in \mathfrak Z(W)$ gives the map
\[
  {\mathfrak z}^{I\!I\!I}_{\alpha\alpha'}:C_*^{\text{cell}}(\lift(\alpha)) \to C_*^{\text{cell}}(\lift(\alpha')).
\]
Because we have an identification between $\eqz(\alpha, \Theta, \alpha')$ and \eqref{finer-prod-b} in the complement of subspaces of codimension $1$, the induced maps on cellular chains coincide. Consequently, we have
\begin{equation}\label{Z-III-rel}
	\fz^{I\!I\!I}_{\alpha\alpha'}=\sum_\Theta\fb_{\Theta|_{Y'}\alpha'}\fb_{\alpha\Theta|_Y
	}.
\end{equation}

Going back to the goal of defining maps using the truncated moduli spaces, we need to fix homotopies between privileged sections and the obstruction sections. For each $\Theta\in \mathfrak Z(W)$ and each $\alpha \in  \mathsf{orb}(Y,\mathfrak a)$ and $\alpha' \in  \mathsf{orb}(Y',\mathfrak a')$, we choose $SO(3)$-equivariant maps 
\begin{align*}
\Xi_{\alpha \Theta}&\colon [0,1] \times \eqm(Y; \alpha, \theta) \to \mathbb R^3, \\
\Xi_{\Theta \alpha'}&\colon [0,1] \times \eqm(Y'; \theta', \alpha') \to \mathbb R^3, \\
\Xi_{\alpha \Theta \alpha'}&\colon [0,1] \times [0,1] \times \eqm(Y; \alpha, \theta) \times \eqm(Y'; \theta', \alpha') \to \mathbb R^3.
\end{align*}
We demand of these the usual compatibility with respect to gluing as in \eqref{gluing-comp}. We demand that 
\begin{align*}
	\Xi_{\alpha \Theta}(0, A) = \psi_{\alpha \Theta}(L,A), \quad\quad &\Xi_{\alpha \Theta}(1, A) = \psi^{\text{priv}}_{\alpha \theta}(A),\\
	\Xi_{\Theta \alpha'}(0, A) = \psi_{\Theta \alpha'}(-L,A), \quad\quad &\Xi_{\Theta\alpha'}(1, A) = \psi^{\text{priv}}_{\theta'\alpha'}(A),\\
	\Xi_{\alpha \Theta \alpha'}(0,t,A,B) = \psi_{\alpha \Theta \alpha'}\left(\frac Lt, \frac{L}{t-1},A,B\right), \quad\quad &\Xi_{\alpha \Theta \alpha'}(1,t,A,B) = \psi^{\text{priv}}_{\alpha \alpha'}(t, A, B).
\end{align*}
The third map is related to the other maps as 
\[
  \Xi_{\alpha \Theta \alpha'}(s,0,A,B)=  \Xi_{\Theta \alpha'}(s,B),\hspace{1cm}\Xi_{\alpha \Theta \alpha'}(s,1,A,B)=  \Xi_{\alpha \Theta}(s,A).
\]

In all cases we demand these are continuous, smooth on each stratum, and transverse to zero on each stratum. Thus, their zero sets, written $\widetilde Z^\Xi_{\alpha \Theta}$, $\widetilde Z^\Xi_{\Theta\alpha'}$, and $\widetilde Z^\Xi_{\alpha \Theta\alpha'}$, are stratified-smooth spaces. Their boundary relations are as follows.

\begin{lemma}\label{lemma:Xi-relations}
For any $\Theta \in \mathfrak Z(W)$ with limits $\theta, \theta'$, the spaces $\widetilde Z^\Xi_{\alpha \Theta}$, $\widetilde Z^\Xi_{\Theta\alpha'}$, and $\widetilde Z^\Xi_{\alpha \Theta\alpha'}$ defined above are stratified-smooth spaces and their boundaries are described as follows:
\begin{enumerate}[label=(\roman*)]
	\item The boundary of $\widetilde Z^\Xi_{\alpha \Theta}$ with $\Theta|_{Y}=\theta$ is given as 
	\[
	  \partial \widetilde Z^\Xi_{\alpha \Theta} = \widetilde Z_{\alpha \Theta} \sqcup \widetilde Z(Y; \alpha, \theta) 
	  \sqcup \(\bigsqcup_{\beta\in \mathsf{orb}(Y,\mathfrak a)} \eqm(Y; \alpha, \beta) \times_\beta \widetilde Z^\Xi_{\beta \Theta}\).
	 \]
	\item The boundary of $\widetilde Z^\Xi_{\Theta \alpha'}$ is given as 
	\[
	  \partial \widetilde Z^\Xi_{\Theta \alpha'} =\widetilde Z_{\Theta \alpha'}\sqcup \widetilde Z(Y'; \theta', \alpha') \sqcup \(\bigsqcup_{\beta'\in \mathsf{orb}(Y',\mathfrak a')} \widetilde Z^\Xi_{\Theta \beta'} \times_{\beta'} \eqm(Y'; \beta', \alpha')\).
	\]
	\item The boundary of $\widetilde Z^\Xi_{\alpha \Theta \alpha'}$ is given as 
	\begin{align*}
		\partial &\widetilde Z^\Xi_{\alpha \Theta \alpha'}= \widetilde Z_{\alpha \Theta \alpha'} \sqcup \widetilde Z(\alpha, \Theta, \alpha') \sqcup 
		(\eqm(Y; \alpha, \theta) \times \widetilde Z^\Xi_{\Theta \alpha'}) \sqcup (\widetilde Z^\Xi_{\alpha \Theta} \times \eqm(Y';\theta', \alpha'))\\
		&\hspace{.2cm}\(\bigsqcup_{\beta\in \mathsf{orb}(Y,\mathfrak a)} \eqm(Y; \alpha, \beta) \times_\beta \widetilde Z^\Xi_{\beta \Theta \alpha'}\)\sqcup \(\bigsqcup_{\beta'\in \mathsf{orb}(Y',\mathfrak a')} \widetilde Z^\Xi_{\alpha \Theta \beta'} \times_{\beta'} \eqm(Y'; \beta', \alpha')\).
\end{align*}
\end{enumerate}
\end{lemma}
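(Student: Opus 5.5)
The three zero sets are cut out of stratified-smooth manifolds with corners by sections that are continuous, smooth on each stratum and transverse to zero on each stratum. The plan is therefore to read off the boundary of each zero set from the boundary of its domain, one codimension-one face at a time, after checking that the zero sets are stratified-smooth spaces.

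\textbf{Stratified-smoothness.} The domains are
\[[0,1]\times\eqm(Y;\alpha,\theta),\qquad [0,1]\times\eqm(Y';\theta',\alpha'),\qquad [0,1]^2\times\eqm(Y;\alpha,\theta)\times\eqm(Y';\theta',\alpha').\]
Each is a product of stratified-smooth manifolds, hence a stratified-smooth manifold whose strata are products of strata. The maps $\Xi$ are transverse to zero on every stratum. So, exactly as in the proof of Lemma \ref{lemma:zb-bdry-relns}, the locally standard hypothesis holds, with the chart given by the ambient corner chart and $\psi=\Xi$. Compactness follows because the moduli spaces in this dimension range are compact; this is the standing assumption coming from \eqref{eqn:bubbling-dim-res} and \eqref{eqn:dim-res-3}.

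\textbf{General boundary principle.} For a section $s$ on a stratified-smooth manifold $P$ that is transverse on strata, the codimension-one strata of $s^{-1}(0)$ are exactly $s^{-1}(0)\cap P_\sigma$ for the codimension-one strata $P_\sigma$ of $P$. Hence $\partial\, s^{-1}(0)=\bigsqcup_\sigma (s|_{\overline P_\sigma})^{-1}(0)$. Transversality on each stratum makes the intersection with a codimension-$k$ stratum have codimension $k$ in the zero set.

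\textbf{Cases (i) and (ii).} The boundary of $[0,1]\times\eqm(Y;\alpha,\theta)$ consists of three pieces:
\[\{0\}\times\eqm,\qquad \{1\}\times\eqm,\qquad [0,1]\times\partial\eqm.\]
By \eqref{eqn:flowcat-boundary-relation}, $\partial\eqm(Y;\alpha,\theta)=\bigsqcup_\beta\eqm(Y;\alpha,\beta)\times_\beta\eqm(Y;\beta,\theta)$. The zero sets on these pieces are identified as follows.
\begin{itemize}
\item At $s=0$, $\Xi_{\alpha\Theta}(0,\cdot)=\psi_{\alpha\Theta}(L,\cdot)$, so the zero set is $\widetilde Z_{\alpha\Theta}$.
\item At $s=1$, $\Xi_{\alpha\Theta}(1,\cdot)=\psi^{\rm priv}_{\alpha\theta}$, so the zero set is $\widetilde Z(Y;\alpha,\theta)$.
\item On $[0,1]\times\partial\eqm$, the gluing compatibility (the analogue of \eqref{gluing-comp}) gives $\Xi_{\alpha\Theta}(s,A\circ B)=\Xi_{\beta\Theta}(s,B)$. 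The zero set is therefore $\eqm(Y;\alpha,\beta)\times_\beta\widetilde Z^\Xi_{\beta\Theta}$.
\end{itemize}
Case (ii) is symmetric, with gluing on the right: $\Xi_{\Theta\alpha'}(s,A\circ B)=\Xi_{\Theta\beta'}(s,A)$.

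\textbf{Case (iii).} The boundary of the domain has the following codimension-one faces:
\[\{0\}\times[0,1],\quad \{1\}\times[0,1],\quad [0,1]\times\{0\},\quad [0,1]\times\{1\},\quad \partial\eqm(Y)\ \text{factor},\quad \partial\eqm(Y')\ \text{factor}.\]
Using the prescribed boundary values of $\Xi_{\alpha\Theta\alpha'}$, these give respectively:
\begin{itemize}
\item $\widetilde Z_{\alpha\Theta\alpha'}$, after reparametrizing $I\cong[0,1]$ via $t\mapsto L/t$;
\item $\widetilde Z(\alpha,\Theta,\alpha')$;
\item $\eqm(Y;\alpha,\theta)\times\widetilde Z^\Xi_{\Theta\alpha'}$, since $\Xi$ is independent of $A$ on this face;
\item $\widetilde Z^\Xi_{\alpha\Theta}\times\eqm(Y';\theta',\alpha')$;
\item and the two fiber-product families, via the gluing compatibility.
\end{itemize}

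\textbf{Main obstacle.} The step I expect to be hardest is checking that these faces are disjoint codimension-one strata of the zero set, with no extra boundary pieces. The pieces I would check are:
\begin{itemize}
\item Corners of $[0,1]^2$: these have codimension two in the domain, so by transversality their intersection with the zero set has codimension two and contributes no boundary.
\item The $t$-endpoints of $\psi_{\alpha\Theta\alpha'}$: these must be matched consistently with $\Xi_{\alpha\Theta}$ and $\Xi_{\Theta\alpha'}$, which the stated relations $\Xi_{\alpha\Theta\alpha'}(s,0,\cdot)=\Xi_{\Theta\alpha'}(s,\cdot)$ and $\Xi_{\alpha\Theta\alpha'}(s,1,\cdot)=\Xi_{\alpha\Theta}(s,\cdot)$ guarantee.
\item Degeneracy of the fiber products over central orbits: since $\lift(\theta)$ is a point, these are honest products, which is why the $\times$ in (iii) is not a fiber product.
\end{itemize}
Finally, I would invoke \cite[Appendix A]{DME1} for the fact that the boundary of a stratified-smooth space is computed stratum-wise as in Definition \ref{defn:bdry-stratsmooth}.
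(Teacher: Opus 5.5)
Your proposal is correct and is the argument the paper relies on. The paper gives no separate proof: the lemma is treated as immediate once $\Xi_{\alpha\Theta}$, $\Xi_{\Theta\alpha'}$, $\Xi_{\alpha\Theta\alpha'}$ are required to be stratumwise smooth and transverse to zero, with the prescribed values at $s=0,1$, the matching conditions at $t=0,1$, and gluing compatibility. As you say, stratified-smoothness follows as in Lemma \ref{lemma:zb-bdry-relns}, and each listed boundary piece is the zero set over one codimension-one face of the domain.
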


Arguing as in Proposition \ref{prop:flowcat-cell-approx} and Lemma \ref{B-Z-cellular}, we may coherently modify the endpoint maps to ensure that $\widetilde Z^\Xi_{\alpha \Theta}$, $\widetilde Z^\Xi_{\Theta\alpha'}$, and $\widetilde Z^\Xi_{\alpha \Theta\alpha'}$ are cellular correspondences. In particular, $\widetilde Z_{\alpha \Theta}$, $\widetilde Z_{\Theta\alpha'}$, and $\widetilde Z_{\alpha \Theta\alpha'}$ are cellular correspondences. We may then carry out a straightforward induction to ensure that the correspondence defined by $\eqn(W; \alpha, \alpha')$ is cellular for each $\alpha, \alpha'$, with the endpoint maps restricted to $\widetilde Z_{\alpha \Theta}$, $\widetilde Z_{\Theta\alpha'}$, and $\widetilde Z_{\alpha \Theta\alpha'}$ equal to those induced by the endpoint maps of $\widetilde Z^\Xi_{\alpha \Theta}$, $\widetilde Z^\Xi_{\Theta\alpha'}$, and $\widetilde Z^\Xi_{\alpha \Theta\alpha'}$. Therefore, each of these defines a cellular correspondence, and it can be used to define a module map $C_*^{\rm cell}(\lift(\alpha)) \to C_*^{\rm cell}(\lift(\alpha'))$ for appropriate $\alpha$, $\alpha'$. The module homomorphism associated with $\eqn(W; \alpha, \alpha')$ is denoted by 
\[
  {\mathfrak h}_{\eqn(W;\alpha,\alpha')}:C_*^{\rm cell}(\lift(\alpha)) \to C_*^{\rm cell}(\lift(\alpha')).
\]
We also have the maps 
\begin{align}
  \mathfrak z_{\alpha\Theta}^{\Xi}:C_*^{\rm cell}(\lift(\alpha)) \to &C_*^{\rm cell}(\lift(\Theta|_{Y'})),\hspace{1cm}
  \mathfrak z_{\Theta\alpha'}^{\Xi}:C_*^{\rm cell}(\lift(\Theta|_{Y})) \to C_*^{\rm cell}(\lift(\alpha'))\nonumber\\
  &\mathfrak z_{\alpha\Theta\alpha'}^{\Xi}:C_*^{\rm cell}(\lift(\alpha)) \to C_*^{\rm cell}(\lift(\alpha'))\nonumber
\end{align}
associated with the correspondences $\widetilde Z^\Xi_{\alpha \Theta}$, $\widetilde Z^\Xi_{\Theta\alpha'}$, and $\widetilde Z^\Xi_{\alpha \Theta\alpha'}$. Finally, we define
\begin{align*}
{\mathfrak n}_{\alpha\alpha'} &= {\mathfrak h}_{\eqn(W;\alpha,\alpha')} + \sum_{\Theta \in \mathfrak Z(W)} \mathfrak z_{\alpha\Theta\alpha'}^{\Xi}+ \sum_{\Theta|_{Y'} = \alpha'} \mathfrak z_{\alpha\Theta}^{\Xi} +\sum_{\Theta|_Y = \alpha} \mathfrak z_{\Theta\alpha'}^{\Xi}.
\end{align*}

The following lemma follows by examining the relations of Lemmas \ref{lemma:N-relations} and \ref{lemma:Xi-relations}.
\begin{lemma}\label{lemma:cob-op-relns}
The operator ${\mathfrak n}_{\alpha\alpha'}$ satisfies the following boundary relation:
\begin{align*}
\partial^{\rm cell} {\mathfrak n}_{\alpha\alpha'} + {\mathfrak n}_{\alpha\alpha'} \partial^{\rm cell} = & \,{\mathfrak z}^I_{\alpha\alpha'} + {\mathfrak z}^{I\!I}_{\alpha\alpha'} + {\mathfrak z}^{I\!I\!I}_{\alpha\alpha'}\\&+\sum_{\beta \in \mathsf{orb}(Y,\mathfrak a)} {\mathfrak n}_{\beta\alpha'} {\mathfrak m}_{\alpha\beta} + \sum_{\beta' \in \mathsf{orb}(Y',\mathfrak a')} {\mathfrak m}_{\beta'\alpha'} {\mathfrak n}_{\alpha\beta'}.
\end{align*}
\end{lemma}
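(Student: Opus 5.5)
The relation will come from the cellular boundary formula \eqref{eqn:cellular-fundamental-class-2}, applied to each of the four correspondences whose pushforwards make up $\mathfrak n_{\alpha\alpha'}$. The bookkeeping principle is that every boundary stratum not already visible in the claimed identity either occurs twice (and so cancels mod $2$) or is one of the terms $\mathfrak z^I$, $\mathfrak z^{I\!I}$, $\mathfrak z^{I\!I\!I}$, or a composite with $\mathfrak m$.

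First apply \eqref{eqn:cellular-fundamental-class-2}, together with the fiber-product rule \eqref{eqn:fiber-product-correspondence}, to $\eqn(W;\alpha,\alpha')$, using the boundary decomposition of Lemma \ref{lemma:N-relations}. This gives
\[
\partial^{\rm cell}\mathfrak h_{\eqn} + \mathfrak h_{\eqn}\partial^{\rm cell}
= \sum_\beta \mathfrak h_{\eqn(W;\beta,\alpha')}\fm_{\alpha\beta} + \sum_{\beta'} \fm_{\beta'\alpha'}\mathfrak h_{\eqn(W;\alpha,\beta')}
+ \sum_\Theta (\widetilde Z_{\alpha\Theta\alpha'})_* + \sum_{\Theta|_{Y'}=\alpha'}(\widetilde Z_{\alpha\Theta})_* + \sum_{\Theta|_Y=\alpha}(\widetilde Z_{\Theta\alpha'})_*.
\]
Next do the same for $\mathfrak z^\Xi_{\alpha\Theta}$, $\mathfrak z^\Xi_{\Theta\alpha'}$ and $\mathfrak z^\Xi_{\alpha\Theta\alpha'}$, using Lemma \ref{lemma:Xi-relations}.

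In each case the boundary has three kinds of pieces. The first is the obstruction-section zero set $\widetilde Z_{\bullet}$ at parameter $0$; this is exactly the term just produced by $\eqn$, so the two cancel mod $2$. The second is the privileged zero set at parameter $1$. For types I and II this gives $(\widetilde Z(Y;\alpha,\theta))_*=\fz_{\alpha\theta}$ and $(\widetilde Z(Y';\theta',\alpha'))_*=\fz_{\theta'\alpha'}$; summed over $\Theta$ as in \eqref{zeta-I-II-def}, these are $\mathfrak z^I_{\alpha\alpha'}$ and $\mathfrak z^{I\!I}_{\alpha\alpha'}$. For type III it gives $(\widetilde Z(\alpha,\Theta,\alpha'))_*$, which summed over $\Theta$ is $\mathfrak z^{I\!I\!I}_{\alpha\alpha'}$ by definition, and equals \eqref{Z-III-rel}. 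The third kind consists of breaking strata $\eqm\times_\beta \widetilde Z^\Xi$ and $\widetilde Z^\Xi\times_{\beta'}\eqm$. Together with the breaking terms of $\eqn$, these assemble into $\sum_\beta \mathfrak n_{\beta\alpha'}\fm_{\alpha\beta}+\sum_{\beta'}\fm_{\beta'\alpha'}\mathfrak n_{\alpha\beta'}$.

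The step needing care is the extra type III boundary faces $\eqm(Y;\alpha,\theta)\times\widetilde Z^\Xi_{\Theta\alpha'}$ and $\widetilde Z^\Xi_{\alpha\Theta}\times\eqm(Y';\theta',\alpha')$, which lie over $t=0$ and $t=1$. I would match these against the terms $\mathfrak z^\Xi_{\Theta\alpha'}\fm_{\alpha\theta}$, which sits in $\mathfrak n_{\theta\alpha'}\fm_{\alpha\theta}$ because $\Theta|_Y=\theta$, and $\fm_{\theta'\alpha'}\mathfrak z^\Xi_{\alpha\Theta}$, which sits in $\fm_{\theta'\alpha'}\mathfrak n_{\alpha\theta'}$. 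This uses the fiber product over the central orbit, which is a point, so these are plain products, and it uses the compatibilities $\Xi_{\alpha\Theta\alpha'}(s,0,\cdot)=\Xi_{\Theta\alpha'}(s,\cdot)$ and $\Xi_{\alpha\Theta\alpha'}(s,1,\cdot)=\Xi_{\alpha\Theta}(s,\cdot)$.

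The one thing to double-check is that each cellular breaking term is counted exactly once. In particular, the breaking through $\theta\in\mathfrak Z(Y)$ in the $\eqn$ boundary must not duplicate these faces, which holds because $\eqn$ has had the neighborhood of the $\Theta$-broken locus removed. Also, all pushforwards must be computed with the coherent cellular endpoint maps fixed earlier, so that \eqref{eqn:fiber-product-correspondence} applies to every term. With coefficients in $\F$, the left-hand cellular differential terms then combine into the stated identity.
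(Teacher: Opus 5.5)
Your proposal is correct and takes essentially the paper's approach. The paper's proof is the single instruction to combine the boundary descriptions of Lemmas \ref{lemma:N-relations} and \ref{lemma:Xi-relations} via the cellular boundary formula, and your bookkeeping carries out exactly that verification: the $\widetilde Z_{\bullet}$ faces cancel in pairs, the privileged zero sets give $\mathfrak z^{I}$, $\mathfrak z^{I\!I}$, $\mathfrak z^{I\!I\!I}$, and the $t=0,1$ faces of $\widetilde Z^\Xi_{\alpha\Theta\alpha'}$ supply the $\mathfrak z^\Xi$ components of $\mathfrak n_{\theta\alpha'}\fm_{\alpha\theta}$ and $\fm_{\theta'\alpha'}\mathfrak n_{\alpha\theta'}$.
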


\subsection{Obstructed cobordism maps and the main inequality}\label{subsec:susp-cobmap}
We now proceed to use the maps defined above to define the dg-module homomorphism $\widetilde \lambda_W: \widetilde C(Y, \mathfrak a) \to S \widetilde C(Y', \mathfrak a')$ associated with a cobordism $W: (Y, \mathfrak a) \to (Y', \mathfrak a')$ for which $b_1(W) = b_1(\partial W) = 0$ and $b^+(W) = 1$. For 
\[
  \alpha\in \mathsf{orb}(Y,\mathfrak a), 
  \hspace{0.8cm}\alpha'\in\mathsf{orb}(Y',\mathfrak a') \setminus \mathfrak Z(Y'),\hspace{0.8cm} \theta'\in \mathfrak Z(Y'),
\] 
we have
\begin{align*}
  \fm^W_{\alpha \alpha'}&={\mathfrak n}_{\alpha\alpha'},\\
  \fm^W_{\alpha \underline{\theta}'}&=\iota {\mathfrak n}_{\alpha\theta'} + \sum_{\Theta|_{Y'} = \theta'} {\mathfrak b}_{\alpha\Theta|_Y},\\
  \fm^W_{\alpha \theta'}&=\sum\limits_{\Theta|_Y = \alpha, \,\, \Theta|_{Y'} = \theta'} 1, 
\end{align*}	
where the sums in these expressions are over all $\Theta\in \mathfrak Z(W)$ satisfying the given constraints. Since any $\Theta\in \mathfrak Z(W)$ restricts to an element of $\mathfrak Z(Y)$ on the incoming end, the last term is trivial unless $\alpha\in \mathfrak Z(Y)$. In the case $\alpha\in \mathfrak Z(Y)$, the map $1$ denotes the identity map $C^{\rm cell}_*(\lift(\alpha)) \to C^{\rm cell}_*(\lift(\theta'))$. In particular, the map $\epsilon_0: Z(Y) \to Z(Y')$ introduced in Section \ref{subsec:bimod} satisfies \[\fm^W_{\alpha \theta'}(1) = \langle \epsilon_0(\alpha), \theta'\rangle.\] Throughout this section, to indicate the dependence on $W$, we will denote this map by $\epsilon_0^W$ and call it the central component of $W$. This map is nonzero if and only if $H^1(W;\mathbb F_2) \to H^1(\partial W; \mathbb F_2)$ is injective, which is equivalent to $\dim H^1(W, \partial W; \mathbb F_2) = 1$.  

Now we are ready to define $\widetilde \lambda_W$. For $\alpha\in \mathsf{orb}(Y, \mathfrak a)$ and $e_\alpha\in C^{\rm cell}_*(\lift(\alpha))$, let
\[
  \widetilde\lambda_W(e_\alpha) = \sum_{\alpha'}\fm^{W}_{\alpha\alpha'}(e_\alpha),
\]
where the sum is over all $\alpha'\in \mathsf{orb}_S(Y', \mathfrak a')$. 

\begin{prop}\label{prop:obs-cob-map}
	The map $\widetilde \lambda_W$ is a dg-module homomorphism.
\end{prop}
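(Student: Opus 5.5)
Two things must be checked: that $\widetilde\lambda_W$ commutes with the $\Lambda(\chi_1,\chi_2)=C_*^{\rm cell}(SO(3))$-action, and that $\widetilde d_S\widetilde\lambda_W=\widetilde\lambda_W\widetilde d$.

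Equivariance is immediate. Every correspondence used in defining $\fm^W$ is $SO(3)$-equivariant and cellular: the spaces $\eqn(W;\alpha,\alpha')$, $\widetilde Z^\Xi_{\bullet}$ and $\eqb(Y;\alpha,\theta)$, together with the identity correspondences for the central components. The maps $\iota$ and $\pi$ are module maps. So each component $\fm^W_{\alpha\beta'}$ is a module homomorphism, exactly as for the bimodule maps of Section \ref{subsec:bimod}.

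For the chain map relation, I will write both sides componentwise, indexing the source by $\alpha\in\mathsf{orb}(Y,\fa)$ and the target by $\beta\in\mathsf{orb}_S(Y',\fa')$. I split into the three target types $\alpha'$ (non-central), $\underline\theta'$, and $\theta'$, mirroring the proof of Proposition \ref{prop:Sg-is-complex}. Since $\partial^{\rm cell}=0$ over $\F$, each component reduces to an identity of the form
\[
\sum_{\gamma}\fm^W_{\gamma\beta}\fm_{\alpha\gamma}=\sum_{\gamma'\in\mathsf{orb}_S(Y')}\fm^S_{\gamma'\beta}\fm^W_{\alpha\gamma'} .
\]

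Case $\beta=\alpha'$ non-central. The right side expands into terms $\fm_{\gamma'\alpha'}\mathfrak n_{\alpha\gamma'}$ over non-central $\gamma'$, together with
\[
\fb_{\theta'\alpha'}\Bigl(\iota\mathfrak n_{\alpha\theta'}+\sum_{\Theta|_{Y'}=\theta'}\fb_{\alpha\Theta|_Y}\Bigr)+\fz_{\theta'\alpha'}\,\epsilon\text{-terms}.
\]
I then apply (\ref{eqn:BM-reln}), $\fb_{\theta'\alpha'}\iota=\fm_{\theta'\alpha'}$, to absorb the first term into the sum over all $\gamma'$. Next, (\ref{Z-III-rel}) identifies $\sum\fb\fb$ with $\fz^{I\!I\!I}$, and the central terms give exactly $\fz^{I\!I}$. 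The relation then becomes precisely Lemma \ref{lemma:cob-op-relns}. The term $\fz^I$ vanishes here because $\alpha'$ is not central.

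Case $\beta=\underline\theta'$. After applying $\iota$ to the previous computation with $\alpha'=\theta'$, the $\fz^I$ term survives and must be cancelled. I plan to cancel it with the boundary relation for the real blow-ups $\eqb(Y;\alpha,\theta)$. This is the analogue of (\ref{eqn:B-reln}) for sections on the incoming end, whose extra boundary face $\eqz\times S^2$ contributes $\iota\fz_{\alpha\theta}$. Summing over $\Theta$ restricting to $\theta'$ gives $\iota\fz^I$. The remaining terms, such as $\iota\fb\,\iota$, vanish or recombine using $\pi\iota=0$ and (\ref{eqn:BM-reln}).

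Case $\beta=\theta'$. Only the rows $\fm^S_{\underline\tau'\theta'}=\pi\delta$ contribute, giving
\[
\pi\Bigl(\iota\mathfrak n_{\alpha\theta'}+\sum\fb_{\alpha\Theta|_Y}\Bigr)=\pi\fb_{\alpha\Theta|_Y}.
\]
This must match $\sum_\gamma\fm^W_{\gamma\theta'}\fm_{\alpha\gamma}=\sum_{\Theta}\fm_{\alpha\Theta|_Y}$. That identity is $\pi\fb_{\alpha\theta}=\fm_{\alpha\theta}$, which is the dual of Lemma \ref{iota-b=m} for incoming sections and holds because the blow-up projects to $\eqm(Y;\alpha,\theta)$ away from codimension one.

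I expect the main obstacle to be the bookkeeping in the $\underline\theta'$ case. There one must confirm that the orientation convention $-\psi^{\rm priv}/\|\psi^{\rm priv}\|$ on $\eqb(Y;\alpha,\theta)$ matches the antipodal identification in (\ref{finer-prod-b}), and that every $\fz^I$, $\fz^{I\!I}$, $\fz^{I\!I\!I}$ term from Lemma \ref{lemma:cob-op-relns} is cancelled exactly once. I would tabulate all terms in matrix form with respect to the splitting (\ref{suspension-splitting}) to make the cancellation transparent.
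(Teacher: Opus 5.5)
Your proposal is correct and follows the paper's proof essentially step for step: module-linearity holds because $\mathfrak n$, $\fb$, $\iota$ and the identity maps are module homomorphisms, and the chain-map identity is checked in the three target cases (non-central $\alpha'$, $\underline\theta'$, $\theta'$). The tools are the same as the paper's: $\fb_{\theta'\alpha'}\iota=\fm_{\theta'\alpha'}$, \eqref{Z-III-rel} and Lemma~\ref{lemma:cob-op-relns}, then the incoming-end analogue of \eqref{eqn:B-reln} summed over $\Theta$ to cancel $\iota\fz^{I}$, and finally $\pi\iota=0$ together with $\pi\fb_{\alpha\theta}=\fm_{\alpha\theta}$. (Minor point: in the $\underline\theta'$ case the term $\iota\fb_{\tau'\theta'}\iota\,\mathfrak n_{\alpha\tau'}$ recombines via $\fb\iota=\fm$ alone; $\pi\iota=0$ is only needed in the $\theta'$ case.)
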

\begin{proof}
	This map is clearly a module homomorphism because all of the maps ${\mathfrak n}_{\alpha\alpha'}$, ${\mathfrak b}_{\alpha, \Theta|_Y}$, $\iota$, and 
	$1$ are module homomorphisms. 
	To show that $\widetilde \lambda_W$ is a chain map, we need to check that for any $\alpha\in \mathsf{orb}(Y, \mathfrak a)$ and $\alpha'\in \mathsf{orb}_S(Y', \mathfrak a')$
	the following relation holds:
	\begin{equation}\label{chain-map-ob-cob-map}
	  \partial^{\rm cell} \fm^W_{\alpha\alpha'}+\fm^W_{\alpha\alpha'}\partial^{\rm cell}=\sum_{\beta'\in \mathsf{orb}_S(Y', \mathfrak a')}\fm^S_{\beta'\alpha'}\fm^W_{\alpha\beta'}
	  +\sum_{\beta\in \mathsf{orb}(Y, \mathfrak a)}\fm^W_{\beta\alpha'}\fm_{\alpha\beta}
	\end{equation}
	\begin{itemize}
	\item If $\alpha'\in\mathsf{orb}(Y',\mathfrak a') \setminus \mathfrak Z(Y')$, then \eqref{chain-map-ob-cob-map} expands to 
		\begin{align*}
			\partial^{\rm cell} {\mathfrak n}_{\alpha \alpha'} + {\mathfrak n}_{\alpha\alpha'} \partial^{\rm cell} =& 
			 \sum_{\beta' \in \mathsf{orb}(Y',\mathfrak a') \setminus \mathfrak Z(Y')} {\mathfrak m}_{\beta' \alpha'} {\mathfrak n}_{\alpha \beta'}+\sum_{\theta' \in \mathfrak Z(Y')} 			{\mathfrak b}_{\theta' \alpha'} \iota {\mathfrak n}_{\alpha \theta'} \\
			&+  \sum_{\Theta \in \mathfrak Z(W)} {\mathfrak b}_{\Theta|_{Y'},\alpha'} {\mathfrak b}_{\alpha \Theta|_Y}+\sum\limits_{\Theta|_Y = \alpha, \,\, \Theta|_{Y'} = \theta'} \fz_{\theta' \alpha'}\\
			&+\sum_{\beta \in \mathsf{orb}(Y,\mathfrak a)} {\mathfrak n}_{\beta\alpha'} {\mathfrak m}_{\alpha\beta}.
		\end{align*} 
		Using the relation ${\mathfrak b}_{\theta' \alpha'} \iota= {\mathfrak m}_{\theta' \alpha'}$ provided by Lemma \ref{iota-b=m}, the first two terms combine as 
		\[
		  \sum_{\beta' \in \mathsf{orb}(Y',\mathfrak a')} {\mathfrak m}_{\beta'\alpha'} {\mathfrak n}_{\alpha\beta'}.
		\] 
		Now the desired relation follows immediately from \eqref{Z-III-rel} and Lemma \ref{lemma:cob-op-relns}.
	\item If $\alpha'=\underline \theta'$ for $\theta'\in \mathfrak Z(Y')$, then the right hand side of \eqref{chain-map-ob-cob-map} can be rewritten as 
		\begin{align*} 
			 \sum_{\beta' \in \mathsf{orb}(Y',\mathfrak a') \setminus \mathfrak Z(Y')} &\iota{\mathfrak m}_{\beta' {\theta}'} {\mathfrak n}_{\alpha \beta'}+\sum_{\tau' \in \mathfrak Z(Y')} 			\iota{\mathfrak b}_{\tau' {\theta}'} \iota {\mathfrak n}_{\alpha \tau'}+  \sum_{\Theta \in \mathfrak Z(W)} \iota{\mathfrak b}_{\Theta|_{Y'}{\theta}'} {\mathfrak b}_{\alpha \Theta|_Y} \\
			+&\sum\limits_{\Theta|_Y = \alpha, \,\, \Theta|_{Y'} = \tau'} \iota\fz_{\tau' {\theta}'}+\sum_{\beta \in \mathsf{orb}(Y,\mathfrak a)}\iota  {\mathfrak n}_{\beta{\theta}'} {\mathfrak m}_{\alpha\beta}+
			\sum_{\substack{\beta \in \mathsf{orb}(Y,\mathfrak a)\\\Theta|_{Y'} = \theta' }} {\mathfrak b}_{\beta \Theta|_Y}{\mathfrak m}_{\alpha\beta}.
		\end{align*} 
		We may simplify the above expression by combining the first two terms as in the previous case, applying \eqref{Z-III-rel} and \eqref{zeta-I-II-def} to the third and the fourth terms, and 
		obtain
		\[
		  \sum_{\beta' \in \mathsf{orb}(Y',\mathfrak a')} \iota{\mathfrak m}_{\beta'\theta'} {\mathfrak n}_{\alpha\beta'}+\iota {\mathfrak z}^{I\!I\!I}_{\alpha \theta'}+
		  \iota {\mathfrak z}^{I\!I}_{\alpha \theta'}+\sum_{\beta \in \mathsf{orb}(Y,\mathfrak a)}\iota  {\mathfrak n}_{\beta{\theta}'} {\mathfrak m}_{\alpha\beta}+
			\sum_{\substack{\beta \in \mathsf{orb}(Y,\mathfrak a)\\\Theta|_{Y'} = \theta' }} {\mathfrak b}_{\beta \Theta|_{Y}} {\mathfrak m}_{\alpha \beta}.
		\] 
		By Lemma \ref{lemma:cob-op-relns} and the fact that $\iota$ is a chain map, the above expression is equal to
		\begin{equation}\label{rel}
		  \partial^{\rm cell} \iota {\mathfrak n}_{\alpha \theta'} + \iota{\mathfrak n}_{\alpha \theta'} \partial^{\rm cell}+\iota {\mathfrak z}^{I}_{\alpha \theta'}
		  +\sum_{\substack{\beta \in \mathsf{orb}(Y,\mathfrak a)\\\Theta|_{Y'} = \theta' }} {\mathfrak b}_{\beta \Theta|_{Y}} {\mathfrak m}_{\alpha \beta}.
		\end{equation}
		The following relation is a counterpart of \eqref{eqn:B-reln}:
		\[\partial^{\rm cell} {\mathfrak b}_{\alpha \theta} + {\mathfrak b}_{\alpha \theta}  \partial^{\rm cell} = 
		\sum_{\beta \in \mathsf{orb}(Y,\mathfrak a)} {\mathfrak b}_{\beta \theta} {\mathfrak m}_{\alpha \beta} + \iota {\mathfrak z}_{\alpha \theta}.\]
		Summing this relation over all $\Theta\in \mathfrak Z(W)$ with the constraint that $\theta=\Theta|_{Y}$ and $\theta'=\Theta|_{Y'}$ gives 
		\[\partial^{\rm cell} \left(\sum_{\Theta|_{Y'} = \theta'} {\mathfrak b}_{\alpha \Theta|_Y}\right) + \left(\sum_{\Theta|_{Y'} = \theta'} {\mathfrak b}_{\alpha \Theta|_Y}\right) \partial^{\rm cell} = 
		\sum_{\substack{\beta \in \mathsf{orb}(Y,\mathfrak a)\\\Theta|_{Y'} = \theta' }} {\mathfrak b}_{\beta \Theta|_{Y}} {\mathfrak m}_{\alpha \beta} + \iota {\mathfrak z}^{I}_{\alpha \theta'}.\]
		Using this relation, we can see that \eqref{rel} is equal to the left hand side of \eqref{chain-map-ob-cob-map}, which completes the proof of the claim in this case.
	\item In the case that $\alpha'=\theta'$ for $\theta'\in \mathfrak Z(Y')$, the relation \eqref{chain-map-ob-cob-map} can be rewritten as 
	\[
	  0=
	  \pi \iota {\mathfrak n}_{\alpha\theta'} + 
	  \sum\limits_{\Theta|_Y = \theta,\,\Theta|_{Y'} = \theta'} \pi {\mathfrak b}_{\alpha\theta}+\sum\limits_{\Theta|_Y = \theta,\, \Theta|_{Y'} = \theta'} \fm_{\alpha\theta}.
	 \] 
	  This follows from the relations $\pi \iota = 0$ and $\pi {\mathfrak b}_{\alpha\theta} = {\mathfrak m}_{\alpha\theta}$. 
	  The latter relation can be verified with the same argument as in Lemma \ref{iota-b=m}. \qedhere
\end{itemize}
\end{proof}

\begin{cor}\label{cor:main-ineq-bp-1}
Suppose $W: Y \to Y'$ is a cobordism between rational homology spheres with $b^+(W) = 1$. Then the map $\varepsilon_0^W: Z(Y) \to Z(Y')$  sends $J_n^{(3)}(Y)$ into $J_{n-1}^{(3)}(Y')$. Equivalently, for any $\vartheta \in Z(Y)$ we have \[q_3(Y; \vartheta) \le q_3(Y'; \varepsilon_0^W(\vartheta)) + 1.\]
\end{cor}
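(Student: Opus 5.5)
The plan is to combine the dg-module homomorphism of Proposition \ref{prop:obs-cob-map} with the filtration shift of Proposition \ref{prop:filt-shift}. In the corollary the hypotheses $b_1(W)=b_1(\partial W)=0$ are in force: rational homology spheres have $b_1=0$, and I take $b_1(W)=0$ as understood from the setup of Section \ref{subsec:susp-bdry-relns}. Choose abelian data $\fa,\fa'$, cellular approximations, privileged sections, and obstruction-section homotopies as in Section \ref{subsec:susp-bdry-relns}. This produces the dg-module map $\widetilde\lambda_W\colon \widetilde C(Y,\fa)\to S\widetilde C(Y',\fa')$. Since it commutes with all of $\Lambda(\chi_1,\chi_2)$, in particular with $\chi_2$, it is a morphism of $\cS$-complexes $(\widetilde C(Y,\fa),\chi_2)\to(S\widetilde C(Y',\fa'),\chi_2)$. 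Both source and target are perfect: the source was checked in Section \ref{subsec:QHS-Scpx}, and for the target I use the isomorphism of Proposition \ref{two-suspensions-are-equivalent} with the algebraic suspension $\Sigma\widetilde C$, whose $\cS$-complex structure is that of \cite{DS:-unori-skein-tr}. Perfectness can also be checked directly from the explicit matrix of $\widetilde d_S$.

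The key step is to show that this morphism has height $0$ with $\tau_0=\varepsilon_0^W$. I would identify $Z(S\widetilde C(Y'))$ with $Z(Y')$ via the summand $Z[3]$ in \eqref{suspension-splitting}. This identification should be checked: with respect to $\chi_2$, the summands $Z[2]\oplus\chi_2Z[2]$ become part of $C\oplus\chi_2 C$, and $Z[3]$ is the complement in $\ker\chi_2$. The component of $\widetilde\lambda_W$ from the central summand $Z(Y)$ to $Z[3]$ is $\fm^W_{\alpha\theta'}$, which equals $\langle\varepsilon_0^W(\alpha),\theta'\rangle$ as recorded just before Proposition \ref{prop:obs-cob-map}. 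By \eqref{tau-i-formulas}, $\tau_0$ is this $\varepsilon$-component, so $\tau_0=\varepsilon_0^W$. Height $0$ imposes no further vanishing conditions.

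Proposition \ref{prop:tau-filtered} then gives $\varepsilon_0^W(J_n^{(3)}(Y,\fa))\subset J_n(S\widetilde C(Y',\fa'),\chi_2)$. Proposition \ref{prop:filt-shift} identifies the latter with $J^{(3)}_{n-1}(Y',\fa')$; here one must make sure its identification of $Z$ agrees with the $Z[3]$ summand used above, which it does, since the morphism in that proof sends $Z$ to $Z[3]$ by the identity. Lemma \ref{lemma:abelian-data-independence} removes the dependence on the abelian data. The stated inequality follows by taking $n=q_3(Y;\vartheta)$: then $\varepsilon_0^W(\vartheta)\in J^{(3)}_{n-1}(Y')$, so $q_3(Y';\varepsilon_0^W(\vartheta))\ge n-1$. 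If $\varepsilon_0^W(\vartheta)=0$, the right side is $+\infty$ and there is nothing to prove.

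The main obstacle I expect is the bookkeeping of the $\cS$-complex splitting of $S\widetilde C(Y')$: confirming that the $\chi_2$-cycles modulo boundaries are exactly $Z[3]$, and that the grading shift by $-3$ matches the direction of the filtration shift in Proposition \ref{prop:filt-shift}, so that one gets $n-1$ rather than $n+1$. A secondary point is that the construction does not depend on the choices of sections, at least at the level of the filtration. For the inequality only existence matters, so any single choice of sections suffices.
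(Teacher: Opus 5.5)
Your proposal is correct and is essentially the paper's proof: the map of Proposition~\ref{prop:obs-cob-map} is a height-$0$ morphism $(\widetilde C(Y),\chi_2)\to(S\widetilde C(Y'),\chi_2)$ with $\tau_0=\varepsilon_0^W$, and Propositions~\ref{prop:tau-filtered} and~\ref{prop:filt-shift} then give the shift by one.

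Two small fixes are needed.

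First, you should not assume $b_1(W)=0$; dispose of $b_1(W)>0$ as the paper does. In that case the restriction $H^1(W;\mathbb F_2)\to H^1(\partial W;\mathbb F_2)$ has nonzero kernel, so every count in \eqref{eqn:epsilon-description} is even, $\varepsilon_0^W=0$, and the statement is vacuous.

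Second, the morphism in the proof of Proposition~\ref{prop:filt-shift} sends $Z$ to $\chi_2 Z[2]$, not to $Z[3]$. It has $\tau_0=0$ and $\tau_1=\mathrm{id}$, i.e.\ it is strong height $1$, and that is exactly why the shift is by one. Your conclusion that $Z(S\widetilde C(Y'))=Z[3]$ is compatibly identified with $Z(Y')$ is still right; only the stated reason was wrong.
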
 

Note that the statement is vacuous if $\dim H^1(W, \partial W; \mathbb F_2) > 1$, as then $\varepsilon^W_0 = 0$ and $q_3(Y'; 0) = \infty$. In particular, we may assume that $b_1(W)=0$. The statement is only interesting for those $\vartheta$ with $\varepsilon_0^W(\vartheta) \ne 0$. When $Y, Y'$ are $\mathbb F_2$-homology spheres, the statement is that if $b^+(W) = 1$ and $H^1(W;\mathbb F_2) = 0$, then $q_3(Y) \le q_3(Y') + 1$. When the first cohomology group is nonzero, the statement is vacuous.

\begin{proof}
By Proposition \ref{prop:obs-cob-map}, there exists a morphism of $\mathcal S$-complexes $\widetilde \lambda_W:\widetilde C(Y) \to S \widetilde C(Y')$ with central component $\varepsilon_0^W$. By Proposition \ref{prop:filt-shift}, the suspension shifts the filtration $\{J^{(3)}_{n}(Y')\}$ on $Z(Y')$ down by $1$. By Proposition \ref{prop:tau-filtered}, the map $\epsilon_0^W: Z(Y) \to Z(Y')$ is filtered with respect to this shifted filtration on the codomain, proving the desired claim. 
\end{proof}

In fact, the preceding result has a natural extension to arbitrary cobordisms. The proof rests on the following elementary topological lemma:

\begin{lemma}\label{lemma:cob-decomp}
Suppose $W: Y \to Y'$ is a cobordism between rational homology spheres with $b^+(W) = k > 0$ and $b_1(W) = 0$. Then $W$ may be written as a composite $W = W_k \circ \cdots \circ W_{1}$, where each $W_i: Y_{i-1} \to Y_i$ is a cobordism between rational homology spheres with $b^+(W_i) = 1$ and $b_1(W_i) = 0$, where $Y = Y_0$ and $Y' = Y_k$. 
\end{lemma}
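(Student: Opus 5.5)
The plan is to use Morse theory to split $W$ into $k$ pieces, each containing exactly one "positive" piece of $b^+$. First, because $b_1(W)=0$ and $Y$, $Y'$ are rational homology spheres, I would take a self-indexing handle decomposition of $W$ relative to $Y$. The $1$-handles and $3$-handles can be traded or cancelled rationally. Concretely, I would pass to a decomposition in which the $1$-handles are attached first, then all $2$-handles, then the $3$-handles. Let $W_{\le 1}$ be the union of $I\times Y$ with the $1$-handles and $W_{\ge 3}$ the union with the $3$-handles.

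The key object is the middle level. After the $1$-handles, the boundary is $Y\#^m(S^1\times S^2)$, which is no longer a rational homology sphere. To fix this, I would instead group the $1$-handles with enough $2$-handles to kill their homology rationally. Since $b_1(W)=0$, the attaching curves of the $2$-handles generate $H_1$ of the $1$-handlebody part rationally. So I can choose $m$ of the $2$-handles whose attaching curves, together with $Y$, kill $H_1(\#^m S^1\times S^2;\mathbb Q)$. Dually, $m'$ of the $2$-handles pair with the $3$-handles.

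After this regrouping, the remaining $2$-handles form a cobordism $V:Y_a\to Y_b$ between rational homology spheres. Attaching $2$-handles along a link in a rational homology sphere gives an intersection form on $H_2(V,\partial V;\mathbb Q)$ that is nondegenerate on the relevant part. Up to a rational change of basis it can be diagonalized. The initial and final pieces (the $1$-handle/$2$-handle and $2$-handle/$3$-handle pieces) I would try to arrange to have $b^+=0$, pushing all of $b^+(W)$ into $V$. Whether this is possible depends on the choice of which $2$-handles to group, and I would choose them generically.

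Next I would split $V$ itself. Order the components $L_1,\dots,L_r$ of the attaching link. Let $V_j$ be the trace of $L_j$ in the manifold obtained by surgery on $L_1,\dots,L_{j-1}$. Each $V_j$ has $b_2=1$, so $b^+(V_j)\in\{0,1\}$, and signatures add by Novikov additivity. The intermediate manifolds need to be rational homology spheres, which means the leading principal minors of the rational linking matrix must be nonzero. I would arrange this by handleslides, possibly after stabilizing with a $\overline{\mathbb{CP}}^2$ summand, as in Lemma \ref{blowup_trick}; since $b^+$ is unchanged and the negative pieces can be absorbed into neighbors, this is harmless. Handleslides realize integral changes of basis, and generic integral matrices have nonzero minors, so I expect the minors can be made nonzero.

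Finally, I would concatenate consecutive pieces so that each group contains exactly one $V_j$ with $b^+=1$. The $b^+=0$ pieces, and the initial and final pieces, are absorbed into adjacent groups. Then $b^+(W_i)=1$ by additivity across rational homology sphere boundaries, and $b_1(W_i)=0$ follows from Mayer–Vietoris, since $b_1(W)=0$ and the gluing surfaces have $b_1=0$.

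The main obstacle is the step of isolating the $1$- and $3$-handles into rational homology sphere levels with $b_1=0$ on each piece, while keeping all intermediate boundaries rational homology spheres. I would handle it by the homological choice of cancelling $2$-handles described above, and if needed by stabilizing.
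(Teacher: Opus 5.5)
Your argument is correct in outline. After the first step it takes a different route from the paper; that first step, splitting off the $1$- and $3$-handles into rational homology cobordisms, is common to both. The paper inducts on $b^+$. It picks a primitive integral class $H=\sum a_iH_i$ with $H\cdot H>0$ and uses handleslides to make $H$ the first handle. Then $(I\times Y)\cup H_1$ has $b_2=b^+=1$ and a rational homology sphere as its outgoing end, and the paper recurses on the remainder, which has $b^+=k-1$. You instead cut the $2$-handle cobordism into $r$ single-handle pieces and regroup them. This needs all leading principal minors of the linking matrix to be nonzero at once, which is more than the paper uses (one nonzero square at a time), but it gives a finer decomposition.

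Two points should be tightened.

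First, discard the $\overline{\mathbb{CP}}^2$ fallback. It is not harmless here: it would decompose $W\#\overline{\mathbb{CP}}^2$ rather than $W$, and absorbing pieces into neighbours does not remove the summand. It is also unnecessary, and the ``generic matrices'' heuristic can be replaced by a short argument.
\begin{itemize}
\item The rational linking form $Q$ of $V$ is nondegenerate because both ends are rational homology spheres, so you can choose a $Q$-orthogonal rational basis $v_1,\dots,v_r$.
\item The saturated lattices $\Lambda_j=\operatorname{span}_{\mathbb Q}(v_1,\dots,v_j)\cap\mathbb Z^r$ form a flag of direct summands. Hence there is a $\mathbb Z$-basis $e_1,\dots,e_r$ with $\Lambda_j=\operatorname{span}_{\mathbb Z}(e_1,\dots,e_j)$, and this change of basis is realized by handleslides, reordering and reorientation of components.
\item The leading minors of $(e_a\cdot e_b)$ are nonzero because $Q$ restricted to each $\operatorname{span}_{\mathbb Q}(v_1,\dots,v_j)$ is nondegenerate.
\end{itemize}

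Second, nothing needs to be ``arranged generically'' for the end pieces. If $m$ $2$-handles meet the $m$ $1$-handles via a rationally invertible matrix, the relative cellular chain complex of that piece is acyclic over $\mathbb Q$. So that piece is a rational homology cobordism with $b_2=0$. For the $3$-handles, proceed sequentially as the paper does. The remainder still has $b_1=0$ by Mayer--Vietoris across the new rational homology sphere, and you then turn it upside down. This also guarantees that the $2$-handles paired with the $3$-handles are disjoint from those paired with the $1$-handles.
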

\begin{proof}
The cobordism $W$ may be obtained by sequentially attaching $1$-, $2$-, and $3$-handles to the outgoing end of $I \times Y$. Because $b_1(W) = b_1(Y) = 0$, we may attach all $1$-handles and a collection of $2$-handles to decompose $W$ as a composite of a rational homology cobordism $Y \to Y''$ and another cobordism $Y'' \to Y'$; similarly with the $3$-handles. Thus, it suffices to provide such a decomposition for a $2$-handle cobordism. 

Since $b^+(W)> 0$, there is a primitive linear combination $H = \sum_{i=1}^m a_iH_i$ with integer coefficients so that the self-intersection number $H\cdot H$ is a positive rational number. Performing handleslides if necessary, one may suppose $H = H_1$. Let $W_1 = (I\times Y)\cup H_1$ be the cobordism with a single $2$-handle $H_1$ attached. The outgoing boundary $Y_1$ is a rational homology sphere, and the cobordism satisfies $b_1(W_1) = 0$ and $b^+(W_1) = 1$. The embedding $Y_1 \subseteq W$ divides $W$ into two successive cobordisms $W_1:Y_0\to Y_1$ and $W':Y_1\to Y'$ so that $b^+(W') = k - 1$. The result now follows by induction.
\end{proof}

This in hand, we are able to prove a generalization of the main inequality to rational homology spheres.

\begin{theorem}\label{thm:main-ineq-QHS}
Suppose $W: Y \to Y'$ is a cobordism of rational homology spheres. Then the map $\varepsilon_0^W: Z(Y) \to Z(Y')$ sends $J_n^{(3)}(Y)$ into $J_{n-b^+(W)}^{(3)}(Y')$. Equivalently, for any $\vartheta \in Z(Y)$ we have \[q_3(Y; \vartheta) \le q_3(Y'; \varepsilon_0^W(\vartheta)) + b^+(W).\]
\end{theorem}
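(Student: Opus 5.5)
The plan is to reduce to the two cases already in hand, $b^+(W)=0$ (Proposition \ref{prop:generalized-monotonicity}) and $b^+(W)=1$ (Corollary \ref{cor:main-ineq-bp-1}). The glue is Lemma \ref{lemma:cob-decomp} together with multiplicativity of the central component, $\varepsilon_0^{W'\circ W} = \varepsilon_0^{W'}\circ\varepsilon_0^{W}$.

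\textbf{Step 1: dispose of the vacuous cases.} If $b_1(W)>0$, the plan is to show $\varepsilon_0^W=0$, so that $J_{n-b^+}(Y')\ni 0$ trivially. By Lemma \ref{lemma:component-epsilon}, each coefficient of $\varepsilon_0^W$ counts classes $\Theta\in H^1(W;\mathbb F_2)$ with prescribed restrictions to the two ends. These counts are either $0$ or equal to $|\ker(H^1(W;\F)\to H^1(\partial W;\F))|$. If $b_1(W)>0$, or more generally if $H^1(W,\partial W;\F)$ is nonzero, this kernel has even order, so every count is even. So we may assume $b_1(W)=0$. The case $b^+(W)=0$ is then exactly Proposition \ref{prop:generalized-monotonicity}.

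\textbf{Step 2: the case $b^+(W)=k>0$.} Apply Lemma \ref{lemma:cob-decomp} to write $W=W_k\circ\cdots\circ W_1$, where each $b^+(W_i)=1$ and each $b_1(W_i)=0$. Corollary \ref{cor:main-ineq-bp-1} gives $\varepsilon_0^{W_i}(J_n^{(3)}(Y_{i-1}))\subset J_{n-1}^{(3)}(Y_i)$. Composing these inclusions yields $\varepsilon_0^{W_k}\cdots\varepsilon_0^{W_1}(J_n^{(3)}(Y))\subset J^{(3)}_{n-k}(Y')$.

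\textbf{Step 3: multiplicativity of $\varepsilon_0$.} It remains to check $\varepsilon_0^{W}=\prod_i \varepsilon_0^{W_i}$ over $\F$. By \eqref{eqn:epsilon-description}, the $(\theta,\theta')$ entry of the composite counts chains of classes $\Theta_i\in H^1(W_i;\F)$ that agree on each intermediate $Y_i$. Since each $Y_i$ is a rational homology sphere, Mayer--Vietoris gives that $H^1(W;\F)$ is exactly the fiber product $H^1(W_1;\F)\times_{H^1(Y_1;\F)}\cdots\times H^1(W_k;\F)$. Indeed, the connecting map from $H^0$ is zero because each $Y_i$ is connected. So the counts agree exactly, not merely mod $2$, and the theorem follows. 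The equivalent statement for $q_3(Y;\vartheta)$ comes from the definition \eqref{h-inv}.

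The main obstacle is Step 3, together with the bookkeeping in Step 1. The delicate point is that the intermediate $Y_i$ may have $2$-torsion even when $Y$ and $Y'$ do not. The Mayer--Vietoris identification handles this; everything else is formal composition of the filtration inclusions.
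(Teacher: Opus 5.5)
Your proposal is correct and follows the paper's proof essentially step for step. You dispose of $b_1(W)>0$ as vacuous, cite Proposition \ref{prop:generalized-monotonicity} for $b^+(W)=0$, decompose via Lemma \ref{lemma:cob-decomp}, and iterate Corollary \ref{cor:main-ineq-bp-1} using functoriality of $\varepsilon_0$; your Mayer--Vietoris fiber-product argument supplies the justification the paper only calls ``easily seen''.

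One small slip: $H^1(W,\partial W;\mathbb F_2)$ is never zero, since the cokernel of $H^0(W;\mathbb F_2)\to H^0(\partial W;\mathbb F_2)$ injects into it. So the correct general vacuity condition is $\dim H^1(W,\partial W;\mathbb F_2)>1$, as in the paper. Your $b_1(W)>0$ case is unaffected.
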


Again, this statement is vacuous unless $\dim H^1(W, \partial W; \mathbb F_2) = 1$.

\begin{proof}
If $b_1(W) > 0$, then in particular $\dim H^1(W, \partial W; \mathbb F_2) > 1$, so the statement is vacuous. If $b_1(W) = b^+(W) = 0$, then the statement was established as Proposition \ref{prop:generalized-monotonicity}. Thus, we may suppose $b_1(W) = 0$ and $b^+(W) > 0$, in which case Lemma \ref{lemma:cob-decomp} allows us to decompose $W = W_k \circ \cdots \circ W_1$, where each $W_i$ is a cobordism between rational homology spheres with $b_1(W_i) = 0$ and $b^+(W_i) = 1$. Write $\varepsilon_0^{(i)}: Z(Y) \to Z(Y_i)$ for the map induced by the cobordism $W_i \circ \cdots \circ W_1$, with $\varepsilon_0^{(0)} = 1 $ and $\varepsilon_0^{(k)} = \varepsilon_0^W$. Because the assignment $Y \mapsto Z(Y)$ and $W \mapsto \varepsilon_0^W$ is easily seen to be functorial, we have $\varepsilon_0^{W_i} \circ \varepsilon_0^{(i-1)} = \varepsilon_0^{(i)}$. By Corollary \ref{cor:main-ineq-bp-1}, we have for each $i$ that \[\varepsilon_0^{W_i}(J^{(3)}_{m}(Y_{i-1})) \subset J^{(3)}_{m-1}(Y_i).\] By induction, we therefore have that $\varepsilon_0^{(i)}(J^{(3)}_n(Y)) \subset J^{(3)}_{n-i}(Y_i)$ for each $0 \le i \le k$. Taking $i = k = b^+(W)$ gives the desired result.
\end{proof}

\begin{remark}
In fact, the proof establishes a stronger claim. A cobordism $W: Y \to Y'$ with $b^+(W) = 1$ induces a dg-module morphism $\widetilde C(Y, \fa) \to S \widetilde C(Y', \fa') \cong \Sigma \widetilde C(Y', \fa')$. The algebraic suspension $\Sigma \widetilde C$ is functorial up to homotopy for $\cS$-complexes \cite[Section 2.3]{DS:-unori-skein-tr}, and in fact the same formulas establish that $\Sigma \widetilde C$ is functorial up to homotopy for dg-modules over $\Lambda(\chi_1, \chi_2)$ of the form considered in this paper. Therefore, for each cobordism $W: Y \to Y'$ between rational homology spheres, composition gives a $\Lambda(\chi_1, \chi_2)$-equivariant map $\widetilde C(Y, \fa) \to \Sigma^{b^+(W)} \widetilde C(Y', \fa')$, whose central component is exactly $\epsilon_0^W$. 
\end{remark}

\begin{proof}[Proof of Theorem \ref{main-thm}(ii)]
If necessary, perform surgery on loops representing a basis of $H_1(W;\mathbb Z)/\text{Tors}$, so that $H_1(W;\mathbb F_2) = H_1(\partial W; \mathbb F_2) = 0$. Then the map $\varepsilon_0^W: Z(Y) \to Z(Y')$ is the identity map between two $1$-dimensional $\mathbb F_2$-vector spaces. Thus, $q_3(Y) \le q_3(Y') + b^+(W)$, giving the left-hand inequality of \eqref{b+-ineq}. The right-hand inequality follows by applying this to the same cobordism with reversed orientation, $\bar W: Y' \to Y$, and using that $q_3$ negates under orientation-reversal.
\end{proof}

We conclude by observing that the inequality \eqref{b+-ineq} implies the other half of the additivity formula Theorem \ref{main-thm}(iii). 

\begin{proof}[Proof of Theorem \ref{main-thm}(iii)]
By Proposition \ref{prop:froy-vs-cell}, the invariant $q_3$ discussed above coincides with the invariant discussed in Section \ref{background-proof-1}, and in particular we have the inequality $q_3(Y \# P) \ge q_3(Y) + 1$ of Proposition \ref{proposition:superadditivity}. Because $P$ is the $(-1)$-surgery on the left-handed trefoil, it bounds a manifold $X$ with $b^-(X) = 1$. Taking the boundary sum of $X$ with $I \times Y$ gives a cobordism $W: Y \to Y \# P$ with $H_1(W;\mathbb Z) = 0$ and $b^-(W) = 1$. It follows from \eqref{b+-ineq} that $q_3(Y \# P) - q_3(Y) \le 1$, establishing the desired equality.
\end{proof}

\addcontentsline{toc}{section}{References}

\bibliography{references}

\Addresses

\end{document}